 \theoremstyle{plain}
\newtheorem{thm}[subsubsection]{Theorem}
 \theoremstyle{plain}
\theoremstyle{plain}
\theoremstyle{plain}
\theoremstyle{plain}
\theoremstyle{plain}
\theoremstyle{plain}
  \newtheorem{prop}[subsubsection]{Proposition}
\theoremstyle{plain}
 \newtheorem{lemma}[subsubsection]{Lemma}
\theoremstyle{plain}
\theoremstyle{plain}
\theoremstyle{plain}
\theoremstyle{definition}
  \newtheorem{defn}[subsubsection]{Definition}
\theoremstyle{definition}
 \theoremstyle{definition}
\theoremstyle{remark}
\newtheorem{rmk}[subsubsection]{Remark}
\newcommand{\Z}{\mathbb{Z}}
\newcommand{\Q}{\mathbb{Q}}
\newcommand{\Qp}{\mathbb{Q}_p}
\newcommand{\R}{\mathbb{R}}
\newcommand{\C}{\mathbb{C}}
\newcommand{\F}{\mathbb{F}}
\newcommand{\N}{\mathbb{N}}
\newcommand{\cA}{\mathcal{A}}
\newcommand{\cC}{\mathcal{C}}
\newcommand{\cG}{\mathcal{G}}
\newcommand{\cI}{\mathcal{I}}
\newcommand{\cJ}{\mathcal{J}}
\newcommand{\cM}{\mathcal{M}}
\newcommand{\cO}{\mathcal{O}}
\newcommand{\cT}{\mathcal{T}}
\newcommand{\cU}{\mathcal{U}}
\newcommand{\cX}{\mathcal{X}}
\newcommand{\eps}{\varepsilon}
\newcommand{\phz}{\varphi}
\newcommand{\Zp}{\mathbb{Z}_p}
\newcommand{\Gal}{\mathrm{Gal}}
\newcommand{\Hom}{\mathrm{Hom}}
\newcommand{\Res}{\mathrm{Res}}
\newcommand{\GL}{\mathrm{GL}}
\newcommand{\Spec}{\mathrm{Spec}\ }
\newcommand{\Fp}{\F_p}
\newcommand{\un}[1]{\underline{#1}}
\renewcommand{\bf}[1]{\mathbf{#1}}
\newcommand{\Rep}{\mathrm{Rep}}
\newcommand{\Diag}{\mathrm{Diag}}
\newcommand{\tld}[1]{\widetilde{#1}}
\newcommand{\JH}{\mathrm{JH}}
\newcommand{\rbar}{\overline{r}}
\newcommand{\rhobar}{\overline{\rho}}
\newcommand{\Spf}{\mathrm{Spf}}
\newcommand{\semis}{\mathrm{ss}}
\newcommand{\speci}{\mathrm{sp}}
\newcommand{\Adm}{\mathrm{Adm}}
\newcommand{\orient}{\textrm{or}}
\newcommand{\rG}{\mathrm{G}}
\newcommand{\defeq}{\stackrel{\textrm{\tiny{def}}}{=}}
\newcommand{\s}{^\times}
\newcommand{\ovl}[1]{\overline{#1}}
\newif\iffinalrun
  \newcommand{\mar}[1]{}
  \newcommand{\mar}[1]{\marginpar{\raggedright\tiny #1}}
\DeclareMathOperator{\Mod}{Mod}
\DeclareMathOperator{\Coh}{Mod}
\DeclareMathOperator{\Mat}{Mat}
\newcommand{\ra}{\rightarrow}
\newcommand{\iarrow}{\hookrightarrow}
\newcommand{\into}{\hookrightarrow}
\newcommand{\risom}{\buildrel\sim\over\rightarrow}
\title{Colength one deformation rings}
\author{Daniel Le}
\address{
Department of Mathematics,
Purdue University,
150 N. University Street,
West Lafayette, Indiana 47907, USA
}
\email{ledt@purdue.edu}
\author{Bao V.~Le Hung}
\address{Department of Mathematics,
Northwestern University,
2033 Sheridan Road,
Evanston, Illinois 60208, USA}
\email{lhvietbao@googlemail.com}
\author{Stefano Morra}
\address{Universit\'e Paris 8, Laboratoire d'Analyse, G\'eom\'etrie et Applications,  LAGA, Universit\'e Sorbonne Paris Nord, CNRS, UMR 7539,  F-93430, Villetaneuse, France}
\email{morra@math.univ-paris13.fr}
\author{Chol Park}
\address{Department of Mathematical Sciences, Ulsan National Institute of Science and Technology,
UNIST-gil 50, Ulsan 44919, South Korea}
\email{cholpark@unist.ac.kr}
\author{Zicheng Qian}
\address{Morningside Center of Mathematics, No.55, Zhongguancun East Road, Beijing, 100190, China }
\email{qianzicheng@amss.ac.cn}
\begin{document}

\begin{abstract}
Let $K/\Qp$ be a finite unramified extension, $\rhobar:\Gal(\ovl{\Q}_p/K)\ra\GL_n(\ovl{\F}_p)$ a continuous representation, and $\tau$ a tame inertial type of dimension $n$.
We explicitly determine, under mild regularity conditions on $\tau$, the potentially crystalline deformation ring $R^{\eta,\tau}_{\rhobar}$ in parallel Hodge--Tate weights $\eta=(n-1,\cdots,1,0)$ and inertial type $\tau$ when the \emph{shape} of $\rhobar$ with respect to $\tau$ has colength at most one.
This has application to the modularity of a class of shadow weights in the weight part of Serre's conjecture.
Along the way we make unconditional the local-global compatibility results of \cite{PQ}.
\end{abstract}
\maketitle

\section{Introduction}

In recent years, calculations of various potentially crystalline deformation spaces have seen a number of applications to questions of local-global compatibility in the mod $p$ and $p$-adic Langlands program.
This includes the weight part of Serre's conjecture, the determination of mod $p$ multiplicities, conjectures of Breuil on integral structures in $K$-types, and generalizations of Colmez's functor (see e.g.~\cite{EGS,LLLM,LLLM2,DL,MLM,LGC,GL3Wild,OBW,HW,BHHMS,BHHMS2}).
Under a somewhat exotic genericity condition, \cite{MLM} shows that tamely potentially crystalline deformation spaces are equisingular to certain closed subvarieties of Pappas--Zhu local models.
Not much is known about the geometry of these local models for Galois deformation spaces in general.
Moreover, it is difficult in practice to make the genericity condition explicit or to work with their natural presentations.

\subsection{The main result}

The local model has a stratification indexed by \emph{admissible} elements of the extended affine Weyl group called \emph{shapes} and the complexity of the geometry increases as the length of the shape decreases.
\cite{OBW} shows that when the shape is \emph{extremal} i.e.~has maximal length ($\binom{n+1}{3}$ for $n$-dimensional representations of $\Gal(\ovl{\Q}_p/\Qp)$), then the corresponding tamely potentially crystalline deformation ring is formally smooth.
The main result of this paper, which we state only for representations of $\Gal(\ovl{\Q}_p/\Qp)$ in the introduction, is the following:

\begin{thm}[Theorem \ref{thm: main deformation ring}]\label{thm:intro:main}
Let $E$ be a finite extension of $\Qp$, with ring of integers $\cO$ and residue field $\F$.
Let $\rhobar:\Gal(\ovl{\Q}_p/\Qp) \ra \GL_n(\F)$ a continuous Galois representation, $\tau$ a $3n-7$-generic tame inertial type, $\eta=(n-1,\cdots,1,0)\in\Z^n$, and $R^\tau_{\rhobar}$ the lifting ring for potentially crystalline representations of type $(\eta,\tau)$.
If $R^{\eta,\tau}_{\rhobar}$ is nonzero and the length of the shape $\tld{w}(\rhobar,\tau)$ is at least $\binom{n+1}{3}-1$ $($i.e.~the colength of the shape is at most one$)$, then $R^{\eta,\tau}_{\rhobar}$ is formally smooth over $\cO$ or $\cO[\![X,Y]\!]/(XY-p)$.
\end{thm}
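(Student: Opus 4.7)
The plan is to present $R^{\eta,\tau}_{\rhobar}$ explicitly via the moduli of Kisin modules with tame descent data, as developed in the LLLM series and refined in \cite{MLM}. Under the $3n-7$-genericity assumption on $\tau$, a suitable moduli stack of such Kisin modules is known to be formally smooth over $\Spf R^{\eta,\tau}_{\rhobar}$, so the problem reduces to computing an explicit affine chart of the Pappas--Zhu local model near the stratum indexed by the shape $\tld{w}\defeq \tld{w}(\rhobar,\tau)$, subject to the appropriate monodromy and matching (descent) equations.

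The first step is to classify the admissible elements $\tld{w}\in\Adm(\eta)$ of colength one. These are exactly the admissible elements covered, in the Bruhat order on the extended affine Weyl group of $\GL_n$, by at least one extremal admissible element via a single affine simple reflection. I would organize these shapes into a small number of combinatorial types according to the direction of the covering reflection and its position relative to the maximal-length Schubert cells. In parallel, I would identify, for each such type, which extremal neighbors lie in $\Adm(\eta)$: the number of such neighbors (one or two) should govern the dichotomy in the conclusion.

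For each shape type I would then compute an explicit affine chart of the local model around a point lifting $\rhobar$. The case of extremal shapes, handled by \cite{OBW}, gives a formally smooth ring $\cO[\![t_1,\ldots,t_d]\!]$ in which each coordinate arises as a matrix entry of the universal Kisin module relative to a gauge basis. For a colength-one shape, the defining equations of the local model impose \emph{one} extra constraint relating these variables and $p$. The expected dichotomy is: when $\tld{w}$ is covered by a \emph{unique} extremal element in $\Adm(\eta)$, the extra equation is (after an analytic coordinate change) of the form $X\cdot u=0$ for a unit $u$ or $X=0$, yielding formal smoothness over $\cO$; when $\tld{w}$ is covered by \emph{two} extremal elements in $\Adm(\eta)$, the stratum sits at the scheme-theoretic intersection of two top-dimensional components of the local model, and the equation becomes $XY-p$ after a coordinate change. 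Formal smoothness of the generic fiber (which is standard for potentially crystalline deformation rings in regular Hodge--Tate weights) then forces the flat $\cO$-model to take exactly the claimed form.

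The main obstacle will be the explicit matrix computation, which is significantly subtler than in the extremal case because the Iwahori-level decomposition of the Kisin module is no longer in "generic position" at a colength-one stratum. Careful bookkeeping -- organizing the universal matrix entries by the root system of $\GL_n$, using the Iwahori decomposition to isolate the one obstructing root direction, and systematically discarding higher-order corrections via the $3n-7$-genericity as in \cite{MLM,OBW} -- should reduce the full matrix computation to a single $2\times 2$ sub-block, where the alternative $X=0$ vs.\ $XY=p$ can be read off directly from the descent-data action on that block.
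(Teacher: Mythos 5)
Your high-level reduction (Kisin modules/local models, classify colength-one elements of $\Adm(\eta)$, compute an explicit affine chart and impose the monodromy condition) is the same as the paper's, but two of your key steps do not work as described.

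First, the criterion you propose for the dichotomy is wrong. You predict that $R^{\eta,\tau}_{\rhobar}$ is formally smooth over $\cO$ when $\tld{w}$ is covered by a unique extremal element of $\Adm(\eta)$ and is of the form $\cO[\![X,Y]\!]/(XY-p)$ when it is covered by two; i.e.\ you make the answer a combinatorial function of the shape alone. In fact the chart at a colength-one stratum is (after eliminating variables) of the form $\cO[\{X_\beta\},c]/(c\cdot Z_{-\alpha}-p)$, where $Z_{-\alpha}$ is an explicit polynomial in the coordinates (Lemmas \ref{lemma: equation for def ring 1} and \ref{lemma: equation for def ring 2}), and its special fiber always has \emph{two} components in the colength-one case --- matching the two Serre weights in $W^?(\rhobar^{\speci})\cap\JH(\ovl{\sigma(\tau)})$ produced by Lemma \ref{lemma:nb:SW}. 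Which alternative occurs is decided by whether $Z_{-\alpha}$ vanishes modulo $\varpi$ at the particular $\F$-point corresponding to $\rhobar$ (this is the set $\cJ_0$ in Theorem \ref{thm: main deformation ring}): two representations with the same shape relative to $\tau$ can fall into different cases. Organizing the proof around the number of extremal covers of $\tld{w}$ would therefore force you to prove a false statement in one of your two cases. (Relatedly, a Bruhat covering is by an affine reflection, not necessarily a simple one; the usable classification is the explicit one of Proposition \ref{prop:classification, colength 1 dual shapes}, namely $\tld{w}=wt_\eta s_\alpha w^{-1}$ or $wt_{\eta-\alpha}s_\alpha w^{-1}$ with precise conditions on $(w,\alpha)$.)

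Second, the assertion that the colength-one chart differs from the extremal one by ``one extra constraint'' reducible to a $2\times 2$ block is a genuine gap, not bookkeeping. Already the finite height conditions produce one extra coordinate $c$ (from the anomalous diagonal entry) and one extra top-degree coefficient $c_{\beta,m_\beta+1}$ for \emph{every} ``bad'' root $\beta$ sharing a row or column with $\alpha$; eliminating all of these requires the identities coming from $VV^{-1}=1$, the relation $-c\cdot c_{-\alpha}=p^{|\langle\eta,\alpha^\vee\rangle|}$, and the monodromy equations for all such roots simultaneously (Lemmas \ref{lemma: bad cases 1}, \ref{lemma: left-bottom block 1} and their second-form analogues), before the single surviving relation $c\cdot Z_{-\alpha}-p$ emerges. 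Finally, your closing inference --- that $\cO$-flatness plus smoothness of the generic fiber forces the ring to be $\cO$ or $\cO[\![X,Y]\!]/(XY-p)$ --- is a non sequitur (consider $\cO[\![X,Y]\!]/(XY-p^2)$); what is actually needed is the explicit closed immersion into the model ring together with a dimension count and the irreducibility of that model to upgrade the surjection to an isomorphism.
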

\begin{rmk}
\label{rmk:intro}
\begin{enumerate}
\item
\label{it:rmk:intro:1}
Replacing $\Qp$ by a finite unramified extension $K$ and requiring that the shape have colength one at each embedding $K\into \ovl{\Q}_p$, we show that the deformation ring is formally smooth over a completed tensor product of rings of the form $\cO[\![X,Y]\!]/(XY-p)$.
The number of such factors in the tensor product can furthermore be explicitly computed.
\item The colength one deformation spaces that have a parabolic structure were computed in \cite{OBW}. In general, colength one deformation spaces do not have a parabolic structure, making their computation far more difficult.
\item
\label{it:rmk:intro:3}
When $n=3$, the tame inertial types with colength one shape are sufficient to prove the Serre weight conjecture for $\GL_3$ \cite{LLLM,LLLM2,GL3Wild}.
We generalize these ideas to prove the modularity of Serre weights of \emph{defect at most one} under the assumptions of \cite{OBW}, in particular under an explicit combinatorial genericity condition.
\item Using standard Taylor--Wiles techniques, Theorem \ref{thm:intro:main} gives modularity lifting results similar to \cite[Theorem 9.2.1]{MLM} (improving the polynomial genericity and the tameness condition at $p$ in \emph{loc.~cit.}, but imposing specific conditions on the shape with respect to the tame inertial types).
\end{enumerate}
\end{rmk}

While Theorem \ref{thm:intro:main} generalizes some previous results, its proof is perhaps surprisingly subtle despite the shape having close to maximal length.
We do not expect our methods to extend to shapes of smaller length.
This suggests that local models for Galois deformation spaces are genuinely complicated geometric objects and that simple explicit descriptions are hard to come by.

Our principal motivation in writing this paper was to apply Theorem \ref{thm:intro:main} to prove the weight elimination and mod $p$ multiplicity one results necessary to make unconditional the local-global compatibility result of \cite{PQ} which states roughly that the local mod $p$ Galois representation at~$p$ can be recovered from the $\GL_n(\Q_p)$-action on the Hecke isotypic part of the mod $p$ completed cohomology of a definite unitary group.
While the results of \cite{PQ} were superseded by those of \cite{LGC}, the method of \cite{LGC} using only extremal shapes does not work for $\mathrm{GSp}_4(\Q_p)$ while it should be possible to adapt the methods of \cite{PQ} (which builds on \cite{BD,HLM,LMP,MP} in small rank) to many $p$-adic reductive groups over $\Q_p$.
Indeed, this has been carried out for $\mathrm{GSp}_4(\Q_p)$ \cite{EL}.
For generalizations of \cite{LGC}, an analogue of Theorem \ref{thm:intro:main} should prove useful.
We hope to return to this in future work.

\subsection{Global and local applications}
As mentioned in Remark \ref{rmk:intro}\eqref{it:rmk:intro:3}, as a more immediate global application of Theorem \ref{thm:intro:main} we obtain the modularity of weights of defect at most one.
The notion of \emph{defect} of a Serre weight $\sigma$ for a \emph{tame} Galois representation $\rhobar$ was first introduced in \cite[\S 8.6]{MLM}.
This notion is purely combinatorial, and encodes the maximal length for the shapes $\tld{w}(\rhobar,\tau)$ such that $\sigma\in W^?(\rhobar)\cap \JH(\ovl{\sigma(\tau)})$.
In this paper we generalize the notion of defect for any $\rhobar$ in terms of specializations (as done in \cite{OBW} for extremal weights), and prove their modularity when the defect is at most one, conditional to the existence of a modular obvious weight.
The result is the following, and we refer the reader to the bulk of the paper of any undefined notion:
\begin{thm}[Theorem \ref{thm:globalobv}]
\label{thm:main:GA:intro}
Let $F/F^+$ be a CM field.
Assume that $F^+\neq \Q$, that all places of $F^+$ above $p$ are unramified over $\Q_p$ and  totally split in $F$.
Let $\rbar: G_{F^+} \ra \cG(\F)$ be a continuous representation which is automorphic in the sense of \cite[Definition 5.5.1]{OBW}, with set of modular weights $W(\rbar)$.
Let $\rbar_p$ be the $L$-homomorphism attached to the collection $\{\rbar|_{G_{F^+_v}}\}_{v|p}$ and write $W_{\leq \un{0}}^g(\rbar_p)$ and $W_{\leq \un{1}}^g(\rbar_p)$ for the set of extremal weights and for the set of weights of defect at most one, respectively, for $\rbar_p$.
Asssume further that:
\begin{itemize}
\item $\rbar(G_{F(\zeta_p)})\subseteq \GL_n(\F)$ is adequate; and
\item $\rbar_p$ is $6(n-1)$-generic.
\end{itemize}
Then the following are equivalent:
\begin{enumerate}
\item $W_{\leq \un{0}}^g(\rbar_p) \cap W(\rbar) \neq \emptyset$; and
\item $W^g_{\leq \un{1}}(\rbar_p) \subset W(\rbar)$.
\end{enumerate}
\end{thm}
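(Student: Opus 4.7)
The implication $(2) \Rightarrow (1)$ is immediate because extremal Serre weights have defect zero, so $W^g_{\leq \un{0}}(\rbar_p)\subseteq W^g_{\leq \un{1}}(\rbar_p)$; note that the former is nonempty under the given genericity by the combinatorial description of extremal weights. The substantive content is the converse, which I would establish by a weight cycling argument that starts from a single modular extremal weight and propagates modularity one ``defect-one step'' at a time, by moving between the two irreducible components of a carefully chosen potentially crystalline deformation ring.

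\textbf{Patching and choice of type.} The adequacy hypothesis on $\rbar(G_{F(\zeta_p)})$ allows the standard Taylor--Wiles--Kisin patching construction to produce a patching functor $M_\infty$ from locally algebraic types to coherent modules over a formal power series ring over $\widehat{\bigotimes}_{v\mid p} R^{\eta,\tau_v}_{\rhobar_v}$, with the key property that a weight $\sigma$ lies in $W(\rbar)$ if and only if $M_\infty(\sigma)\neq 0$. Fix $\sigma^{\flat}\in W^g_{\leq \un{0}}(\rbar_p)\cap W(\rbar)$ and a target weight $\sigma\in W^g_{\leq \un{1}}(\rbar_p)$. The combinatorial definition of defect singles out one embedding $v_0\mid p$ at which $\sigma_{v_0}$ differs from $\sigma^{\flat}_{v_0}$ by a codimension-one facet; the $6(n-1)$-genericity hypothesis then lets me select a tame inertial type $\tau=\bigotimes_{v\mid p}\tau_v$, satisfying the $(3n-7)$-genericity hypothesis of Theorem~\ref{thm:intro:main} at every embedding, such that both $\sigma^{\flat}$ and $\sigma$ occur in $\JH(\ovl{\sigma(\tau)})$ and such that the shape $\tld{w}(\rhobar_v,\tau_v)$ has colength $0$ for $v\neq v_0$ and colength exactly $1$ at $v_0$.

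\textbf{Local structure and support.} By Theorem~\ref{thm:intro:main} together with Remark~\ref{rmk:intro}\eqref{it:rmk:intro:1}, $R^{\eta,\tau}_{\rhobar_p}$ is formally smooth over $\cO[\![X,Y]\!]/(XY-p)$, and $\Spec R^{\eta,\tau}_{\rhobar_p}$ has exactly two irreducible components $\cC^{\flat}$ and $\cC$ meeting transversely along a divisor in the special fibre. A Breuil--M\'ezard analysis identifies $\cC^{\flat}$ with the component on which the JH constituent $\sigma^{\flat}$ of $\ovl{\sigma(\tau)}$ shows up and $\cC$ with the component on which $\sigma$ shows up. The patched module $M_\infty(\sigma(\tau))$ is maximal Cohen--Macaulay over $R^{\eta,\tau}_{\rhobar_p}$, and modularity of $\sigma^{\flat}$ shows $\cC^{\flat}$ is in its support. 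What remains is to show that $\cC$ is also in the support, since then transversality forces the reduction $M_\infty(\sigma(\tau))/p$ to have a summand supported at the intersection, giving $M_\infty(\sigma)\neq 0$ and hence $\sigma\in W(\rbar)$; iterating over all $\sigma\in W^g_{\leq \un{1}}(\rbar_p)$ finishes the proof.

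\textbf{Main obstacle.} The delicate point is the last step: ensuring that the patched module is \emph{not} supported only on the extremal component $\cC^{\flat}$. The plan is to use classical local-global compatibility to match the generic ranks of $M_\infty(\sigma(\tau))[1/p]$ on the two generic fibres of $\cC^{\flat}$ and $\cC$, since each $\Qpbar$-point of either component gives rise, automorphically, to the same inertial type and thus to the same Hecke isotypic space. Combined with the formally smooth extremal deformation rings of \cite{OBW} (which give unconditional faithfulness on the extremal component) and the Cohen--Macaulay structure of the patched module, this forces the rank on $\cC$ to match that on $\cC^{\flat}$ and hence be positive. This component-matching argument is the genuinely new input; it is made possible precisely by the fact that the colength one deformation ring has the clean presentation $\cO[\![X,Y]\!]/(XY-p)$ provided by Theorem~\ref{thm:intro:main}, without which the geometry of $\cC^{\flat}\cup \cC$ would be too complicated to exploit.
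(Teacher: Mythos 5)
There is a genuine gap, and it begins with a misreading of the local geometry. By Theorem \ref{thm: main deformation ring}, $R^{\eta,\tau}_{\rbar_p}$ is formally smooth over $\widehat{\bigotimes}_{\cO}\,\cO[\![X,Y]\!]/(XY-p)$, and $\cO[\![X,Y]\!]/(XY-p)$ is an integral \emph{domain}: $\Spec R^{\eta,\tau}_{\rbar_p}$ is irreducible, and the two components you describe are components of its \emph{special fibre} (corresponding to the two Serre weights in $W^g(\rhobar)\cap \JH(\ovl{\sigma(\tau)})$ contributed by each colength-one embedding), not of the deformation space itself. Consequently the ``delicate point'' you isolate --- showing the patched module is supported on a second irreducible component $\cC$ of $\Spec R^{\eta,\tau}_{\rbar_p}$ --- does not arise: once $M_\infty(\sigma(\tau)^\circ)$ is nonzero (which follows from modularity of the extremal constituent via \cite{OBW}), maximal Cohen--Macaulayness over a geometrically integral ring forces full support, hence full support on the special fibre, hence support on $\cC_{\sigma}(\rhobar)$. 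The irreducibility is precisely what the computation of the deformation ring buys. Moreover, the substitute argument you propose for your (spurious) obstacle --- matching generic ranks on two components via ``classical local-global compatibility, since each $\ovl{\Q}_p$-point gives the same inertial type'' --- is not valid even in settings where the deformation space genuinely is reducible: equality of inertial types at points of two components does not transfer nonvanishing of a patched module from one component to the other; that transfer is exactly the content of modularity lifting across components and cannot be asserted for free.

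Two further points. First, $W^g_{\leq \un{1}}(\rbar_p)$ allows colength one at \emph{every} embedding $j\in\cJ_p$, so the total defect $\sum_j(\ell(t_\eta)-\ell(\tld{w}^{(j)}))$ can be as large as $\#\cJ_p$; a single step at one distinguished embedding $v_0$ only reaches weights of total defect one, and your ``iterate over all $\sigma$'' does not explain how to chain steps (the intermediate weights need not be reachable from an extremal weight by types with colength one at a single embedding). The paper instead inducts on the total defect $\delta_{\rhobar}(\sigma)$: for each $\sigma$ it chooses a specialization $\rhobar\leadsto\rhobar^{\speci}$ and the type $\tau=\tau_{\rhobar^{\speci}}(\tld{w}_{\rhobar^{\speci}}(\sigma))$, whose shape has colength one at exactly the embeddings where $\sigma$ is non-extremal, verifies $\tld{w}(\rhobar,\tau)=\tld{w}(\rhobar^{\speci},\tau)$ and $W^g(\rhobar)\cap\JH(\ovl{\sigma(\tau)})=W^?(\rhobar^{\speci})\cap\JH(\ovl{\sigma(\tau)})$, and then compares the cycle of $M_\infty(\ovl{\sigma}(\tau)^\circ)$ (whose support is the full special fibre by the integrality argument above) with the sum of the cycles of its Jordan--H\"older constituents, all of which except $\sigma$ have strictly smaller defect and are controlled by the inductive hypothesis; this isolates $\cC_\sigma$ in the support of $M_\infty(\sigma)$. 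Second, you would also need the identification of the special-fibre components of $R^{\eta,\tau}_{\rbar_p}$ with the cycles $\cC_{\sigma'}$ of the Emerton--Gee stack (\cite[Theorem 7.4.2]{MLM}) rather than an unspecified ``Breuil--M\'ezard analysis''; this is where the hypothesis that $\sigma$ be a \emph{geometric} weight of defect at most one, i.e.\ $\rhobar\in\cC_\sigma(\F)$, actually enters.
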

Compared to \cite[Theorem 5.5.5]{OBW}, Theorem \ref{thm:main:GA:intro} assumes that $F^+$ is unramified above $p$, but it gives the modularity of defect one weights.
(For $\GL_3$, this is sufficient to prove the generic Serre weight conjecture.)

\subsection{Acknowledgements}
Part of the work was carried out during a visit at the Universit\`a degli Studi di Padova (2019), which we would like to heartily thank for the excellent working conditions which provided to us.

D.L. was supported by the National Science Foundation under agreements Nos.~DMS-1128155 and DMS-1703182 and an AMS-Simons travel grant.
B.LH. acknowledges support from the National Science Foundation under grant Nos.~DMS-1128155, DMS-1802037 and the Alfred P. Sloan Foundation.
S.M. was supported by the ANR-18-CE40-0026 (CLap CLap) and the Institut Universitaire de France.
C.P. was supported by Samsung Science and Technology Foundation under Project Number~SSTF-BA2001-02.

\subsection{Notation}

For a field $K$, we denote by $\ovl{K}$ a fixed separable closure of $K$ and let $G_K \defeq \Gal(\ovl{K}/K)$.
If $K$ is defined as a subfield of an algebraically closed field, then we set $\ovl{K}$ to be this field.

If $K$ is a nonarchimedean local field, we let $I_K \subset G_K$ denote the inertial subgroup and $W_K \subset G_K$ denote the Weil group.
We fix a prime $p\in\Z_{>0}$.
Let $E \subset \ovl{\Q}_p$ be a subfield which is finite-dimensional over $\Q_p$.
We write $\cO$ to denote its ring of integers, fix an uniformizer $\varpi\in \cO$ and let $\F$ denote the residue field of $E$.
We will assume throughout that $E$ is sufficiently large.

\subsubsection{Reductive groups}
\label{sec:not:RG}
Let $G$ denote a split connected reductive group (over some ring) together with a Borel $B$, a maximal split torus $T \subset B$, and $Z \subset T$ the center of $G$.
Let $d = \dim G - \dim B$.
When $G$ is a product of copies of $\GL_n$, we will take $B$ to be upper triangular Borel and $T$ the diagonal torus.
Let $\Phi^{+} \subset \Phi$ (resp. $\Phi^{\vee, +} \subset \Phi^{\vee}$) denote the subset of positive roots (resp.~positive coroots) in the set of roots (resp.~coroots) for $(G, B, T)$.
We use the notation $\alpha > 0$ (resp.~$\alpha < 0$) for a positive (resp.~negative) root $\alpha\in \Phi$.
Let $\Delta$ (resp.~$\Delta^{\vee}$) be the set of simple roots (resp.~coroots).
Let $X^*(T)$ be the group of characters of $T$, and set $X^0(T)$ to be the subgroup consisting of characters $\lambda\in X^*(T)$ such that $\langle\lambda,\alpha^\vee\rangle=0$ for all $\alpha^\vee\in \Delta^{\vee}$.
Let  $W(G)$ denote the Weyl group of $(G,T)$.  Let $w_0$ denote the longest element of $W(G)$.
We sometimes write $W$ for $W(G)$ when there is no chance for confusion.
Let $W_a$ (resp.~$\tld{W}$) denote the affine Weyl group and extended affine Weyl group
\[
W_a = \Lambda_R \rtimes W(G)\quad\mbox{and} \quad \tld{W} = X^*(T) \rtimes W(G)
\]
for $G$, respectively.
We use $t_{\nu} \in \tld{W}$ to denote the image of $\nu \in X^*(T)$.

The Weyl groups $W(G)$, $\tld{W}$, and $W_a$ act naturally on $X^*(T)$.
If $A$ is any ring, then the above Weyl groups act naturally on $X^*(T)\otimes_{\Z} A$ by extension of scalars. Let $M$ be a free $\Z$-module of finite rank (e.g. $M=X^*(T)$).
The duality pairing between $M$ and its $\Z$-linear dual $M^*$ will be denoted by $\langle \ ,\,\rangle$.
If $A$ is any ring, the pairing $\langle \ ,\,\rangle$ extends by $A$-linearity to a pairing between $M\otimes_{\Z}A$ and $M^*\otimes_{\Z}A$, and by an abuse of notation it will be denoted with the same symbol $\langle \ ,\,\rangle$.

We write $G^\vee = G^\vee_{/\Z}$ for the split connected reductive group over $\Z$ determined by the root datum $(X_*(T),X^*(T), \Phi^\vee,\Phi)$.
This defines a maximal split torus $T^\vee\subseteq G^\vee$ such that we have canonical identifications $X^*(T^\vee)\cong X_*(T)$ and $X_*(T^\vee)\cong X^*(T)$.

Let $V\defeq X^*(T)\otimes_{\Z}\R$.
For $(\alpha,k)\in \Phi \times \Z$, we have the root hyperplane $H_{\alpha,k}\defeq \{x \in V \mid \langle\lambda,\alpha^\vee\rangle=k\}$.
An \emph{alcove} is a connected component of $V \setminus\ \big(\bigcup_{(\alpha,n)}H_{\alpha,n}\big)$, and we denote by $\cA$ the set of alcoves. We say that an alcove $A$ is \emph{restricted} if $0<\langle\lambda,\alpha^\vee\rangle<1$ for all $\alpha\in \Delta$ and $\lambda\in A$.
We let $A_0$ denote the (dominant) base alcove, i.e.~the set of $\lambda\in X^*(T)\otimes_{\Z}\R$ such that $0<\langle\lambda,\alpha^\vee\rangle<1$ for all $\alpha\in \Phi^+$.
Recall that $\tld{W}$ acts transitively on the set of alcoves, and $\tld{W}\cong\tld{W}_a\rtimes \Omega$ where $\Omega$ is the stabilizer of $A_0$.
We define
$$
\tld{W}^+\defeq\{\tld{w}\in \tld{W}\mid \tld{w}(A_0) \textrm{ is dominant}\}\quad \mbox{and}\quad
\tld{W}^+_1\defeq\{\tld{w}\in \tld{W}^+\mid \tld{w}(A_0) \textrm{ is restricted}\}.
$$
We fix an element $\eta\in X^*(T)$ such that $\langle \eta,\alpha^\vee\rangle = 1$ for all positive simple roots $\alpha$ and let $\tld{w}_h$ be $w_0 t_{-\eta}\in \tld{W}^+_1$.

When $G = \GL_n$, we fix an isomorphism $X^*(T) \cong \Z^n$ in the standard way, where the standard $i$-th basis element $\eps_i\defeq(0,\ldots, 1,\ldots, 0)$ (with the $1$ in the $i$-th position) of the right-hand side corresponds to extracting the $i$-th diagonal entry of a diagonal matrix.
In particular, we can write any root $\beta\in \Phi$ as $\beta=\eps_i-\eps_j$ for uniquely chosen $1\leq i,j\leq n$, $i\neq j$.

Given a finite set $\cJ$ and an isomorphism $G\stackrel{\sim}{\ra}\GL_n^{\cJ}$ we use superscripts in the notations above, e.g.~${\Phi}^{+,\cJ}\subset{\Phi}^{\cJ}$, ${\Delta}^{\cJ}$, $X^*({T})^{\cJ}$, ${W}^{\cJ}$, etc., where now $\Phi^+, \Phi, \Delta, X^*(T), {W}$, etc.~are relative to $\GL_n$.
In order not to overload notations, we do not use underlined notations for the elements of ${\Phi}^{\cJ}$, $X^*({T})^{\cJ}$, ${W}^{\cJ}$, etc., so that for instance a root $\alpha\in{\Phi}^{\cJ}$ is in fact a collection of roots $(\alpha^{(j)})_{j\in\cJ}$ where each $\alpha^{(j)}$ is a root of $\GL_n$.
Finally, we take $\eta \in X^*({T})^{\cJ}$ to correspond to the element $(n-1, n-2, \ldots, 0)_{j\in\cJ} \in (\Z^n)^{\cJ}$ in the identification above.
When an element $j\in\cJ$ is fixed, we will abuse notation and will use the same symbol $\eta$ to denote the element which corresponds to the tuple $(n-1,\dots,1,0)$ at $j$.

We let $F^+_p$ be a finite unramified \'etale $\Q_p$-algebra so that $F^+_p$ is isomorphic to a product $\prod_{S_p} F^+_{v}$ over a finite set $S_p$ where, for each $v\in S_p$, $F_{v}^+$ is a finite unramified extension of $\Q_p$.
For each $v\in S_p$ let $\cO_{F^+_{v}} $ be the ring of integers of $ F^+_{v}$, $k_{v}$ the residue field and let $\cO_p$ (resp.~$k_p$) be the product $\prod_{v\in S_p} \cO_{F_v^+}$ (resp.~$\prod_{v\in S_p} k_{v}$).
(This will be used in global applications, where $S_p$ will be a finite set of places dividing $p$ of a number field $F^+$.)

If $G$ is a split connected reductive group over $\F_p$, with Borel $B$ and maximal torus $T$, we let $G_0 \defeq \Res_{k_p/\F_p} G_{/k_p}$, and similarly define $B_0$, $T_0$. %
We will always assume that $\F$ contains the image of any ring homomorphism $k_p \ra \ovl{\F}_p$ so that we can and do fix an isomorphism $\big(G_0\times_{\Spec \Fp}\Spec \F\big)\stackrel{\sim}{\rightarrow}(G\times_{\Spec \Fp}\Spec\F)^{\cJ_p}$ where $\cJ_p$ denotes the set of ring homomorphisms $k_p \ra \F$.
For notational convenience, we will write $\un{G}\defeq G_0\times_{\Spec \Fp}\Spec \F$, and similarly for $\un{B}$, $\un{T}$.
The notations $\un{W}$, $\un{\tld{W}}$, $\un{\tld{W}}^+$, $\un{\tld{W}}^{+}_1$, etc.~as well as the identifications $\un{W}\stackrel{\sim}{\ra}W^{\cJ_p}$, $\un{\tld{W}}\stackrel{\sim}{\ra}\tld{W}^{\cJ_p}$, $\un{\tld{W}}^+\stackrel{\sim}{\ra}(\tld{W}^{+})^{\cJ_p}$, $\un{\tld{W}}^{+}_1\stackrel{\sim}{\ra}(\tld{W}^{+}_1)^{\cJ_p}$, etc.~should be clear.

\subsubsection{Galois theory}
\label{sec:not:GT}
Let $K$ be a finite extension of $\Qp$, with residue field $k$ of degree $f$ over $\Fp$.
We assume that $K/\Qp$ is unramified and write $W(k)$ for the ring of Witt vectors, which is also the ring of integers of $K$.
The arithmetic Frobenius automorphism on $W(k)$, acting as raising to $p$-th power on the residue field will be denoted by $\phz$.
We fix an embedding $\sigma_0$ of $K$ into $E$ (equivalently an embedding $k$ into $\F$) and define $\sigma_j = \sigma_0 \circ \phz^{-j}$.
This gives an identification between $\cJ=\Hom(k,\F)$ and $\Z/f\Z$.

We normalize Artin's reciprocity map $\mathrm{Art}_{K}: K\s\ra W_{K}^{\mathrm{ab}}$ so that uniformizers are sent to geometric Frobenius elements. We fix once and for all a sequence $ (p_{m})_{m\in \N}\in \ovl{K}^{\N}$ satisfying $p_{m+1}^{p}=p_{m}$, $p_{0}\defeq -p\in K$ and let $K_\infty$ be $\underset{m\in\N}{\bigcup}K(p_{m})$.

Given an element $\pi_1 \defeq (-p)^{\frac{1}{p^{f}-1}}\in \overline{K}$ we have a  character $\omega_{K}:I_K \ra W(k)^{\times}$ defined by the condition $g(\pi_1)=\omega_{K}(g)\pi_1$.
Using our choice of embedding $\sigma_0$ this gives a fundamental character of niveau $f$
\[
\omega_{f}:= \sigma_0 \circ \omega_{K}:I_K \ra \cO^{\times}.
\]

Let $\rho: G_K\rightarrow \GL_n(E)$ be a $p$-adic, de Rham Galois representation.
For $\sigma: K\iarrow E$, we define $\mathrm{HT}_\sigma(\rho)$ to be the multiset of $\sigma$-labeled Hodge-Tate weights of $\rho$, i.e.~the set of integers $i$ such that $\dim_E\big(\rho\otimes_{\sigma,K}\C_p(-i)\big)^{G_K}\neq 0$ (with the usual notation for Tate twists).
In particular, the cyclotomic character $\eps$ has Hodge--Tate weights 1 for all embedding $\sigma:K\iarrow E$.
For $\mu=(\mu^{(j)})_j\in X^*(\un{T})$ we say that $\rho$ has Hodge--Tate weighs $\mu$ if for all $j\in\cJ$
\[
\mathrm{HT}_{\sigma_j}(\rho)=\{\mu_{1}^{(j)},\mu_{2}^{(j)},\dots,\mu_{n}^{(j)}\}.
\]
The \emph{inertial type} of $\rho$ is the isomorphism class of $\mathrm{WD}(\rho)|_{I_K}$, where $\mathrm{WD}(\rho)$ is the Weil--Deligne representation attached to $\rho$ as in \cite{CDT}, Appendix B.1 (in particular, $\rho\mapsto\mathrm{WD}(\rho)$ is \emph{covariant}).
An inertial type is a morphism $\tau: I_K\ra \GL_n(E)$ with open kernel and which extends to the Weil group $W_K$ of $G_K$.
We say that $\rho$ has type $(\mu,\tau)$ if $\rho$ has Hodge--Tate weights $\mu$ and inertial type given by (the isomorphism class of) $\tau$.

\subsubsection{Miscellaneous}
Finally, $\delta_P$ denotes the Kronecker delta function on the condition $P$. We also use $\delta$ for the defect function in \S\ref{subsec: App to patching}. This shall cause no confusion.

\section{Preliminaries}

\subsection{Affine Weyl groups, tame inertial types, Serre weights}

\subsubsection{Affine Weyl group}

We collect here the necessary background to give a classification of colength one elements in the admissible set (Proposition \ref{prop:classification, colength 1 dual shapes}).

Recall from \S \ref{sec:not:RG} that $G$ is a split reductive group with split maximal torus $T$.
We write $W$ for the Weyl group associated to $(G,T)$ and $V\defeq X^*(T) \otimes \R \cong X_*(T^{\vee}) \otimes \R$ for the apartment of $(G, T)$ on which $\tld{W}\defeq X^*(T) \rtimes W$ acts.
We write $\cC_0$ for the dominant Weyl chamber in $V$.

Recall that $\cA$ denotes the set of alcoves of $X^*(T) \otimes \R$ and that $A_0 \in \cA$ denotes the dominant base alcove.
We let $\uparrow$ denote the upper arrow ordering on alcoves as defined in \cite[\S II.6.5]{RAGS} which induces the ordering $\uparrow$ on $W_a$ via the bijection $W_a \risom \cA$ given by $\tld{w} \mapsto \tld{w}(A_0)$.
The dominant base alcove $A_0$ defines a set of simple reflections in $W_a$ and thus a Coxeter length function on $W_a$ denoted $\ell(-)$ and a Bruhat order on $W_a$ denoted by $\leq$.
Given $\lambda \in X^*(T)$ we consider the set of $\lambda$-admissible elements of $\tld{W}$:
\begin{equation}
\label{eq:adm:lbd}
\Adm(\lambda) \defeq  \left\{ \tld{w} \in \tld{W}\mid \tld{w} \leq t_{w(\lambda)} \text{ for some } w \in W \right\}.
\end{equation}

If $\Omega \subset \tld{W}$ is the stabilizer of the base alcove, then $\tld{W} = W_a \rtimes \Omega$ and so $\tld{W}$ inherits a Bruhat order in the standard way: For $\tld{w}_1, \tld{w}_2\in W_a$ and $\delta\in \Omega$, $\tld{w}_1\delta\leq \tld{w}_2\delta$ if and only if $\tld{w}_1\leq \tld{w}_2$ , and elements in different right $W_a$-cosets are incomparable.
We extend $\ell(-)$ to $\tld{W}$ by letting $\ell(\tld{w}\delta)\defeq \ell(\tld{w})$ for any $\tld{w}\in W_a$, $\delta\in \Omega$.

Let $(\tld{W}^\vee,\leq)$ be the following partially ordered group: $\tld{W}^\vee$ is identified with $\tld{W}$ as a group, and $\ell(-)$ and $\leq$ are defined with respect to the \emph{antidominant} base alcove.
If $\tld{w}=t_\nu w\in \tld{W}$ with $w\in W$ and $\nu\in X^*(T)$ we define $\tld{w}^*\defeq w^{-1}t_{\nu}\in \tld{W}^\vee$.
(The assignement $\tld{w}\mapsto\tld{w}^*$ defines a bijection which preserves length and Bruhat order, see \cite[Lemma 2.1.3]{LLL}; we also denote the inverse bijection by the same symbol $\tld{w}\mapsto\tld{w}^*$.)
Given $\lambda \in X^*(T)$ we define the set $\Adm^\vee(\lambda)$ by replacing $\tld{W}$ by $\tld{W}^\vee$ in the right hand side of \eqref{eq:adm:lbd}.

We also recall that, given $m\in\Z$ and $\alpha\in\Phi$, the $m$-th $\alpha$-strip is the subset of $V$ defined by
\[
\{x\in V \mid m<\langle x,\alpha^\vee\rangle<m+1\}.
\]
Finally, we say that $\tld{w}\in\tld{W}$ is \emph{regular} if it is in the sense of \cite[Definition~2.1.3]{MLM}.

\begin{prop}
\label{prop:classification, colength 1 dual shapes} %
Suppose $\tld{w}\in \Adm(\eta)$ such that $\ell(\tld{w})=\ell(t_{\eta})-1$. Then one of the following holds:
\begin{enumerate}
\item
\label{it:colength:1:1}
$\tld{w}=w t_{\eta}s_\alpha w^{-1}$ where $w\in W$ and $\alpha>0$ a positive root such that $w(\alpha)>0$, and there are \emph{no} decompositions $\alpha=\beta_1+\beta_2$ with $\beta_i>0$ such that $w(\beta_i)>0$;
\item
\label{it:colength:1:2}
$\tld{w}=w t_{\eta-\alpha}s_\alpha w^{-1}$ where $w\in W$, $\alpha\in \Phi^+\setminus \Delta$ such that $w(\alpha)<0$, and for any decomposition $\alpha=\beta_1+\beta_2$ with $\beta_i>0$, we have $w(\beta_i)<0$.
\end{enumerate}
\end{prop}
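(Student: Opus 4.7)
The plan is to exploit the Bruhat-order structure of the admissible set. The maximal elements of $\Adm(\eta)$ are precisely the translations $t_{w(\eta)}$ for $w \in W$, all of length $\ell(t_\eta)$, so any $\tld w \in \Adm(\eta)$ with $\ell(\tld w) = \ell(t_\eta) - 1$ is a Bruhat cover of some such $t_{w(\eta)}$. By the strong exchange property (extended to $\tld W = W_a \rtimes \Omega$), it has the form $\tld w = t_{w(\eta)} \cdot s_{\beta, k}$ for some affine reflection with $\beta \in \Phi$, $k \in \Z$. After normalizing $\alpha := w^{-1}(\beta) \in \Phi^+$ via the identification $s_{\beta,k} = s_{-\beta,-k}$ (and adjusting $k$ accordingly), this reads
\[
\tld w \;=\; w\bigl(t_{\eta + k\alpha}\, s_\alpha\bigr) w^{-1},
\]
and the two cases of the proposition correspond precisely to $k = 0$ (Case~\eqref{it:colength:1:1}) and $k = -1$ (Case~\eqref{it:colength:1:2}). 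The task is thus to determine when the length drops by exactly one, and to show that $k$ is forced into $\{0,-1\}$.

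For the length computation I would apply the Iwahori--Matsumoto formula $\ell(t_\mu v') = \sum_{\gamma > 0}\bigl|\langle \mu,\gamma^\vee\rangle - \delta_{(v')^{-1}(\gamma)<0}\bigr|$ to $\mu = w(\eta) + k w(\alpha)$ and $v' = s_{w(\alpha)}$, and compare termwise with $\ell(t_{w(\eta)}) = \sum_{\gamma > 0}|\langle w(\eta),\gamma^\vee\rangle|$. Since $\eta$ is regular dominant, $\operatorname{sign}\langle w(\eta),\gamma^\vee\rangle = \operatorname{sign}(w^{-1}(\gamma))$ for $\gamma \in \Phi^+$, so each absolute value can be unfolded explicitly. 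After cancellation the length difference collapses to a signed sum indexed by the inversion set $N(s_{w(\alpha)}) = \{\gamma > 0 : s_{w(\alpha)}(\gamma) < 0\}$, weighted by $\operatorname{sign}(w^{-1}(\gamma))$ and corrected by the $k\, w(\alpha)$ translation. A direct analysis of this signed sum shows that it equals $-1$ only when $k \in \{0,-1\}$.

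To extract the ``no decomposition'' conditions I would use the natural bijection between $N(s_{w(\alpha)}) \setminus \{w(\alpha)\}$ and decompositions $w(\alpha) = \gamma_1 + \gamma_2$ into positive roots, via $\gamma \mapsto (\gamma, w(\alpha)-\gamma)$; under $w^{-1}$ this matches decompositions $\alpha = \beta_1 + \beta_2$ with $\beta_i := w^{-1}(\gamma_i)$, and $\beta_i \in \Phi^+$ corresponds exactly to $w(\beta_i) \in \Phi^+$. In Case~\eqref{it:colength:1:1} ($k = 0$) the extremal root $\gamma = w(\alpha)$ contributes $\operatorname{sign}(\alpha) = +1$, forcing $w(\alpha) > 0$ as required; the signed count then equals $+1$ iff each pair has \emph{mixed} $w^{-1}$-signs, i.e.\ no decomposition $\alpha = \beta_1+\beta_2$ into positive roots has both $w(\beta_i) > 0$. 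Case~\eqref{it:colength:1:2} ($k = -1$) is entirely dual: the extremal contribution forces $w(\alpha) < 0$, the signed balance forces the dual decomposition condition, and $\alpha \notin \Delta$ is automatic since for $\alpha$ simple $N(s_{w(\alpha)})$ consists only of $\{w(\alpha)\}$ and so $k=-1$ cannot achieve the required length drop.

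The main obstacle I anticipate is the careful sign bookkeeping in the length comparison: the extremal root $\gamma = w(\alpha)$ must be handled separately from the pair structure, the translation correction from $k\,w(\alpha)$ interacts non-trivially with the signs of $\langle w(\eta),\gamma^\vee\rangle$ for $\gamma \notin N(s_{w(\alpha)})$ (and one must check these contribute zero to the net difference), and finally the pairing identification must correctly match decompositions of $w(\alpha)$ with decompositions of $\alpha$ via the action of $w^{-1}$.
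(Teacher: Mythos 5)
Your skeleton---every colength one $\tld{w}$ is a Bruhat cover of some $t_{w(\eta)}$, hence equals $t_{w(\eta)}$ times an affine reflection, hence $\tld{w}=w\,t_{\eta+k\alpha}s_\alpha w^{-1}$---is correct, and it is a genuinely different route from the paper's, which first passes to the dominant position (showing $w^{-1}\tld{w}$ is dominant and $w\cdot(w^{-1}\tld{w})$ is reduced), invokes the fact that two dominant elements of adjacent lengths differ by an affine reflection, pins down the relevant $\alpha$-strip geometrically, and only then does a $\beta$-height count for case \eqref{it:colength:1:2}. However, the Iwahori--Matsumoto computation you sketch would not go through as described; the problems are concentrated in the $k=-1$ analysis.

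First, the roots outside $N(s_{w(\alpha)})$ do \emph{not} ``contribute zero to the net difference'' when $k=-1$. A pair $\{\gamma, s_{w(\alpha)}(\gamma)\}$ of positive roots outside the inversion set contributes $-2$ to $\ell(\tld{w})-\ell(t_{w(\eta)})$ exactly when $w^{-1}(\gamma)$ and $w^{-1}(s_{w(\alpha)}(\gamma))$ have opposite signs, and such pairs are in bijection with decompositions $\alpha=\beta_1+\beta_2$ into positive roots with $w(\beta_1)>0>w(\beta_2)$; meanwhile, for $w(\alpha)<0$ and $k=-1$ the pairs \emph{inside} $N(s_{w(\alpha)})$ all contribute $0$. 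So in case \eqref{it:colength:1:2} the decomposition condition is carried entirely by the roots your scheme discards, not by the inversion set. (Your case \eqref{it:colength:1:1} analysis is fine, since for $k=0$ the non-inversion roots genuinely drop out.) Second, the claim that ``for $\alpha$ simple $N(s_{w(\alpha)})$ consists only of $\{w(\alpha)\}$, so $k=-1$ cannot achieve the required length drop'' is false twice over: $w(\alpha)$ need not be simple when $\alpha$ is, and for $\alpha$ simple with $w(\alpha)<0$ the element $wt_{\eta-\alpha}s_\alpha w^{-1}$ \emph{does} have colength one---it equals $(ws_\alpha)t_\eta s_\alpha (ws_\alpha)^{-1}$ because $s_\alpha(\eta)=\eta-\alpha$, i.e.\ it is a case \eqref{it:colength:1:1} element in another presentation. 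This exposes a real gap: a colength one element admits several presentations $(w,\alpha,k)$, and the one produced by the covering need not satisfy either list of conditions. For instance $t_{(1,1,1)}s_{\eps_1-\eps_3}\in\Adm(\eta)$ for $\GL_3$ has length $\ell(t_\eta)-1=3$ and arises as a cover of $t_\eta$ with $w=1$, $k=-1$, $w(\alpha)=\alpha>0$; only after replacing $w$ by $w_0$ does it take the form \eqref{it:colength:1:2}. You must either show that \emph{some} presentation has one of the two required forms, or normalize the presentation at the outset (as the paper does by forcing $w^{-1}\tld{w}$ dominant). Finally, the exclusion of $k\le -2$ and $k\ge 1$ is asserted but not argued, and given the sign subtleties above it cannot be waved through.
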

We say that a colength one element $\tld{w}\in \Adm(\eta)$ is \emph{of the first form} (resp.~of the \emph{second form}) is it is as in item \eqref{it:colength:1:1} (resp.~as in item \eqref{it:colength:1:2}) of Proposition \ref{prop:classification, colength 1 dual shapes}.
Recalling \cite[Definition 2.1.3]{MLM} we note that a colength one element $\tld{w}\in \Adm(\eta)$ is irregular exactly when it is of the first form and moreover the root $\alpha$ appearing in \eqref{it:colength:1:1} is a simple root.

\begin{rmk}
\label{rmk:expl:w}
If $\alpha=\eps_i-\eps_j\in\Phi^+\setminus\Delta$ then
\begin{enumerate}
\item
\label{it:rmk:expl:w:1}
the condition in item \eqref{it:colength:1:1} means that $w$ preserves the order of $i$ and $j$, and $w$ maps no element $k\in (i,j)$ into an element in $(w(i),w(j))$;
\item
\label{it:rmk:expl:w:2}
the condition in item \eqref{it:colength:1:2} means that $w$ reverses the order of $i$ and $j$, and maps any $k\in (i,j)$ to an element in $(w(j),w(i))$.
\end{enumerate}
\end{rmk}

\begin{proof}
We assume $\tld{w}(A_0)$ is in chamber $w(\cC_0)$. Then there is a gallery from $A_0$ to $\tld{w}(A_0)$ in the $w$-positive direction (cf.~\cite[\S 2, Definition 5.2]{HC}), hence \cite[Corollary 4.4]{HH} shows that $\tld{w}<wt_{\eta}w^{-1}$. Thus $\tld{u}\defeq w^{-1}\tld{w}$ is dominant so that  $w\tld{u}$ is a reduced expression for $\tld{w}$ by \cite[Lemma 2.2.1]{OBW} (cf.~Definition 2.1.2 in \emph{loc.~cit}.~for the notion of reduced expression).
We conclude that $\tld{u}\leq t_{\eta}w^{-1}$ and $\ell(\tld{u})=\ell(t_{\eta}w^{-1})-1$. Since $\tld{u}$ and $t_{\eta}w^{-1}$ are both dominant and their lengths differ by $1$ they must differ by an affine reflection in direction $\alpha$ for some positive root $\alpha>0$ by \cite[Corollary A.1.2]{GHS}. We claim that the $\alpha$-strip containing $\tld{u}$ is the lower neighbor of the $\alpha$-strip containing $t_{\eta}w^{-1}$.

Suppose the contrary. Then for each vertex $v$ of $A_0$, the line segment joining $\tld{u}(v)$ and $ t_{\eta}w^{-1}(v)$ lies in the dominant Weyl chamber and contains $t_\alpha\tld{u}(v)$.
This shows that $t_{\alpha}\tld{u}(A_0)$ is dominant and $\tld{u} \uparrow t_\alpha\tld{u}\uparrow t_{\eta}w^{-1}$. \cite[Theorem 4.3]{wang} implies that $\tld{u}<t_{\alpha}\tld{u}<t_\eta w^{-1}$ which contradicts $\ell(\tld{u})=\ell(t_{\eta}w^{-1})-1$.

There are two cases:
\begin{itemize}
\item
$\tld{u}(A_0)$ and $t_\eta w^{-1}(A_0)$ share the vertex $\eta$.
In this case $\tld{u}=t_\eta s_{\alpha}w^{-1}$.
The conditions $\tld{u}\leq t_\eta w^{-1}$ and $\ell(\tld{u})=\ell(t_\eta w^{-1})-1$ are equivalent to $\ell(s_\alpha w^{-1})=\ell(w^{-1})+1$, equivalently $\ell(ws_\alpha)=\ell(w)+1$. But this condition translates exactly to the condition in item \eqref{it:colength:1:1}.
\item $\tld{u}(A_0)$ has $\eta-\alpha$ as a vertex. In this case we must have $\tld{u}=t_{\eta-\alpha}s_\alpha w^{-1}$.
To see the condition on $w$, we note that for each positive root $\beta$ the difference between the $\beta$-heights of $t_{\eta-\alpha}s_\alpha w^{-1}$ and $t_\eta w^{-1}$ is
\[ \langle \eta, \beta^\vee \rangle +\delta_{w(\beta)>0}-1 - (\langle \eta, \beta^\vee \rangle-\langle \alpha,\beta^\vee\rangle +\delta_{ws_\alpha(\beta)>0}-1)=\langle \alpha, \beta^\vee \rangle +\delta_{w(\beta)>0}-\delta_{ws_\alpha(\beta)>0}.\]
Thus the colength $1$ condition becomes
\[\sum _{\beta \neq \alpha, \beta>0} -\delta_{w(\beta)>0}+\delta_{ws_\alpha(\beta)>0}=2\langle \eta,\alpha \rangle -2\]
which gives the condition in item \eqref{it:colength:1:2}.%
\end{itemize}
This completes the proof.
\end{proof}

\begin{lemma}
\label{lem:fact:reg}
Let $\tld{w}\in\Adm^{\mathrm{reg}}(\eta)$ satisfy $\ell(\tld{w})\geq\ell(t_\eta)-1$, and let $\tld{w}_2^{-1}w_0t_\nu \tld{w}_1$ be the unique up to $X^0(T)$ factorization of $\tld{w}$ with $\tld{w}_1,\tld{w}_2\in \tld{W}^+_1$ and $\nu \in X^+(T)$ $($see \cite[Proposition 2.1.5]{MLM}$)$.
Suppose that $\tld{w}=\tld{x}_2^{-1}s\tld{x}_1$ with $\tld{x}_1,\tld{x}_2\in \tld{W}^+$, $s\in W$, and that $\tld{x}_1\uparrow\tld{x}\uparrow\tld{w}_h^{-1}\tld{x}_2$ with $\tld{x} \in \tld{W}^+_1$.
Then $s=w_0$ and either $\tld{x} = \tld{x}_1 \in \tld{w}_1X^0(T)$ or $\tld{x} = \tld{w}_h^{-1} \tld{x}_2 \in \tld{w}_h^{-1}\tld{w}_2X^0(T)$.
If moreover $\ell(\tld{w})=\ell(t_\eta)$ we further have $\tld{x} = \tld{x}_1=\tld{w}_h^{-1} \tld{x}_2$ and $\nu\in X^0(T)$.
\end{lemma}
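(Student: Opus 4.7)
My plan is to convert the upper-arrow condition on $\tld{x}$ into a Bruhat-length inequality and then to exploit the tight length budget enforced by $\tld{w}\in\Adm^{\mathrm{reg}}(\eta)$ of colength at most one. First I would establish the length bookkeeping identity
\[
\ell(\tld{w}_h^{-1}\tld{x}_2)-\ell(\tld{x}_1)=\bigl(\ell(t_\eta)-\ell(\tld{w})\bigr)-\bigl(\ell(w_0)-\ell(s)\bigr),
\]
using that $\tld{x}_1,\tld{x}_2\in\tld{W}^+$ makes multiplications length-additive: since $\tld{x}_2(A_0)$ is dominant, concatenation of a reduced walk for $\tld{x}_2$ with a reduced walk for $\tld{w}_h^{-1}$ stays reduced, so $\ell(\tld{w}_h^{-1}\tld{x}_2)=\ell(\tld{w}_h)+\ell(\tld{x}_2)=\ell(t_\eta)-\ell(w_0)+\ell(\tld{x}_2)$; the analogous argument applied to $\tld{w}=\tld{x}_2^{-1}(s\tld{x}_1)$ gives $\ell(\tld{w})=\ell(s)+\ell(\tld{x}_1)-\ell(\tld{x}_2)$, and subtracting yields the identity. (These are the same type of dominance-plus-length-additivity arguments as those used in the proof of Proposition \ref{prop:classification, colength 1 dual shapes} via \cite[Corollary 4.4]{HH} and \cite[Lemma 2.2.1]{OBW}.)

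By \cite[Theorem 4.3]{wang}, the chain $\tld{x}_1\uparrow\tld{x}\uparrow\tld{w}_h^{-1}\tld{x}_2$ upgrades to $\tld{x}_1\leq\tld{x}\leq\tld{w}_h^{-1}\tld{x}_2$ in Bruhat order, so the left-hand side of the identity is nonnegative; combined with $\ell(t_\eta)-\ell(\tld{w})\leq 1$, this forces $\ell(s)\geq\ell(w_0)-1$. The case $\ell(s)=\ell(w_0)-1$ would collapse the interval $[\tld{x}_1,\tld{w}_h^{-1}\tld{x}_2]$ to a single element and produce a factorization of $\tld{w}$ incompatible with its regularity via the uniqueness clause of \cite[Proposition 2.1.5]{MLM}; this leaves $s=w_0$. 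Then the equality $\tld{x}_2^{-1}w_0\tld{x}_1=\tld{w}_2^{-1}w_0t_\nu\tld{w}_1$ combined with uniqueness up to $X^0(T)$ of the canonical factorization identifies $\tld{x}_1\in t_\nu\tld{w}_1 X^0(T)$ and $\tld{x}_2\in\tld{w}_2X^0(T)$, hence $\tld{w}_h^{-1}\tld{x}_2\in\tld{w}_h^{-1}\tld{w}_2 X^0(T)$.

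With these identifications the Bruhat gap between $\tld{x}_1$ and $\tld{w}_h^{-1}\tld{x}_2$ is exactly $\ell(t_\eta)-\ell(\tld{w})\in\{0,1\}$. To pin down $\tld{x}$ I would rely on the geometric observation that two elements of $\tld{W}^+_1$ that are distinct modulo $X^0(T)$ must be separated in upper-arrow order by strictly more than one crossing, since any upward alcove walk linking them must exit and re-enter the restricted dominant region and therefore cross at least one wall $H_{\alpha,1}$ for a simple root $\alpha$. Hence $\tld{x}$ coincides modulo $X^0(T)$ with $\tld{x}_1$ or with $\tld{w}_h^{-1}\tld{x}_2$, and the corresponding endpoint is automatically restricted. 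When moreover $\ell(\tld{w})=\ell(t_\eta)$ the gap is $0$, so $\tld{x}_1\equiv\tld{x}\equiv\tld{w}_h^{-1}\tld{x}_2\pmod{X^0(T)}$; rewriting this as $t_\nu\tld{w}_1\equiv\tld{w}_h^{-1}\tld{w}_2\pmod{X^0(T)}$ and reading off the $W$-components using $\tld{w}_1,\tld{w}_2\in\tld{W}^+_1$ forces $\nu\in X^0(T)$.

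The main obstacle in executing this plan is the sparseness statement for $\tld{W}^+_1$ in upper-arrow order in the paragraph above: it is a purely geometric statement about the restricted dominant alcove box and which reflections can link alcoves within it, and I expect to verify it by a direct analysis of the walls that border restricted alcoves, with the regularity of $\tld{w}$ used to exclude degenerate configurations where $\tld{x}$ sits on a wall $H_{\alpha,0}$ for a simple root.
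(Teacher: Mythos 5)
Your overall strategy (a length count against the colength budget, regularity to force $s=w_0$, uniqueness of the canonical factorization) is the same as the paper's, but several steps are wrong as stated. The central ``length bookkeeping identity'' is derived from two false formulas: $\ell(\tld{w}_h^{-1}\tld{x}_2)=\ell(\tld{w}_h)+\ell(\tld{x}_2)$ already fails for $\tld{x}_2=\tld{w}_h$ (the left side is $0$), and $\ell(\tld{w})=\ell(s)+\ell(\tld{x}_1)-\ell(\tld{x}_2)$ has the wrong sign --- length-additivity for $\tld{x}_2^{-1}s\tld{x}_1$ with $\tld{x}_1,\tld{x}_2\in\tld{W}^+$ reads $\ell(\tld{x}_2)+\ell(s)+\ell(\tld{x}_1)$, and your version is negative for $\tld{x}_1=s=e$, $\tld{x}_2=t_\eta$. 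The identity itself is not true in general (same test case: the left side is $+\ell(w_0)$, the right side is $-\ell(w_0)$). The correct bookkeeping, as in the paper, writes $\ell(t_\eta)=\ell(\tld{w}_h\tld{x})+\ell(w_0)+\ell(\tld{x})$ (\cite[Lemma 4.1.9]{LLL}), uses only the subadditivity $\ell(\tld{w})\le\ell(\tld{x}_2)+\ell(s)+\ell(\tld{x}_1)$, and so bounds the sum of the three nonnegative terms $\ell(\tld{w}_h\tld{x})-\ell(\tld{x}_2)$, $\ell(w_0)-\ell(s)$, $\ell(\tld{x})-\ell(\tld{x}_1)$ by $1$. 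Note also that your passage from $\tld{x}\uparrow\tld{w}_h^{-1}\tld{x}_2$ to $\tld{x}\le\tld{w}_h^{-1}\tld{x}_2$ via Wang's theorem is unjustified, since $\tld{w}_h^{-1}\tld{x}_2$ need not be dominant; one must transport through $\tld{w}_h$ to get $\tld{x}_2\le\tld{w}_h\tld{x}$ with $\tld{w}_h\tld{x}\in\tld{W}^+$.

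Three further gaps. The ``sparseness'' lemma you flag as your main obstacle is false: distinct elements of $\tld{W}^+_1$ can be adjacent in the $\uparrow$ order (already for $n=3$ the two restricted dominant alcoves are separated by a single wall-crossing), and it is also unnecessary, since once the three nonnegative terms above sum to at most one you immediately get $\tld{x}=\tld{x}_1$ or $\tld{x}=\tld{w}_h^{-1}\tld{x}_2$. Ruling out $\ell(s)=\ell(w_0)-1$ does not follow from the uniqueness clause of \cite[Proposition 2.1.5]{MLM}, which only concerns factorizations with $w_0$ in the middle; the paper instead notes that $s=w_0s_\alpha$ with $\alpha$ simple and $\tld{x}_1=\tld{x}=\tld{w}_h^{-1}\tld{x}_2$ exhibits $\tld{w}$ as a colength-one element of the first form with simple $\alpha$, hence irregular by Proposition \ref{prop:classification, colength 1 dual shapes} and the remark following it. Finally, the simultaneous identification $\tld{x}_1\in t_\nu\tld{w}_1X^0(T)$ and $\tld{x}_2\in\tld{w}_2X^0(T)$ cannot be read off from uniqueness before deciding which of $\tld{x}_1$, $\tld{w}_h^{-1}\tld{x}_2$ equals the restricted element $\tld{x}$: the translation $t_\nu$ sits on different sides of $w_0$ in the two cases, and your assertion contradicts the lemma's own conclusion $\tld{x}_1\in\tld{w}_1X^0(T)$ when $\tld{x}=\tld{x}_1$.
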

\begin{proof}
Let $\tld{w},\tld{x}_1,\tld{x}_2,s,$ and $\tld{x}$ be as above.
Since $\tld{x}_1\uparrow\tld{x}\uparrow\tld{w}_h^{-1}\tld{x}_2$, we have that $\tld{x}_1 \leq \tld{x}$ and $\tld{x}_2 \leq \tld{w}_h \tld{x}$ by \cite[Theorem 4.1.1, Proposition 4.1.2]{LLL} as $\tld{w}_h \tld{x} \in \tld{W}^+$.
Then we have
\begin{align*}
1&\geq\ell(t_\eta)-\ell(\tld{w})\\
&\geq \ell((\tld{w}_h\tld{x})^{-1}w_0\tld{x}) -[\ell(\tld{x}_2^{-1})+\ell(s)+\ell(\tld{x}_1)]\\
&= [\ell(\tld{w}_h\tld{x})-\ell(\tld{x}_2)]+ [\ell(w_0)-\ell(s)]+[\ell(\tld{x})-\ell(\tld{x}_1)]
\end{align*}
where the last equality follows from \cite[Lemma 4.1.9]{LLL}.
Since each term in the last expression is nonnegative, we immediately see that $\tld{x} = \tld{x}_1$ or $\tld{w}_h^{-1} \tld{x}_2$.
(If moreover $\ell(\tld{w})=\ell(t_\eta)$ we further have $s=w_0$ and $\tld{x}_1=\tld{x}=\tld{w}_h^{-1}\tld{x}_2$.)
It suffices to show that $\tld{x}_1 \in \tld{w}_1X^0(T)$ in the former case and $\tld{x}_2 \in \tld{w}_2X^0(T)$ in the latter case.

We next show that $s=w_0$.
Supposing otherwise, $\tld{x}_1 = \tld{x} = \tld{w}_h^{-1}\tld{x}_2$ and $s = w_0s_{\alpha}$ with $\alpha\in \Delta^+$.
By Proposition \ref{prop:classification, colength 1 dual shapes} and the paragraph following it, we conclude that $\tld{w}$ is not regular which is a contradiction.

Now suppose that $\tld{x} = \tld{x}_1$ so that we have $\tld{x}_2^{-1} w_0 \tld{x}_1 = \tld{w}_2^{-1} w_0 t_\nu \tld{w}_1$.
The uniqueness in \cite[Proposition 2.1.5]{MLM} guarantees that $\tld{x}_1 \in \tld{w}_1X^0(T)$ (and, if moreover $\ell(t_\eta)=\ell(\tld{w})$, that $\nu\in X^0(T)$).
Similarly, if $\tld{x} = \tld{w}_h^{-1} \tld{x}_2$, then $\tld{x}_2 \in \tld{W}_1^+$ and uniqueness again guarantees that $\tld{x}_2 \in \tld{w}_2X^0(T)$.
\end{proof}

For the following lemma, which explicitly describes the unique up to $X^0(T)$-decomposition of colength one regular admissible elements mentioned in Lemma \ref{lem:fact:reg}, we specialize to the case $G=\GL_n$.
Recall that in this case we have the elements $\eps_i\in X^*(T)$ for $i=1,\dots,n$ and the fundamental weights $\omega'_{\eps_i-\eps_{i+1}}\defeq \sum_{k=1}^{i}\eps_i$ for $1\leq i\leq n-1$.
We have the inclusion
\begin{equation}
\label{eq:bij:W1}
W\into\tld{W}^+_1,\quad
w\longmapsto t_{\eta_w}w
\end{equation}
where $t_{\eta_w}$ is the dominant weight defined by
\begin{equation}
\label{eq:def:tw}
t_{\eta_w}\defeq \sum_{\substack{\beta\in\Delta,\, w^{-1}(\beta)<0}}\omega'_\beta
\end{equation}
(see \cite[equation (5.1)]{herzig-duke}).
Given $w\in W$ we write $\tld{w}$ for the image of $w$ via the inclusion \eqref{eq:bij:W1}.
\begin{lemma}
\label{lem:can:dec:easy}
Let $\tld{w}\in \Adm^{\textnormal{reg}}(\eta)$ satisfy $\ell(\tld{w})=\ell(t_\eta)-1$.
Then in the decomposition $\tld{w}=(\tld{w}_2)^{-1}w_0t_{\nu}\tld{w}_1$ of Lemma \ref{lem:fact:reg}, we can take $\tld{w}_1=\tld{(s_{\alpha}w^{-1})}$, $\tld{w}_2=\tld{w}_h\tld{w^{-1}}$ and $\nu=\eta_{w^{-1}}-\eta_{s_\alpha w^{-1}}-\eps\alpha$, where $\eps$ equals $1$ $($resp.~$0)$ if we are in case \eqref{it:colength:1:2} $($resp. case~\eqref{it:colength:1:1}$)$ of Proposition \ref{prop:classification, colength 1 dual shapes}.
\end{lemma}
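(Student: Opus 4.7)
The plan is to verify the claimed formula by a direct product computation in $\tld{W}$ and then invoke the uniqueness clause of Lemma~\ref{lem:fact:reg} (cf.~\cite[Proposition~2.1.5]{MLM}). By Proposition~\ref{prop:classification, colength 1 dual shapes} I write $\tld{w} = w t_\mu s_\alpha w^{-1}$, where $\mu=\eta$ in case~\eqref{it:colength:1:1} and $\mu=\eta-\alpha$ in case~\eqref{it:colength:1:2}; pushing the leading $w$ past the translation gives $\tld{w} = t_{w(\mu)}\,s_{w(\alpha)}$.

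Next, using $\tld{w}_h = w_0 t_{-\eta}$ and $\tld{v} = t_{\eta_v} v$ for $v\in W$ from \eqref{eq:bij:W1}–\eqref{eq:def:tw}, I expand $\tld{w}_2 = \tld{w}_h\tld{w^{-1}} = t_{w_0(\eta_{w^{-1}}-\eta)}\,w_0 w^{-1}$ and $\tld{w}_1 = t_{\eta_{s_\alpha w^{-1}}}\,s_\alpha w^{-1}$, so that a routine manipulation (pushing all $W$-factors to the right and collecting translations) yields
\[
\tld{w}_2^{-1}\,w_0\,t_\nu\,\tld{w}_1 \;=\; t_{w(\eta - \eta_{w^{-1}} + \nu + \eta_{s_\alpha w^{-1}})}\,s_{w(\alpha)}.
\]
Comparing with $\tld{w} = t_{w(\mu)}\,s_{w(\alpha)}$ uniquely determines $\nu = \mu - \eta + \eta_{w^{-1}} - \eta_{s_\alpha w^{-1}}$, which is exactly $\eta_{w^{-1}} - \eta_{s_\alpha w^{-1}} - \eps\alpha$ with $\eps$ the indicator of being in case~\eqref{it:colength:1:2}.

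It remains to check the memberships allowing the uniqueness of Lemma~\ref{lem:fact:reg} to apply. That $\tld{w}_1 = \tld{s_\alpha w^{-1}} \in \tld{W}^+_1$ is immediate from the defining inclusion~\eqref{eq:bij:W1}. For $\tld{w}_2$, the key ingredient will be the congruence
\[
w_0(\eta_{w^{-1}} - \eta) \equiv \eta_{w_0 w^{-1}} \pmod{X^0(T)},
\]
which I will establish from $\eta = \sum_{\beta\in\Delta}\omega'_\beta$, the relations $w_0(\eta) \equiv -\eta$ and $w_0(\omega'_\beta) \equiv -\omega'_{-w_0(\beta)} \pmod{X^0(T)}$, and a direct bookkeeping of the index sets defining $\eta_{w^{-1}}$ and $\eta_{w_0 w^{-1}}$. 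Since $\tld{W}^+_1$ is stable under $X^0(T)$-translations, this places $\tld{w}_2$ in $\tld{W}^+_1$.

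Once these are in place, the uniqueness up to $X^0(T)$ of the canonical factorization identifies $(\tld{w}_1,\tld{w}_2,\nu)$ with the triple from Lemma~\ref{lem:fact:reg}; in particular dominance of $\nu$ is automatic, since $X^0(T)$-shifts preserve the pairing with simple coroots. The main obstacle I anticipate is the careful handling of $X^0(T)$-equivalences around the $\tld{w}_h$-twist (since $\tld{w}_h$ is only an involution modulo $X^0(T)$), together with the case split between~\eqref{it:colength:1:1} and~\eqref{it:colength:1:2}; the remaining manipulations are mechanical.
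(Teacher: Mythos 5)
Your direct product computation of $\tld{w}_2^{-1}w_0t_\nu\tld{w}_1$ and the identification of $\nu$ is exactly the paper's first step, and your verification that $\tld{w}_1,\tld{w}_2\in\tld{W}^+_1$ (via $\eqref{eq:bij:W1}$ and the stability of $\tld{W}^+_1$ under left multiplication by $\tld{w}_h$ modulo $X^0(T)$) is fine. The problem is your final step: you claim that once the memberships in $\tld{W}^+_1$ are checked, ``the uniqueness up to $X^0(T)$ of the canonical factorization identifies $(\tld{w}_1,\tld{w}_2,\nu)$ with the triple from Lemma \ref{lem:fact:reg}; in particular dominance of $\nu$ is automatic.'' This is circular. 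The uniqueness in \cite[Proposition 2.1.5]{MLM} is uniqueness among factorizations in which $\nu\in X^+(T)$ is \emph{already} dominant; it does not assert that every factorization $\tld{w}=\tld{x}_2^{-1}w_0t_{\nu'}\tld{x}_1$ with $\tld{x}_1,\tld{x}_2\in\tld{W}^+_1$ coincides with the canonical one. So you cannot invoke it until you have verified dominance of your explicitly computed $\nu$, which is precisely the statement you are trying to deduce. (Your parenthetical that $X^0(T)$-shifts preserve pairings with simple coroots is true but addresses a different issue.)

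This dominance check is in fact the entire substantive content of the lemma, and it is not formal: unwinding \eqref{eq:def:tw}, one must show
\[
\langle\nu,\beta^\vee\rangle=\delta_{w(\beta)<0}-\delta_{ws_\alpha(\beta)<0}-\eps\langle\alpha,\beta^\vee\rangle\geq 0
\]
for every $\beta\in\Delta$, and this genuinely uses the combinatorial constraints on $w$ from Proposition \ref{prop:classification, colength 1 dual shapes} as made explicit in Remark \ref{rmk:expl:w}. For instance, in case \eqref{it:colength:1:1} with $\alpha=\eps_{i}-\eps_{j}$ and $\beta=\eps_i-\eps_{i+1}$ (with $i+1<j$), the inequality reduces to: $w(j)>w(i+1)$ implies $w(i)>w(i+1)$, which holds only because the condition on $w$ forbids $w(i+1)$ from landing in the interval $(w(i),w(j))$. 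Without those hypotheses the inequality can fail, so there is no purely formal route around the casewise verification. To repair your proof, replace the appeal to uniqueness by this casewise check of $\langle\nu,\beta^\vee\rangle\geq 0$ over the values $\langle\alpha,\beta^\vee\rangle\in\{-1,0,1\}$ and the two cases of Proposition \ref{prop:classification, colength 1 dual shapes}.
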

\begin{proof}
Letting $\tld{w}=wt_{\eta-\eps\alpha}  s_\alpha w^{-1}$, where $\eps\in\{0,1\}$ equals $1$ (resp.~$0$) if we are in case \eqref{it:colength:1:2} (resp.~\eqref{it:colength:1:1}) of Proposition \ref{prop:classification, colength 1 dual shapes}, we have
\[
\tld{w}=(\tld{w}_h\tld{w^{-1}})^{-1}w_0t_\nu\tld{s_\alpha w^{-1}}
\]
by \eqref{eq:bij:W1} and the definition of $\tld{w}_h$.
It thus suffices to prove that $\nu$ is dominant, i.e.~using equation \eqref{eq:def:tw}, that
\begin{equation*}
\label{eq:dom:nu}
\delta_{w(\beta)>0}-\delta_{ws_\alpha(\beta)>0}-\eps\langle\alpha,\beta^\vee\rangle\geq 0
\end{equation*}
for all $\beta\in\Delta$.
This is an elementary casewise check, based on the value of $\langle\alpha,\beta^\vee\rangle\in\{-1,0,1\}$ and Remark \ref{rmk:expl:w}, and which we leave to the reader.
\end{proof}

\subsection{Local models, affine charts and monodromy conditions}
For any Noetherian $\cO$-algebra $R$ define
\begin{align*}
L \cG(R)&\defeq \{A \in \GL_n(R(\!(v+p)\!))\mid \text{ $A$ is upper triangular modulo $v$}\};\\
L^+\cM(R)&\defeq \{A \in \Mat_n(R[\![v+p]\!])\mid \text{ $A$ is upper triangular modulo $v$}\}.
\end{align*}

\subsubsection{Affine charts}
\label{subsubsec:AC}
Let $\tld{z}=zt_\nu\in \tld{W}^{\vee}$.
Given an integer $h\geq 0$ define the subfunctor ${\cU}(\tld{z})^{\det,\leq h}\subseteq  L\cG$ defined on Noetherian $\cO$-algebras $R$ as follows: ${\cU}(\tld{z})^{\det,\leq h}(R)$ is the set of matrices $A\in L\cG(R)$ such that for all $1\leq i,k\leq n$ the following holds:
\begin{itemize}
\item $(v+p)^{h}A_{ik}\in v^{\delta_{i>k}}R[v]$;
\item $\deg\big((v+p)^{h}A_{ik}\big)\leq h+\nu_{k}+\delta_{i>k}-\delta_{i<z(k)}$;
\item
$(v+p)^{h}A_{z(k)k}$ is a monic polynomial; and
\item
$\det (A)=\det(z)(v+p)^{|\!|\nu|\!|}$.
\end{itemize}
where we have written $\nu=(\nu_{\ell})_{1\leq\ell\leq n}\in X^*(T)$ and $|\!|\nu|\!|\defeq\sum_{\ell=1}^n\nu_\ell$.

We now introduce the subfunctors and $T^\vee$-torsors which are relevant for our analysis of deformation rings.
In the above setting, define the following subfunctors of $ {\cU}(\tld{z})^{\det,\leq 0}$:
\begin{itemize}
\item $U^{[0,n-1]}(\tld{z})\subseteq {\cU}(\tld{z})^{\det,\leq 0}$ as the subfunctor whose $R$-valued points are those $A\in {\cU}(\tld{z})^{\det,\leq 0}$ such that both $A$ and $(v+p)^{n-1}A^{-1}$ are in  $L^+\cM(R)$; note that $U^{[0,n-1]}(\tld{z})$ is representable by an affine scheme over $\cO$;
\item
$U(\tld{z},\leq\!\!\eta)\subseteq U^{[0,n-1]}(\tld{z})$ as the closed, $\cO$-flat and reduced subscheme of $U^{[0,n-1]}(\tld{z})$ whose $R$-valued points consist of $A\in U^{[0,n-1]}(\tld{z})(R)$ with elementary divisors bounded by $(v+p)^{\eta}$ (i.e.~each $k$ by $k$ minor of $A$ is divisible by $(v+p)^{\frac{(k-1)k}{2}}$); and
\item
$\tld{U}(\tld{z},\leq\!\!\eta)\defeq T^\vee U(\tld{z},\leq\!\!\eta)$ as the subscheme of
$L\cG$ whose $R$-valued points are of the form $DA$ where $D\in T^\vee(R)$ and $A\in U(\tld{z},\leq\!\!\eta)(R)$; note that $\tld{U}(\tld{z},\leq\!\!\eta)$ is endowed with a $T^\vee_\cO$-action induced by left multiplication of matrices, and we have $\big[T^{\vee,\cJ}\backslash\tld{U}(\tld{z},\leq\!\!\eta)\big]\cong U(\tld{z},\leq\!\!\eta)$.
\end{itemize}
The entries of $A\in U^{[0,n-1]}(\tld{z})(R)$ will typically be written in the form
\begin{equation}
\label{eq:expl:U}
A_{ik}= v^{\delta_{i>k}}\Bigg(\sum_{\ell=0}^{\nu_k-\delta_{i<z(k)} }a_{ik,\ell}(v+p)^\ell\Bigg)\nonumber
\end{equation}
so that $U^{[0,n-1]}(\tld{z})$ is an affine $\cO$-scheme whose global functions is the quotient of the polynomial $\cO$-algebra in the variables $a_{ik,\ell}$ (for the appropriate range of $\ell$ determined by $\tld{z}$ and $(i,k)$) by the ideal determined by the condition $\det (A)=\det(z)(v+p)^{|\!|\nu|\!|}$.
Then $U(\tld{z},\leq\!\!\eta)$ is a closed subscheme of $U^{[0,n-1]}(\tld{z})$, given by the $p$-saturation of the condition that each $k$ by $k$ minor is divisible by $(v+p)^{\frac{(k-1)k}{2}}$. In general, these schemes can be somewhat hard to describe, however in Proposition~\ref{prop: FH 1} and Proposition~\ref{prop: FH 2}, we will give an explicit presentations for them when $\tld{z}$ has colength one.

\subsubsection{Algebraic monodromy condition}

Let $\bf{a}\in\cO^{n}$.
We define an operator $\nabla_{\bf{a}}$ on $L\cG$ by $\nabla_{\bf{a}}(A)\defeq \big(v\frac{d}{dv}(A)-[\Diag(\bf{a}),A]\big)$, and a closed subfunctor $L\cG^{\nabla_{\bf{a}}}\subseteq L\cG$ by
\[
L\cG^{\nabla_{\bf{a}}}(R)\defeq\left\{
A\in L\cG(R)\mid \nabla_{\bf{a}}(A)A^{-1}\in \frac{1}{(v+p)}L^+\cM(R)
\right\}.
\]
(We write $[M,N]$ for the usual Lie bracket on $L\cG$, defined by $[M,N]\defeq MN-NM$.)

We define the following closed subschemes of $U(\tld{z},\leq\!\!\eta)$:
\begin{itemize}
\item $U^{\textnormal{nv}}(\tld{z},\leq\!\!\eta,\nabla_{\bf{a}})\defeq U(\tld{z},\leq\!\!\eta)\cap L\cG^{\nabla_{\bf{a}}}$; and

\item $U(\tld{z},\leq\!\!\eta,\nabla_{\bf{a}})\subseteq U^{\textnormal{nv}}(\tld{z},\leq\!\!\eta,\nabla_{\bf{a}})$ as the $p$-flat closure of $U^{\textnormal{nv}}(\tld{z},\leq\!\!\eta,\nabla_{\bf{a}})_E$ inside $U(\tld{z},\leq\!\!\eta)$.
\end{itemize}
Thus $U(\tld{z},\leq\!\!\eta,\nabla_{\bf{a}})$ is the closed subscheme $U(\tld{z},\leq\!\!\eta)$ whose ring of global functions is the quotient of $\cO(U(\tld{z},\leq\!\!\eta))$ by the $p$-saturation of the ideal $I_{\nabla_{\bf{a}}}$ obtained by imposing that the universal matrix
\[
A^{\textnormal{univ}}\in U(\tld{z},\leq\!\!\eta)\big(\cO(U(\tld{z},\leq\!\!\eta)\big)
\]
satisfies
\[\big(\nabla_{\bf{a}}(A^{\textnormal{univ}})\big)\big(A^{\textnormal{univ}}\big)^{-1}\in \frac{1}{(v+p)}L^+\cM(\cO(U(\tld{z},\leq\!\!\eta)),
\]
cf.~\cite[Definition 7.1.8]{MLM} for a list of generators of this ideal.
Proposition~\ref{prop: naive main, one} and Proposition~\ref{prop: naive main, zero} give results towards an explicit presentation for the affine scheme $U(\tld{z},\leq\!\!\eta,\nabla_{\bf{a}})$ when $\tld{z}$ has colength at most one.

We define analogously the closed subschemes $\tld{U}(\tld{z},\leq\!\!\eta,\nabla_{\bf{a}})\subseteq \tld{U}^{\textnormal{nv}}(\tld{z},\leq\!\!\eta,\nabla_{\bf{a}})\subseteq \tld{U}(\tld{z},\leq\!\!\eta)$, replacing ${U}(\tld{z},\leq\!\!\eta)$ by $\tld{U}(\tld{z},\leq\!\!\eta)$ in the above items. Note that we also have
\begin{equation}\label{eq: left torus action}
\tld{U}(\tld{z},\leq\!\!\eta,\nabla_{\bf{a}})=T^\vee U(\tld{z},\leq\!\!\eta,\nabla_{\bf{a}}),
\end{equation}
compatible with $\tld{U}(\tld{z},\leq\!\!\eta)\defeq T^\vee U(\tld{z},\leq\!\!\eta)$.

\subsubsection{Monodromy condition and Galois representations.}
\label{subsub:true_mon}
We follow the notation and terminology on tame inertial types and their lowest alcove presentations of \cite[\S 2.4]{MLM}.
In particular given
 $(s,\mu)=(s^{(j)},\mu^{(j)})_{j\in\cJ}\in W^{\cJ}\times (X^*(T)\cap C_0)^{\cJ}$,  \cite[Example 2.4.1, equations (5.2), (5.1)]{MLM} produces a tame inertial type $\tau(s,\mu+\eta)$ and $n$-tuples $\bf{a}^{\prime (j')}\in\Z^n$ for $0\leq j'\leq fr-1$ (where $r$ is the order of the element $s^{(0)}s^{(1)}\cdots s^{(f-2)}s^{(f-1)}\in W$).
If $\mu$ is $1$-deep in $\un{C}_0$, for each $0\leq j'\leq fr-1$ we define   $s'_{\orient,j'}$ to be the element of $W$ such that $(s'_{\orient,j'})^{-1}(\bf{a}^{\prime\,(j')})\in\Z^n$ is dominant, and let
\begin{equation}
\label{eq:monodromy_str_constant}
\bf{a}^{(j)}\defeq (s^{\prime\,(j)}_{\orient})^{-1}(\bf{a}^{\prime\,(j)})/(1-p^{fr})
\end{equation}
for $0\leq j\leq f$.

Recall from \cite[\S 7.2]{MLM} the $p$-adic formal algebraic stack $\cX^{\leq \eta,\tau}$ over $\Spf\cO$.
It is $p$-flat, equidimensional of dimension $(1+[K:\Qp]\binom{n}{2})$ and moreover if $\rhobar: G_{K}\ra\GL_n(\F)$ is a continuous Galois representation, then $R^{\leq \eta,\tau}_{\rhobar}$ is a versal ring for $\cX^{\leq \eta,\tau}$ at its point corresponding to $\rhobar$ (see \cite[\S 4.8]{EGstack}).
Let $\tld{z}=(\tld{z}^{(j)})_{j\in\cJ}\in \tld{W}^{\vee,\cJ}$.

Let $N\geq n+1$ and assume that $\mu$ is $N$-deep in alcove $\un{C}_0$.
From \cite[Theorem 7.2.3, Theorem 7.3.2]{MLM} we have an open substack $\cX^{\leq \eta,\tau}(\tld{z})\into \cX^{\leq \eta,\tau}$, an $\cO$-flat closed $p$-adic formal subscheme $\tld{U}(\tld{z},\leq\!\!\eta,\nabla_{\tau,\infty})$ of $\tld{U}(\tld{z},\leq\!\!\eta)^{\wedge_p}$ (where $\tld{U}(\tld{z},\leq\!\!\eta)\defeq\prod_{j\in \cJ}\tld{U}(\tld{z}^{(j)},\leq\!\!\eta)$ and the superscript $\wedge_p$ denotes the $p$-adic completion) and a formally smooth morphism
\begin{equation}
\label{diag:main}
\tld{U}(\tld{z},\leq\!\!\eta,\nabla_{\tau,\infty})\ra \cX^{\leq \eta,\tau}(\tld{z})
\end{equation}
of relative dimension $n\#\cJ$.

Assume further that $N\geq 2n-5$ and $\mu$ is $N$-deep in alcove $\un{C}_0$.
Then \cite[Proposition 7.1.10]{MLM} shows that the formal scheme $\tld{U}(\tld{z},\leq\!\!\eta,\nabla_{\tau,\infty})$ is the $p$-flat closure of a natural deformation of $\prod_{j\in \cJ}\tld{U}(\tld{z}^{(j)},\leq\!\!\eta,\nabla_{\bf{a}^{(j)}})$ in the following sense: Letting $I^{(j)}_{\nabla_{\bf{a}^{(j)}}}$ be the ideal cutting out $\tld{U}(\tld{z}^{(j)},\leq\!\!\eta,\nabla_{\bf{a}^{(j)}})$ in $\tld{U}(\tld{z}^{(j)},\leq\!\!\eta)$,
$\tld{U}(\tld{z},\leq\!\!\eta,\nabla_{\tau,\infty})$ is cut out by the $p$-saturation of an ideal $I_{\nabla_{\tau,\infty}}$ which is obtained from $\sum_{j\in \cJ} I^{(j)}_{\nabla_{\bf{a}^{(j)}}}$ by adding an element divisible by $p^{N-2n+5}$ to each element of its natural set of generators (in particular, $I_{\nabla}\subset \sum_j (I^{(j)}_{\nabla_{\bf{a}^{(j)}}},p^{N-2n+5})$.)
This property can be expressed (abusively) in the following way: Given a $p$-adically complete Noetherian $\cO$-algebra $R$, if $\tld{A}\in \tld{U}(\tld{z},\leq\!\!\eta)^{\wedge_p}(R)$ satisfies the equations in $I_{\nabla_{\tau,\infty}}$ then
\begin{equation}
\label{eq:true:mon:cond:A}
\big(\nabla_{\bf{a}^{(j)}}(\tld{A}^{(j)})\big)\big((v+p)^{n-1}\tld{A}^{(j)}\big)^{-1}\in (v+p)^{n-2}\mathrm{M}_n(R[v])+p^{N-2n+5}\mathrm{M}_n(R[\![v]\!]).
\end{equation}

Finally let us recall the characterization of the Galois representations that contribute to $\cX^{\leq \eta,\tau}(\tld{z})$: If $\rhobar\in \cX^{\leq \eta,\tau}(\tld{z})(\F)$, then its \'{e}tale $\varphi$ module $\cM_{\rhobar}=\mathbb{V}^*_K(\rhobar|_{G_{K_\infty}})$, cf.~\cite[\S 5.5]{MLM}, admits a basis with respect to which the matrix of Frobenius belongs to $\tld{U}(\tld{z},\leq\!\!\eta)(\F) \prod_{j\in \cJ}s_j^{-1}v^{\mu_j+\eta_j}$. We say that $\rhobar$ has shape $\tld{z}$ with respect to $\tau$ if additionally the matrix of Frobenius belongs to $\prod_{j\in \cJ} \cI\tld{z}^{(j)}\cI s_j^{-1}v^{\mu_j+\eta_j}$. (Here $\cI$ denotes the Iwahori subgroup of $\GL_n(\F(\!(v)\!))$ corresponding to the Borel of upper triangular matrices.)

It follows from \cite[Proposition 5.4.7]{MLM} that any $\rhobar\in\cX^{\leq \eta,\tau}(\F)$ has a unique shape in $\Adm(\eta)$, which we denote by $\tld{w}(\rhobar,\tau)^*$.

\section{Finite height conditions}\label{sec:finite height conditions}
In this section, we describe $U(\tld{w},\leq\!\!\eta)$ when $\tld{w}^*\in \Adm(\eta)$ has colength one.
Throughout this subsection $R$ denotes a Noetherian $\cO$-flat $\cO$-algebra, and we use the notation
$u_\beta:\mathbb{G}_a\hookrightarrow w_0Uw_0$ for the embedding to the $\beta$-entry for each $\beta\in\Phi^-$, where $U$ is the subgroup of upper triangular unipotent matrices in $\GL_n$.
Moreover, given $A\in\mathrm{Mat}_n(R)$ and $\beta=\eps_i-\eps_j\in \Phi$, we write $A_{\beta}$ and $A_{\beta,\ell}$ for $A_{ij}$ and $a_{ik,\ell}$ respectively.

\subsection{Finite height conditions: the first form}\label{subsec: height condition, first}
Let $\tld{w}^*\in \Adm(\eta)$ be a colength one shape of the first form.
Thus, by Proposition~\ref{prop:classification, colength 1 dual shapes} and the definition of $\tld{w}\mapsto \tld{w}^*$, we have $\tld{w}=ws_\alpha t_{\eta}w^{-1}$, where $w\in W$ satisfies $w(\alpha)>0$ and $w(k)$ do not belong to the interval $(w(i_0),w(j_0))$ for all $i_0<k<j_0$ if we set $\alpha=\alpha_{i_0j_0}\in \Phi^+\setminus\Delta$. For notational convenience we set $s\defeq s_\alpha$ and $w'\defeq ws$.

With an eye towards the monodromy conditions, we introduce the following:
\begin{defn}
We say that a negative root $\beta\in \Phi^-$ is \emph{bad} (with respect to $\tld{w}$) if either one of the following holds:
\begin{itemize}
\item $\beta$ shares either the row or the column of $\alpha$;
\item $\delta_{w(\beta)<0}\neq\delta_{w'(\beta)<0}$.
\end{itemize}
\end{defn}

\begin{prop}\label{prop: FH 1}
Let $A\in U(\tld{w},\leq\!\!\eta)(R)$.

Then $A_{w(j_0)w(j_0)}=(w^{-1}Aw)_{j_0j_0}=c(v+p)^{n-j_0}$ for some $c\in R$ and
\begin{equation}\label{eq: elementary operation 1}
s\cdot u_{-\alpha}(-c)\cdot (w^{-1}Aw)=(v+p)^\eta\cdot V
\end{equation}
where $V$ is a lower triangular unipotent matrix whose entries are in $R[v]$.

Moreover, for each $\beta\in\Phi^{-}$ the entry $V_\beta$ can be written as $v^{\kappa_\beta}\cdot f_{\beta}(v)$ for a polynomial $f_{\beta}(v)\in R[v]$ of the form $\sum_{k}c_{\beta,k}(v+p)^k$ where
\begin{enumerate}
\item if $\beta$ is bad then $\kappa_\beta=\delta_{w(\beta)<0}=0$ and  $\deg(f_{\beta})<|\langle\eta,\beta^\vee\rangle|+1$;
\item if $\beta$ shares the row or the column of $\alpha$ but is not bad then $\kappa_\beta=\delta_{w(\beta)<0}$ and $\deg(f_{\beta})<|\langle\eta,\beta^\vee\rangle|$;
\item if $\beta$ shares the row of $-\alpha$ then $\kappa_\beta=\delta_{w'(\beta)<0}$ and $\deg(f_{\beta})<|\langle\eta,\beta^\vee\rangle|$;
\item if $\beta$ shares the column of $-\alpha$ with $\beta\neq-\alpha$ then $\kappa_\beta=\delta_{w(\beta)<0}$ and $\deg(f_{\beta})<|\langle\eta,\beta^\vee\rangle|-\delta_{w(-\alpha)<w(\beta)<0}$;
\item for all other roots $\beta\in\Phi^-$, $\kappa_\beta=\delta_{w(\beta)<0}$ and $\deg(f_{\beta})<|\langle\eta,\beta^\vee\rangle|$.
\end{enumerate}
\end{prop}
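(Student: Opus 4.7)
My plan is to reduce by conjugation to shape $s_\alpha t_\eta$, pin down the diagonal entry $B_{j_0j_0}$ via a determinant/minor argument, and then apply the explicit row operation $s\cdot u_{-\alpha}(-c)$ to reach the $t_\eta$ normal form; I can then read off $V$ and check the degree and valuation bounds entry by entry, guided by the classification in the statement.

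First I will set $B\defeq w^{-1}Aw$; since $B_{ik}=A_{w(i)w(k)}$ and $w^{-1}\tld{w}\,w = s_\alpha t_\eta$, the matrix $B$ lies in $U(s_\alpha t_\eta, \leq\!\!\eta)(R)$ and satisfies the Iwahori bounds, monicity, and $\leq\eta$ elementary-divisor condition for the shape $s_\alpha t_\eta$. Since the permutation $s_\alpha$ fixes the indices $\{1,\ldots,j_0-1\}$ pointwise, the top-left $(j_0{-}1)\times(j_0{-}1)$ block of $B$ has shape $t_{\eta_{\leq j_0-1}}$; the standard normal form for matrices of that shape (combined with the divisibility of its determinant by $(v+p)^{\binom{j_0-1}{2}}$) identifies this block with $(v+p)^{\eta_{\leq j_0-1}}\cdot L$ for some lower-triangular unipotent $L\in \mathrm{Mat}_{j_0-1}(R[v])$, so in particular its determinant is a unit times $(v+p)^{(n-1)+(n-2)+\cdots+(n-j_0+1)}$. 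Combining this with the divisibility of the top-left $j_0\times j_0$ minor by $(v+p)^{\binom{j_0}{2}}$ and expanding along the bottom row kills the low-degree $(v+p)$-coefficients of $B_{j_0j_0}$, giving the first assertion $B_{j_0j_0}=c(v+p)^{n-j_0}$ for some $c\in R$.

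Next I will analyze $M\defeq s\cdot u_{-\alpha}(-c)\cdot B$, which amounts to replacing row $j_0$ of $B$ by (row $j_0$) $-c\cdot$(row $i_0$) and then swapping rows $i_0$ and $j_0$. The choice of $c$ is precisely what is needed for $M$ to satisfy the Iwahori degree bounds for shape $t_\eta$: the leading $(v+p)^{n-j_0}$-coefficient of $B_{j_0j_0}$, which equals $c$, cancels $c$ times the leading coefficient of the monic entry $B_{i_0j_0}$, so that the resulting $(i_0,j_0)$ entry of $M$ sits strictly below the $t_\eta$ degree bound in that position. Checking that $M$ then satisfies the full $t_\eta$-shape conditions together with the $\leq \eta$ elementary-divisor condition allows me to apply the standard normal form for $U(t_\eta,\leq\!\!\eta)$, yielding $M=(v+p)^\eta V$ with $V\in \mathrm{Mat}_n(R[v])$ lower-triangular unipotent.

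To finish, I will verify the case-by-case degree/valuation bounds on $V_\beta$ entry by entry, following the classification in the statement. For $\beta$ disjoint from the rows and columns of $\pm\alpha$ (case (5)), $M_\beta=B_\beta$ and the bound follows directly from the Iwahori bound for $B$ after dividing out the column weight $(v+p)^{\eta_k}$. Cases (2), (3), (4) similarly translate the Iwahori bounds for $B_\beta$ or $B_{s_\alpha(\beta)}$ depending on whether $\beta$ lies in the row/column of $\alpha$ or of $-\alpha$, with the $-\delta_{w(-\alpha)<w(\beta)<0}$ correction in case (4) reflecting an additional divisibility forced by the elementary-divisor condition. I expect the hard part to be the bad-root case (1): there the row swap changes whether the entry is upper or lower triangular in the $w$-picture, so both the Iwahori bounds and the elementary-divisor condition have to be used in tandem (via carefully chosen $2\times 2$ minors involving the columns hit by $\beta$ and its $s_\alpha$-twin) to deduce that the valuation $\kappa_\beta$ drops to $0$ and the $(v+p)$-degree gains the extra $+1$.
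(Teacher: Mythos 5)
Your overall route — conjugate by $w$, identify $c$ from the diagonal entry $B_{j_0j_0}$ of $B=w^{-1}Aw$, apply the row operation $s\cdot u_{-\alpha}(-c)$, and then extract the degree and valuation bounds entry by entry from the minor (finite height) conditions — is the same as the paper's. However, two of your key steps have genuine gaps. First, your derivation of $B_{j_0j_0}=c(v+p)^{n-j_0}$ rests on the claim that $s_\alpha$ fixes $\{1,\dots,j_0-1\}$ pointwise, which is false: $s_\alpha$ is the transposition $(i_0\,j_0)$ and moves $i_0<j_0$. As a result the top-left $(j_0-1)\times(j_0-1)$ block of $B$ is not of shape $t_{\eta_{\leq j_0-1}}$ (the monic entry of column $i_0$ sits in row $j_0$, outside the block), it is not triangular in the standard ordering (the triangularity of $B$ is governed by $w$), and its determinant is not a priori a unit times $(v+p)^{(n-1)+\cdots+(n-j_0+1)}$ — the elementary divisor condition only gives divisibility of that minor by $(v+p)^{\binom{j_0-1}{2}}$. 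In the paper $c$ is merely \emph{defined} as the coefficient of $v^{n-j_0}$ in $B_{j_0j_0}$, and the exact identity $B_{j_0j_0}=c(v+p)^{n-j_0}$ only falls out at the very end of the induction running from the bottom-right corner upward.

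Second, and more seriously, you assert that after the row operation the matrix "satisfies the full $t_\eta$-shape conditions" so that the standard normal form applies. That lower triangularity is precisely what has to be proved, and it is the crux of the argument: one must show $C_{lm}=0$ for $1\leq l<i_0$ and $l<m\leq j_0$. When $m=j_0$ and $w(i_0)<w(l)<w(j_0)$, the degree bound on $C_{lj_0}$ is one higher than in the extremal case, so the minor conditions only yield $C_{lj_0}=x(v+p)^{n-j_0}$ with $x\in R$ an undetermined parameter. Killing $x$ requires the auxiliary operation $C'=u_{\beta}(-x)C$, extracting the constant term of $C'_{li_0}$ to obtain $p^{|\langle\eta,\alpha^\vee\rangle|-1}\,x\,c_{-\alpha}=0$, and then invoking that $c_{-\alpha}$ is a unit in $R[1/p]$ (Lemma~\ref{lemma: equation from height conditions 1}) together with the $\cO$-flatness of $R$. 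Your plan contains no trace of this step. Note also that the "hard part" is not case (1) of the statement (bad roots receive the \emph{weakest} conclusion, so there is less to prove there); the difficulty is exactly this vanishing above the diagonal.
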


\begin{proof}
It is easy to see that $(w^{-1}Aw)_{j_0j_0}$ is a polynomial of degree less than $n-j_0+1$. Let $c\in R$ be the coefficient of $v^{n-j_0}$ in $(w^{-1}Aw)_{j_0j_0}$, and set $C\defeq su_{-\alpha}(-c)(w^{-1}Aw)$ as in \eqref{eq: elementary operation 1}. One can readily observe the following degree bounds:
\begin{itemize}
\item $\deg (C_{ll})\leq n-l$ for all $1\leq l\leq n$;
\item for $\beta=\alpha_{lm}\in\Phi$ with $l,m\not\in\{i_0,j_0\}$, $C_\beta=v^{\delta_{w(\beta)<0}}f_\beta(v)$ with $\deg (f_\beta)\leq n-m-1$;
\item for $\beta=\alpha_{i_0 m}\in\Phi$, $C_\beta=v^{\delta_{w(\beta)<0}}f_{\beta}(v)$ with $\deg(f_\beta)\leq n-m-1+\delta_{w'(\beta)<0<w(\beta)}$;
\item for $\beta=\alpha_{j_0 m}\in\Phi$, $C_\beta=v^{\delta_{w'(\beta)<0}}f_{\beta}(v)$ with $\deg(f_\beta)\leq n-m-1$;
\item for $\beta=\alpha_{l i_0}\in\Phi$ with $l\neq j_0$, $C_\beta=v^{\delta_{w(\beta)<0}}f_{\beta}(v)$ with $\deg(f_\beta)\leq n-i_0-\delta_{w(\beta)<0}-\delta_{w(\beta)>w(-\alpha)}$;
\item for $\beta=\alpha_{l j_0}\in\Phi$, $C_\beta=v^{\delta_{w(\beta)<0}}f_{\beta}(v)$ with $\deg(f_\beta)\leq n-j_0-\delta_{w(\beta)>w(\alpha)}-\delta_{w(\beta)<0}$.
\end{itemize}

We will repeatedly use the finite height conditions in the following way:
\begin{enumerate}
\item\label{induction1} Assume that the right-bottom block of $C$ of size $k\times k$ is of the following form
$$
\small{\left(
  \begin{array}{ccc}
    C_{(n+1-k)(n+1-k)} & \cdots & 0 \\
    \vdots & \ddots & 0 \\
    C_{n(n+1-k)} & \cdots & C_{nn} \\
  \end{array}
\right)},
$$
and that $(v+p)^{n-l}\mid C_{lm}$ if $C_{lm}$ is sitting on the $(k\times k)$-block above;
\item\label{induction2} For $l$ and $m$ with $1\leq l,m\leq n-k$, assume that the $(k+1)\times(k+1)$-submatrix determined by $C_{lm}$ and the $k\times k$-block above is also lower-triangular;
\item\label{induction3} By applying the finite height condition $(v+p)^{\frac{k(k+1)}{2}}\mid ((k+1)\times (k+1)\mbox{-minors})$ to the $(k+1)\times(k+1)$-submatrix above, we conclude that $(v+p)^{n-k}\mid C_{lm}$.
\end{enumerate}

We first show that the finite height condition on $C$ implies that the right-bottom $(n-i_0+1)\times(n-i_0+1)$-block of $C$ is as described in the statements. This can be proved by induction together with the degree bounds as follows. It is clear that $C_{\beta}=0$ for $\beta=\alpha_{ln}$ if $l<n$, by the degree bounds. Now by inductively using items \eqref{induction1}, \eqref{induction2}, \eqref{induction3} above we conclude that the right-bottom $(n-i_0+1)\times(n-i_0+1)$-block of $C$ is as described in the statements.
Moreover, it is also immediate that $C_{lm}=0$ if $1\leq l< m$ and $j_0<m\leq n$ and that $(v+p)^{n-l}\mid C_{lm}$ if $i_0\leq l\leq n$ and $1\leq m\leq l$, by inductively using items \eqref{induction1}, \eqref{induction2}, \eqref{induction3} above.

We now check that $C_\beta=0$ if $\beta=\alpha_{lm}$ with $1\leq l<i_0$ and $l< m\leq j_0$. Note that we can not, in general, conclude $C_{lj_0}=0$ from the fact $(v+p)^{n-j_0}|C_{lj_0}$ induced from items \eqref{induction1}, \eqref{induction2}, \eqref{induction3} above for $1\leq l<i_0$, since the degree bound of the polynomial $C_{lj_0}$ is one higher than usual if $w(i_0)<w(l)<w(j_0)$. Let $\beta=\alpha_{lj_0}$ be such a root, and assume that the right-bottom $(n-l)\times(n-l)$-block of $C$ is as described in the statements. By items \eqref{induction1}, \eqref{induction2}, \eqref{induction3} and the degree bound, we may let $C_\beta=x(v+p)^{n-j_0}$ for $x\in R$. Consider $C'\defeq u_{\beta}(-x) C$. Then we have $C'_\beta=0$, and, moreover, the finite height condition together with the degree bounds implies that $C'_{lm}=0$ for all $i_0\leq m< j_0$. In particular, we have $$C'_{li_0}=C_{li_0}-xC_{-\alpha}=0.$$
By looking at the constant part of $C'_{li_0}$, which is the same as the constant part of $-xC_{-\alpha}$ as $v\mid C_{li_0}$, we have $p^{|\langle\eta,-\alpha^\vee\rangle|-1}xc_{-\alpha}=0$ in $R$ where we let $p^{|\langle\eta,-\alpha^\vee\rangle|-1}c_{-\alpha}$ be the constant part of $C_{-\alpha}$. On the other hand, by Lemma~\ref{lemma: equation from height conditions 1} $c_{-\alpha}$ is a unit in $R[\frac{1}{p}]$, so that we conclude that $x=0$. (Note that to prove Lemma~\ref{lemma: equation from height conditions 1} we only used the fact that the right-bottom $(n-i_0+1)\times(n-i_0+1)$-block of $C$ is as described in the statements, which is already proved in the previous paragraph.) Now by repeatedly using items \eqref{induction1}, \eqref{induction2}, \eqref{induction3} above together with the degree bounds, we conclude that $C_{lm}=0$ if $l< m\leq j_0$. Repeating this argument, we conclude that $C_\beta=0$ if $\beta=\alpha_{lm}$ with $1\leq l<i_0$ and $l< m\leq j_0$ and that $(v+p)^{n-l}\mid C_{lm}$ if $1\leq m\leq l\leq n$.

Finally, we point out that we have $(w^{-1}Aw)_{j_0j_0}=c(v+p)^{n-j_0}$, which can be readily seen during the induction steps due to the degree bound. This completes the proof.
\end{proof}

For the rest of this subsection, we observe some necessary properties of $\kappa_\beta\in\{0,1\}$, defined in Proposition~\ref{prop: FH 1}, as well as some identities of the coefficients of $V$ from the finite height conditions. We first fix some notation:
\begin{itemize}
\item For $\beta\in\Phi^-$, we write $\mathfrak{D}_\beta$ for the set of the decompositions $(\beta_1,\beta_2)$ of $\beta$ into two negative roots with $\beta_1$ sharing the column of $\beta$ (and $\beta_2$ sharing the row of $\beta$).
\item For a bad root $\beta\in\Phi^{-}$, we write $\mathfrak{A}_\beta$ for the set $\{(\beta_1,\beta_2)\in\mathfrak{D}_\beta\mid \kappa_\beta=\kappa_{\beta_1}+\kappa_{\beta_2}\}$.
\end{itemize}

The following are immediate consequences of Proposition~\ref{prop: FH 1}, which will be frequently used:
\begin{itemize}
\item if $\beta$ shares the row of $\alpha$ then we have $\kappa_\beta=\kappa_{s(\beta)}$;
\item if $\beta$ is a bad root sharing the column of $\alpha$ then $\deg(v^{-\kappa_{s(\beta)}}V_{s(\beta)})<|\langle\eta,s(\beta)^\vee\rangle|-1$ and $\kappa_{s(\beta)}=1+\kappa_\beta$;
\item if $(\beta_1,\beta_2)\in\mathfrak{D}_{-\alpha}$ then $\kappa_{\beta_1}+\kappa_{\beta_2}=1$;
\item $\kappa_{-\alpha}=0$.
\end{itemize}

\begin{lemma}\label{lemma: subadditive 1}
Let $\beta=\alpha_{lm}\in\Phi^-$. Then if $l\leq j_0$ or $m>i_0$ then $\kappa_\beta$ is subadditive, i.e., $\kappa_{\beta}\leq \kappa_{\beta_1}+\kappa_{\beta_2}$ for all $(\beta_1,\beta_2)\in\mathfrak{D}_{\beta}$.

Moreover, if $\beta$ has a decomposition $(\beta_1,\beta_2)\in\mathfrak{D}_\beta$ with $\kappa_\beta>\kappa_{\beta_1}+\kappa_{\beta_2}$ then
\begin{itemize}
\item $l>j_0$ and $m\leq i_0$;
\item $\kappa_{\beta}=1$ and $\kappa_{\beta_1}=\kappa_{\beta_2}=0$;
\item $\beta_1$ shares the row of $-\alpha$ such that $\beta_2$ is bad and either $s(\beta_1)$ is bad or $\beta_1=-\alpha$.
\end{itemize}
\end{lemma}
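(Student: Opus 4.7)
The plan is to argue by case analysis based on the location of $\beta,\beta_1,\beta_2$ relative to the row of $-\alpha$ (the case where $\kappa$ is computed using $w'$ instead of $w$). Since each $\kappa_{\beta_i}\in\{0,1\}$, a failure of subadditivity is equivalent to $\kappa_\beta=1$ and $\kappa_{\beta_1}=\kappa_{\beta_2}=0$. Writing $\beta=\alpha_{lm}$, $\beta_1=\alpha_{km}$, $\beta_2=\alpha_{lk}$ with $m<k<l$, the assumption $\kappa_\beta=1$ forces $\beta$ not to be bad, so in particular $l\neq i_0$, $m\neq j_0$, and $\delta_{w(\beta)<0}=\delta_{w'(\beta)<0}$. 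The key reduction, extracted from Proposition \ref{prop: FH 1}, is that for any $\beta'\in\Phi^-$ the condition $\kappa_{\beta'}=0$ uniformly produces a numerical inequality on $w$-values (or on $w'$-values when $\beta'$ shares the row of $-\alpha$): if $\beta'$ is bad, then $w(\beta')>0$; otherwise $\kappa_{\beta'}=\delta_{w(\beta')<0}$ or $\delta_{w'(\beta')<0}$ according to the category. This allows combining the three vanishing/nonvanishing conditions into a short chain of inequalities.

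If $l=j_0$, both $\beta$ and $\beta_2$ share the row of $-\alpha$; unpacking $\kappa_\beta=\delta_{w(i_0)>w(m)}=1$ together with $\kappa_{\beta_2}=0\Rightarrow w(i_0)<w(k)$ and $\kappa_{\beta_1}=0\Rightarrow w(k)<w(m)$ yields $w(i_0)<w(k)<w(m)<w(i_0)$, a contradiction. If $l\neq j_0$ and $k\neq j_0$, all three $\kappa$'s are $w$-based and the analogous chain $w(l)<w(k)<w(m)<w(l)$ again contradicts. These two sub-cases cover everything with $l\leq j_0$, so any failure must have $l>j_0$ and $k=j_0$.

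In this remaining case, $\beta_2=\alpha_{l j_0}$ shares the column of $\alpha$ and is therefore bad, yielding $\kappa_{\beta_2}=0$ and $w(l)<w(j_0)$ by Proposition \ref{prop: FH 1}, while $\beta_1=\alpha_{j_0 m}$ shares the row of $-\alpha$. The delicate sub-case is $i_0<m<j_0$: the genericity condition $w(m)\notin(w(i_0),w(j_0))$ first implies $\beta_1$ is not bad, hence $\kappa_{\beta_1}=\delta_{w(i_0)>w(m)}$; if this vanishes then $w(i_0)<w(m)$, forcing $w(m)>w(j_0)$ by the same condition, so $\kappa_\beta=\delta_{w(l)>w(m)}=1$ would give $w(l)>w(j_0)$, contradicting $w(l)<w(j_0)$. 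Hence any failure forces $m\leq i_0$, and the moreover part follows by inspection: $\beta_1$ shares the row of $-\alpha$ (since $k=j_0$), $\beta_2$ is bad, and either $m=i_0$ so $\beta_1=-\alpha$, or $m<i_0$ so $s(\beta_1)=\alpha_{i_0 m}$ shares the row of $\alpha$ and is bad. The hard part will be this sub-case $i_0<m<j_0$, which requires the careful interplay between the second condition on $w$ and the consequence $w(l)<w(j_0)$ coming from $\beta_2$ being bad; the other cases are essentially transitivity of the strict order on $w$-values.
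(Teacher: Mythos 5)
Your argument for the subadditivity statement is correct and is essentially the paper's (very terse) argument, carried out in more detail: reduce each $\kappa_{\beta'}$ to $\delta_{w(\beta')<0}$, or to $\delta_{w'(\beta')<0}$ when $\beta'$ lies in row $j_0$, and derive a cyclic chain of strict inequalities among $w$-values; the exclusion of the sub-case $i_0<m<j_0$ via the condition $w(m)\notin(w(i_0),w(j_0))$ is exactly the right use of the colength-one combinatorics from Remark \ref{rmk:expl:w}. This correctly pins down the failure configuration $l>j_0$, $k=j_0$, $m\leq i_0$.

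There is, however, a gap in the ``moreover'' bullets. You assert that $\beta_2=\alpha_{lj_0}$ ``shares the column of $\alpha$ and is therefore bad'', and likewise that $s(\beta_1)=\alpha_{i_0m}$ ``shares the row of $\alpha$ and is bad''. Sharing the row or column of $\alpha$ does not by itself make a root bad: as the notion is used throughout \S\ref{subsec: height condition, first} (see Proposition \ref{prop: FH 1}(1)--(2) and Lemma \ref{lemma: decomposition 1-2}, which explicitly treat non-bad roots in the row/column of $\alpha$), badness also requires $\delta_{w(\beta)<0}\neq\delta_{w'(\beta)<0}$; in particular a bad root satisfies $w(\beta)>0$ and $w'(\beta)<0$, a fact your own ``key reduction'' relies on. (The same slip occurs when you claim $\kappa_\beta=1$ forces $l\neq i_0$ and $m\neq j_0$ --- false, though harmless since your case split covers those configurations anyway.) The badness of $\beta_2$ and of $s(\beta_1)$ is true but must be checked: from $\kappa_\beta=1$, $\kappa_{\beta_1}=\kappa_{\beta_2}=0$ you have already extracted $w(i_0)<w(m)<w(l)<w(j_0)$, which gives $w(\beta_2)>0$ and $w'(\beta_2)=\eps_{w(l)}-\eps_{w(i_0)}<0$, and, when $m<i_0$, $w(s(\beta_1))=w'(\beta_1)>0$ and $w'(s(\beta_1))=w(\beta_1)<0$. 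This is precisely how the paper concludes (``$\beta_2$ is also bad, as $w'(\beta_2)<0$''), and it is the verification your write-up omits.
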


\begin{proof}
One can readily observe that
\begin{itemize}
\item if $\beta=\alpha_{lm}$ with $l\not\in\{i_0,j_0\}$ then $\kappa_\beta=\delta_{w(\beta)<0}$;
\item if $\beta=\alpha_{i_0m}$ then $\kappa_\beta=\delta_{w(\beta)<0}$;
\item if $\beta=\alpha_{j_0m}$ then $\kappa_\beta=\delta_{w'(\beta)<0}$.
\end{itemize}
The first part follows from combining these three observations together with the fact $\delta_{w(\beta)<0}\leq \delta_{w(\beta_1)<0}+\delta_{w(\beta_2)<0}$ if $(\beta_1,\beta_2)\in\mathfrak{D}_\beta$, case by case.

For the second part, if $\beta$ has a decomposition $(\beta_1,\beta_2)\in\mathfrak{D}_\beta$ with $\kappa_\beta>\kappa_{\beta_1}+\kappa_{\beta_2}$ then it is easy to see that $\beta_1$ shares the row of $-\alpha$, and so if $\beta_1\neq -\alpha$ then we have $w'(\beta_1)>0$ and $w(\beta_1)<0$ and so $s(\beta_1)$ is bad. $\beta_2$ is also bad, as $w'(\beta_2)<0$.
\end{proof}

\begin{lemma}\label{lemma: decomposition 1-1}
Let $\beta\in\Phi^-$ be a bad root sharing the row of $\alpha$. Then the map $(\beta_1,\beta_2)\mapsto (\beta_1, s(\beta_2))$ gives rise to a bijection between the following sets:
\begin{itemize}
\item the set $\mathfrak{A}_\beta$;
\item the set of elements $(\beta_1,\beta_2)$ of $\mathfrak{A}_{s(\beta)}$ with $\beta_2\neq-\alpha$.
\end{itemize}
Moreover, $\beta_2\in\Phi^-$ is bad if $(\beta_1,\beta_2)\in\mathfrak{A}_\beta$.
\end{lemma}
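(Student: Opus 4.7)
The plan is to reduce the claimed bijection to a tautology on an integer index parametrising the decompositions, by invoking the immediate consequence of Proposition~\ref{prop: FH 1} that $\kappa_\gamma = \kappa_{s(\gamma)}$ whenever $\gamma \in \Phi^-$ shares the row of $\alpha$; the colength-one hypothesis on $w$ will enter only in the surjectivity step.

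Writing $\beta = \alpha_{i_0 m}$ with $m < i_0$, any $(\beta_1, \beta_2) \in \mathfrak{D}_\beta$ has the form $(\alpha_{km}, \alpha_{i_0 k})$ for some $k \in (m, i_0)$, while any $(\beta_1', \beta_2') \in \mathfrak{D}_{s(\beta)}$ (where $s(\beta) = \alpha_{j_0 m}$) has the form $(\alpha_{k'm}, \alpha_{j_0 k'})$ for some $k' \in (m, j_0)$. Under these coordinates the map $(\beta_1, \beta_2) \mapsto (\beta_1, s(\beta_2))$ is the identity $k \mapsto k$, manifestly injective into $(m, j_0) \setminus \{i_0\}$ (the excluded index $i_0$ corresponding precisely to the excluded $\beta_2' = -\alpha$). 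Applying $\kappa_\gamma = \kappa_{s(\gamma)}$ to both $\beta$ and $\beta_2 = \alpha_{i_0 k}$ (each of which shares the row of $\alpha$) yields the equivalence $\kappa_\beta = \kappa_{\beta_1}+\kappa_{\beta_2} \Longleftrightarrow \kappa_{s(\beta)} = \kappa_{\beta_1}+\kappa_{s(\beta_2)}$, so the $\mathfrak{A}$-condition is preserved and reflected and the map restricts to an injection $\mathfrak{A}_\beta \hookrightarrow \{(\beta_1', \beta_2') \in \mathfrak{A}_{s(\beta)} : \beta_2' \neq -\alpha\}$.

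The substantive step is surjectivity: given $(\alpha_{k'm}, \alpha_{j_0 k'}) \in \mathfrak{A}_{s(\beta)}$ with $k' \neq i_0$, I plan to rule out $k' \in (i_0, j_0)$. Since $\beta$ is bad, Proposition~\ref{prop: FH 1}(1) gives $\kappa_\beta = 0$, hence $\kappa_{s(\beta)} = 0$, so membership in $\mathfrak{A}_{s(\beta)}$ forces $\kappa_{\alpha_{k'm}} = \kappa_{\alpha_{j_0 k'}} = 0$. Evaluating these via Proposition~\ref{prop: FH 1}(5) and~(3) respectively gives $w(k') < w(m)$ and $w(i_0) < w(k')$; combining with the badness inequality $w(m) < w(j_0)$ (the definition of bad in the first-form setup, which places $w(m)$ strictly between $w(i_0)$ and $w(j_0)$) yields $w(k') \in (w(i_0), w(j_0))$. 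Were $k' \in (i_0, j_0)$ this would contradict the colength-one constraint on $w$, so $k' < i_0$ and the preimage $(\alpha_{k'm}, \alpha_{i_0 k'})$ lies in $\mathfrak{D}_\beta$ (and hence in $\mathfrak{A}_\beta$ by the equivalence above). The moreover statement follows from the same chain $w(i_0) < w(k) < w(m) < w(j_0)$ applied in the other direction: it places $w(k)$ strictly between $w(i_0)$ and $w(j_0)$, certifying $\beta_2 = \alpha_{i_0 k}$ as bad. I expect the main obstacle to be the careful bookkeeping of which case of Proposition~\ref{prop: FH 1} governs each $\kappa$-value; once this is fixed, the argument is essentially mechanical.
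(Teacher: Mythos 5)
Your proof is correct and follows essentially the same route as the paper's: both reduce the bijection to bookkeeping of the $\kappa$-values via the explicit formulas of Proposition~\ref{prop: FH 1}, using $\kappa_\gamma=\kappa_{s(\gamma)}$ for roots sharing the row of $\alpha$ to transfer the $\mathfrak{A}$-condition. If anything, your argument is slightly more complete: the paper's chain of equivalences only establishes that the map preserves and reflects membership in $\mathfrak{A}$, whereas you explicitly rule out elements of $\mathfrak{A}_{s(\beta)}$ indexed by $k'\in(i_0,j_0)$ using the colength-one constraint on $w$, which is the surjectivity step the paper leaves implicit.
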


\begin{proof}
For $(\beta_1,\beta_2)\in\mathfrak{D}_\beta$ it is easy to see that $\kappa_\beta=\kappa_{\beta_1}+\kappa_{\beta_2}$, if and only if $\kappa_{\beta_1}=0=\kappa_{\beta_2}$, if and only if $w(\beta_1)>0$, $w(\beta_2)>0$, and $w'(\beta_2)<0$, if and only if $\kappa_{\beta_1}=0=\kappa_{s(\beta_2)}$, if and only if $\kappa_{s(\beta)}=\kappa_{\beta_1}+\kappa_{s(\beta_2)}$. Moreover, if $(\beta_1,\beta_2)\in\mathfrak{A}_\beta$ then $w(\beta_2)>0$ and $w'(\beta_2)<0$, and so $\beta_2$ is bad. This completes the proof.
\end{proof}

\begin{lemma}\label{lemma: decomposition 1-2}
Let $\beta\in\Phi^-$ be a root sharing the column of $\alpha$. If $\beta$ is bad then the map $(\beta_1,\beta_2)\mapsto (s(\beta_1),\beta_2)$ gives rise to a bijection between the following sets:
\begin{itemize}
\item the set $\mathfrak{A}_{\beta}$;
\item the set of elements $(\beta_1,\beta_2)$ of $\mathfrak{A}_{s(\beta)}$ such that $s(\beta_1)$ is bad.
\end{itemize}
Moreover, if $\beta$ is not bad and $(\beta_1,\beta_2)\in\mathfrak{A}_{s(\beta)}$ then $s(\beta_1)$ is not bad.
\end{lemma}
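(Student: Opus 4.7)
The plan is to imitate the proof of Lemma~\ref{lemma: decomposition 1-1}, exploiting the column/row symmetry under $s = s_\alpha$. First, I parametrize: a root $\beta\in\Phi^-$ sharing the column of $\alpha$ is $\beta = \eps_l - \eps_{j_0}$ with $l > j_0$, so $s(\beta) = \eps_l - \eps_{i_0}$, and any $(\beta_1,\beta_2)\in\mathfrak{D}_\beta$ has the form $(\eps_m - \eps_{j_0},\,\eps_l - \eps_m)$ for some $j_0 < m < l$. The map of the lemma sends this to $(\eps_m - \eps_{i_0},\,\eps_l - \eps_m)\in\mathfrak{D}_{s(\beta)}$, which is manifestly injective.

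The key computation is to evaluate the relevant $\kappa$'s using Proposition~\ref{prop: FH 1}: $\kappa_{\beta_2}$ falls under case (5) (and $s$ fixes $\beta_2$), while $\kappa_\beta$ and $\kappa_{\beta_1}$ fall under cases (1)/(2), and $\kappa_{s(\beta)}$, $\kappa_{s(\beta_1)}$ fall under case (4). Each of the four outer $\kappa$'s reduces to a $\delta$-symbol comparing $w(\text{row})$ with $w(i_0)$ or $w(j_0)$. A direct check then yields the identity
\[
\kappa_\beta - \kappa_{\beta_1} = \kappa_{s(\beta)} - \kappa_{s(\beta_1)} \;\Longleftrightarrow\; \bigl[\bigl(\beta \text{ is bad}\bigr) \Leftrightarrow \bigl(\beta_1 \text{ is bad}\bigr)\bigr],
\]
together with the observation that $s(\beta_1)$, being a negative root with $m > i_0$, is bad precisely when $\beta_1$ is (both coincide with the condition $w(i_0) < w(m) < w(j_0)$ via the sign-mismatch criterion).

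For the main bijection (with $\beta$ bad), combining the membership condition $\kappa_\beta = \kappa_{\beta_1}+\kappa_{\beta_2}$ with $\beta$ bad forces $w(i_0) < w(l) < w(m) < w(j_0)$: in particular $\beta_1$ (and hence $s(\beta_1)$) is bad, and the key identity above delivers $(s(\beta_1),\beta_2)\in\mathfrak{A}_{s(\beta)}$. Conversely, any $(\gamma_1,\gamma_2)\in\mathfrak{A}_{s(\beta)}$ with $s(\gamma_1)$ bad has $\gamma_1 = \eps_{m'}-\eps_{i_0}$ with $m' > j_0$ and $w(i_0) < w(m') < w(j_0)$, and the parallel computation pulls back via $s$ to an element of $\mathfrak{A}_\beta$, providing the inverse.

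For the ``moreover'' clause, assume $\beta$ is not bad, so $w(l)\notin\bigl(w(i_0),w(j_0)\bigr)$, and let $(\beta_1,\beta_2)\in\mathfrak{A}_{s(\beta)}$. If $\beta_1 = -\alpha$ then $s(\beta_1) = \alpha$ is positive, hence trivially not a bad negative root; otherwise a case split on the signs of $w(l)-w(j_0)$ and of $w(l)-w(i_0)$, combined with the identity $\delta_{w(l)>w(i_0)} = \delta_{w(m')>w(i_0)} + \delta_{w(l)>w(m')}$ coming from $(\beta_1,\beta_2)\in\mathfrak{A}_{s(\beta)}$, forces $w(m')\notin\bigl(w(i_0),w(j_0)\bigr)$, so $s(\beta_1)$ is not bad. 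The main bookkeeping obstacle is aligning the piecewise formulas of Proposition~\ref{prop: FH 1} with the several $\delta$-identities, in particular handling the exceptional case $\beta_1 = -\alpha$ (where the formula for $\kappa_{\gamma_1}$ must be adjusted); but no ideas beyond those of Lemma~\ref{lemma: decomposition 1-1} are required.
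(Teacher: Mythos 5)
Your argument is essentially the paper's: both proofs reduce the claim to a direct comparison of the explicit formulas for the $\kappa$'s (namely $\kappa_{\eps_k-\eps_{j_0}}=\delta_{w(k)>w(j_0)}$ and $\kappa_{\eps_k-\eps_{i_0}}=\delta_{w(k)>w(i_0)}$ for $k>j_0$, together with $\kappa_{\beta_2}=\delta_{w(l)>w(m)}$), and your ``key identity'' is exactly the transfer statement underlying the paper's chain of equivalences. The parametrization, both directions of the bijection, and the ``moreover'' clause (including the exceptional cases $\beta_1=-\alpha$ and $i_0<m'\leq j_0$, where $s(\beta_1)$ is a positive root) all check out.

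One assertion is false as stated, though it turns out not to be load-bearing: you claim that for $\beta_1=\eps_m-\eps_{j_0}$ the root $s(\beta_1)=\eps_m-\eps_{i_0}$ ``is bad precisely when $\beta_1$ is.'' In the paper's definition badness requires that the root share the row or the column of $\alpha$ itself; $s(\beta_1)$ shares the column of $-\alpha$ (it falls under case (4) of Proposition~\ref{prop: FH 1}), so it is never bad, even though its sign-mismatch condition coincides with that of $\beta_1$ (both being $w(i_0)<w(m)<w(j_0)$). The proof survives because the condition defining the target set, evaluated on the image $(s(\beta_1),\beta_2)$, asks that $s(s(\beta_1))=\beta_1$ be bad, and badness of $\beta_1$ is exactly what you establish; likewise in the converse direction you correctly test badness of $s(\gamma_1)$, the column-of-$\alpha$ root. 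You should delete the parenthetical ``(and hence $s(\beta_1)$)'' and rephrase the observation as a statement about the sign-mismatch criteria only.
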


\begin{proof}
Assume $\beta$ is bad. Then we have $\kappa_\beta=0$ and $\kappa_{s(\beta)}=1$. Also, for $(\beta_1,\beta_2)\in\mathfrak{D}_{\beta}$ it is easy to see that $\kappa_{s(\beta)}=\kappa_{\beta_1}+\kappa_{\beta_2}$ and $s(\beta_1)$ is bad, if and only if $\kappa_{\beta_1}=1$, $\kappa_{s(\beta_1)}=0$, and $\kappa_{\beta_2}=0$, if and only if $\kappa_{\beta}=\kappa_{s(\beta_1)}+\kappa_{\beta_2}$. This completes the proof of the first part.

For the second part, it is clear if $\kappa_\beta=0$. If $\kappa_\beta=1$ then $\kappa_{s(\beta)}=1$ as $\beta$ is not bad, and so we have either $\kappa_{\beta_2}=1$ or $\kappa_{\beta_1}=1$. If $\kappa_{\beta_2}=1$ then $\kappa_{\beta_1}=0$ and so $s(\beta_1)$ is not bad. If $\kappa_{\beta_2}=0$ then $\kappa_{\beta_1}=1$ and $\kappa_{s(\beta_1)}=1$, and so $s(\beta_1)$ is not bad. This completes the proof.
\end{proof}

For each $\beta\in\Phi^-$, let $c_\beta$ be the coefficient of $v^{\kappa_{\beta}}$ in $V_{\beta}$. For each $\beta=\alpha_{lm}\in\Phi^-$ with either $l\leq j_0$ or $i_0\leq m$, let $c^{\imath}_\beta$ be the coefficient of $v^{\kappa_\beta-1}$ (resp. $v^{\kappa_\beta}$) in $V^{\imath}_{\beta}$ if $l> j_0$ and $m=i_0$ (resp. otherwise), where $V^{\imath}\defeq V^{-1}$.

\begin{lemma}\label{lemma: equation from height conditions 1}
If $\beta$ is a bad root sharing the row of $\alpha$ then $$c_\beta\cdot p^{|\langle\eta,\alpha^\vee\rangle|}=-c\cdot c_{s(\beta)}.$$ Moreover, we have $-c\cdot c_{-\alpha}=p^{|\langle\eta,\alpha^\vee\rangle|}$.
\end{lemma}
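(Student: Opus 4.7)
The plan is to combine the matrix identity $w^{-1} A w = u_{-\alpha}(c) \cdot s \cdot C$ (a rewriting of \eqref{eq: elementary operation 1} using $s^{-1} = s$) with the shape condition that $A$ is upper triangular modulo $v$. Reading off the $(j_0, m)$-entry yields
\begin{equation*}
(w^{-1} A w)_{j_0, m} = C_{i_0, m} + c \cdot C_{j_0, m},
\end{equation*}
and this entry also equals $A_{w(j_0), w(m)}$. Since $A_{ij} \in v^{\delta_{i > j}} R[v]$ by the shape condition, this entry vanishes at $v = 0$ whenever $w(j_0) > w(m)$, and both claimed identities will be obtained by evaluating the above display at $v = 0$ for appropriate choices of $m$.

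For the moreover statement I would take $m = i_0$, so that $w(j_0) > w(i_0)$ follows from Remark~\ref{rmk:expl:w}(1). Using $C_{i_0, i_0} = (v+p)^{n - i_0}$ and $C_{j_0, i_0} = (v+p)^{n - j_0} f_{-\alpha}(v)$ (the latter valid since $\kappa_{-\alpha} = 0$), evaluation at $v = 0$ yields $0 = p^{n - i_0} + c \cdot p^{n - j_0} \cdot c_{-\alpha}$. Dividing by $p^{n - j_0}$ (permissible since $R$ is $\cO$-flat, hence $p$-torsion free) gives $-c \cdot c_{-\alpha} = p^{j_0 - i_0} = p^{|\langle \eta, \alpha^\vee\rangle|}$.

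For the first identity I would take $\beta = \alpha_{i_0, m}$. Since $\beta$ is bad and shares the row of $\alpha$, Proposition~\ref{prop: FH 1}(1) forces $\kappa_\beta = \delta_{w(\beta) < 0} = 0$, whence $w(i_0) > w(m)$; combining with $w(j_0) > w(i_0)$ again gives $w(j_0) > w(m)$, so $A_{w(j_0), w(m)}|_{v = 0} = 0$. For $s(\beta) = \alpha_{j_0, m}$ case (3) of Proposition~\ref{prop: FH 1} applies (with $w'(s(\beta)) = w(\beta)$), yielding $\kappa_{s(\beta)} = 0$. Hence $V_\beta|_{v = 0} = c_\beta$ and $V_{s(\beta)}|_{v = 0} = c_{s(\beta)}$, so evaluating the display at $v = 0$ gives $0 = p^{n - i_0} c_\beta + c \cdot p^{n - j_0} c_{s(\beta)}$; dividing by $p^{n - j_0}$ produces the desired identity. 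The only substantive ingredient is the invocation of Proposition~\ref{prop: FH 1}(1) to secure the strict inequality $w(j_0) > w(m)$, which is exactly where the \emph{bad} hypothesis is used; the rest is routine evaluation of a matrix identity at $v = 0$.
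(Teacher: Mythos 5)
Your overall strategy is the same as the paper's: invert the elementary operation \eqref{eq: elementary operation 1}, read off an entry of $w^{-1}Aw$ as a combination of entries of $C=(v+p)^\eta V$, and extract the constant term using the vanishing of $A_{w(j_0),w(m)}$ at $v=0$. The identity $(w^{-1}Aw)_{j_0,m}=C_{i_0,m}+c\,C_{j_0,m}$, the degree/valuation bookkeeping, and the treatment of the moreover statement are all correct.

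There is, however, a sign error at the one point where you invoke the badness hypothesis. For $\beta=\alpha_{i_0,m}=\eps_{i_0}-\eps_m$ (with $m<i_0$), the condition $\delta_{w(\beta)<0}=0$ means $w(\beta)=\eps_{w(i_0)}-\eps_{w(m)}>0$, i.e.\ $w(i_0)<w(m)$ --- the opposite of what you wrote --- so your chain $w(j_0)>w(i_0)>w(m)$ does not go through. The inequality you actually need, $w(j_0)>w(m)$, is nonetheless true, and it comes directly from badness rather than from transitivity: since $\beta$ is bad, $\delta_{w(\beta)<0}\neq\delta_{w'(\beta)<0}$, and Proposition \ref{prop: FH 1}(1) gives $\delta_{w(\beta)<0}=0$, hence $w'(\beta)=\eps_{w(j_0)}-\eps_{w(m)}<0$, i.e.\ $w(j_0)>w(m)$. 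With that one-line repair the argument is complete and coincides in substance with the paper's proof, which likewise extracts constant terms from the entries of \eqref{eq: elementary operation 1} together with the finite height structure of $C=(v+p)^\eta V$.
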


\begin{proof}
This is immediate from the elementary operations in \eqref{eq: elementary operation 1} together with the finite height conditions. For instance, we have $C_{i_0,i_0}=v(v+p)^{n-i_0-1}-cC_{-\alpha}=(v+p)^{n-i_0}$ where $C$ is defined in the proof of Proposition~\ref{prop: FH 1}. Now, extracting the constant term of the identity we get $-c\cdot c_{-\alpha}=p^{|\langle\eta,\alpha^\vee\rangle|}$.
\end{proof}

\begin{lemma}\label{lemma: equation from VV^-1=1 1}
Let $\beta\in\Phi^-$ be a bad root.
\begin{enumerate}
\item If $\beta\in\Phi^-$ is sharing the row of $\alpha$ then we have $$c_{s(\beta)}+\sum_{(\beta_1,\beta_2)\in\mathfrak{A}_\beta}c_{s(\beta_2)}c^{\imath}_{\beta_1} +c_{-\alpha}c^{\imath}_{\beta}+c^{\imath}_{s(\beta)}=0.$$
\item If $\beta\in\Phi^-$ is sharing the column of $\alpha$ then we have
    $$ c_{\beta}c^{\imath}_{-\alpha} + \sum_{(\beta_1,\beta_2)\in\mathfrak{A}_{\beta}}c_{\beta_2}c^{\imath}_{s(\beta_1)}+c^{\imath}_{s(\beta)}=0.$$
\end{enumerate}
\end{lemma}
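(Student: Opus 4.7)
The strategy is to derive both identities from the matrix equation $V V^{\imath}=I$ by reading off the vanishing of specific off-diagonal entries and extracting an appropriate power of $v$. The combinatorics of which products contribute matches the decomposition identities set up in Lemma~\ref{lemma: decomposition 1-1} and Lemma~\ref{lemma: decomposition 1-2}.

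For case (1), I look at the $(j_0,m)$-entry of $V V^{\imath}$ (so $s(\beta)=\alpha_{j_0,m}$) and expand it as
\[
V_{s(\beta)}+V^{\imath}_{s(\beta)}+V_{-\alpha}V^{\imath}_{\beta}+\sum_{\substack{m<k<j_0\\ k\neq i_0}}V_{\alpha_{j_0,k}}V^{\imath}_{\alpha_{k,m}}=0.
\]
Extracting the coefficient of $v^{\kappa_{s(\beta)}}=v^0$, the three explicit terms contribute $c_{s(\beta)}$, $c^{\imath}_{s(\beta)}$, and $c_{-\alpha}c^{\imath}_\beta$ respectively (after checking that none of $s(\beta)$, $\beta$, $-\alpha$ lies in the ``first index $>j_0$ and second index $=i_0$'' regime that triggers the shifted $c^{\imath}$-convention). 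For $m<k<i_0$, the summand matches $V_{s(\beta_2)}V^{\imath}_{\beta_1}$ for the decomposition $(\beta_1,\beta_2)=(\alpha_{k,m},\alpha_{i_0,k})\in\mathfrak{D}_\beta$; using $\kappa_{s(\beta_2)}=\kappa_{\beta_2}$, the $v^0$-contribution equals $c_{s(\beta_2)}c^{\imath}_{\beta_1}$ exactly when $(\beta_1,\beta_2)\in\mathfrak{A}_\beta$, matching Lemma~\ref{lemma: decomposition 1-1}.

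The main technical obstacle is verifying that the ``extra'' summands with $i_0<k<j_0$ contribute nothing to the $v^0$-coefficient. Here the badness of $\beta$ is crucial: since $\beta$ is bad with $w(\beta)>0$, badness forces $w'(\beta)<0$, i.e.~$w(m)>w(j_0)$; meanwhile Proposition~\ref{prop:classification, colength 1 dual shapes}\eqref{it:colength:1:1} forces $w(k)\notin(w(i_0),w(j_0))$ for $i_0<k<j_0$. Combining these with the formulas $\kappa_{\alpha_{j_0,k}}=\delta_{w(k)>w(i_0)}$ and $\kappa_{\alpha_{k,m}}=\delta_{w(m)>w(k)}$ from Proposition~\ref{prop: FH 1} (noting that $\alpha_{k,m}$ is not bad in this range because $s$ fixes both its indices), a short case analysis gives $\kappa_{\alpha_{j_0,k}}+\kappa_{\alpha_{k,m}}\geq 1$, killing the contribution. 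Summing the remaining terms yields identity (1).

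Case (2), with $\beta=\alpha_{l,j_0}$ sharing the column of $\alpha$ and $s(\beta)=\alpha_{l,i_0}$, is handled in parallel using the $(l,i_0)$-entry of $V V^{\imath}$. The new feature is that $s(\beta)$ lies in the shifted $c^{\imath}$-regime, so $\kappa_{s(\beta)}=\kappa_\beta+1$ and the coefficient to extract is that of $v^{\kappa_{s(\beta)}-1}=v^{\kappa_\beta}$; this explains both the absence of a $c_{s(\beta)}$-term (since $V_{s(\beta)}$ only starts at $v^{\kappa_{s(\beta)}}$) and the appearance of $c^{\imath}_{s(\beta)}$ at that degree. The summand for $k=j_0$ gives $c_\beta c^{\imath}_{-\alpha}$; summands for $j_0<k<l$ identify with $V_{\beta_2}V^{\imath}_{s(\beta_1)}$ and contribute $c_{\beta_2}c^{\imath}_{s(\beta_1)}$ for $(\beta_1,\beta_2)\in\mathfrak{A}_\beta$ by Lemma~\ref{lemma: decomposition 1-2}; and summands for $i_0<k<j_0$ are eliminated by the parallel badness argument, now using $w(l)>w(i_0)$ together with $w(l)<w(j_0)$ coming from badness of $\beta$. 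Assembling these contributions yields the second identity.
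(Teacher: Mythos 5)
Your strategy coincides with the paper's: expand the $s(\beta)$-entry of $VV^{\imath}=I$ and extract the coefficient of $v^{0}$. Your case (1) is correct -- the surviving decompositions are exactly those in $\mathfrak{A}_\beta$, and the summands with $i_0<k<j_0$ do vanish for the reason you indicate (this vanishing is also exactly what Lemma \ref{lemma: decomposition 1-1} encodes, via the statement that every element of $\mathfrak{A}_{s(\beta)}$ with second component $\neq -\alpha$ comes from $\mathfrak{A}_\beta$; you re-derive it by hand, which is fine). Your formulas $\kappa_{\alpha_{j_0,k}}=\delta_{w(k)>w(i_0)}$ and ``badness gives $w(m)>w(j_0)$'' carry a uniform sign flip against Proposition \ref{prop: FH 1} (which gives $\kappa_{\alpha_{j_0,k}}=\delta_{w'(\alpha_{j_0,k})<0}=\delta_{w(k)<w(i_0)}$), but since the flip is consistent the contradiction you reach is still valid.

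In case (2) there is a genuine gap in the summands with $j_0<k<l$. Writing $\beta_1=\alpha_{k,j_0}$, $\beta_2=\alpha_{l,k}$, the term $V_{\beta_2}V^{\imath}_{s(\beta_1)}$ contributes $c_{\beta_2}\cdot(\text{constant term of }V^{\imath}_{s(\beta_1)})$ whenever $\kappa_{\beta_2}=0$. Since $s(\beta_1)=\alpha_{k,i_0}$ lies in the shifted regime, $c^{\imath}_{s(\beta_1)}$ \emph{is} that constant term whenever $\kappa_{s(\beta_1)}=1$; and there genuinely exist pairs with $\kappa_{\beta_2}=0$ but $\beta_1$ \emph{not} bad (namely $w(l)<w(k)$ with $w(k)>w(j_0)$), for which $\kappa_{\beta_1}=1$, so $(\beta_1,\beta_2)\notin\mathfrak{A}_\beta$ yet the expansion a priori produces the extra term $c_{\beta_2}c^{\imath}_{s(\beta_1)}$ absent from identity (2). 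Lemma \ref{lemma: decomposition 1-2} only sorts decompositions by the additivity of $\kappa$; it does not kill these terms. What is needed, and what your proposal omits, is the observation that for $\beta'\neq-\alpha$ sharing the column of $-\alpha$ with $s(\beta')$ not bad, the coefficient of $v^{\kappa_{\beta'}-1}$ in $V^{\imath}_{\beta'}$ vanishes: by the second part of Lemma \ref{lemma: subadditive 1} a failure of subadditivity for $\beta'$ would force $s(\beta')$ bad, so subadditivity holds, $V^{\imath}_{\beta'}\in v^{\kappa_{\beta'}}R[v]$, and hence $c^{\imath}_{s(\beta_1)}=0$ for $\beta_1$ not bad. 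With that step inserted your argument closes; without it the asserted identity does not follow from the expansion.
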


\begin{proof}
(1) follows immediately from Lemma~\ref{lemma: decomposition 1-1} and Lemma~\ref{lemma: subadditive 1}, by extracting the constant term in the equation $(V\cdot V^{\imath})_{s(\beta)}=0$.

It is immediate that if $\beta'\in\Phi^-$ with $\beta'\neq-\alpha$ shares the column of $-\alpha$ and $s(\beta')$ is not bad, then the coefficient of $v^{\kappa_{\beta'}-1}$ in $V^{\imath}_{\beta'}$ vanishes, as $\kappa_{\beta'}$ is subadditive if $s(\beta')$ is not bad. This result together with Lemma~\ref{lemma: decomposition 1-2} and Lemma~\ref{lemma: subadditive 1} implies (2), by extracting the constant term in the equation $(V\cdot V^{\imath})_{s(\beta)}=0$.
\end{proof}

\subsection{Finite height conditions: the second form}\label{subsec: height condition, second}
Let $\tld{w}^*\in \Adm(\eta)$ be a colength one shape of the second form.
Write $\tld{w}=ws_\alpha t_{\eta-\alpha}w^{-1}$ where $w\in W$ satisfies $w(\alpha)<0$ and $w(j_0)<w(k)<w(i_0)$ for all $i_0<k<j_0$ if we set $\alpha=\alpha_{i_0j_0}\in \Phi^+\setminus\Delta$. For notational convenience we set $w'\defeq w s_\alpha$ and $s\defeq s_\alpha$, so that we may write $\tld{w}=w't_{\eta-\alpha}sw'^{-1}$.

With an eye towards applying the monodromy conditions, we introduce the following:
\begin{defn}
We say that a negative root $\beta\in \Phi^-$ is \emph{bad} (with respect to $\tld{w}$) if
\begin{itemize}
\item $\beta$ shares either the row or the column of $\alpha$;
\item $\beta$ satisfies $\delta_{w(\beta)<0}=\delta_{w'(\beta)<0}$.
\end{itemize}
\end{defn}

\begin{prop}\label{prop: FH 2}
Let $A\in U(\tld{w},\leq\!\!\eta)(R)$.

Then $A_{w'(i_0)w'(i_0)}=(w'^{-1}Aw')_{i_0i_0}=c(v+p)^{n-j_0}$ for some $c\in R$ and
\begin{equation}\label{eq: elementary operation 2}
u_{\alpha}(-c)\cdot v^{\eps_{i_0}}\cdot (w'^{-1}Aw')\cdot s=(v+p)^\eta\cdot V'
\end{equation}
where $V'$ is a lower triangular matrix whose entries are in $R[v]$.

Moreover, $V\defeq v^{-\eps_{j_0}}\cdot V'$ is unipotent, and for each $\beta\in\Phi^{-}$ the entry $V'_\beta$ can be written as $v^{\kappa'_\beta}\cdot f_{\beta}(v)$ for a polynomial $f_{\beta}(v)\in R[v]$ of the form $\sum_{k}c_{\beta,k}(v+p)^k$ where
\begin{enumerate}
\item if $\beta$ is bad then $\kappa'_\beta=\delta_{w'(\beta)<0}$ and  $\deg(f_{\beta})<|\langle\eta,\beta^\vee\rangle|+1$;
\item if $\beta$ shares the row or the column of $\alpha$ but is not bad then $\kappa'_\beta=1+\delta_{w'(\beta)<0}=\delta_{w(\beta)<0}=1$ and $\deg(f_{\beta})<|\langle\eta,\beta^\vee\rangle|$;
\item if $\beta$ shares the column of $-\alpha$ then  $\kappa'_\beta=\delta_{w(\beta)<0}$ and $\deg(f_{\beta})<|\langle\eta,\beta^\vee\rangle|-\delta_{w(\beta)>w'(\alpha)}-\delta_{w(\beta)< 0}$;
\item for all other roots $\beta\in\Phi^-$, $\kappa'_\beta=\delta_{w'(\beta)<0}$ and $\deg(f_{\beta})<|\langle\eta,\beta^\vee\rangle|$.
\end{enumerate}
\end{prop}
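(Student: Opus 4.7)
The plan is to follow the strategy of the proof of Proposition~\ref{prop: FH 1}, with combinatorial bookkeeping adjusted to the second form. After conjugating $A$ by $w'$, the relevant shape datum becomes $t_{\eta-\alpha}s$ rather than $t_\eta s_\alpha$; the left multiplication by $v^{\eps_{i_0}}$ and right multiplication by $s$ absorb the $-\alpha$-shift and the reflection, so that $C \defeq v^{\eps_{i_0}}(w'^{-1}Aw')s$ has a ``shape'' dominated by $(v+p)^\eta$, with the only anomaly being an extra factor of $v$ in row $j_0$ (coming from the column swap composed with the $v^{\eps_{i_0}}$ on the left). Reading off the explicit degree bound on $(w'^{-1}Aw')_{i_0 i_0}\in R[v]$ (at most $n-j_0$) from the definition of $U^{[0,n-1]}(\tld{w})$, and combining it with the finite-height condition $(v+p)^{n-j_0}\mid(w'^{-1}Aw')_{i_0 i_0}$, forces $(w'^{-1}Aw')_{i_0 i_0}=c(v+p)^{n-j_0}$ for a unique $c\in R$.

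Next, applying $u_\alpha(-c)$ kills the leading term of the $(i_0,j_0)$ entry of $v^{\eps_{i_0}}(w'^{-1}Aw')s$, which would otherwise obstruct lower-triangularity. I then run the three-step induction~\eqref{induction1}--\eqref{induction3} from the proof of Proposition~\ref{prop: FH 1}, working from the bottom-right $k\times k$ block upward: the finite-height condition $(v+p)^{k(k-1)/2}\mid (k\times k)$-minor, combined with the degree bounds above, forces the off-diagonal entries to vanish and the diagonal entries to have the required shape. The outcome is the factorization $(v+p)^\eta V'$ with $V'$ lower triangular; the unipotency of $V=v^{-\eps_{j_0}}V'$ is verified on the diagonal, where the $j_0$-th diagonal entry of $V'$ picks up the extra $v$ from $v^{\eps_{i_0}}$ composed with the swap, and every other diagonal entry is $1$.

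The remaining task is to establish the precise claims (1)--(4) on $\kappa'_\beta$ and on $\deg(f_\beta)$. For generic $\beta$, the induction directly produces an entry of the claimed form. The exceptional entries sit in the column sharing $-\alpha$, where the degree bound exceeds what is strictly needed by $1$; for these I would clear them via an auxiliary row operation $u_\beta(-x)$, use the finite-height condition to derive a linear relation forcing $x$ to be proportional to the leading coefficient of $V'_{-\alpha}$, and then prove that this leading coefficient is a unit in $R[1/p]$ by an analogue of Lemma~\ref{lemma: equation from height conditions 1} (reading off the $(i_0,i_0)$ entry of $C$ after normalization by $u_\alpha(-c)$). This forces $x=0$.

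The main obstacle I expect is the combinatorial bookkeeping in this last step. The definition of ``bad'' here involves the equality $\delta_{w(\beta)<0}=\delta_{w'(\beta)<0}$ instead of the inequality used in the first form, which flips the interpretation of which roots gain an extra $v$-factor versus an extra $(v+p)$-factor, and which sit in the row or column of $\alpha$ versus $-\alpha$. The casewise analysis behind items~(1)--(4), together with the analogues of Lemmas~\ref{lemma: subadditive 1}--\ref{lemma: equation from VV^-1=1 1} needed later to implement the monodromy conditions, must therefore be redone from scratch; each individual case, however, reduces to a computation closely parallel to one already carried out in \S\ref{subsec: height condition, first}.
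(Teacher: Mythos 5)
Your proposal matches the paper's proof, which likewise defines $c$ as the coefficient of $v^{n-j_0}$ in $(w'^{-1}Aw')_{i_0i_0}$, sets $C= u_{\alpha}(-c)v^{\eps_{i_0}}(w'^{-1}Aw')s$, records the entrywise degree bounds, and then declares the rest "similar to Proposition~\ref{prop: FH 1}" — i.e.\ the same three-step minor induction plus the auxiliary $u_\beta(-x)$ operation and the unit argument via the analogue of Lemma~\ref{lemma: equation from height conditions 1} (here Lemma~\ref{lemma: equation from height conditions 2}) that you describe. The combinatorial recasting of the "bad" roots that you flag as the remaining work is exactly what the paper also leaves to the reader.
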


\begin{proof}
It is easy to see that $(w'^{-1}Aw')_{i_0i_0}$ is a polynomial of degree less than $n-j_0+1$. Let $c\in R$ be the coefficient of $v^{n-j_0}$ in $(w'^{-1}Aw')_{i_0i_0}$, and set $C\defeq u_{\alpha}(-c)v^{\eps_{i_0}}(w'^{-1}Aw')s$ as in \eqref{eq: elementary operation 2}. One can readily observe the following degree bound:
\begin{itemize}
\item $\deg (C_{ll})\leq n-l$ if $l\neq j_0$, and $\deg (C_{j_0j_0})\leq n-l+1$ and $C_{j_0j_0}\in vR[v]$;
\item for $\beta=\alpha_{lm}\in\Phi$ with $l\neq i_0$ and $m\not\in\{i_0,j_0\}$, $C_\beta=v^{\delta_{w'(\beta)<0}}f_\beta(v)$ with $\deg (f_\beta)\leq n-m-1$;
\item for $\beta=\alpha_{li_0}\in\Phi$, $C_\beta=v^{\delta_{w(\beta)<0}}f_{\beta}(v)$ with $\deg(f_\beta)\leq n-i_0-1-\delta_{w(\beta)>w'(\alpha)}-\delta_{w(\beta)< 0}$;
\item for $\beta=\alpha_{lj_0}\in\Phi$ with $\beta\neq\alpha$, $C_\beta=v^{\delta_{w(\beta)<0}}f_{\beta}(v)$ with $\deg(f_\beta)\leq n-j_0+1-\delta_{w(\beta)>w'(-\alpha)}-\delta_{w(\beta)< 0}$;
\item for $\beta=\alpha_{i_0m}\in\Phi$, $C_\beta=v^{\delta_{w(\beta)<0}}f_\beta(v)$ with $\deg(f_\beta)\leq n-m$ if $\beta\neq\alpha$, and $C_\alpha=vf_\beta(v)$ with $\deg(f_\beta)\leq n-j_0$.
\end{itemize}
Now, the rest of the proof is similar to Proposition~\ref{prop: FH 1}, and we leave the details for the reader.
\end{proof}

For the rest of this subsection, we observe some necessary properties of $\kappa'_\beta\in\{0,1\}$, defined in Proposition~\ref{prop: FH 2}, as well as some identities of the coefficients of $V'$ (and so of $V$ as well) from the finite height conditions. We first fix some notation:
\begin{itemize}
\item For $\beta\in\Phi^-$, we write $\mathfrak{D}_\beta$ for the set of the decompositions $(\beta_1,\beta_2)$ of $\beta$ into two negative roots with $\beta_1$ sharing the column of $\beta$ (and $\beta_2$ sharing the row of $\beta$).
\item For a bad root $\beta\in\Phi^{-}$, we write $\mathfrak{A}_\beta$ for the set $\{(\beta_1,\beta_2)\in\mathfrak{D}_\beta\mid \kappa'_\beta=\kappa'_{\beta_1}+\kappa'_{\beta_2}\}$.
\item To describe the bottom degrees of each entry of $V$, we set $\kappa_\beta\defeq \kappa'_\beta-1$ if $\beta\in\Phi^-$ shares the row of $-\alpha$, and $\kappa_\beta\defeq\kappa'_\beta$ otherwise.
\end{itemize}

The following are immediate consequences of Proposition~\ref{prop: FH 2}, which will be frequently used:
\begin{itemize}
\item if $\beta$ is bad then we have $\kappa'_\beta=\kappa'_{s(\beta)}$;
\item if $\beta$ is a bad root sharing the column of $\alpha$ then $\deg(v^{-\kappa'_{s(\beta)}}V'_{s(\beta)})<|\langle\eta,s(\beta)^\vee\rangle|-1$;
\item if $\beta=\alpha_{j_0m}$ with $i_0<m\leq j_0$ then $\kappa'_\beta=1$;
\item if $\beta=\alpha_{li_0}$ with $i_0<l\leq j_0$ then $\kappa'_{\beta}=0$, and, in particular, $\kappa_{-\alpha}=-1$.
\end{itemize}

\begin{lemma}\label{lemma: subadditive 2}
Let $\beta=\alpha_{lm}\in\Phi^-$. Then $\kappa'_\beta$ is subadditive, i.e. $\kappa'_{\beta}\leq \kappa'_{\beta_1}+\kappa'_{\beta_2}$ for all $(\beta_1,\beta_2)\in\mathfrak{D}_\beta$.

Moreover, if $\beta$ has a decomposition $(\beta_1,\beta_2)\in\mathfrak{D}_\beta$ with $\kappa_\beta>\kappa_{\beta_1}+\kappa_{\beta_2}$ then
\begin{itemize}
\item $l>j_0$ and $m\leq i_0$;
\item $\beta_1$ shares the row of $-\alpha$ such that $\beta_2$ is bad and either $s(\beta_1)$ is bad or $\beta_1=-\alpha$.
\end{itemize}
\end{lemma}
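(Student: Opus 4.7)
The plan is to parallel the proof of Lemma~\ref{lemma: subadditive 1}, with a case analysis reflecting the extra "row of $-\alpha$" phenomenon appearing in Proposition~\ref{prop: FH 2}. My first step is to collapse the four cases of Proposition~\ref{prop: FH 2} into a uniform formula: for $\beta\in\Phi^-$ one has $\kappa'_\beta=\delta_{w(\beta)<0}$ when $\beta$ shares row $i_0$ or column $j_0$ of $\alpha$, or column $i_0$ of $-\alpha$; and $\kappa'_\beta=\delta_{w'(\beta)<0}$ in all other cases (which include row $j_0$ off column $i_0$ and the generic case). For the "not bad" roots on the row or column of $\alpha$, this uses the second-form constraint $w(i_0)>w(j_0)$ to force $w(\beta)<0<w'(\beta)$, so that $1+\delta_{w'(\beta)<0}=1=\delta_{w(\beta)<0}$; the remaining cases are immediate.

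With this uniform description, subadditivity of $\kappa'_\beta$ reduces to the additivity $\beta=\beta_1+\beta_2$ together with the trivial bound $\delta_{u(\gamma)<0}\leq \delta_{u(\gamma_1)<0}+\delta_{u(\gamma_2)<0}$ for $u\in W$. The only nontrivial configurations are those in which the middle index $k$ of the decomposition equals $i_0$ or $j_0$, causing $\beta_1$ and $\beta_2$ to be governed by different elements of $\{w,w'\}$. In those cases I would verify subadditivity by hand, using the second-form inequalities $w(i_0)>w(j_0)$ and $w(j_0)<w(k')<w(i_0)$ for $i_0<k'<j_0$ together with $w'=ws_\alpha$; each sub-configuration is elementary.

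For the \emph{moreover} part, I use $\kappa_\beta=\kappa'_\beta-\delta_{\beta\text{ shares row }j_0}$ to compute
\[
\kappa_\beta-\kappa_{\beta_1}-\kappa_{\beta_2}=\bigl(\kappa'_\beta-\kappa'_{\beta_1}-\kappa'_{\beta_2}\bigr)+\bigl(\delta_{\beta_1\text{ shares row }j_0}+\delta_{\beta_2\text{ shares row }j_0}-\delta_{\beta\text{ shares row }j_0}\bigr).
\]
Since $\beta$, $\beta_1$, $\beta_2$ share row $j_0$ iff $l=j_0$, $k=j_0$, $l=j_0$ respectively, the second bracket is $\leq 0$ except when $l\neq j_0$ and $k=j_0$, where it equals $+1$. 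Combined with subadditivity of $\kappa'$, failure of $\kappa$-subadditivity forces $l>j_0=k>m$ together with the equality $\kappa'_\beta=\kappa'_{\beta_1}+\kappa'_{\beta_2}$; in particular $\beta_1=\alpha_{j_0m}$ lies on the row of $-\alpha$, as claimed.

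It remains to exclude $i_0<m<j_0$ and to deduce the properties of $\beta_2$ and $s(\beta_1)$. For $i_0<m<j_0$, the second-form constraint $w(j_0)<w(m)<w(i_0)$ forces $\kappa'_{\beta_1}=1$, and saturation $\kappa'_\beta=1+\kappa'_{\beta_2}$ would require $\kappa'_\beta=1$ and $\kappa'_{\beta_2}=0$, i.e.\ $w(l)>w(m)$ but $w(l)\leq w(j_0)$, which is impossible since $w(m)>w(j_0)$ in this range. Once $m\leq i_0$, the structural claims on $\beta_2=\alpha_{lj_0}$ and on $s(\beta_1)=\alpha_{i_0m}$ (or $\beta_1=-\alpha$ when $m=i_0$) follow by enumerating the possible orderings of $(w(l),w(m),w(i_0),w(j_0))$ compatible with $\kappa'_\beta=\kappa'_{\beta_1}+\kappa'_{\beta_2}$; the main obstacle is the bookkeeping of these sign patterns, but each verification is elementary and mirrors the analysis already used in the subadditivity step.
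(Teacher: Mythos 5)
Your proposal is correct and follows essentially the same route as the paper's proof: the same uniform description of $\kappa'_\beta$ as $\delta_{w(\beta)<0}$ on the row/column of $\alpha$ and the column of $-\alpha$ and $\delta_{w'(\beta)<0}$ elsewhere, subadditivity via the triangle inequality for these indicator functions, and the ``moreover'' part by tracking the $\kappa_\beta=\kappa'_\beta-\delta_{l=j_0}$ correction to force $k=j_0$, $l>j_0$ and saturation of $\kappa'$-subadditivity. (Minor slip: the bracket $\delta_{\beta_1\text{ shares row }j_0}+\delta_{\beta_2\text{ shares row }j_0}-\delta_{\beta\text{ shares row }j_0}$ equals $\delta_{k=j_0}$, so it is $=0$ rather than $\leq 0$ outside the exceptional case; this does not affect the argument.)
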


\begin{proof}
The proof is very similar to Lemma~\ref{lemma: subadditive 1}. Observe that
\begin{itemize}
\item if $\beta=\alpha_{lm}$ with $l\neq i_0$ and $m\not\in\{i_0,j_0\}$ then $\kappa'_\beta=\delta_{w'(\beta)<0}$;
\item if $\beta=\alpha_{lm}$ with $m\in\{i_0,j_0\}$ then $\kappa'_\beta=\delta_{w(\beta)<0}$;
\item if $\beta=\alpha_{i_0m}$ then $\kappa'_\beta=\delta_{w(\beta)<0}$.
\end{itemize}
The subadditivity of $\kappa'_\beta$ follows from combining the three observations above together with the fact $\delta_{w'(\beta)<0}\leq \delta_{w'(\beta_1)<0}+\delta_{w'(\beta_2)<0}$ if $(\beta_1,\beta_2)\in\mathfrak{D}_\beta$. The subadditivity of $\kappa_\beta$ when $l\leq j_0$ or $i_0<m$ also easily follows from the subadditivity of $\kappa'_\beta$ together with the colength one property of $w$.

Assume that $\beta$ has a decomposition $(\beta_1,\beta_2)\in\mathfrak{D}_\beta$ with $\kappa_\beta>\kappa_{\beta_1}+\kappa_{\beta_2}$. Then as $\kappa'_\beta$ is subadditive and $\kappa_{\beta}$ is subadditive if $m>i_0$, we see that $\beta_1$ shares the row of $-\alpha$ and so $\beta_2$ shares the column of $\alpha$. It is also easy to see that $\beta_2$ is bad, and $s(\beta_1)$ is bad if $\beta_1\neq-\alpha$.
\end{proof}

\begin{lemma}\label{lemma: decomposition 2-1}
Let $\beta\in\Phi^-$ be a bad root sharing the row of $\alpha$. Then the map $(\beta_1,\beta_2)\mapsto (\beta_1, s(\beta_2))$ gives rise to a bijection between the following sets:
\begin{itemize}
\item the set $\mathfrak{A}_\beta$;
\item the set of elements $(\beta_1,\beta_2)$ of $\mathfrak{A}_{s(\beta)}$ with $\beta_2\neq-\alpha$.
\end{itemize}
Moreover, $\beta_2\in\Phi^-$ is bad if $(\beta_1,\beta_2)\in\mathfrak{A}_\beta$.
\end{lemma}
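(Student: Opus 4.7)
I plan to mirror the argument of Lemma~\ref{lemma: decomposition 1-1}, replacing $\kappa$ by $\kappa'$ and invoking the formulas of Proposition~\ref{prop: FH 2} together with the immediate consequences listed below it. Writing $\beta=\alpha_{i_0 m}$ with $m<i_0$, a decomposition $(\beta_1,\beta_2)\in\mathfrak{D}_\beta$ takes the form $\beta_1=\alpha_{km}$, $\beta_2=\alpha_{i_0 k}$ with $m<k<i_0$, so that $(\beta_1,s(\beta_2))=(\alpha_{km},\alpha_{j_0 k})$ lies in $\mathfrak{D}_{s(\beta)}$. The excluded element of $\mathfrak{D}_{s(\beta)}$ with second coordinate $-\alpha=\alpha_{j_0 i_0}$ corresponds to $k=i_0$, which is automatically outside the range $m<k<i_0$, so the map $s$ on the second coordinate automatically avoids $\{s(\beta_2)=-\alpha\}$, and its inverse is $s^{-1}$, giving a set-theoretic bijection between $\mathfrak{D}_\beta$ and the subset of $\mathfrak{D}_{s(\beta)}$ whose second coordinate is not $-\alpha$.

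The core of the proof is then a chain of equivalences parallel to that of Lemma~\ref{lemma: decomposition 1-1}:
\[
\kappa'_\beta = \kappa'_{\beta_1} + \kappa'_{\beta_2}
\;\Longleftrightarrow\;
\text{explicit sign conditions on } w(\beta_i),\,w'(\beta_i)
\;\Longleftrightarrow\;
\kappa'_{s(\beta)} = \kappa'_{\beta_1} + \kappa'_{s(\beta_2)}.
\]
The two symmetries driving this chain are $\kappa'_\beta = \kappa'_{s(\beta)}$, valid for any bad $\beta$ by the first immediate consequence of Proposition~\ref{prop: FH 2}, and, when $\beta_2$ is bad, $\kappa'_{\beta_2}=\kappa'_{s(\beta_2)}$; both follow from the fact that $s_\alpha$ swaps $i_0$ and $j_0$, hence interchanges the evaluations $w(\cdot)$ and $w'(\cdot)$ and preserves the defining $\delta$-condition for being bad. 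On the $\beta_1$-factor the map $s$ acts trivially (since $\beta_1$ does not involve $i_0$ or $j_0$), so $\kappa'_{\beta_1}$ survives the substitution unchanged.

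The main obstacle I anticipate is the ``moreover'' clause: showing that $\beta_2$ is necessarily bad whenever $(\beta_1,\beta_2)\in\mathfrak{A}_\beta$. I would argue by contradiction. If $\beta_2=\alpha_{i_0 k}$ shares the row of $\alpha$ but is not bad, then case~(2) of Proposition~\ref{prop: FH 2} forces $\kappa'_{\beta_2}=1$ and, combined with $\kappa'_\beta\le 1$, the equality $\kappa'_\beta = \kappa'_{\beta_1}+\kappa'_{\beta_2}$ yields $\kappa'_{\beta_1}=0$ and $\kappa'_\beta=1$. Unpacking these via case~(1) of Proposition~\ref{prop: FH 2} forces $w(m)\ge w(i_0)$ and $w(k)\ge w(m)\ge w(i_0)$, which contradicts the ``not bad'' condition $w(k)\in(w(j_0),w(i_0))$. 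Once this exclusion is in place, the chain of equivalences provides the restriction of the set-theoretic bijection above to $\mathfrak{A}_\beta\leftrightarrow \mathfrak{A}_{s(\beta)}\setminus\{\beta_2=-\alpha\}$, and the remainder is a formal bookkeeping check across the cases of Proposition~\ref{prop: FH 2}.
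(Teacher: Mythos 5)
Your overall strategy --- transporting the proof of Lemma \ref{lemma: decomposition 1-1} to the second form --- is exactly the paper's intended route (the paper literally says ``the argument is very similar to Lemma \ref{lemma: decomposition 1-1}''), but your first step contains a genuine error. The map $(\beta_1,\beta_2)\mapsto(\beta_1,s(\beta_2))$ is \emph{not} a set-theoretic bijection from $\mathfrak{D}_\beta$ onto the elements of $\mathfrak{D}_{s(\beta)}$ whose second coordinate is not $-\alpha$. Writing $\beta=\alpha_{i_0m}$, the source $\mathfrak{D}_\beta$ is indexed by $m<k<i_0$ and its image consists of the pairs $(\alpha_{km},\alpha_{j_0k})$ with $m<k<i_0$; but $\mathfrak{D}_{s(\beta)}$ is indexed by $m<k'<j_0$, so besides the excluded pair at $k'=i_0$ it also contains all pairs with $i_0<k'<j_0$ (a nonempty range since $\alpha\notin\Delta$), and none of these lie in the image --- indeed $s(\alpha_{j_0k'})=\alpha_{i_0k'}$ is a \emph{positive} root for such $k'$, so it cannot arise as $s(\beta_2)$ with $(\beta_1,\beta_2)\in\mathfrak{D}_\beta$. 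Your chain of equivalences only compares an element of $\mathfrak{D}_\beta$ with its image, so it says nothing about these extra decompositions, and surjectivity onto $\{(\beta_1,\beta_2)\in\mathfrak{A}_{s(\beta)}\mid\beta_2\neq-\alpha\}$ is not established. The missing check is that $(\alpha_{k'm},\alpha_{j_0k'})\notin\mathfrak{A}_{s(\beta)}$ for $i_0<k'<j_0$: one always has $\kappa'_{\alpha_{j_0k'}}=1$ (third bullet after Proposition \ref{prop: FH 2}), which disposes of the case $\kappa'_{s(\beta)}=0$; and when $\kappa'_{s(\beta)}=1$, the badness of $\beta$ gives $w(j_0)>w(m)$, which combined with the colength-one condition $w(j_0)<w(k')<w(i_0)$ forces $\kappa'_{\alpha_{k'm}}=1$ as well, so the sum is $2>\kappa'_{s(\beta)}$.

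A secondary problem is the ``moreover'' clause. Your inequalities are reversed relative to the paper's conventions (for a negative root $\gamma=\eps_a-\eps_b$ one has $w(\gamma)<0$ exactly when $w(a)>w(b)$): $\kappa'_\beta=1$ gives $w(i_0)>w(m)$, not $w(m)\ge w(i_0)$, and $\kappa'_{\beta_1}=0$ gives $w(k)<w(m)$, not $w(k)\ge w(m)$. With the correct inequalities your direct contradiction with $w(k)<w(i_0)$ evaporates; the argument only closes after also invoking the badness of $\beta$ itself, which upgrades $w(i_0)>w(m)$ to $w(j_0)>w(m)$ and then yields $w(j_0)>w(m)>w(k)>w(j_0)$ against the not-bad hypothesis $w(j_0)<w(k)$. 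The conclusion is correct, but the step as written does not follow from the hypotheses you actually use.
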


\begin{proof}
The argument is very similar to Lemma~\ref{lemma: decomposition 1-1}. We leave the details for the reader.
\end{proof}

\begin{lemma}\label{lemma: decomposition 2-2}
Let $\beta\in\Phi^-$ be a root sharing the column of $\alpha$. If $\beta$ is bad then the map $(\beta_1,\beta_2)\mapsto (s(\beta_1),\beta_2)$ gives rise to a bijection between the following sets:
\begin{itemize}
\item the set $\mathfrak{A}_\beta$;
\item the set of elements $(\beta_1,\beta_2)$ of $\mathfrak{A}_{s(\beta)}$ such that $s(\beta_1)$ is bad.
\end{itemize}
Moreover, if $\beta$ is not bad and $(\beta_1,\beta_2)\in\mathfrak{A}_{s(\beta)}$ then $s(\beta_1)$ is not bad.
\end{lemma}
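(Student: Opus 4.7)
The proof will mirror that of Lemma \ref{lemma: decomposition 1-2}, with the first-form invariant $\kappa$ replaced by its second-form analogue $\kappa'$ and Proposition \ref{prop: FH 2} (together with the bullet list following it) used in place of Proposition \ref{prop: FH 1}. The cornerstone is the identity $\kappa'_{\beta'} = \kappa'_{s(\beta')}$ valid for any bad $\beta'$ (first bullet after Proposition \ref{prop: FH 2}), combined with the explicit expressions $\kappa'_\beta = \delta_{w'(\beta)<0}$ from case (1) and $\kappa'_{s(\beta)} = \delta_{w(s(\beta))<0}$ from case (3) of that proposition.

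For the first part, I parametrize $\mathfrak{D}_\beta$ via the intermediate index $k$: writing $\beta = \alpha_{l, j_0}$ with $l > j_0$, a decomposition takes the form $(\beta_1, \beta_2) = (\alpha_{k, j_0}, \alpha_{l, k})$ for $j_0 < k < l$, and its image $(s(\beta_1), \beta_2) = (\alpha_{k, i_0}, \alpha_{l, k})$ is a decomposition of $s(\beta) = \alpha_{l, i_0}$. Since $k > j_0$, the root $\beta_1 \in \Phi^-$ shares the column of $\alpha$, and it is bad iff $w(k) \notin (w(j_0), w(i_0))$. A short case analysis using cases (1)--(4) of Proposition \ref{prop: FH 2} shows that membership in $\mathfrak{A}_\beta$ forces $\beta_1$ bad (otherwise case (2) gives $\kappa'_{\beta_1} = 1$, which combined with $\kappa'_{\beta_2} = \delta_{w(l) > w(k)}$ from case (4) is incompatible with $\kappa'_\beta \in \{0,1\}$). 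Once $\beta_1$ is bad, the identities $\kappa'_\beta = \kappa'_{s(\beta)}$ and $\kappa'_{\beta_1} = \kappa'_{s(\beta_1)}$ immediately yield $(\beta_1, \beta_2) \in \mathfrak{A}_\beta \iff (s(\beta_1), \beta_2) \in \mathfrak{A}_{s(\beta)}$, establishing the bijection onto the bad-condition subset on the right. The decompositions $(\gamma_1, \gamma_2)$ of $s(\beta)$ indexed by $k' \leq j_0$ (outside the image of the map) are automatically excluded from the right-hand subset: for such $k'$, $s(\gamma_1) \in \Phi^+$ is vacuously not bad.

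For the second part, the not-bad hypothesis on $\beta$ means $w(l) \in (w(j_0), w(i_0))$, whence $\kappa'_\beta = 1$ from case (2) and $\kappa'_{s(\beta)} = \delta_{w(l) > w(i_0)} = 0$ from case (3). For any $(\gamma_1, \gamma_2) \in \mathfrak{A}_{s(\beta)}$ this forces $\kappa'_{\gamma_1} = \kappa'_{\gamma_2} = 0$. Supposing for contradiction that $s(\gamma_1)$ is bad, the index $k$ of $\gamma_1 = \alpha_{k, i_0}$ satisfies $k > j_0$ and $w(k) \notin (w(j_0), w(i_0))$; combined with $\kappa'_{\gamma_1} = 0$ and case (3) (giving $w(k) < w(i_0)$), this forces $w(k) < w(j_0)$, and then $w(l) > w(j_0) > w(k)$ yields $\kappa'_{\gamma_2} = \delta_{w(l) > w(k)} = 1$, contradicting $\kappa'_{\gamma_2} = 0$.

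The principal bookkeeping difficulty will be tracking $\delta_{w(\cdot)<0}$ versus $\delta_{w'(\cdot)<0}$ under the involution $s_\alpha$ and consistently handling the boundary $k = j_0$ (where $\beta_1 = -\alpha$ and $\kappa'_{-\alpha} = 0$ by the last bullet after Proposition \ref{prop: FH 2}); once these signs and boundaries are settled, the verification reduces, as in the first-form analogue, to elementary combinatorics of the $w$-image on the interval $(i_0, j_0)$ and its complement.
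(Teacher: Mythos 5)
Your proof is correct and follows the same route the paper intends, which simply defers to the first-form analogue (Lemma \ref{lemma: decomposition 1-2}) and leaves the details to the reader: a direct check with the explicit values of $\kappa'$ from Proposition \ref{prop: FH 2}, the identity $\kappa'_{\beta'}=\kappa'_{s(\beta')}$ for bad $\beta'$, and the observation that $s(\gamma_1)$ is a positive root (hence not bad) when the intermediate index is $\leq j_0$. One small imprecision in your first part: the obstruction when $\beta_1$ is not bad is not that $1+\delta_{w(l)>w(k)}\notin\{0,1\}$, but that it never equals $\kappa'_\beta$ under the constraints (if $\kappa'_\beta=1$ then $w(l)>w(i_0)>w(k)$ forces $\kappa'_{\beta_2}=1$ and the sum is $2$; if $\kappa'_\beta=0$ the sum is $\geq 1$), which is exactly the ``short case analysis'' you allude to.
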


\begin{proof}
The argument is very similar to Lemma~\ref{lemma: decomposition 1-2}. We leave the details for the reader.
\end{proof}

For each $\beta\in\Phi^-$, let $c_\beta$ be the coefficient of $v^{\kappa'_{\beta}}$ in $V'_{\beta}$. For each $\beta=\alpha_{lm}\in\Phi^-$ with either $l\leq j_0$ or $i_0\leq m$, let $c^{\imath}_\beta$ be the coefficient of $v^{\kappa'_\beta-1}$ (resp. $v^{\kappa'_\beta}$) in $V^{\imath}_{\beta}$ if either $l=j_0$ or $l> j_0$ and $m=i_0$ (resp. otherwise), where $V^{\imath}\defeq V^{-1}$.

\begin{lemma}\label{lemma: equation from height conditions 2}
If $\beta$ is a bad root sharing the row of $\alpha$ then $$c_\beta\cdot p^{|\langle\eta,\alpha^\vee\rangle|}=-c\cdot c_{s(\beta)}.$$ Moreover, we have $-c\cdot c_{-\alpha}=p^{|\langle\eta,\alpha^\vee\rangle|}$.
\end{lemma}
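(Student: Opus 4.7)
The strategy is parallel to that of Lemma~\ref{lemma: equation from height conditions 1}: I extract both identities by computing specific entries of the matrix equation \eqref{eq: elementary operation 2} and comparing low-degree $v$-coefficients, using the finite height conditions on $A$. Set $B \defeq w'^{-1}Aw'$ and let $C$ denote the left-hand side of \eqref{eq: elementary operation 2}. A direct expansion of the elementary operations gives
\[
C_{i_0, m} = vB_{i_0, s(m)} - cB_{j_0, s(m)} \text{ for all } m, \qquad C_{l,m} = B_{l, s(m)} \text{ for } l \neq i_0.
\]
The identity $C = (v+p)^\eta V'$ together with $V'_{i_0,i_0} = V_{i_0,i_0} = 1$ and $V'_{j_0,j_0} = v$ further pins down several entries of $C$.

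For the moreover claim, specializing to the $(i_0,i_0)$ entry gives $vB_{i_0,j_0} - cB_{j_0,j_0} = C_{i_0,i_0} = (v+p)^{n-i_0}$, while $B_{j_0,j_0} = C_{j_0,i_0} = (v+p)^{n-j_0}V'_{-\alpha}$. Rearranging yields
\[
vB_{i_0,j_0} = (v+p)^{n-i_0} + c(v+p)^{n-j_0}V'_{-\alpha}.
\]
Since $B_{i_0,j_0}\in R[v]$, the left-hand side is divisible by $v$, so the right-hand side has no constant term: $p^{n-i_0} + cp^{n-j_0}c_{-\alpha} = 0$. Dividing by $p^{n-j_0}$ (a nonzero divisor since $R$ is $\cO$-flat) gives $-c \cdot c_{-\alpha} = p^{j_0 - i_0} = p^{|\langle\eta, \alpha^\vee\rangle|}$.

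For the first identity, let $\beta = \alpha_{i_0,m}\in\Phi^-$ be bad in row $i_0$; then $m<i_0$ and $\delta_{w(\beta) < 0} = \delta_{w'(\beta) < 0}$, and $s(\beta) = \alpha_{j_0,m}$ lies in case~(4) of Proposition~\ref{prop: FH 2} with $w'(s(\beta)) = w(\beta)$, hence $\kappa'_{s(\beta)} = \delta_{w(\beta)<0} = \kappa'_\beta$. Applying the same computation to the $(i_0,m)$ entry gives
\[
vB_{i_0,m} = (v+p)^{n-i_0}V'_\beta + c(v+p)^{n-j_0}V'_{s(\beta)}.
\]
When $\kappa'_\beta = 0$, evaluating at $v=0$ and dividing by $p^{n-j_0}$ yields $c_\beta\cdot p^{|\langle\eta,\alpha^\vee\rangle|} = -c\cdot c_{s(\beta)}$. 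When $\kappa'_\beta = 1$, both $V'_\beta$ and $V'_{s(\beta)}$ are divisible by $v$, and I instead extract the coefficient of $v^1$ from the equation; the contribution $B_{i_0,m}(0)$ from the left-hand side vanishes because $A \in U(\tld{w},\leq\eta)(R)$ forces $A_{w'(i_0),w'(m)}$ to carry the factor $v^{\delta_{w'(i_0)>w'(m)}} = v$ (using $w'(\beta)<0$), so the same identity follows.

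The main obstacle is the case $\kappa'_\beta = 1$, which has no analog in Lemma~\ref{lemma: equation from height conditions 1}: the extra factor $v^{\eps_{i_0}}$ in \eqref{eq: elementary operation 2} (absent from \eqref{eq: elementary operation 1}) shifts bottom degrees in row $j_0$ of $V'$, forcing a degree-$1$ comparison that must invoke the structure of the affine chart $U(\tld{w}, \leq\eta)$ to ensure $A_{w'(i_0),w'(m)}$ is divisible by $v$.
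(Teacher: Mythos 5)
Your proof is correct and follows the same route as the paper, which simply invokes the elementary operations of \eqref{eq: elementary operation 2} together with the finite height conditions and extracts low-order $v$-coefficients of the relevant entries. The extra care you take in the case $\kappa'_\beta=1$ (using that $A$ is upper triangular modulo $v$ to kill the constant term of $B_{i_0,m}$) is a detail the paper's one-line proof glosses over, but it is handled correctly and does not constitute a different method.
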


\begin{proof}
The same argument as in Lemma~\ref{lemma: equation from height conditions 1} also works, using the elementary operations in \eqref{eq: elementary operation 2} together with the finite height conditions.
\end{proof}

\begin{lemma}\label{lemma: equation from VV^-1=1 2}
Let $\beta\in\Phi^-$ be a bad root
\begin{enumerate}
\item If $\beta\in\Phi^-$ is sharing the row of $\alpha$ then we have $$c_{s(\beta)}+\sum_{(\beta_1,\beta_2)\in\mathfrak{A}_\beta}c_{s(\beta_2)}c^{\imath}_{\beta_1} +c_{-\alpha}c^{\imath}_{\beta}+c^{\imath}_{s(\beta)}=0.$$
\item If $\beta\in\Phi^-$ is sharing the column of $\alpha$ then we have
    $$c_{\beta}c^{\imath}_{-\alpha}+ \sum_{(\beta_1,\beta_2) \in\mathfrak{A}_\beta} c_{\beta_2}c^{\imath}_{s(\beta_1)}+ c^{\imath}_{s(\beta)}=0.$$
\end{enumerate}
\end{lemma}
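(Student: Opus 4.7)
The plan is to imitate the proof of Lemma~\ref{lemma: equation from VV^-1=1 1} almost verbatim, working from the relation $V \cdot V^{-1} = \mathrm{Id}$. Since Proposition~\ref{prop: FH 2} guarantees that $V = v^{-\eps_{j_0}} V'$ is lower triangular unipotent, expanding the $(s(\beta))$-entry of $V \cdot V^{\imath}$ yields
\[
V_{s(\beta)} + \sum_{(\gamma_1, \gamma_2) \in \mathfrak{D}_{s(\beta)}} V_{\gamma_2}\, V^{\imath}_{\gamma_1} + V^{\imath}_{s(\beta)} = 0
\]
as an equation in $R[v]$. In each part I will extract a specific coefficient of a power of $v$ and use Proposition~\ref{prop: FH 2} together with the subadditivity of $\kappa'$ (Lemma~\ref{lemma: subadditive 2}) to identify precisely which terms survive.

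For part (1), $\beta$ shares the row of $\alpha$, so $s(\beta)$ shares the row of $-\alpha$; extract the coefficient of $v^{\kappa'_{s(\beta)}-1}$. The leftmost term contributes $c_{s(\beta)}$ and the rightmost contributes $c^{\imath}_{s(\beta)}$ by the conventions set after Proposition~\ref{prop: FH 2}. Subadditivity forces each middle summand to vanish unless $(\gamma_1, \gamma_2) \in \mathfrak{A}_{s(\beta)}$. Invoke Lemma~\ref{lemma: decomposition 2-1} to partition $\mathfrak{A}_{s(\beta)}$ into (a) the distinguished decomposition with $\gamma_2 = -\alpha$, which yields $c_{-\alpha}\, c^{\imath}_\beta$, and (b) the remaining decompositions, in bijection with $\mathfrak{A}_\beta$ via $(\beta_1, \beta_2) \mapsto (\beta_1, s(\beta_2))$, which yield $\sum_{(\beta_1,\beta_2) \in \mathfrak{A}_\beta} c_{s(\beta_2)}\, c^{\imath}_{\beta_1}$.

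For part (2), $\beta$ shares the column of $\alpha$; extract the coefficient of $v^{\kappa'_{s(\beta)}}$. The observation after Proposition~\ref{prop: FH 2} that $\deg(v^{-\kappa'_{s(\beta)}}V'_{s(\beta)})<|\langle\eta,s(\beta)^\vee\rangle|-1$ for such bad roots kills the $V_{s(\beta)}$ contribution, while $V^{\imath}_{s(\beta)}$ contributes $c^{\imath}_{s(\beta)}$. Subadditivity again restricts to $(\gamma_1, \gamma_2) \in \mathfrak{A}_{s(\beta)}$. Apply Lemma~\ref{lemma: decomposition 2-2} to split these into the decompositions with $s(\gamma_1)$ bad — bijecting with $\mathfrak{A}_\beta$ via $(\beta_1, \beta_2) \mapsto (s(\beta_1), \beta_2)$ and producing $\sum_{(\beta_1,\beta_2) \in \mathfrak{A}_\beta} c_{\beta_2}\, c^{\imath}_{s(\beta_1)}$ — and those with $s(\gamma_1)$ not bad. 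Exactly as in the proof of Lemma~\ref{lemma: equation from VV^-1=1 1}, the non-bad case forces the coefficient of $v^{\kappa'_{\gamma_1}-1}$ in $V^{\imath}_{\gamma_1}$ to vanish, leaving only the exceptional decomposition with $\gamma_1 = -\alpha$, which gives $c_\beta\, c^{\imath}_{-\alpha}$.

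The main obstacle is the bookkeeping: verifying that the coefficient chosen in each part matches the conventions defining $c^{\imath}_\bullet$ (which differ depending on whether the root sits in row $j_0$ or column $i_0$), and checking case-by-case that each decomposition outside $\mathfrak{A}_{s(\beta)}$, together with the non-bad decompositions in part (2) other than $(-\alpha, \gamma_2)$, yields a genuinely vanishing contribution after applying the bottom- and top-degree bounds in Proposition~\ref{prop: FH 2}. Apart from this tracking, the argument is formally identical to that of Lemma~\ref{lemma: equation from VV^-1=1 1}.
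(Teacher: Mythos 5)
Your overall strategy --- expand $(V\cdot V^{\imath})_{s(\beta)}=0$ using that $V$ is lower triangular unipotent, extract a low-order coefficient, kill the terms outside $\mathfrak{A}$ by subadditivity (Lemma \ref{lemma: subadditive 2}), and reindex the survivors via Lemmas \ref{lemma: decomposition 2-1} and \ref{lemma: decomposition 2-2} --- is exactly the paper's route (its proof is stated as ``same as Lemma \ref{lemma: equation from VV^-1=1 1} with the second-form lemmas substituted''), and your part (1), including the choice of the degree $\kappa'_{s(\beta)}-1$, is correct.

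Part (2), however, contains a concrete error. For $\beta=\alpha_{lj_0}$ bad and sharing the column of $\alpha$ one has $s(\beta)=\alpha_{li_0}$ with $l>j_0$ and $m=i_0$, so by the convention fixed before Lemma \ref{lemma: equation from height conditions 2} the quantity $c^{\imath}_{s(\beta)}$ is the coefficient of $v^{\kappa'_{s(\beta)}-1}$ in $V^{\imath}_{s(\beta)}$, not of $v^{\kappa'_{s(\beta)}}$; likewise $c^{\imath}_{-\alpha}$ sits in degree $\kappa'_{-\alpha}-1=-1$, so the product $V_{\beta}V^{\imath}_{-\alpha}$ produces $c_{\beta}c^{\imath}_{-\alpha}$ in degree $\kappa'_{\beta}-1=\kappa'_{s(\beta)}-1$. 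The coefficient to extract is therefore that of $v^{\kappa'_{s(\beta)}-1}$ (the exact analogue of the ``constant term'' used in the first form, where $\kappa_{s(\beta)}=1$). At the degree you chose, $v^{\kappa'_{s(\beta)}}$, the first term contributes $c_{s(\beta)}$ (nonzero in general) and the last term does not give $c^{\imath}_{s(\beta)}$, so the identity obtained is not the one claimed. Relatedly, your reason for discarding the $V_{s(\beta)}$ contribution is wrong: the bound $\deg(v^{-\kappa'_{s(\beta)}}V'_{s(\beta)})<|\langle\eta,s(\beta)^\vee\rangle|-1$ controls the \emph{top} of the polynomial and is irrelevant to a bottom-degree extraction; what kills $V_{s(\beta)}$ at the correct degree is simply that $V_{s(\beta)}=V'_{s(\beta)}\in v^{\kappa'_{s(\beta)}}R[v]$ by Proposition \ref{prop: FH 2} (note $s(\beta)$ is not in row $j_0$, so the twist by $v^{-\eps_{j_0}}$ does not lower its valuation). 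With the degree corrected and this justification substituted, the remainder of your part (2) --- Lemma \ref{lemma: decomposition 2-2} together with the vanishing of the coefficient of $v^{\kappa'_{\gamma_1}-1}$ in $V^{\imath}_{\gamma_1}$ when $\gamma_1\neq-\alpha$ shares the column of $-\alpha$ and $s(\gamma_1)$ is not bad --- goes through as in the first form.
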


\begin{proof}
The proof is similar to that of Lemma~\ref{lemma: equation from VV^-1=1 1}. The only difference is that we use Lemma~\ref{lemma: decomposition 2-1}, Lemma~\ref{lemma: subadditive 2}, and Lemma~\ref{lemma: decomposition 2-2} instead of Lemma~\ref{lemma: decomposition 1-1}, Lemma~\ref{lemma: subadditive 1}, and Lemma~\ref{lemma: decomposition 1-2}, respectively.
\end{proof}

\subsection{Description of $U(\tld{w},\leq\!\! \eta)$ in colength one}
Let $V$ be the matrix introduced at Proposition~\ref{prop: FH 1} or at Proposition~\ref{prop: FH 2}. For each $\beta\in\Phi^-$, we let $m_\beta\defeq |\langle\eta,\beta^\vee\rangle|-1$, and recall that the entry $V_{\beta}$ is of the form
$$v^{\kappa_\beta}\sum_{k=0}^{m'_\beta}c_{\beta,k}(v+p)^k$$
where
\begin{equation}\label{equ: description of the entries of V}
m'_\beta=
\left\{
  \begin{array}{ll}
    m_\beta+1 & \hbox{if $\beta$ is bad;} \\
    m_\beta-1 & \hbox{if $\beta$ shares the column of $-\alpha$ such that $s(\beta)\in\Phi^-$ is bad;} \\
    m_\beta & \hbox{otherwise.}
  \end{array}
\right.
\end{equation}

\begin{prop}\label{prop: space of height conditions}
Let $\tld{w}^*\in \Adm(\eta)$ be a colength one shape. Then there is a closed immersion $$U(\tld{w},\leq\!\! \eta)\hookrightarrow \mathrm{Spec}\cO[\{c_{\beta,k}\mid \beta\in\Phi^-\mbox{ and } 0\leq k\leq m_\beta'\}\cup\{c\}].$$
\end{prop}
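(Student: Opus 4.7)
\smallskip

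The plan is to construct a morphism $\iota:U(\tld{w},\leq\!\!\eta)\to\Spec\cO[\{c_{\beta,k}\},c]$ by reading off the coefficients $c$ and $c_{\beta,k}$ from the universal matrix, and then to prove that the induced homomorphism on coordinate rings is surjective. Concretely: for any Noetherian $\cO$-flat $\cO$-algebra $R$ and any $A\in U(\tld{w},\leq\!\!\eta)(R)$, Propositions~\ref{prop: FH 1} and~\ref{prop: FH 2} produce uniquely the scalar $c\in R$ (as the coefficient of $v^{n-j_0}$ in a specified diagonal entry of the conjugate $w^{-1}Aw$ or $w'^{-1}Aw'$) together with a lower triangular matrix $V$ over $R[v]$ whose entry $V_\beta$ for $\beta\in\Phi^-$ is of the prescribed form $v^{\kappa_\beta}\sum_{k=0}^{m'_\beta}c_{\beta,k}(v+p)^k$. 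The formation of $c$ and of the $c_{\beta,k}$ is manifestly functorial in $R$, so this assignment defines the morphism $\iota$.

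To verify that $\iota$ is a closed immersion it suffices to show that the corresponding map $\iota^{\sharp}:\cO[\{c_{\beta,k}\},c]\to \cO(U(\tld{w},\leq\!\!\eta))$ is surjective. By the description in \S\ref{subsubsec:AC} the target is a quotient of the polynomial ring $\cO[\{a_{ik,\ell}\}]$, so surjectivity reduces to showing that each generator $a_{ik,\ell}$ lies in the image of $\iota^\sharp$. The key point is that \eqref{eq: elementary operation 1} and \eqref{eq: elementary operation 2} can be inverted to express $A$ entirely in terms of $c$ and $V$: in the first form one gets $A=w\cdot u_{-\alpha}(c)\cdot s\cdot (v+p)^\eta V\cdot w^{-1}$, while in the second form $A=w'\cdot v^{-\eps_{i_0}}\cdot u_\alpha(c)\cdot (v+p)^\eta V'\cdot s\cdot w'^{-1}$. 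The entries of these matrix products are polynomials in $v$ whose coefficients are polynomials over $\cO$ in $c$ and the $c_{\beta,k}$ (the $v^{-\eps_{i_0}}$ factor in the second form is legitimate because the $i_0$-th row of the preceding product is divisible by $v$, a divisibility built into the proof of Proposition~\ref{prop: FH 2}). Extracting the $a_{ik,\ell}$'s via the normalization of \S\ref{subsubsec:AC} thus expresses each of them as a polynomial in $c$ and the $c_{\beta,k}$ over $\cO$, yielding the required surjectivity.

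The only real obstacle here is bookkeeping: one must carry out the matrix expansion carefully in both forms in order to confirm that the resulting scalars have the desired polynomial nature, and in the second form one must verify explicitly the divisibility condition that makes the $v^{-\eps_{i_0}}$ legitimate. No ideas beyond those already contained in the proofs of Propositions~\ref{prop: FH 1} and~\ref{prop: FH 2} should be required.
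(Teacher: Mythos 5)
Your proposal is correct and matches the paper's intent: the paper simply declares the proposition "immediate from Propositions~\ref{prop: FH 1} and~\ref{prop: FH 2}," and your argument — reading off $c$ and the $c_{\beta,k}$ from the universal matrix and inverting the elementary operations \eqref{eq: elementary operation 1} and \eqref{eq: elementary operation 2} to express each $a_{ik,\ell}$ polynomially in these, hence getting surjectivity on coordinate rings — is exactly the spelled-out version of that observation.
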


\begin{proof}
This is immediate from Propositions~\ref{prop: FH 1} and~\ref{prop: FH 2}.
\end{proof}

\section{Monodromy conditions}\label{sec: monodromy}
In this section, we induce certain identities and properties, that will be necessary to describe $U(\tld{w},\leq\!\! \eta,\nabla_{\bf{a}})$ for $\bf{a}\in\cO^n$ when $\tld{w}^*\in \Adm(\eta)$ is of colength one. Throughout this section, by $R$ we mean a Noetherian $\cO$-flat $\cO$-algebra. We keep the notation of \S \ref{sec:finite height conditions}.

We fix some notation. Set $\gamma_\beta\defeq \kappa_\beta+m'_\beta$, where $m'_\beta$ is defined in \eqref{equ: description of the entries of V}. For each $\beta\in\Phi^-$, we write $\mathbf{F_{\geq\beta}}$ (resp. $\mathbf{F}_{>\beta}$) for the free $\Z$-module generated by the monomials $c_{\beta_1,k_1}\cdots c_{\beta_s,k_s}$ for all non-negative integers $k_1,\cdots,k_s$ with $\sum_{i=1}^sk_i\leq\gamma_\beta+1$ and for all negative roots $\beta_1,\cdots,\beta_s$ with $\beta=\beta_1+\cdots+\beta_s$ (resp. with $\beta=\beta_1+\cdots+\beta_s$ and $s>1$). For $\beta'\in\Phi^-$ with $\beta'>\beta$ we write $\mathbf{F}_{>\beta}^{\beta'}$ for the submodule of $\mathbf{F}_{>\beta}$ generated by the monomials $c_{\beta_1,k_1}\cdots c_{\beta_s,k_s}$ with $\beta_i\neq\beta'$ for all $i$. We will consider all of these free $\Z$-modules as submodules of the ring $\cO(U(\tld{w},\leq\!\! \eta))$ via the closed immersion of Proposition~\ref{prop: space of height conditions}.
Finally, if $A\in U(\tld{w},\leq\!\! \eta)(R)$, we abuse notation and write $\mathbf{F}_{\geq\beta}$, $\mathbf{F}_{>\beta}$, $\mathbf{F}^{\beta'}_{>\beta}$ for the image in $R[v]$ (via the map $\cO(U(\tld{w},\leq\!\! \eta))\rightarrow R$ corresponding to $A$) of the free $\Z$-modules above.

Let $A\in U(\tld{w},\leq\!\! \eta)(R)$, and let $V$ be the corresponding matrix obtained by Proposition~\ref{prop: FH 1} (resp.~by Proposition~\ref{prop: FH 2}) if $\tld{w}$ is of the first form (resp.~if $\tld{w}$ is of the second form). Then an elementary computation shows that $A\in U^{\textnormal{nv}}(\tld{w},\leq\!\! \eta,\nabla_{\bf{a}})(R)$ if and only if for each $\beta\in\Phi^-$
\begin{equation}\label{equ: monodromy condition}
V^\sharp_\beta\defeq(\nabla_{\bf{a}_w} V)_\beta+\sum_{(\beta_1,\beta_2)\in\mathfrak{D}_\beta}(\nabla_{\bf{a}_w} V)_{\beta_2}V^{\imath}_{\beta_1}\in (v+p)^{m_\beta}\mathrm{M}_n(R[v])
\end{equation}
where $\bf{a}_w\in\cO^n$ satisfies $w(\bf{a}_w)=\bf{a}$.
Throughout this subsection, we assume \eqref{equ: monodromy condition} holds for all $\beta\in\Phi^-$, and write $\nabla$ for $\nabla_{\bf{a}_w}$ to lighten the notation.

\subsection{Monodromy conditions: the first form}\label{subsec: monodromy cond, first}
Let $\tld{w}^*\in \Adm(\eta)$ be a colength one shape of the first form (described in Proposition~\ref{prop:classification, colength 1 dual shapes}), and keep the notation of \S\ref{subsec: height condition, first}. In particular, we keep the notation of Proposition~\ref{prop: FH 1}.

\begin{lemma}\label{lemma: routine monodromy 1}
Let $\beta=\alpha_{lm}\in\Phi^-$ with $l\leq j_0$ or $i_0\leq m$. Then we have
\begin{enumerate}
\item if $\beta$ is not bad then $$V^\sharp_\beta\in v^{\tld{\kappa}_\beta}(v+p)^{m_\beta}\big(X_\beta+\mathbf{F}_{>\beta}\big)$$
    where $X_\beta=(m_\beta+\kappa_\beta-\langle\bf{a}_w,\beta^\vee\rangle)c_{\beta,m'_\beta}$ and $$\tld{\kappa}_\beta\defeq\left\{
                                 \begin{array}{ll}
                                  \kappa_\beta  & \hbox{if either $l\leq j_0$, $i_0<m$, or $i_0=m$ and $s(\beta)\in\Phi^-$ is not bad;} \\
                                  \kappa_\beta-1  & \hbox{if $i_0=m$ and $s(\beta)\in\Phi^-$ is bad,}
                                 \end{array}
                               \right.
     $$
\item if $\beta$ is bad then $$V^\sharp_\beta\in v^{\kappa_\beta}(v+p)^{m_\beta}\big(Y_\beta+vX_{\beta}+\mathbf{F}_{>\beta}\big)$$ where
    $$\left\{
      \begin{array}{rl}
        X_\beta&=(m'_\beta+\kappa_\beta-\langle\bf{a}_w,\beta^\vee\rangle) c_{\beta,m'_\beta};  \\
        Y_\beta&=(m_\beta+\kappa_\beta-\langle\bf{a}_w,\beta^\vee\rangle+m_\beta+\kappa_\beta)c_{\beta,m_\beta} +p(\kappa_\beta-\langle\bf{a}_w,\beta^\vee\rangle) c_{\beta,m'_\beta}.
      \end{array}
    \right.$$
\end{enumerate}
\end{lemma}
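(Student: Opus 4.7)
The plan is to compute $V^\sharp_\beta = (\nabla V)_\beta + \sum_{(\beta_1,\beta_2) \in \mathfrak{D}_\beta}(\nabla V)_{\beta_2} V^\imath_{\beta_1}$ directly, splitting the diagonal part from the cross sum and tracking both the bottom $v$-degree and the top $(v+p)$-degree of each contribution. The diagonal term will supply the leading coefficient $X_\beta$ (together with part of $Y_\beta$ in the bad case), while each cross product, by degree and subadditivity estimates, will either land in $v^{\tld{\kappa}_\beta}(v+p)^{m_\beta}\mathbf{F}_{>\beta}$ or contribute a specific residual piece to $Y_\beta$ in the bad case.

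First, for the diagonal contribution, I would apply $\nabla_{\bf{a}_w} = v \tfrac{d}{dv} - \langle \bf{a}_w, \beta^\vee\rangle \cdot$ to $V_\beta = v^{\kappa_\beta} \sum_k c_{\beta,k}(v+p)^k$. Using $v = (v+p) - p$ to re-expand $v V'_\beta$ in the basis $\{(v+p)^k\}$, a short computation gives
\[ (\nabla V)_\beta = v^{\kappa_\beta}\sum_k \bigl((\kappa_\beta + k - \langle \bf{a}_w,\beta^\vee\rangle) c_{\beta,k} - p(k+1) c_{\beta,k+1}\bigr)(v+p)^k. \]
In the \emph{not bad} case, whose top $(v+p)$-exponent from $V_\beta$ is $m'_\beta$, the leading coefficient (at $v^{\kappa_\beta}(v+p)^{m'_\beta}$ and, after distributing $v = (v+p)-p$ when $m'_\beta < m_\beta$, at $v^{\tld{\kappa}_\beta}(v+p)^{m_\beta}$) reads off to $X_\beta$. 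In the \emph{bad} case, $m'_\beta = m_\beta + 1$, and splitting $(v+p)^{m_\beta+1} = v(v+p)^{m_\beta} + p(v+p)^{m_\beta}$ decomposes the top diagonal contribution into a $vX_\beta$-piece (at $v^{\kappa_\beta+1}(v+p)^{m_\beta}$) plus a $pX_\beta$-piece (at $v^{\kappa_\beta}(v+p)^{m_\beta}$), which combines with the $(v+p)^{m_\beta}$-term already present in the sum to account for part of $Y_\beta$.

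Second, for the cross sum, I would bound the bottom $v$-degree and the top $(v+p)$-degree of each product $(\nabla V)_{\beta_2} V^\imath_{\beta_1}$ using Proposition~\ref{prop: FH 1}. The bottom $v$-degree of each factor matches that of $V_{\beta_i}$ (the operator $\nabla$ preserves it, and $V^\imath_{\beta_1}$ has the same $\kappa$-valuation as $V_{\beta_1}$ via a standard triangular inversion), and the top $(v+p)$-degrees are bounded by $m'_{\beta_i}$. Subadditivity of $\kappa$ (Lemma~\ref{lemma: subadditive 1}) then forces divisibility of the product by $v^{\tld{\kappa}_\beta}(v+p)^{m_\beta}$ in the relevant cases, and each such product is a two-factor monomial in the $c_{\beta_i,k_i}$'s, hence lives in $\mathbf{F}_{>\beta}$. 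Finally, in the bad case, the cross decompositions parametrized by the bijections of Lemmas~\ref{lemma: decomposition 1-1}--\ref{lemma: decomposition 1-2} contribute precisely the remaining $(m_\beta + \kappa_\beta) c_{\beta, m_\beta}$-piece to $Y_\beta$, completing the stated formula.

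The main obstacle I anticipate is the borderline case analysis: the terms where one of the $\beta_i$ equals $-\alpha$, or where $s(\beta_i)$ is bad, are exactly the places where the naive degree estimates can fail by one, and one must decide whether a given cross term lands at the leading $v^{\tld{\kappa}_\beta}(v+p)^{m_\beta}$-level (and hence contributes to $Y_\beta$) or strictly inside $v^{\tld{\kappa}_\beta}(v+p)^{m_\beta}\mathbf{F}_{>\beta}$. The case-by-case tracking of the exceptional shifts in Proposition~\ref{prop: FH 1} (items 3 and 4) is precisely what forces the definition of $\tld{\kappa}_\beta$ as a refinement of $\kappa_\beta$; once these corrections are in place, the arithmetic matching the stated $X_\beta$ and $Y_\beta$ is a direct combinatorial check.
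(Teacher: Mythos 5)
Your overall strategy coincides with the paper's: the published proof of this lemma is a two\mbox{-}line ``routine check'' invoking exactly the ingredients you spell out, namely the valuation and degree data of Proposition~\ref{prop: FH 1}, subadditivity of $\kappa$ from Lemma~\ref{lemma: subadditive 1} to control the cross sum $\sum_{(\beta_1,\beta_2)\in\mathfrak{D}_\beta}(\nabla V)_{\beta_2}V^{\imath}_{\beta_1}$, and the observation that in the single non-subadditive case ($l>j_0$, $m=i_0$, $s(\beta)$ bad) the cross sum loses one power of $v$ while the diagonal term $(\nabla V)_\beta$ does not, which is what produces $\tld{\kappa}_\beta=\kappa_\beta-1$. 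Your explicit expansion of $(\nabla V)_\beta$ in the basis $\{(v+p)^k\}$ is correct, and reading off the leading constants from that expansion is the right mechanism.

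There is, however, one step that would fail: the claim that the cross decompositions of Lemmas~\ref{lemma: decomposition 1-1}--\ref{lemma: decomposition 1-2} ``contribute precisely the remaining $(m_\beta+\kappa_\beta)c_{\beta,m_\beta}$-piece to $Y_\beta$.'' This contradicts your own (correct) observation two sentences earlier: every cross term is a $\Z$-linear combination of products of at least two of the variables $c_{\gamma,k}$, hence lies in $\mathbf{F}_{>\beta}$, and so can never produce a term linear in $c_{\beta,m_\beta}$. The coset representatives $X_\beta$ (resp.\ $Y_\beta+vX_\beta$) must be extracted from the diagonal term $(\nabla V)_\beta$ alone, while the whole cross sum is absorbed into $v^{\tld{\kappa}_\beta}(v+p)^{m_\beta}\mathbf{F}_{>\beta}$; this absorption is forced by combining the degree bounds from Proposition~\ref{prop: FH 1}, the divisibility by $(v+p)^{m_\beta}$ coming from \eqref{equ: monodromy condition}, and the divisibility by $v^{\tld{\kappa}_\beta}$ coming from (failure of) subadditivity, which together pin $V^\sharp_\beta$ down to the stated shape. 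Lemmas~\ref{lemma: decomposition 1-1} and~\ref{lemma: decomposition 1-2} play no role at this stage (they enter only later, in Lemmas~\ref{lemma: equation from VV^-1=1 1} and~\ref{lemma: bad cases 1}). If you complete the diagonal computation you already set up, you will see it accounts for all of $Y_\beta+vX_\beta$ by itself, up to the exact integer multiplying $c_{\beta,m_\beta}$ (which in any case only needs to be a unit under the genericity hypotheses used downstream).
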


\begin{proof}
Assume that $\beta=\alpha_{lm}\in\Phi^-$ is not bad. If either $l\leq j_0$, $i_0<m$, or $i_0=m$ and $s(\beta)$ is not bad then by Proposition~\ref{prop: FH 1} together with Lemma~\ref{lemma: subadditive 1} it is routine to check the equation in (1). If $l> j_0$, $i_0= m$, and $s(\beta)\in\Phi^-$ is bad then $\kappa_\beta$ is not subadditive and $\deg (V_\beta/v^{\kappa_\beta})<|\langle\eta,\beta^\vee\rangle|-1$ by part (3) of Proposition~\ref{prop: FH 1}, and so $v^{\kappa_\beta}\nmid \sum_{(\beta_1,\beta_2)\in\mathfrak{D}_\beta}(\nabla V)_{\beta_2}V^{\imath}_{\beta_1}$ while $v^{\kappa_\beta}\mid (\nabla V)_\beta$. Hence, in this case we get the equation in (1) with $\tld{\kappa}_\beta=\kappa_\beta-1$ in the equation.

Assume that $\beta$ is bad. By part (1) of Proposition~\ref{prop: FH 1}, $\deg V_\beta<|\langle\eta,\beta^\vee\rangle|+1$ (as $\kappa_\beta=0$) and $\kappa_\beta$ is subadditive, and so we get the equation in (2).
\end{proof}

For $\beta\in\Phi^-$ and for each integer $s\geq1$, we set $I_{\beta,s}$ to be the set of the tuples of negative roots $(\beta_1,\beta_2,\cdots,\beta_s)$ such that
\begin{itemize}
\item $\beta=\beta_1+\beta_2+\cdots+\beta_s$ and $\beta_1$ shares the column of $\beta$;
\item $\beta_i+\beta_{i+1}\in\Phi^-$ for $i\in\{1,\cdots,s-1\}$;
\item $\sum_{i=1}^s\delta_{w(\beta_i)<0}=s-\delta_{w(\beta)>0}$.
\end{itemize}
Moreover, we set $$I_\beta\defeq\bigcup_{s\geq 1}I_{\beta,s}.$$

\begin{lemma}\label{lemma: shadow 1}
Let $\beta\in\Phi^-$ with $\beta\geq -\alpha$.
\begin{enumerate}
\item If $(\beta_1,\beta_2)\in\mathfrak{D}_{\beta}$ then $\gamma_\beta\geq \gamma_{\beta_1}+\gamma_{\beta_2}$;
\item $(\beta_1,\beta_2)\in\mathfrak{D}_{\beta}$ satisfies $\gamma_\beta=\gamma_{\beta_1}+\gamma_{\beta_2}$ if and only if $\kappa_\beta<\kappa_{\beta_1}+\kappa_{\beta_2}$.
\end{enumerate}
In particular, if $(\beta_1,\beta_2)\in\mathfrak{D}_{-\alpha}$ then
$\gamma_{\beta_1}+\gamma_{\beta_2}=\gamma_{-\alpha}$,
$\deg V^{\imath}_{\beta_1}= \gamma_{\beta_1}=m_{\beta_1}+\kappa_{\beta_1}$, and $$\frac{1}{(\gamma_{\beta_1})!}\frac{d^{\gamma_{\beta_1}}V^{\imath}_{\beta_1}}{dv^{\gamma_{\beta_1}}}= \sum_{(\beta'_1,\cdots,\beta'_s)\in I_{\beta_1}} (-1)^{\ell+j_0-s}c_{\beta'_1,m_{\beta'_1}}c_{\beta'_2,m_{\beta'_2}}\cdots c_{\beta'_s,m_{\beta'_s}}$$
where $\beta_1=\alpha_{\ell i_0}$.
\end{lemma}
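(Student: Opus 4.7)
The plan is to reduce (1) and (2) to an analysis of $\kappa$, and to derive the ``In particular'' part from the Neumann expansion of $V^{-1}$.

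Write $\beta=\eps_l-\eps_m$ with $i_0\leq m<l\leq j_0$, and any $(\beta_1,\beta_2)\in\mathfrak{D}_\beta$ as $\beta_1=\eps_k-\eps_m$, $\beta_2=\eps_l-\eps_k$ for some $m<k<l$. I would first observe that each of $\beta,\beta_1,\beta_2$ lies in the ``generic'' regime of Proposition~\ref{prop: FH 1}: since their row indices lie in $(i_0,j_0]$ and column indices in $[i_0,j_0)$, none shares the row or column of $\alpha$, so none is bad; and the only way any could share the column of $-\alpha$ is at column $i_0$, in which case $s$ sends it to $\eps_\star-\eps_{j_0}$ with $\star\leq l\leq j_0$, which is positive (or zero). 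Hence $m'_\beta=m_\beta$, $m'_{\beta_i}=m_{\beta_i}$, and combining with $m_\beta=m_{\beta_1}+m_{\beta_2}+1$ gives
\[
\gamma_\beta-\gamma_{\beta_1}-\gamma_{\beta_2}=(\kappa_\beta-\kappa_{\beta_1}-\kappa_{\beta_2})+1.
\]

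For (1) and (2), subadditivity (Lemma~\ref{lemma: subadditive 1}, applicable since $l\leq j_0$) gives $\kappa_\beta-\kappa_{\beta_1}-\kappa_{\beta_2}\leq 0$, so the displayed right-hand side is $\leq 1$. To prove (1) it then suffices to show $\kappa_{\beta_1}+\kappa_{\beta_2}-\kappa_\beta\leq 1$. This is a short case check on the triple $(w(l),w(k),w(m))$ in the interior case $l<j_0,\,m>i_0$, where all three $\kappa$'s reduce to $\delta_{w(\cdot)<0}$; the boundary cases $l=j_0$ or $m=i_0$ (where some $\kappa$'s become $\delta_{w'(\cdot)<0}$) are handled using the first-form hypothesis that $w$ maps $(i_0,j_0)$ outside $(w(i_0),w(j_0))$. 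Part (2) is then immediate: $\gamma_\beta=\gamma_{\beta_1}+\gamma_{\beta_2}$ iff $\kappa_{\beta_1}+\kappa_{\beta_2}-\kappa_\beta=1$, iff $\kappa_\beta<\kappa_{\beta_1}+\kappa_{\beta_2}$.

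For the ``In particular'' part, the equality $\gamma_{\beta_1}+\gamma_{\beta_2}=\gamma_{-\alpha}$ follows directly from (2) using the bullet points after Proposition~\ref{prop: FH 1}, which give $\kappa_{-\alpha}=0<1=\kappa_{\beta_1}+\kappa_{\beta_2}$ for any $(\beta_1,\beta_2)\in\mathfrak{D}_{-\alpha}$. To analyse $V^\imath_{\beta_1}$, I would expand $V^{-1}=\sum_{k\geq 0}(-1)^k(V-I)^k$, writing $V^\imath_{\beta_1}=(V^{-1})_{\ell,i_0}$ as a signed sum over all chains $\beta_1=\beta'_1+\cdots+\beta'_s$ of negative roots. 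Iterating (1) yields $\deg V^\imath_{\beta_1}\leq\gamma_{\beta_1}$, and iterating (2) shows that the chains contributing to the top coefficient $v^{\gamma_{\beta_1}}$ are exactly those saturating the $\kappa$-inequality at every splitting. Unwinding using $\kappa_{\beta'_i}=\delta_{w(\beta'_i)<0}$ (valid because every $\beta'_i$ lies strictly inside the rectangle $[i_0,j_0]^2$) shows that the saturation condition is precisely $\sum_i\delta_{w(\beta'_i)<0}=s-\delta_{w(\beta_1)>0}$, which is the defining condition of $I_{\beta_1}$. Since these interior $\beta'_i$ avoid the special rows/columns of Proposition~\ref{prop: FH 1}, $m'_{\beta'_i}=m_{\beta'_i}$, so the top $v$-coefficient of $V_{\beta'_i}$ is indeed $c_{\beta'_i,m_{\beta'_i}}$, giving the claimed product.

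The main obstacle is pinning down the exact sign $(-1)^{\ell+j_0-s}$. The Neumann series naturally produces $(-1)^s$ per path of length $s$, so a global factor $(-1)^{\ell+j_0}$ must be accounted for. I expect this to emerge by tracking Cramer's rule $(V^{-1})_{\ell,i_0}=(-1)^{\ell+i_0}\det V_{\widehat{i_0},\widehat{\ell}}$ together with a parity-$(i_0+j_0)$ sign absorbed through the elementary operation $(v+p)^\eta V=s\,u_{-\alpha}(-c)(w^{-1}Aw)$ in Proposition~\ref{prop: FH 1}, where the row swap by $s=s_\alpha$ and the normalization of $c_{-\alpha}$ each contribute. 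A secondary subtlety is the clean bijection between saturating chains and $I_{\beta_1}$, which requires a short induction on chain length coupled with the same first-form analysis used for (1) and (2).
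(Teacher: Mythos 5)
Your treatment of (1), (2) and the identification of the top-degree monomials is correct and is exactly the paper's route, just written out: the paper's proof consists of the observation that $\kappa_\beta=\kappa_{s(\beta)}$ for $\beta=\alpha_{li_0}$ or $\alpha_{i_0 m}$ with $i_0<l,m<j_0$, the assertion that (1) and (2) are then "easy to check case by case" (your identity $\gamma_\beta-\gamma_{\beta_1}-\gamma_{\beta_2}=(\kappa_\beta-\kappa_{\beta_1}-\kappa_{\beta_2})+1$, reducing everything to $\kappa_{\beta_1}+\kappa_{\beta_2}-\kappa_\beta\leq 1$, is the right bookkeeping, and your use of the first-form condition on $w$ in the boundary cases $l=j_0$, $m=i_0$ is exactly what makes the check go through), and the statement that the elements of $I_{\beta_1}$ correspond to the degree-$\gamma_{\beta_1}$ monomials of $V^\imath_{\beta_1}$, which you recover by iterating (1) and (2) along chains and translating the saturation condition $\sum_i\kappa_{\beta'_i}=\kappa_{\beta_1}+s-1$ into $\sum_i\delta_{w(\beta'_i)<0}=s-\delta_{w(\beta_1)>0}$.

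The one point you leave open — the sign $(-1)^{\ell+j_0-s}$ — you should not expect to recover by the route you sketch. The Neumann expansion of $V^{-1}$ for a unipotent lower-triangular $V$ gives $(-1)^s$ per chain of length $s$, full stop; Cramer's rule applied to the same matrix $V$ must return the identical answer, and the elementary operations of Proposition~\ref{prop: FH 1} are already absorbed into the definition of $V$ and of the coefficients $c_{\beta,k}$, so there is no further sign to be harvested there. Concretely, the $s=1$ chain contributes $-c_{\beta_1,m_{\beta_1}}$ to the leading coefficient, whereas the displayed formula assigns it $(-1)^{\ell+j_0-1}c_{\beta_1,m_{\beta_1}}$; the two differ by the global factor $(-1)^{\ell+j_0}$ you isolated. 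The paper's own proof does not address the sign at all, and the discrepancy is harmless downstream: it only renormalizes $Z_{-\alpha}$ in Lemmas~\ref{lemma: equation for def ring 1} and~\ref{lemma: equation for def ring 2}, and in Theorem~\ref{thm: main deformation ring} the element $Z_{-\alpha}$ is in any case replaced by $X_{-\alpha}$. So record the leading coefficient as $\sum_{I_{\beta_1}}(-1)^s\prod_i c_{\beta'_i,m_{\beta'_i}}$ and move on, rather than hunting for a parity argument that is not there.
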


\begin{proof}
Recall that $\gamma_\beta$ indicates the degree of $V_{\beta}$. By definition of $w$, $\kappa_{\beta}=\kappa_{s(\beta)}$ if either $\beta=\alpha_{li_0}$ with $i_0<l<j_0$ or $\beta=\alpha_{i_0m}$ with $i_0<m<j_0$. Now it is easy to check (1) and~(2) case by case.

For the second part, let $(\beta_1,\beta_2)\in\mathfrak{D}_{-\alpha}$. Then it is clear that $\gamma_{\beta_1}+\gamma_{\beta_2}=\gamma_{-\alpha}$ by (2), and it is also clear that $\deg V^{\imath}_{\beta_1}= \gamma_{\beta_1}$ by (1). Finally, the elements of $I_{\beta}$ correspond to the monomials in $V^{\imath}_{\beta_1}$ which has the highest degree $\gamma_{\beta_1}$. This completes the proof.
\end{proof}

\begin{lemma}\label{lemma: equation for def ring 1}
We have
$$p^{m_{-\alpha}+1}\cdot \langle\bf{a}_w,-\alpha^\vee\rangle =p^{m_{-\alpha}}\cdot c\cdot Z_{-\alpha}$$
where
\begin{multline*}
Z_{-\alpha}\defeq\left(m_{-\alpha}-\langle\bf{a}_w,-\alpha^\vee\rangle\right) c_{-\alpha,m_{-\alpha}}\\
+\sum_{(\beta_1,\beta_2)\in\mathfrak{D}_{-\alpha}} \left(m_{\beta_2}+\kappa_{\beta_2}-\langle\bf{a}_w,\beta_2^\vee\rangle\right) c_{\beta_2,m_{\beta_2}} \sum_{(\beta'_1,\cdots,\beta'_s)\in I_{\beta_1}} (-1)^{\ell+j_0-s}c_{\beta'_1,m_{\beta'_1}}\cdots c_{\beta'_s,m_{\beta'_s}}
\end{multline*}
if we write $\beta_1=\alpha_{\ell i_0}$.
\end{lemma}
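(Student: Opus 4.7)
The plan is to apply the monodromy condition at $\beta=-\alpha$, evaluate $V^\sharp_{-\alpha}$ at $v=0$, and combine with the height identity of Lemma~\ref{lemma: equation from height conditions 1}. The first key observation is that $\sum_{(\beta_1,\beta_2)\in\mathfrak{D}_{-\alpha}}(\nabla V)_{\beta_2}(0)V^\imath_{\beta_1}(0)=0$. Indeed, for each $(\beta_1,\beta_2)\in\mathfrak{D}_{-\alpha}$ we have $\kappa_{\beta_1}+\kappa_{\beta_2}=1$ (a consequence of Proposition~\ref{prop: FH 1}). If $\kappa_{\beta_2}=1$ then $V_{\beta_2}(0)=0$, hence $(\nabla V)_{\beta_2}(0)=-\langle\bf{a}_w,\beta_2^\vee\rangle V_{\beta_2}(0)=0$. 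If instead $\kappa_{\beta_1}=1$, then subadditivity of $\kappa$ on column-of-$(-\alpha)$ roots (Lemma~\ref{lemma: subadditive 1}), applied iteratively inside the unipotent expansion $(V^{-1})_{\beta_1}=\sum_s(-1)^s\sum V_{\gamma_1}\cdots V_{\gamma_s}$, ensures every monomial in $V^\imath_{\beta_1}$ contains some factor $V_\gamma(0)=0$.

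Therefore $V^\sharp_{-\alpha}(0)=(\nabla V)_{-\alpha}(0)=-\langle\bf{a}_w,-\alpha^\vee\rangle\cdot c_{-\alpha}$, where $c_{-\alpha}$ is the $v^0$-coefficient of $V_{-\alpha}$. Multiplying by $c$ and applying Lemma~\ref{lemma: equation from height conditions 1} (which asserts $-c\cdot c_{-\alpha}=p^{m_{-\alpha}+1}$) immediately yields $c\cdot V^\sharp_{-\alpha}(0)=p^{m_{-\alpha}+1}\langle\bf{a}_w,-\alpha^\vee\rangle$.

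On the other hand, Lemma~\ref{lemma: routine monodromy 1}(2) applied to the bad root $-\alpha$ expresses $V^\sharp_{-\alpha}=(v+p)^{m_{-\alpha}}(Y_{-\alpha}+vX_{-\alpha}+F)$ for some $F$ with coefficients in $\mathbf{F}_{>-\alpha}$, so $V^\sharp_{-\alpha}(0)=p^{m_{-\alpha}}(Y_{-\alpha}+F_0)$ where $F_0\in\mathbf{F}_{>-\alpha}$. The second key step is to identify $Y_{-\alpha}+F_0$ with $Z_{-\alpha}$. The single-$c$ part $(m_{-\alpha}-\langle\bf{a}_w,-\alpha^\vee\rangle)c_{-\alpha,m_{-\alpha}}$ of $Z_{-\alpha}$ matches the corresponding piece of $Y_{-\alpha}$ by direct inspection of the formula in Lemma~\ref{lemma: routine monodromy 1}(2), while $F_0$ is computed from the $v^0$-contribution of the products $(\nabla V)_{\beta_2}V^\imath_{\beta_1}$ to $V^\sharp_{-\alpha}/(v+p)^{m_{-\alpha}}$, using the explicit leading coefficient $(m_{\beta_2}+\kappa_{\beta_2}-\langle\bf{a}_w,\beta_2^\vee\rangle)c_{\beta_2,m_{\beta_2}}$ of $(\nabla V)_{\beta_2}$ together with the top-coefficient formula for $V^\imath_{\beta_1}$ from Lemma~\ref{lemma: shadow 1}.

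Combining these two expressions for $V^\sharp_{-\alpha}(0)$ yields the desired identity $p^{m_{-\alpha}+1}\langle\bf{a}_w,-\alpha^\vee\rangle=p^{m_{-\alpha}}\cdot c\cdot Z_{-\alpha}$. The main technical obstacle is the final coefficient identification: one must carefully track how the contributions at degree $v^{m_{-\alpha}}$ (binomial expansion of $(v+p)^{m_{-\alpha}}\cdot vX_{-\alpha}$) and at degree $v^0$ interact, verifying that any spurious $c_{-\alpha,m_{-\alpha}+1}$-terms arising from $Y_{-\alpha}$ cancel against corresponding contributions from $F$, leaving precisely the expression $Z_{-\alpha}$ stated in the lemma.
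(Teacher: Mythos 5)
Your strategy is the same as the paper's: evaluate the monodromy condition \eqref{equ: monodromy condition} for $\beta=-\alpha$ at $v=0$, show the cross terms $\sum_{(\beta_1,\beta_2)\in\mathfrak{D}_{-\alpha}}(\nabla V)_{\beta_2}V^{\imath}_{\beta_1}$ contribute nothing to the constant term, and close with the height identity $-c\cdot c_{-\alpha}=p^{m_{-\alpha}+1}$ of Lemma \ref{lemma: equation from height conditions 1}. Your first two steps are correct and are exactly the paper's (your case analysis on $\kappa_{\beta_1}+\kappa_{\beta_2}=1$ is what the paper compresses into the citation of Lemma \ref{lemma: subadditive 1}), and the final combination with Lemma \ref{lemma: equation from height conditions 1} is right.

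The gap is in your third step, which is where the identity $Z_{-\alpha}$ is actually produced: you assert that $Y_{-\alpha}+F_0=Z_{-\alpha}$ but explicitly defer the verification, flagging the cancellation of putative $c_{-\alpha,m_{-\alpha}+1}$-terms as an unresolved ``technical obstacle''. As written this is not a proof of the stated formula, and the detour through Lemma \ref{lemma: routine monodromy 1}(2) creates a problem that is not actually there. The paper closes the step by a direct degree count: by Lemma \ref{lemma: shadow 1} one has $\gamma_{-\alpha}=m_{-\alpha}$ (so $\deg V_{-\alpha}=m_{-\alpha}$ and there is no top coefficient $c_{-\alpha,m_{-\alpha}+1}$ to worry about), and every cross term $(\nabla V)_{\beta_2}V^{\imath}_{\beta_1}$ has degree exactly $\gamma_{\beta_1}+\gamma_{\beta_2}=m_{-\alpha}$. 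Hence $V^{\sharp}_{-\alpha}$ is a polynomial of degree at most $m_{-\alpha}$ divisible by $(v+p)^{m_{-\alpha}}$, so $V^{\sharp}_{-\alpha}=d\cdot(v+p)^{m_{-\alpha}}$ where $d$ is its top coefficient; $d$ is then read off term by term, namely $(m_{-\alpha}-\langle\bf{a}_w,-\alpha^\vee\rangle)c_{-\alpha,m_{-\alpha}}$ from $(\nabla V)_{-\alpha}$ plus, for each $(\beta_1,\beta_2)\in\mathfrak{D}_{-\alpha}$, the product of the top coefficient $(m_{\beta_2}+\kappa_{\beta_2}-\langle\bf{a}_w,\beta_2^\vee\rangle)c_{\beta_2,m_{\beta_2}}$ of $(\nabla V)_{\beta_2}$ with the top coefficient of $V^{\imath}_{\beta_1}$ computed in the last display of Lemma \ref{lemma: shadow 1}. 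This gives $d=Z_{-\alpha}$ on the nose, with no $vX_{-\alpha}$ term and no cancellations to track. To complete your argument you would in any case have to carry out exactly this top-coefficient computation, so you should do it directly rather than route through $Y_{-\alpha}+vX_{-\alpha}+\mathbf{F}_{>-\alpha}$.
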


\begin{proof}
By Lemma~\ref{lemma: shadow 1}, we see that $\deg((\nabla V)_{\beta_2}V^{\imath}_{\beta_1})=\gamma_{\beta_2}+\gamma_{\beta_1}=\gamma_{-\alpha}=m_{-\alpha}$ for all $(\beta_1,\beta_2)\in\mathfrak{D}_{-\alpha}$. Hence, if we apply the monodromy condition~\eqref{equ: monodromy condition} to $V_{-\alpha}$ then we have
\begin{multline*}
V^\sharp_{-\alpha}= (m_{-\alpha}-\langle\bf{a}_w,-\alpha^\vee\rangle)c_{-\alpha,m_{-\alpha}}(v+p)^{m_{-\alpha}}\\
+\sum_{(\beta_1,\beta_2)\in\mathfrak{D}_{-\alpha}} (m_{\beta_2}+\kappa_{\beta_2}-\langle\bf{a}_w,\beta_2^\vee\rangle)c_{\beta_2,m_{\beta_2}}\sum_{(\beta'_1,\cdots,\beta'_s)\in I_{\beta_1}} (-1)^{\ell+j_0-s}c_{\beta'_1,m_{\beta'_1}}\cdots c_{\beta'_s,m_{\beta'_s}}(v+p)^{m_{-\alpha}}
\end{multline*}
by applying the second part of Lemma~\ref{lemma: shadow 1}. Since the constant term of $V^\sharp_{-\alpha}$ appears only at $(\nabla V)_{-\alpha}$ by Lemma~\ref{lemma: subadditive 1}, by extracting the constant term we have $$-\langle\bf{a}_w,-\alpha^\vee\rangle  c_{-\alpha} =p^{m_{-\alpha}}Z_{-\alpha}.$$ By the second part of Lemma~\ref{lemma: equation from height conditions 1}, we get the desired identity.
\end{proof}

We further eliminate the variables $Y_\beta$ for bad roots $\beta\in\Phi^-$.
\begin{lemma}\label{lemma: bad cases 1}
Let $\beta\in\Phi^-$ be a bad root.
\begin{enumerate}
\item If $\beta$ shares the row of $\alpha$ then we have
$$p^{m_{s(\beta)}}\cdot(Y_\beta+c\cdot X_{s(\beta)})\in p^{m_{s(\beta)}}\cdot \left(c\cdot c_{\beta,m'_{\beta}}\cdot\mathbf{F}_{\geq-\alpha}+ \mathbf{F}_{>\beta}+c\cdot \mathbf{F}_{>s(\beta)}^{\beta}\right).$$
\item If $\beta$ shares the column of $\alpha$ then we have
$$p^{m_{s(\beta)}}\cdot(Y_\beta-c\cdot X_{s(\beta)})\in p^{m_{s(\beta)}}\cdot \left(c\cdot c_{\beta,m'_{\beta}}\cdot\mathbf{F}_{\geq-\alpha}+ \mathbf{F}_{>\beta}+c\cdot \mathbf{F}_{>s(\beta)}^{\beta}\right).$$
\end{enumerate}
\end{lemma}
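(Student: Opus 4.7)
The plan is to leverage two families of relations simultaneously: the monodromy-type leading coefficient identities coming from Lemma~\ref{lemma: routine monodromy 1}, and the finite-height polynomial identities coming from Lemma~\ref{lemma: equation from height conditions 1} and the elementary operation \eqref{eq: elementary operation 1}. First I would apply Lemma~\ref{lemma: routine monodromy 1} to $\beta$ and $s(\beta)$ in turn. For bad $\beta$ sharing the row of $\alpha$, one has $V^\sharp_\beta\in v^{\kappa_\beta}(v+p)^{m_\beta}(Y_\beta+vX_\beta+\mathbf{F}_{>\beta})$, while $s(\beta)$ sits in the row of $-\alpha$ and is not bad, hence $V^\sharp_{s(\beta)}\in v^{\kappa_{s(\beta)}}(v+p)^{m_{s(\beta)}}(X_{s(\beta)}+\mathbf{F}_{>s(\beta)})$. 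The monodromy conditions then say that the bracketed expressions are the two obstructions that need to be related.

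Next I would expand the polynomial identity given by \eqref{eq: elementary operation 1}, reading off coefficient-by-coefficient in the rows $i_0$ and $j_0$ (equivalently, in the $\beta$ and $s(\beta)$ positions). This produces an extension of Lemma~\ref{lemma: equation from height conditions 1}, namely congruences of the shape $p^{|\langle\eta,\alpha^\vee\rangle|}c_{\beta,k}+c\cdot c_{s(\beta),k'}\in \mathbf{F}_{\geq-\alpha}$, valid at each bounded degree $k$ (not just the constant term); combined with the subadditivity and decomposition bijections of Lemmas~\ref{lemma: subadditive 1} and~\ref{lemma: decomposition 1-1}, this bridges the top coefficient $c_{\beta,m_\beta}$ appearing in $Y_\beta$ to the top coefficient $c\cdot c_{s(\beta),m'_{s(\beta)}}$ appearing in $c\cdot X_{s(\beta)}$.

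Substituting these bridge identities into $p^{m_{s(\beta)}}(Y_\beta+c\cdot X_{s(\beta)})$, I expect the leading terms to cancel up to an explicit error. Organizing the remaining contributions by the provenance of each monomial, three types appear: terms that are multiples of $c\cdot c_{\beta,m'_\beta}$ times a product of coefficients whose roots decompose $-\alpha$ (yielding membership in $c\cdot c_{\beta,m'_\beta}\cdot \mathbf{F}_{\geq -\alpha}$); terms with at least two independent coefficient factors from decompositions of $\beta$ (hence in $\mathbf{F}_{>\beta}$); and terms from $c\cdot X_{s(\beta)}$ not involving any $c_{\beta,\ast}$ (hence in $c\cdot\mathbf{F}_{>s(\beta)}^{\beta}$). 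For part~(2), the argument is essentially the same, with the roles of row and column of $\alpha$ exchanged; the sign flip from $Y_\beta+c\cdot X_{s(\beta)}$ to $Y_\beta-c\cdot X_{s(\beta)}$ traces back to the shift $\kappa_{s(\beta)}=1+\kappa_\beta$ from Proposition~\ref{prop: FH 1}(2) replacing the row-case equality $\kappa_\beta=\kappa_{s(\beta)}$.

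The main obstacle will be the bookkeeping of the higher-order finite-height identities together with the degree bounds in Proposition~\ref{prop: FH 1}. Concretely, one must verify that the residual terms land precisely in the three specified submodules of the right-hand side rather than in a strictly larger one. This requires tracking the support of each monomial via Lemmas~\ref{lemma: decomposition 1-1} and~\ref{lemma: decomposition 1-2} (to sort out which decompositions $(\beta_1,\beta_2)$ contribute to the bad/non-bad dichotomy), and using the precise structure of $\mathfrak{A}_\beta$ versus $\mathfrak{D}_\beta$; any sloppy estimate risks producing terms in $\mathbf{F}_{>s(\beta)}$ that contain a $c_{\beta,\ast}$ factor and therefore do not fit into $c\cdot \mathbf{F}_{>s(\beta)}^{\beta}$.
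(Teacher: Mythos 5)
Your overall architecture --- playing the monodromy relations at $\beta$ and at $s(\beta)$ against each other and bridging them through the finite height conditions --- is the right one, and matches the paper in outline. But the bridge you propose does not exist, and you omit the ingredient that actually makes the two monodromy expansions comparable. For $\beta=\alpha_{i_0m}$ in the row of $\alpha$, the operation \eqref{eq: elementary operation 1} gives $(v+p)^{n-i_0}V_\beta + c\,(v+p)^{n-j_0}V_{s(\beta)} = (w^{-1}Aw)_{j_0m}$, and the right-hand side is an entry of $A$ whose coefficients are not otherwise pinned down; the degree bounds on both sides agree, so the only usable information is at the constant term, where the forced $v$-divisibility of the entries of $C$ in Proposition \ref{prop: FH 1} kills the contribution of $(w^{-1}Aw)_{j_0m}$. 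That is exactly Lemma \ref{lemma: equation from height conditions 1}, and it does not extend to ``congruences at each bounded degree $k$'' of the shape $p^{|\langle\eta,\alpha^\vee\rangle|}c_{\beta,k}+c\cdot c_{s(\beta),k'}\in\mathbf{F}_{\geq-\alpha}$: the higher coefficients remain unconstrained by the height conditions (cf.\ Proposition \ref{prop: space of height conditions}, which treats all the $c_{\beta,k}$ as ambient coordinates). Moreover, even granting some bridge, a purely height-theoretic identity cannot cancel the $\bf{a}_w$-dependent numerical factors: $X_{s(\beta)}$ carries $\langle\bf{a}_w,s(\beta)^\vee\rangle$ while $Y_\beta$ carries $\langle\bf{a}_w,\beta^\vee\rangle$, and these differ by the generically nonzero quantity $\langle\bf{a}_w,\alpha^\vee\rangle$.

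What the paper actually does is extract the relevant coefficient ($v^0$, resp.\ $v^{\kappa_\beta}$ and $v^{\kappa_{s(\beta)}-1}$) of the monodromy relations of Lemma \ref{lemma: routine monodromy 1} for \emph{both} $\beta$ and $s(\beta)$; each expansion contains cross terms $\sum_{(\beta_1,\beta_2)}\langle\bf{a}_w,\cdot\rangle\, c_{\cdot}\,c^{\imath}_{\cdot}$ indexed via Lemmas \ref{lemma: decomposition 1-1} and \ref{lemma: decomposition 1-2}. To match the two, one multiplies by $-c$, applies Lemma \ref{lemma: equation from height conditions 1} to each factor $c_{s(\beta_2)}$, and then --- this is the step absent from your proposal --- invokes the identities coming from $V\cdot V^{\imath}=\mathrm{Id}$: the constant term of $(VV^{\imath})_{\beta}=0$, i.e.\ $c_\beta+\sum_{(\beta_1,\beta_2)\in\mathfrak{A}_\beta}c_{\beta_2}c^{\imath}_{\beta_1}+c^{\imath}_\beta=0$, together with Lemma \ref{lemma: equation from VV^-1=1 1} and, in case (2), the inductively proved relation $c^{\imath}_{s(\beta)}=c^{\imath}_{-\alpha}c^{\imath}_{\beta}$. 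These are what convert $\langle\bf{a}_w,s(\gamma)^\vee\rangle=\langle\bf{a}_w,\gamma^\vee\rangle-\langle\bf{a}_w,\alpha^\vee\rangle$ into an exact cancellation and sort the residue into $c\cdot c_{\beta,m'_\beta}\cdot\mathbf{F}_{\geq-\alpha}$, $\mathbf{F}_{>\beta}$ and $c\cdot\mathbf{F}^{\beta}_{>s(\beta)}$. Finally, your account of the sign difference between (1) and (2) is not the actual mechanism: in (1) the plus sign arises from multiplying by $-c$ and using $c_\beta\, p^{|\langle\eta,\alpha^\vee\rangle|}=-c\,c_{s(\beta)}$, while in (2) the minus sign arises from $c_{-\alpha}=-c^{\imath}_{-\alpha}$ combined with $-c\,c_{-\alpha}=p^{|\langle\eta,\alpha^\vee\rangle|}$.
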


\begin{proof}
We first treat the case (1). By Lemma~\ref{lemma: decomposition 1-1}, extracting the constant term in the monodromy equation in~part (1) of Lemma~\ref{lemma: routine monodromy 1} for~$s(\beta)$ gives rise to
\begin{equation}\label{eq: row bad 1-1}
\langle\bf{a}_w,s(\beta)^\vee\rangle c_{s(\beta)} +\sum_{(\beta_1,\beta_2)\in\mathfrak{A}_\beta} \langle\bf{a}_w,s(\beta_2)^\vee\rangle  c_{s(\beta_2)}c^\imath_{\beta_1}+\langle\bf{a}_w,-\alpha^\vee\rangle c_{-\alpha}c^{\imath}_\beta
\in -p^{m_{s(\beta)}}(X_{s(\beta)}+\mathbf{F}_{>s(\beta)}).
\end{equation}
As $\deg(V_{\beta}V^{\imath}_{-\alpha})=m'_\beta+m_{-\alpha} =m_\beta+1+m_{-\alpha}=m_{s(\beta)}=\tld{\kappa}_{s(\beta)}+m_{s(\beta)}$, the quantity in \eqref{eq: row bad 1-1}, in fact, belongs to
$$-p^{m_{s(\beta)}}(X_{s(\beta)}+c_{\beta,m'_\beta}\cdot \mathbf{F}_{\geq -\alpha}+\mathbf{F}_{>s(\beta)}^\beta).$$
By multiplying $-c$ and then applying Lemma~\ref{lemma: equation from height conditions 1} together with Lemma~\ref{lemma: decomposition 1-1}, we have
\begin{multline*}
p^{|\langle\eta,\alpha^\vee\rangle|} \langle\bf{a}_w,s(\beta)^\vee\rangle c_{\beta} +\sum_{(\beta_1,\beta_2)\in\mathfrak{A}_\beta}p^{|\langle\eta,\alpha^\vee\rangle|} \langle\bf{a}_w,s(\beta_2)^\vee\rangle c_{\beta_2}c^{\imath}_{\beta_1}+p^{|\langle\eta,\alpha^\vee\rangle|}\langle\bf{a}_w,-\alpha^\vee\rangle c^{\imath}_\beta\\
\in c\cdot p^{m_{s(\beta)}}(X_{s(\beta)}+c_{\beta,m'_\beta}\cdot \mathbf{F}_{\geq -\alpha}+\mathbf{F}_{>s(\beta)}^\beta).
\end{multline*}
Applying the identity $c_\beta+\sum_{(\beta_1,\beta_2)\in\mathfrak{A}_\beta} c_{\beta_2}c^{\imath}_{\beta_1}+c^{\imath}_\beta=0$, induced from $(V\cdot V^{\imath})_\beta=0$, together with $s(\beta)=\beta-\alpha$ and $s(\beta_2)=\beta_2-\alpha$, we have
\begin{equation}\label{eq: row bad 1-2}
p^{|\langle\eta,\alpha^\vee\rangle|} \langle\bf{a}_w,\beta^\vee\rangle c_{\beta} +\sum_{(\beta_1,\beta_2)\in\mathfrak{A}_\beta}p^{|\langle\eta,\alpha^\vee\rangle|} \langle\bf{a}_w,\beta_2^\vee\rangle c_{\beta_2}c^{\imath}_{\beta_1}\\
\in c\cdot p^{m_{s(\beta)}}(X_{s(\beta)}+c_{\beta,m'_\beta}\cdot \mathbf{F}_{\geq -\alpha}+\mathbf{F}_{>s(\beta)}^\beta).
\end{equation}
But by extracting the constant term in the monodromy equation in part (2) of Lemma~\ref{lemma: routine monodromy 1} for~$\beta$, the quantity in \eqref{eq: row bad 1-2} is also belongs to
$$-p^{|\langle\eta,-\alpha^\vee\rangle|+|\langle\eta,\beta^\vee\rangle|-1}(Y_{\beta} +\mathbf{F}_{>\beta})$$
and so we have the desired result.

We now treat the case (2). By extracting the coefficient of $v^{\kappa_\beta}$ in the equation $(V\cdot V^{\imath})_\beta=0$, we have $c_{\beta}+\sum_{(\beta_1,\beta_2)\in\mathfrak{A}_\beta}c_{\beta_2}c^{\imath}_{\beta_1}+c^{\imath}_\beta=0$, which together with the equation in part (2) of Lemma~\ref{lemma: equation from VV^-1=1 2} induces $$\sum_{(\beta_1,\beta_2)\in\mathfrak{A}_\beta}c_{\beta_2}(c^{\imath}_{s(\beta_1)}-c^{\imath}_{-\alpha}c^{\imath}_{\beta_1})+ (c^{\imath}_{s(\beta)}-c^{\imath}_{-\alpha}c^{\imath}_\beta)=0.$$
This equation together with Lemma~\ref{lemma: decomposition 2-2} inductively induces that
\begin{equation}\label{eq: eqation for column bad roots 1}
c^{\imath}_{s(\beta)}-c^{\imath}_{-\alpha}c^{\imath}_\beta=0
\end{equation}
for any bad root $\beta$ sharing the column of $\alpha$.

By extracting the coefficient of $v^{\kappa_\beta}$ in the monodromy equation in part (2) of Lemma~\ref{lemma: routine monodromy 1} for $\beta$, we have $$(\kappa_{\beta}-\langle\bf{a}_w,\beta^\vee\rangle)c_{\beta} +\sum_{(\beta_1,\beta_2)\in\mathfrak{A}_\beta}(\kappa_{\beta_2}-\langle\bf{a}_w,\beta_2^\vee\rangle) c_{\beta_2}c^{\imath}_{\beta_1} \in p^{m_{\beta}}(Y_{\beta}+\mathbf{F}_{>\beta}).$$
Similarly, by extracting the coefficient of $v^{\kappa_{s(\beta)}-1}$ in the monodromy equation in part (1) of Lemma~\ref{lemma: routine monodromy 1} for $s(\beta)$, we have
\begin{equation}\label{eq: column bad 1-1}
(\kappa_{\beta}-\langle\bf{a}_w,\beta^\vee\rangle)c_{\beta}c^{\imath}_{-\alpha}+ \sum_{(\beta_1,\beta_2)\in\mathfrak{A}_\beta}(\kappa_{\beta_2}-\langle\bf{a}_w,\beta_2^\vee\rangle)c_{\beta_2}c^{\imath}_{s(\beta_1)} \in p^{m_{s(\beta)}}(X_{s(\beta)}+\mathbf{F}_{>{s(\beta)}}).
\end{equation}
As $\deg(V_{\beta}V^{\imath}_{-\alpha})=\gamma_\beta+\gamma_{-\alpha} =m'_\beta+m_{-\alpha}=m_\beta+1+m_{-\alpha}=m_{s(\beta)}=\tld{\kappa}_{s(\beta)}+m_{s(\beta)}$, the quantity in \eqref{eq: column bad 1-1}, in fact, belongs to
$$p^{m_{s(\beta)}}(X_{s(\beta)}+c_{\beta,m'_\beta}\cdot \mathbf{F}_{\geq -\alpha}+\mathbf{F}_{>s(\beta)}^\beta).$$
Comparing these two equations via the identity~\eqref{eq: eqation for column bad roots 1}, we have $$p^{m_{s(\beta)}}X_{s(\beta)}-c^{\imath}_{-\alpha}p^{m_{\beta}}Y_{\beta} \in p^{m_{s(\beta)}}(c_{\beta,m'_\beta}\cdot \mathbf{F}_{\geq -\alpha}+\mathbf{F}_{>s(\beta)}^\beta)+p^{m_\beta}\cdot c^{\imath}_{-\alpha}\cdot \mathbf{F}_{>\beta}.$$
As $c_{-\alpha}=-c^{\imath}_{-\alpha}$, by applying the second part of Lemma~\ref{lemma: equation from height conditions 1} we have
$$-cp^{m_{s(\beta)}}X_{s(\beta)}+p^{m_{\beta}+m_{-\alpha}}Y_{\beta} \in c\cdot p^{m_{s(\beta)}}(c_{\beta,m'_\beta}\cdot \mathbf{F}_{\geq -\alpha}+\mathbf{F}_{>s(\beta)}^\beta)+p^{m_\beta+m_{-\alpha}+1}\cdot  \mathbf{F}_{>\beta}.$$
As $s(\beta)=\beta-\alpha$, we get the desired result.
\end{proof}

We finally treat the case $\beta=\alpha_{lm}\in\Phi^-$ with $l>j_0$ and $m< i_0$. In this case, $\kappa_\beta$ is not subadditive in general, so that the following lemma is not trivial.
\begin{lemma}\label{lemma: left-bottom block 1}
For $\beta=\alpha_{lm}\in\Phi^-$ with $l>j_0$ and $m< i_0$,
$$V^\sharp_\beta\in v^{\kappa_\beta}(v+p)^{|\langle\eta,\beta^\vee\rangle|-1}(X_\beta+\mathbf{F}_{>\beta})$$ where $X_\beta=(m_\beta+\kappa_\beta-\langle\bf{a}_w,\beta^\vee\rangle)c_{\beta,m'_\beta}$.
\end{lemma}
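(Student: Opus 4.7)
The plan is to parallel the proof of Lemma \ref{lemma: routine monodromy 1}(1), with additional care to control the subadditivity-violating terms identified in Lemma \ref{lemma: subadditive 1}(2). The first move is to split on the value of $\kappa_\beta\in\{0,1\}$. When $\kappa_\beta=0$, subadditivity of $\kappa$ on $\mathfrak{D}_\beta$ holds trivially (since $\kappa_{\beta_i}\geq 0$), so each summand in $V^\sharp_\beta$ is divisible by $v^{\kappa_\beta}$, and the degree bounds of Proposition \ref{prop: FH 1} together with the finite height conditions let me reproduce the computation of Lemma \ref{lemma: routine monodromy 1}(1) verbatim.

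The interesting case is $\kappa_\beta=1$. By Lemma \ref{lemma: subadditive 1}(2), subadditivity can fail only at a single decomposition: $\beta_1=\alpha_{j_0 m}$ (sharing the row of $-\alpha$, with $\beta_1\neq -\alpha$ since $m<i_0$) and $\beta_2=\alpha_{l j_0}$ (sharing the column of $\alpha$ and bad), with $\kappa_{\beta_1}=\kappa_{\beta_2}=0$. I would isolate this contribution:
\[
V^\sharp_\beta = (\nabla V)_\beta + (\nabla V)_{\alpha_{l j_0}}V^\imath_{\alpha_{j_0 m}} + \sum_{(\beta_1,\beta_2)\neq(\alpha_{j_0 m},\alpha_{l j_0})}(\nabla V)_{\beta_2}V^\imath_{\beta_1}.
\]
For every term in the final sum, subadditivity gives $v^{\kappa_\beta}$-divisibility, and the degree bounds from Proposition \ref{prop: FH 1} combined with the argument of Lemma \ref{lemma: routine monodromy 1}(1) show divisibility by $v^{\kappa_\beta}(v+p)^{m_\beta}$ with top coefficient contributing as expected to $X_\beta+\mathbf{F}_{>\beta}$. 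The term $(\nabla V)_\beta$ itself is $v^{\kappa_\beta}$-divisible since $V_\beta$ is.

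The delicate point is the isolated term $(\nabla V)_{\alpha_{l j_0}}V^\imath_{\alpha_{j_0 m}}$, which a priori carries a nonzero constant term (in $v$) and thus threatens the $v^{\kappa_\beta}=v$ divisibility. To handle it, I would use the relation $(V\cdot V^\imath)_\beta=0$ (which expresses $c^\imath_\beta$ in terms of the other $c_{\beta'},c^\imath_{\beta'}$) together with Lemma \ref{lemma: equation from VV^-1=1 1} applied to the bad root $\beta_2=\alpha_{l j_0}$ and the Hasse-type identity $-c\cdot c_{-\alpha}=p^{|\langle\eta,\alpha^\vee\rangle|}$ of Lemma \ref{lemma: equation from height conditions 1}. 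Substituting these relations into the extracted constant term of $(\nabla V)_{\alpha_{l j_0}}V^\imath_{\alpha_{j_0 m}}$ should exhibit it as a multiple of $p^{m_\beta}$ times a polynomial belonging to $\mathbf{F}_{>\beta}$ (using the $\cO$-flatness of $R$ to divide by the appropriate power of $p$), at which point the argument of Lemma \ref{lemma: bad cases 1} can be transplanted from the row/column of $\alpha$ to the ``southwest'' position. Once the constant term cancellation is established, the $v^{\kappa_\beta}(v+p)^{m_\beta}$-divisibility follows, and reading off the leading coefficient in $v$ reproduces $X_\beta=(m_\beta+\kappa_\beta-\langle\bf{a}_w,\beta^\vee\rangle)c_{\beta,m'_\beta}$.

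The main obstacle will be the bookkeeping in the constant-term cancellation: the bad term contributes a product $c_{\alpha_{l j_0}}\cdot c^\imath_{\alpha_{j_0 m}}$ whose reduction modulo $\mathbf{F}_{>\beta}$ is not obviously zero. The plan is to exploit that $\alpha_{j_0 m}$ is itself a decomposition target of $\beta$ via $-\alpha$, so the identities from $V\cdot V^\imath=I$ at $\alpha_{j_0 m}$ coupled with Lemma \ref{lemma: equation from height conditions 1} force precisely the needed algebraic relation. This should be the sole new input compared to Lemma \ref{lemma: routine monodromy 1}(1), and once in place the rest of the argument is a routine combination of degree bounds and the $\mathbf{F}_{>\beta}$-module structure.
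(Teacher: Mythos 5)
Your overall strategy---isolate the decomposition at which subadditivity of $\kappa$ fails and kill its contribution using the identities coming from $V\cdot V^{\imath}=I$ together with Lemmas \ref{lemma: equation from VV^-1=1 1} and \ref{lemma: equation from height conditions 1}---is the right one, and you have correctly located the dangerous decomposition $(\beta_1,\beta_2)=(\alpha_{j_0m},\alpha_{lj_0})$. But there are two genuine gaps. First, your claim that ``for every term in the final sum, subadditivity gives $v^{\kappa_\beta}$-divisibility'' fails for the decompositions $\beta_1=\alpha_{km}$, $\beta_2=\alpha_{lk}$ with $j_0<k<l$: here $\beta_1$ lies in the same ``southwest'' block ($k>j_0$, $m<i_0$), so Lemma \ref{lemma: subadditive 1} does not apply to $\beta_1$ and the containment $V^{\imath}_{\beta_1}\in v^{\kappa_{\beta_1}}R[v]$ is not automatic---$V^{\imath}_{\beta_1}$ is a sum over chains that may themselves pass through the subadditivity-violating configuration. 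The paper handles this by proving, by induction on the row index $l'$, that $V^{\imath}_{\beta'}\in v^{\kappa_{\beta'}}R[v]$ for all $\beta'=\alpha_{l'm'}$ with $m'<i_0$ and $l'\neq j_0$; this intermediate claim is an essential layer of the argument that your proposal omits entirely.

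Second, the mechanism that disposes of the isolated term is not a cancellation of its constant term against other contributions, and the lemma you point at is aimed at the wrong root. What is actually true is that $V^{\imath}_{\alpha_{j_0m}}\in vR[v]$ outright whenever $s(\alpha_{j_0m})=\alpha_{i_0m}$ is bad (if it is not bad, Lemma \ref{lemma: subadditive 1}(2) shows subadditivity does not fail there and the routine argument applies). This vanishing is obtained by applying Lemma \ref{lemma: equation from VV^-1=1 1}(1) to the bad root $\alpha_{i_0m}$ \emph{sharing the row of $\alpha$}---not to $\alpha_{lj_0}$ in the column of $\alpha$---then substituting the constant term of $(V\cdot V^{\imath})_{\alpha_{i_0m}}=0$ and Lemma \ref{lemma: equation from height conditions 1} to get $c\cdot c^{\imath}_{\alpha_{j_0m}}=0$, whence $c^{\imath}_{\alpha_{j_0m}}=0$ since $c$ is a unit in $R[\tfrac{1}{p}]$ and $R$ is $\cO$-flat. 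Note also that your proposed conclusion---that the constant term is ``a multiple of $p^{m_\beta}$ times a polynomial in $\mathbf{F}_{>\beta}$''---would not suffice: the statement asserts $V^{\sharp}_\beta=v^{\kappa_\beta}(v+p)^{m_\beta}r$ with $r\in X_\beta+\mathbf{F}_{>\beta}$, so when $\kappa_\beta=1$ the constant term must vanish exactly, not merely up to $p^{m_\beta}\mathbf{F}_{>\beta}$.
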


\begin{proof}
We first claim that $V^{\imath}_{s(\beta_0)}\in v R[v]$ for a bad root $\beta_0\in\Phi^-$ sharing the row of $\alpha$. From part (1) of Lemma~\ref{lemma: equation from VV^-1=1 1} together with Lemma~\ref{lemma: decomposition 1-1} and Lemma~\ref{lemma: equation from height conditions 1}, we have $$p^{|\langle\eta,\alpha^\vee\rangle|}\left(c_{\beta_0}+\sum_{(\beta_1,\beta_2)\in\mathfrak{A}_{\beta_0}} c_{\beta_2}c^{\imath}_{\beta_1} +c^{\imath}_{\beta_0}\right)-c c^{\imath}_{s(\beta_0)}=0.$$
As $c_{\beta_0}+\sum_{(\beta_1,\beta_2)\in\mathfrak{A}_{\beta_0}} c_{\beta_2}c^{\imath}_{\beta_1} +c^{\imath}_{\beta_0}=0$ induced from extracting the constant term of $(V\cdot V^{\imath})_{\beta_0}=0$, we have $c c^{\imath}_{s(\beta_0)}=0$. Hence, we conclude that $c^{\imath}_{s(\beta_0)}=0$, as $c$ is a unit in $R[\frac{1}{p}]$ by Lemma~\ref{lemma: equation from height conditions 1} and $R$ is $\cO$-flat.

Let $\beta'=\alpha_{l'm'}\in\Phi^-$ with $m'<i_0$. We claim that $V^{\imath}_{\beta'}\in v^{\kappa_{\beta'}}R[v]$ if $l'\neq j_0$. It is clear that $V^{\imath}_{\beta'}\in v^{\kappa_{\beta'}}R[v]$ if $l'<j_0$, by Lemma~\ref{lemma: subadditive 1}. Assume $j_0< l'$. Consider the following identity $$0=(V\cdot V^{\imath})_{\beta'}=V_{\beta'}+\sum_{(\beta_1,\beta_2)\in\mathfrak{D}_{\beta'}}V_{\beta_2}\cdot V^{\imath}_{\beta_1}+V^{\imath}_{\beta'},$$
and write $\beta_1:=\alpha_{km'}$. It is obvious that $V_{\beta'}\in v^{\kappa_{\beta'}}R[v]$, and that $V_{\beta_2}\cdot V^{\imath}_{\beta_1}\in v^{\kappa_{\beta'}}R[v]$ for $m'<k<j_0$ by Lemma~\ref{lemma: subadditive 1}, so that it is enough to check that $V_{\beta_2}\cdot V^{\imath}_{\beta_1}\in v^{\kappa_{\beta'}}R[v]$ for $j_0\leq k<l'$.

Assume that $k=j_0$. If $s(\beta_1)$ is bad then $V^{\imath}_{\beta_1}\in vR[v]$ by the first claim, and so we conclude in this case that $V_{\beta_2}\cdot V^{\imath}_{\beta_1}\in v^{\kappa_{\beta'}}R[v]$. If $s(\beta_1)$ is not bad then it is clear that $V^{\imath}_{\beta_1}\in v^{\kappa_{\beta_1}}R[v]$, and so we also have $V_{\beta_2}\cdot V^{\imath}_{\beta_1}\in v^{\kappa_{\beta'}}R[v]$ by Lemma~\ref{lemma: subadditive 1}. Note that this also implies
\begin{equation}\label{eq: left bottom equation 1}
(\nabla V)_{\beta_2}V^{\imath}_{\beta_1}\in v^{\kappa_{\beta'}}R[v]
\end{equation}
in this case. Assume now that $j_0<k<l'$. By induction hypothesis, we have $V^{\imath}_{\beta_1}\in v^{\kappa_{\beta_1}}R[v]$ and so we conclude that $V_{\beta_2}\cdot V^{\imath}_{\beta_1}\in v^{\kappa_{\beta'}}R[v]$ by Lemma~\ref{lemma: subadditive 1}, which completes the proof of the second claim.

Now, let $\beta=\alpha_{lm}\in\Phi^-$ with $l>j_0$ and $m<i_0$. For $(\beta_1,\beta_2)\in\mathfrak{D}_\beta$, if we write $\beta_1=\alpha_{km}$ then it is clear that $(\nabla V)_{\beta_2}V^{\imath}_{\beta_1}\in v^{\kappa_{\beta}}R[v]$ for $k\neq j_0$, by the claim above together with Lemma~\ref{lemma: subadditive 1}. If $k=j_0$, then we also have $(\nabla V)_{\beta_2}V^{\imath}_{\beta_1}\in v^{\kappa_{\beta}}R[v]$ by \eqref{eq: left bottom equation 1}, which completes the proof.
\end{proof}

\subsection{Monodromy conditions: the second form}\label{subsec: monodromy cond, second}
Let $\tld{w}^*\in \Adm(\eta)$ be a colength one shape of the second form (cf. ~Proposition~\ref{prop:classification, colength 1 dual shapes}), and keep the notation of \S\ref{subsec: height condition, second}. In particular, we keep the notation of Proposition~\ref{prop: FH 2}.

\begin{lemma}
\label{lemma: routine monodromy 2}
Let $\beta=\alpha_{lm}\in\Phi^-$ with $l\leq j_0$ or $i_0\leq m$.
Then we have
\begin{enumerate}
\item if $\beta$ is not bad then $$V^\sharp_\beta \in v^{\tld{\kappa}_\beta}(v+p)^{m_\beta}\big(X_\beta+\mathbf{F}_{>\beta}\big)$$
    where $X_\beta=(m_\beta+\kappa_\beta-\langle\bf{a}_w,\beta^\vee\rangle)c_{\beta,m'_\beta}$ and $$\tld{\kappa}_\beta\defeq\left\{
                                 \begin{array}{ll}
                                  \kappa_\beta  & \hbox{if either $l<j_0$, $i_0<m$, or $i_0=m$ and $s(\beta)\in\Phi^-$ is not bad;} \\
                                  \kappa'_\beta-1  & \hbox{if either $i_0=m$ and $s(\beta)\in\Phi^-$ is bad or $l=j_0$,}
                                 \end{array}
                               \right.
     $$
\item if $\beta$ is bad then
\begin{equation*}
V^\sharp_\beta \in v^{\kappa_\beta}(v+p)^{m_\beta} \big(Y_\beta+vX_\beta+\mathbf{F}_{>\beta}\big)
\end{equation*}
where
$$\left\{
  \begin{array}{rl}
   X_\beta &=(m'_\beta+\kappa_\beta-\langle\bf{a}_w,\beta^\vee\rangle) c_{\beta,m'_\beta}; \\
   Y_\beta &=(m_\beta+\kappa_\beta-\langle\bf{a}_w,\beta^\vee\rangle)c_{\beta,m_\beta} +p(\kappa_\beta-\langle\bf{a}_w,\beta^\vee\rangle) c_{\beta,m'_\beta}.
  \end{array}
\right.$$
\end{enumerate}
\end{lemma}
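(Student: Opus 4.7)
The plan is to follow exactly the structural pattern of the proof of Lemma~\ref{lemma: routine monodromy 1}, transporting all the bookkeeping from Proposition~\ref{prop: FH 1} to Proposition~\ref{prop: FH 2}. The starting point is a direct Leibniz computation of the entry $(\nabla V)_\beta$: writing $V_\beta = v^{\kappa_\beta}\sum_{k=0}^{m'_\beta} c_{\beta,k}(v+p)^k$ and using $v\frac{d}{dv}\bigl((v+p)^k\bigr) = k(v+p)^k - kp(v+p)^{k-1}$ together with $v\frac{d}{dv}(v^{\kappa_\beta}) = \kappa_\beta v^{\kappa_\beta}$, one obtains
\[
(\nabla V)_\beta = v^{\kappa_\beta}\sum_{k=0}^{m'_\beta}\Bigl[(k+\kappa_\beta-\langle\bf{a}_w,\beta^\vee\rangle)\,c_{\beta,k} - (k+1)p\,c_{\beta,k+1}\Bigr](v+p)^k,
\]
with the convention $c_{\beta,m'_\beta+1}=0$. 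The top $(v+p)$-coefficient equals $(m'_\beta+\kappa_\beta-\langle\bf{a}_w,\beta^\vee\rangle)c_{\beta,m'_\beta}$; in the non-bad subcases (where $m'_\beta = m_\beta$ up to the shift in Proposition~\ref{prop: FH 2}(3)) this is precisely $X_\beta$, while in the bad subcase (where $m'_\beta = m_\beta+1$) this coefficient ends up multiplying $v$ after factoring out $(v+p)^{m_\beta}$, and the next-to-top coefficient, rearranged by absorbing the $(v+p) = v+p$ shift into the constant level, yields $Y_\beta = (m_\beta+\kappa_\beta-\langle\bf{a}_w,\beta^\vee\rangle)c_{\beta,m_\beta} + p(\kappa_\beta-\langle\bf{a}_w,\beta^\vee\rangle)c_{\beta,m'_\beta}$, as asserted.

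Next, the quadratic contributions $\sum_{(\beta_1,\beta_2)\in\mathfrak{D}_\beta}(\nabla V)_{\beta_2}V^{\imath}_{\beta_1}$ to $V^\sharp_\beta$ must be shown to lie in $v^{\tld{\kappa}_\beta}(v+p)^{m_\beta}\mathbf{F}_{>\beta}$. Under the standing hypothesis $l\leq j_0$ or $i_0\leq m$, Lemma~\ref{lemma: subadditive 2} supplies the subadditivity $\kappa'_\beta\leq\kappa'_{\beta_1}+\kappa'_{\beta_2}$ for every decomposition, and in the generic non-bad subrange where $l<j_0$ and $i_0<m$ the same inequality holds with $\kappa_\beta$ in place of $\kappa'_\beta$. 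Combined with the $(v+p)$-degree bounds of Proposition~\ref{prop: FH 2} (which control $\deg V_\gamma$ and hence $\deg V^{\imath}_\gamma$ via the cofactor/geometric-series expansion of the inverse of a near-unipotent matrix), each summand is divisible by $v^{\tld{\kappa}_\beta}(v+p)^{m_\beta}$ and, being a product of at least two coefficients indexed by roots strictly greater than $\beta$, lies in $\mathbf{F}_{>\beta}$ by definition. This, together with the previous paragraph, settles the generic non-bad case and the bad case, and reduces the remaining statement to the two degree-shifted subcases.

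Finally, one treats the two special subcases that force $\tld{\kappa}_\beta\neq\kappa_\beta$. When $l=j_0$ (so that $\beta$ shares the row of $-\alpha$), the factor $v^{-\eps_{j_0}}$ in $V = v^{-\eps_{j_0}}V'$ shifts the $v$-valuation of $V_\beta$ down by one relative to $V'_\beta$: the Leibniz computation is then applied to $V'_\beta$ and the result divided by $v$, producing the stated $\tld{\kappa}_\beta = \kappa'_\beta - 1$. When $\beta = \alpha_{l\,i_0}$ with $l>j_0$ and $s(\beta)\in\Phi^-$ bad, part~(3) of Proposition~\ref{prop: FH 2} forces $\deg(V_\beta/v^{\kappa_\beta}) < |\langle\eta,\beta^\vee\rangle|-1$, so that the top $(v+p)$-degree of $(\nabla V)_\beta$ drops by one; the cross term $(\nabla V)_{\beta+\alpha}V^{\imath}_{-\alpha}$, having $\kappa_{-\alpha}=-1$ in the second form, then sits at one lower $v$-power than the generic expectation and supplies the dominant contribution, yielding $\tld{\kappa}_\beta = \kappa'_\beta - 1$ as claimed. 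The main obstacle throughout is the consistent tracking of the $v^{-1}$ factor on the $j_0$-th row of $V$ together with the interplay of the two conventions $\kappa_\beta$ and $\kappa'_\beta$; once this dictionary is pinned down, the remainder of the case analysis is mechanical and parallels the corresponding steps in the proof of Lemma~\ref{lemma: routine monodromy 1}.
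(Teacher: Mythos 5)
Your proposal is correct and follows essentially the same route as the paper, whose proof simply transports the argument of Lemma~\ref{lemma: routine monodromy 1} to the second form via Proposition~\ref{prop: FH 2} and Lemma~\ref{lemma: subadditive 2}; your explicit Leibniz expansion, the identification of $Y_\beta$ via $v=(v+p)-p$, and the separate treatment of the two degree-shifted subcases are exactly the details left to the reader there. One phrasing to tighten: the individual cross terms $(\nabla V)_{\beta_2}V^{\imath}_{\beta_1}$ are divisible by $v^{\tld{\kappa}_\beta}$ but not by $(v+p)^{m_\beta}$ --- only their sum with $(\nabla V)_\beta$, i.e.\ $V^{\sharp}_\beta$ itself, is divisible by $(v+p)^{m_\beta}$, and this comes from the standing hypothesis \eqref{equ: monodromy condition} rather than from the degree bounds.
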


\begin{proof}
The proof is similar to that of Lemma~\ref{lemma: routine monodromy 1} using Proposition~\ref{prop: FH 2} and Lemma~\ref{lemma: subadditive 2} instead of Proposition~\ref{prop: FH 1} and Lemma~\ref{lemma: subadditive 1}, respectively. We leave the details for the reader.
\end{proof}

For $\beta\in\Phi^-$ and for each integer $s\geq1$, we set $I_{\beta,s}$ to be the set of the tuples of negative roots $(\beta_1,\beta_2,\cdots,\beta_s)$ such that
\begin{itemize}
\item $\beta=\beta_1+\beta_2+\cdots+\beta_s$ and $\beta_1$ shares the column of $\beta$;
\item $\beta_i+\beta_{i+1}\in\Phi^-$ for $i\in\{1,\cdots,s-1\}$;
\item $w'(\beta_i)<0$ for all $i\in\{1,\cdots,s\}$.
\end{itemize}
Moreover, we set $$I_\beta\defeq\bigcup_{s\geq 1}I_{\beta,s}.$$
\begin{lemma}\label{lemma: shadow 2}
Let $\beta\in\Phi^-$ with $\beta\geq -\alpha$.
\begin{enumerate}
\item If $(\beta_1,\beta_2)\in\mathfrak{D}_{\beta}$ then $\gamma_\beta\geq \gamma_{\beta_1}+\gamma_{\beta_2}$;
\item $(\beta_1,\beta_2)\in\mathfrak{D}_{\beta}$ satisfies $\gamma_\beta=\gamma_{\beta_1}+\gamma_{\beta_2}$ if and only if $\kappa_\beta<\kappa_{\beta_1}+\kappa_{\beta_2}$.
\end{enumerate}
In particular, if $(\beta_1,\beta_2)\in\mathfrak{D}_{-\alpha}$ then
$\gamma_{\beta_1}+\gamma_{\beta_2}=\gamma_{-\alpha}$,
$\deg V^{\imath}_{\beta_1}= \gamma_{\beta_1}=m_{\beta_1}$, and $$\frac{1}{(\gamma_{\beta_1})!}\frac{d^{\gamma_{\beta_1}}V^{\imath}_{\beta_1}}{dv^{\gamma_{\beta_1}}}= \sum_{(\beta'_1,\cdots,\beta'_s)\in I_{\beta_1}} (-1)^{\ell+j_0-s}c_{\beta'_1,m_{\beta'_1}}c_{\beta'_2,m_{\beta'_2}}\cdots c_{\beta'_s,m_{\beta'_s}}$$
where $\beta_1=\alpha_{\ell i_0}$.
\end{lemma}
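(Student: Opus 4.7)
The plan is to follow the proof of Lemma~\ref{lemma: shadow 1} almost verbatim, substituting Proposition~\ref{prop: FH 2} and Lemma~\ref{lemma: subadditive 2} for Proposition~\ref{prop: FH 1} and Lemma~\ref{lemma: subadditive 1}, and tracking the extra shift $\kappa_\beta=\kappa'_\beta-\delta_{\beta\text{ shares row of }-\alpha}$ introduced by the second-form normalization $V=v^{-\eps_{j_0}}V'$.

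For parts (1) and (2), since $m_\beta=m_{\beta_1}+m_{\beta_2}+1$ for $(\beta_1,\beta_2)\in\mathfrak{D}_\beta$, one has
\[
\gamma_\beta-\gamma_{\beta_1}-\gamma_{\beta_2}=\bigl(\kappa_\beta-\kappa_{\beta_1}-\kappa_{\beta_2}\bigr)+\bigl(m'_\beta-m'_{\beta_1}-m'_{\beta_2}\bigr).
\]
The first parenthesis is controlled by Lemma~\ref{lemma: subadditive 2} together with the explicit values of $\kappa'$ listed immediately after Proposition~\ref{prop: FH 2}; the second is a bounded correction, recorded in~\eqref{equ: description of the entries of V}, that depends only on whether each of $\beta,\beta_1,\beta_2$ is bad or shares the column of $-\alpha$ with bad $s$-image. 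I would run through the cases indexed by the geometric position of $\beta=\alpha_{l,m}$ relative to $(i_0,j_0)$ and verify that the right-hand side is non-negative, and that it vanishes exactly when the first summand is strictly negative (equivalently, $\kappa_\beta<\kappa_{\beta_1}+\kappa_{\beta_2}$). The precise list of configurations producing equality is exactly the one flagged in the "moreover" part of Lemma~\ref{lemma: subadditive 2}.

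For the \emph{in particular} clause: applying (2) with $\beta=-\alpha$ works because $\kappa_{-\alpha}=\kappa'_{-\alpha}-1=-1$ whereas $\kappa_{\beta_1}+\kappa_{\beta_2}\geq 0$ for any $(\beta_1,\beta_2)\in\mathfrak{D}_{-\alpha}$, so $\gamma_{\beta_1}+\gamma_{\beta_2}=\gamma_{-\alpha}$. The existence of a decomposition with both $\beta_i\in\Phi^-$ forces $\beta_1=\alpha_{\ell,i_0}$ with $i_0<\ell<j_0$; Proposition~\ref{prop: FH 2} and the remarks after it then give $\kappa_{\beta_1}=0$ and $m'_{\beta_1}=m_{\beta_1}$, hence $\gamma_{\beta_1}=m_{\beta_1}$. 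Since $V$ is lower-triangular unipotent by Proposition~\ref{prop: FH 2}, the inverse formula $V^\imath=\sum_{s\geq 1}(-N)^s$ (with $V=I+N$) expands $V^\imath_{\beta_1}$ as an alternating sum over chain decompositions of $\beta_1$ into negative roots. Parts~(1) and~(2) bound the degree of each summand by $\gamma_{\beta_1}$ and identify the chains that saturate the bound as those at which $\kappa$ is non-subadditive at every step, i.e.~those satisfying $w'(\beta'_i)<0$ for all $i$---precisely the elements of $I_{\beta_1}$. Extracting the top-degree coefficient, and noting that for such chains one has $m'_{\beta'_i}=m_{\beta'_i}$ so that the $v^{\gamma_{\beta'_i}}$-coefficient of $V_{\beta'_i}$ is $c_{\beta'_i,m_{\beta'_i}}$, yields the desired sum of products; the sign $(-1)^{\ell+j_0-s}$ is the cofactor-expansion sign together with the same overall normalization appearing in Lemma~\ref{lemma: shadow 1}.

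The main obstacle is the case analysis for (1) and~(2): the $-1$ shift on $\kappa_\beta$ in the row of $-\alpha$ creates several more subcases than in the first-form situation and forces a careful cell-by-cell matching between the $\kappa$ and $m'$ corrections. Once the non-negativity is established the cofactor-expansion identification of the leading term of $V^\imath_{\beta_1}$ is formally identical to that in Lemma~\ref{lemma: shadow 1}.
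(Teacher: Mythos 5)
Your proposal is correct and follows essentially the same route as the paper, which itself proves this lemma by declaring it ``similar to Lemma~\ref{lemma: shadow 1}'' (case-by-case verification of (1)--(2) from the explicit values of $\kappa'_\beta$, $\kappa_\beta$, $m'_\beta$, followed by identifying the top-degree monomials of $V^\imath_{\beta_1}$ with the chains in $I_{\beta_1}$). Your key computations --- $\kappa_{-\alpha}=-1$ versus $\kappa_{\beta_1}+\kappa_{\beta_2}=0$, and $m'_{\beta_1}=m_{\beta_1}$ with $\kappa_{\beta_1}=0$ for $\beta_1=\alpha_{\ell i_0}$, $i_0<\ell<j_0$ --- check out, and you supply at least as much detail as the paper does.
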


\begin{proof}
The proof is similar to that of Lemma~\ref{lemma: shadow 1}. We leave the details for the reader.
\end{proof}

\begin{lemma}\label{lemma: equation for def ring 2}
We have
\begin{equation*}
p^{m_{-\alpha}+1}\cdot(\langle\bf{a}_w,-\alpha^\vee\rangle+1) =p^{m_{-\alpha}}\cdot c\cdot Z_{-\alpha}
\end{equation*}
where
\begin{multline*}
Z_{-\alpha}\defeq\left(m_{-\alpha}-1-\langle\bf{a}_w,-\alpha^\vee\rangle\right) c_{-\alpha,m_{-\alpha}}\\
+\sum_{(\beta_1,\beta_2)\in\mathfrak{D}_{-\alpha}} \left(m_{\beta_2}-\langle\bf{a}_w,\beta_2^\vee\rangle\right)c_{\beta_2,m_{\beta_2}} \sum_{(\beta'_1,\cdots,\beta'_s)\in I_{\beta_1}} (-1)^{\ell+j_0-s}c_{\beta'_1,m_{\beta'_1}}\cdots c_{\beta'_s,m_{\beta'_s}}
\end{multline*}
if we write $\beta_1=\alpha_{\ell i_0}$.
\end{lemma}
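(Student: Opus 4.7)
The approach is to adapt the proof of Lemma~\ref{lemma: equation for def ring 1} to the second form; the main novelty is that $\kappa_{-\alpha}=-1$ (rather than $0$), and this is precisely what produces the ``$+1$'' in the statement.

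First I invoke Lemma~\ref{lemma: shadow 2} to pin down the relevant degrees: for each $(\beta_1,\beta_2)\in\mathfrak{D}_{-\alpha}$, we have $\gamma_{\beta_1}+\gamma_{\beta_2}=\gamma_{-\alpha}=m_{-\alpha}-1$ and $\deg V^{\imath}_{\beta_1}=m_{\beta_1}$, with the leading coefficient of $V^{\imath}_{\beta_1}$ expressed as the sum indexed by $I_{\beta_1}$. Writing $(\beta_1,\beta_2)=(\alpha_{ki_0},\alpha_{j_0k})$ for $i_0<k<j_0$, the defining property $w(j_0)<w(k)<w(i_0)$ of the second form implies $w'(\beta_2)<0$, hence $\kappa'_{\beta_2}=1$ and $\kappa_{\beta_2}=\kappa'_{\beta_2}-1=0$. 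A direct expansion then gives the leading $v$-coefficient of $(\nabla V)_{\beta_2}$ as $(m_{\beta_2}-\langle\bf{a}_w,\beta_2^\vee\rangle)c_{\beta_2,m_{\beta_2}}$. Similarly, writing $V_{-\alpha}=v^{-1}\sum_{k} c_{-\alpha,k}(v+p)^k=v^{-1}P(v)$, one computes $(\nabla V)_{-\alpha}=-(1+\langle\bf{a}_w,-\alpha^\vee\rangle)V_{-\alpha}+P'(v)$, whose leading $v$-coefficient is $(m_{-\alpha}-1-\langle\bf{a}_w,-\alpha^\vee\rangle)c_{-\alpha,m_{-\alpha}}$. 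The extra ``$-1$'' is precisely the contribution of $v\frac{d}{dv}(v^{-1})\cdot P(v)=-V_{-\alpha}$, which has no analog in the first-form setting.

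Combining these contributions with the monodromy condition~\eqref{equ: monodromy condition}, I claim that $V^\sharp_{-\alpha}=Z_{-\alpha}\,v^{-1}(v+p)^{m_{-\alpha}}$, so that after multiplication by $v$ we obtain the genuine polynomial identity $v V^\sharp_{-\alpha}=Z_{-\alpha}(v+p)^{m_{-\alpha}}$; evaluating at $v=0$ yields $Z_{-\alpha}\,p^{m_{-\alpha}}$ on one side. For the other side, note that each $V^{\imath}_{\beta_1}$ and $(\nabla V)_{\beta_2}$ lies in $R[v]$: indeed the expansion $V^{-1}=\mathrm{Id}+\sum_{s\geq 1}(-1)^{s}(V-\mathrm{Id})^{s}$ shows that $(V^{\imath})_{ki_0}$ involves only entries of $V$ along strictly decreasing chains of indices starting at $k<j_0$, none of which passes through row $j_0$, so no $v^{-1}$ arises from the $v^{-\eps_{j_0}}$-normalization. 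Multiplying any such polynomial by $v$ annihilates its constant term, so the only surviving contribution to the constant term of $vV^\sharp_{-\alpha}$ comes from $v\cdot(\nabla V)_{-\alpha}$, which evaluates to $-(1+\langle\bf{a}_w,-\alpha^\vee\rangle)c_{-\alpha}$ (the remaining $v\cdot P'(v)$ contributes nothing at $v=0$).

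Equating the two computations gives $-(1+\langle\bf{a}_w,-\alpha^\vee\rangle)c_{-\alpha}=Z_{-\alpha}\,p^{m_{-\alpha}}$. Multiplying both sides by $-c$ and substituting the identity $-c\cdot c_{-\alpha}=p^{|\langle\eta,\alpha^\vee\rangle|}=p^{m_{-\alpha}+1}$ from the second part of Lemma~\ref{lemma: equation from height conditions 2} yields the desired equation $p^{m_{-\alpha}+1}(\langle\bf{a}_w,-\alpha^\vee\rangle+1)=p^{m_{-\alpha}}\cdot c\cdot Z_{-\alpha}$. The main subtlety, and the only genuine deviation from the first-form argument, is the careful handling of the $v^{-1}$-factor in $V_{-\alpha}$: one must verify that the monodromy condition still forces $V^\sharp_{-\alpha}$ into the predicted form $Z_{-\alpha}v^{-1}(v+p)^{m_{-\alpha}}$, which is where the degree-matching content of Lemma~\ref{lemma: shadow 2} does the real work.
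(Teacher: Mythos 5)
Your proof is correct and follows essentially the same route as the paper's: the paper likewise invokes Lemma~\ref{lemma: shadow 2} for the degree matching and then extracts the constant term of $v\cdot V^{\sharp}_{-\alpha}$, referring back to the argument of Lemma~\ref{lemma: equation for def ring 1} and leaving the details to the reader. You have simply supplied those details (the $\kappa_{-\alpha}=-1$ bookkeeping, the polynomiality of $V^{\imath}_{\beta_1}$, and the sign check via Lemma~\ref{lemma: equation from height conditions 2}), all of which check out.
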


\begin{proof}
By Lemma~\ref{lemma: shadow 2}, we see that $\deg((\nabla V)_{\beta_2}V^{\imath}_{\beta_1})=\gamma_{\beta_2}+\gamma_{\beta_1}=m_{\beta_2}+m_{\beta_1}=m_{-\alpha}-1$ for all $(\beta_1,\beta_2)\in\mathfrak{D}_{-\alpha}$. By the same argument as in Lemma~\ref{lemma: equation for def ring 1}, extracting the constant term of $v\cdot V_{-\alpha}^{\sharp}$ gives rise to the result. We leave the details for the reader.
\end{proof}

We further eliminate the variables $Y_\beta$ for bad roots $\beta\in\Phi^-$.
\begin{lemma}\label{lemma: bad cases 2}
Let $\beta\in\Phi^-$ be a bad root.
\begin{enumerate}
\item If $\beta$ shares the row of $\alpha$ then we have
$$p^{m_{s(\beta)}}\cdot(Y_\beta+c\cdot X_{s(\beta)})\in p^{m_{s(\beta)}}\cdot \left(c\cdot c_{\beta,m'_{\beta}}\cdot\mathbf{F}_{\geq-\alpha}+ \mathbf{F}_{>\beta}+c\cdot \mathbf{F}_{>s(\beta)}^{\beta}\right).$$
\item If $\beta$ shares the column of $\alpha$ then
$$p^{m_{s(\beta)}}\cdot(Y_\beta-c\cdot X_{s(\beta)})\in p^{m_{s(\beta)}}\cdot \left(c\cdot c_{\beta,m'_{\beta}}\cdot\mathbf{F}_{\geq-\alpha}+ \mathbf{F}_{>\beta}+c\cdot \mathbf{F}_{>s(\beta)}^{\beta}\right).$$
\end{enumerate}
\end{lemma}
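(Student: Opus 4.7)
The plan is to mirror the proof of Lemma~\ref{lemma: bad cases 1} step for step, substituting each auxiliary result by its second-form analog: Lemma~\ref{lemma: routine monodromy 2} in place of Lemma~\ref{lemma: routine monodromy 1}, Lemmas~\ref{lemma: decomposition 2-1} and~\ref{lemma: decomposition 2-2} in place of Lemmas~\ref{lemma: decomposition 1-1} and~\ref{lemma: decomposition 1-2}, Lemma~\ref{lemma: equation from height conditions 2} in place of Lemma~\ref{lemma: equation from height conditions 1}, Lemma~\ref{lemma: equation from VV^-1=1 2} in place of Lemma~\ref{lemma: equation from VV^-1=1 1}, and Lemma~\ref{lemma: subadditive 2} in place of Lemma~\ref{lemma: subadditive 1}. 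The overall scheme is unchanged; only the roles played by $w'$ and the shifted invariants $\kappa'_\beta$ need to be tracked.

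For part (1), I would extract the constant coefficient in $v$ from the monodromy equation of Lemma~\ref{lemma: routine monodromy 2}(1) applied to $s(\beta)$. By Lemma~\ref{lemma: decomposition 2-1}, the resulting sum over $\mathfrak{A}_{s(\beta)}$ splits into a piece indexed by $\mathfrak{A}_\beta$ (via $(\beta_1,\beta_2)\mapsto (\beta_1,s(\beta_2))$) plus the leftover term corresponding to $\beta_2 = -\alpha$. A comparison of degrees, using $\gamma_\beta + \gamma_{-\alpha} = m'_\beta + m_{-\alpha} = m_{s(\beta)}$, shows that the $V_\beta V^{\imath}_{-\alpha}$ contribution lands in $c_{\beta,m'_\beta}\cdot \mathbf{F}_{\geq -\alpha}$, so the full expression lies in $-p^{m_{s(\beta)}}\bigl(X_{s(\beta)} + c_{\beta,m'_\beta}\mathbf{F}_{\geq -\alpha} + \mathbf{F}_{>s(\beta)}^{\beta}\bigr)$. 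Multiplying through by $-c$ and converting each $c\cdot c_{s(\beta')}$ to $p^{|\langle\eta,\alpha^\vee\rangle|} c_{\beta'}$ via Lemma~\ref{lemma: equation from height conditions 2} yields an expression in $c_\beta$, $c_{\beta_2}c^{\imath}_{\beta_1}$, and $c^{\imath}_\beta$. The identity $c_\beta + \sum_{(\beta_1,\beta_2)\in\mathfrak{A}_\beta} c_{\beta_2}c^{\imath}_{\beta_1} + c^{\imath}_\beta = 0$, obtained from extracting the constant term of $(V V^{\imath})_\beta = 0$, then collapses the $c^{\imath}_\beta$-term; combining with $\langle\bf{a}_w,s(\beta)^\vee\rangle = \langle\bf{a}_w,\beta^\vee\rangle - \langle\bf{a}_w,\alpha^\vee\rangle$ and comparing with the constant term produced by Lemma~\ref{lemma: routine monodromy 2}(2) for $\beta$ gives the claimed congruence.

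For part (2), I would first establish the analog $c^{\imath}_{s(\beta)} = c^{\imath}_{-\alpha}\,c^{\imath}_\beta$ of equation~\eqref{eq: eqation for column bad roots 1}, valid for every bad $\beta$ sharing the column of $\alpha$. Subtracting the identity $c_\beta + \sum c_{\beta_2}c^{\imath}_{\beta_1} + c^{\imath}_\beta = 0$ from Lemma~\ref{lemma: equation from VV^-1=1 2}(2) yields
\[
\sum_{(\beta_1,\beta_2)\in\mathfrak{A}_\beta} c_{\beta_2}\bigl(c^{\imath}_{s(\beta_1)} - c^{\imath}_{-\alpha} c^{\imath}_{\beta_1}\bigr) + \bigl(c^{\imath}_{s(\beta)} - c^{\imath}_{-\alpha} c^{\imath}_\beta\bigr) = 0,
\]
and Lemma~\ref{lemma: decomposition 2-2} lets one induct on the height of $\beta$ along the column of $\alpha$ to conclude. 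With this in hand, I would extract the coefficient of $v^{\kappa_\beta}$ in Lemma~\ref{lemma: routine monodromy 2}(2) for $\beta$ and the coefficient of $v^{\kappa'_{s(\beta)}-1}$ in Lemma~\ref{lemma: routine monodromy 2}(1) for $s(\beta)$; the same degree comparison as in case~(1) places the $V_\beta V^{\imath}_{-\alpha}$ contribution in $c_{\beta,m'_\beta}\mathbf{F}_{\geq -\alpha}$. Multiplying the second relation by $c^{\imath}_{-\alpha}$ and using $c^{\imath}_{s(\beta)} = c^{\imath}_{-\alpha}c^{\imath}_\beta$ to align it with the first relation produces $p^{m_{s(\beta)}}X_{s(\beta)} - c^{\imath}_{-\alpha} p^{m_\beta} Y_\beta$ in the claimed ideal; applying $c_{-\alpha} = -c^{\imath}_{-\alpha}$ together with the second part of Lemma~\ref{lemma: equation from height conditions 2} ($-c\cdot c_{-\alpha} = p^{|\langle\eta,\alpha^\vee\rangle|}$) and using $s(\beta) = \beta - \alpha$ finishes the argument.

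The main obstacle I expect is bookkeeping: tracking which monomials fall into $c_{\beta,m'_\beta}\mathbf{F}_{\geq -\alpha}$ versus $\mathbf{F}_{>s(\beta)}^\beta$ relies on careful use of the modified bottom degrees $\kappa'_\beta$ and $\kappa_\beta$ from Proposition~\ref{prop: FH 2}, which now shift by one along the row of $-\alpha$. A subtler point than in the first form is that the $v$-adic filtration on $V^{\imath}$ must be reestablished case by case for bad versus non-bad roots on the column of $\alpha$, since subadditivity of $\kappa_\beta$ can fail exactly in the region $l > j_0$, $m \leq i_0$; but Lemma~\ref{lemma: subadditive 2} (together with the degree bounds in Proposition~\ref{prop: FH 2}) supplies the replacement needed, so no genuinely new identity should be required beyond what was used in Lemma~\ref{lemma: bad cases 1}.
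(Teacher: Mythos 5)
Your proposal is correct and follows essentially the same route as the paper, whose own proof of this lemma simply says it is similar to that of Lemma~\ref{lemma: bad cases 1} with the second-form auxiliary lemmas substituted for their first-form counterparts and leaves the details to the reader. The details you supply match the structure of the proof of Lemma~\ref{lemma: bad cases 1} step for step, so nothing further is needed.
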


\begin{proof}
The proof is similar to that of Lemma~\ref{lemma: bad cases 1}. The only difference is that we use Lemmas~\ref{lemma: equation from height conditions 2}, \ref{lemma: decomposition 2-1}, \ref{lemma: routine monodromy 2}, \ref{lemma: equation from VV^-1=1 2}, \ref{lemma: decomposition 2-2}, and \ref{lemma: equation from height conditions 2}  instead of Lemmas~\ref{lemma: equation from height conditions 1}, \ref{lemma: decomposition 1-1}, \ref{lemma: routine monodromy 1}, \ref{lemma: equation from VV^-1=1 1}, \ref{lemma: decomposition 1-2}, and \ref{lemma: equation from height conditions 1}, respectively. We leave the details for the reader.
\end{proof}

Finally, we treat the case $\beta=\alpha_{lm}\in\Phi^-$ with $l>j_0$ and $m<i_0$. In this case, $\kappa_\beta$ is not subadditive in general, so that the following lemma is not trivial.
\begin{lemma}\label{lemma: left-bottom block 2}
Assume that $V^{\imath}_{s(\beta)}\in v^{\kappa_\beta}R[v]$ for a bad root $\beta\in\Phi^-$ sharing the row of $\alpha$. For $\beta=\alpha_{lm}\in\Phi^-$ with $l>j_0$ and $m<i_0$, we have
$$V^\sharp_\beta \in v^{\kappa_\beta}(v+p)^{m_\beta}\big(X_\beta+\mathbf{F}_{>\beta}\big)$$
    where $X_\beta=(\langle\bf{a}_w,\beta^\vee\rangle +m_\beta+\kappa_\beta)c_{\beta,m'_\beta}$.
\end{lemma}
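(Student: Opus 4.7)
The plan is to mimic the proof of Lemma~\ref{lemma: left-bottom block 1}, where the role of its first step (the independent verification that $V^\imath_{s(\beta_0)}\in vR[v]$ for bad $\beta_0$ sharing the row of $\alpha$, obtained from Lemma~\ref{lemma: equation from VV^-1=1 1}(1) together with the unit property of $c$) is now played directly by the standing hypothesis of the statement.

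First I would establish, by induction on $l'$, the auxiliary claim: for every $\beta' = \alpha_{l'm'}\in\Phi^-$ with $m' < i_0$ and $l'\neq j_0$, one has $V^\imath_{\beta'}\in v^{\kappa_{\beta'}}R[v]$. The case $l' < j_0$ follows immediately from the identity
\[
V^\imath_{\beta'} = -V_{\beta'} - \sum_{(\beta_1,\beta_2)\in\mathfrak{D}_{\beta'}} V_{\beta_2}V^\imath_{\beta_1}
\]
together with the subadditivity of $\kappa_{\beta'}$ in that region (Lemma~\ref{lemma: subadditive 2}). For the inductive step with $l' > j_0$, the same identity is examined: writing $\beta_1 = \alpha_{km'}$, the cases $m' < k < j_0$ use subadditivity and the cases $j_0 < k < l'$ use the induction hypothesis. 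The critical case $k = j_0$, where $\beta_1 = \alpha_{j_0m'}$ shares the row of $-\alpha$ and $\kappa_{\beta_1}$ fails to be subadditive, is exactly covered by the hypothesis applied to $s(\beta_1) = \alpha_{i_0 m'}$ (when this root is bad; otherwise the $\kappa$-values already align and subadditivity suffices).

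Second, for $\beta = \alpha_{lm}$ with $l > j_0$ and $m < i_0$, I would expand
\[
V^\sharp_\beta = (\nabla V)_\beta + \sum_{(\beta_1,\beta_2)\in\mathfrak{D}_\beta} (\nabla V)_{\beta_2}V^\imath_{\beta_1}
\]
and check each cross-term lies in $v^{\kappa_\beta}(v+p)^{m_\beta}\mathbf{F}_{>\beta}$. For $\beta_1 = \alpha_{km}$ with $k\neq j_0$ the auxiliary claim gives $V^\imath_{\beta_1}\in v^{\kappa_{\beta_1}}R[v]$, and then Lemma~\ref{lemma: shadow 2} together with Lemma~\ref{lemma: subadditive 2} yields both the desired $v$- and $(v+p)$-divisibility. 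For $k = j_0$ the hypothesis on $s(\beta_1) = \alpha_{i_0m}$ supplies the additional $v$-power that exactly compensates the failure of subadditivity. Only $(\nabla V)_\beta$ then contributes to the leading $(v+p)^{m_\beta}$-part, and a direct computation from the presentation $V_\beta = v^{\kappa_\beta}\sum_k c_{\beta,k}(v+p)^k$ together with $(\nabla V)_\beta = v\tfrac{dV_\beta}{dv} - \langle\bf{a}_w,\beta^\vee\rangle V_\beta$ (and the expansion $v = (v+p) - p$) produces the leading coefficient $X_\beta = (\langle\bf{a}_w,\beta^\vee\rangle + m_\beta + \kappa_\beta)c_{\beta,m'_\beta}$ modulo $\mathbf{F}_{>\beta}$.

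The main obstacle is the bookkeeping around the $\beta_1 = \alpha_{j_0 m}$ summand, both in the induction and in the final expansion: this is the only place where the subadditive inequalities on $\kappa$ break, and the proof hinges on trading the missing unit of $\kappa$-divisibility for the hypothesized extra $v$-divisibility of $V^\imath_{\alpha_{j_0m}}$.
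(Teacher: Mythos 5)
Your proposal is correct and follows essentially the same route as the paper: the paper's own proof simply re-establishes the claim $V^{\imath}_{s(\beta_0)}\in v^{\kappa_{\beta_0}}R[v]$ for bad roots $\beta_0$ sharing the row of $\alpha$ (mirroring the first claim of Lemma~\ref{lemma: left-bottom block 1}) and then repeats the induction and term-by-term analysis of that lemma with Lemma~\ref{lemma: subadditive 2} in place of Lemma~\ref{lemma: subadditive 1}. Your only deviation is to invoke the stated hypothesis directly rather than re-deriving it, which is harmless.
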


\begin{proof}
We first claim that $V^{\imath}_{s(\beta_0)}\in v^{\kappa_{\beta_0}}R[v]$ for each bad root $\beta_0\in\Phi^-$ sharing the row of $\alpha$, whose proof is almost identical to the first claim in the proof of Lemma~\ref{lemma: left-bottom block 1}. The rest of the proof also is similar to that of Lemma~\ref{lemma: left-bottom block 1}. The only difference is that we use Lemma~\ref{lemma: subadditive 2} instead of Lemma~\ref{lemma: subadditive 1}. We leave the details for the reader.
\end{proof}

\subsection{Description of $U(\tld{w},\leq\!\! \eta,\nabla_{\bf{a}})$ in colength $\leq 1$}
In this subsection we describe $U(\tld{w},\leq\!\! \eta,\nabla_{\bf{a}})$ when $\tld{w}^*\in \tld{W}$ has colength $\leq 1$.

We first give an upper bound of $U(\tld{w},\leq\!\! \eta,\nabla_{\bf{a}})$ for $\tld{w}^*$ of colength zero.
\begin{prop}\label{prop: naive main, zero}
Let $\tld{w}^*\in \Adm(\eta)$ be a colength zero shape, and let $\bf{a}\in\cO^n$.
Assume that $\ovl{\bf{a}}$ is $n$-generic $($\cite[Definition 4.2.2]{MLM}$)$.
Then there is a closed immersion
\[
U(\tld{w},\leq\!\! \eta,\nabla_{\bf{a}})\into \mathrm{Spec}\cO[\{X_\beta\mid \beta\in\Phi^-\}].
\]
\end{prop}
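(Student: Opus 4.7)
The proof should follow the strategy of \S\ref{sec:finite height conditions}--\S\ref{sec: monodromy}, but simplified since no ``bad'' roots occur in the colength zero setting. I would proceed in three steps.

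First, I would derive the colength zero analog of Propositions~\ref{prop: FH 1} and~\ref{prop: FH 2}: writing $\tld{w}=wt_\eta w^{-1}$ with $w\in W$, any $A\in U(\tld{w},\leq\!\!\eta)(R)$ factors uniquely as $A = w\cdot V\cdot (v+p)^\eta\cdot w^{-1}$ where $V$ is lower triangular unipotent with entries
\[
V_\beta = v^{\kappa_\beta}\sum_{k=0}^{m_\beta}c_{\beta,k}(v+p)^k,\qquad \kappa_\beta = \delta_{w(\beta)<0},\quad m_\beta = |\langle\eta,\beta^\vee\rangle|-1,
\]
with no further relations imposed by the finite height conditions. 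In particular, $U(\tld{w},\leq\!\!\eta)\cong \mathrm{Spec}\,\cO[\{c_{\beta,k}\}_{\beta\in\Phi^-,\, 0\leq k\leq m_\beta}]$.

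Second, I would prove the colength zero analog of Lemma~\ref{lemma: routine monodromy 1}: for every $\beta\in\Phi^-$,
\[
V^\sharp_\beta \in v^{\kappa_\beta}(v+p)^{m_\beta}\bigl(X_\beta + \mathbf{F}_{>\beta}\bigr),\qquad X_\beta = (m_\beta+\kappa_\beta-\langle\bf{a}_w,\beta^\vee\rangle)\,c_{\beta,m_\beta}.
\]
Here $\kappa_\beta$ is subadditive on every decomposition $(\beta_1,\beta_2)\in\mathfrak{D}_\beta$ (the analog of Lemma~\ref{lemma: subadditive 1} applies with no exceptional case), so the claim reduces to a direct degree count on the definition of $V^\sharp_\beta$, with the coefficient of $v^{\kappa_\beta}(v+p)^{m_\beta}$ controlled by the action of $\nabla_{\bf{a}_w}$ on the leading term of $V_\beta$.

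Third, I would extract the equations defining $U(\tld{w},\leq\!\!\eta,\nabla_{\bf{a}})$ inside $U(\tld{w},\leq\!\!\eta)$. Expanding $V^\sharp_\beta/v^{\kappa_\beta}$ in powers of $(v+p)$ and using the divisibility $V^\sharp_\beta\in (v+p)^{m_\beta}R[v]$ from \eqref{equ: monodromy condition}, each vanishing coefficient at $(v+p)^k$ for $0\leq k < m_\beta$ yields a relation
\[
(k+\kappa_\beta-\langle\bf{a}_w,\beta^\vee\rangle)\,c_{\beta,k} + h_{\beta,k} = 0,\qquad h_{\beta,k}\in \mathbf{F}_{>\beta}.
\]
The $n$-genericity of $\ovl{\bf{a}}$ ensures that $|k+\kappa_\beta-\langle\ovl{\bf{a}},\beta^\vee\rangle|<n$ is nonzero modulo $p$ and therefore a unit in $\cO$, so each $c_{\beta,k}$ with $k<m_\beta$ is expressed in terms of $\mathbf{F}_{>\beta}$; likewise $c_{\beta,m_\beta}$ is a unit multiple of $X_\beta$. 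A descending induction on the height of $\beta\in\Phi^-$ (which is the natural filtration making $\mathbf{F}_{>\beta}$ a ``lower'' submodule) then shows that every $c_{\beta,k}$ lies in the $\cO$-subalgebra of $\cO(U(\tld{w},\leq\!\!\eta,\nabla_{\bf{a}}))$ generated by $\{X_{\beta'}\}_{\beta'\in\Phi^-}$, giving the required closed immersion.

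The main technical obstacle lies in the second step: verifying that expanding $V^\sharp_\beta$ in $(v+p)$ yields relations linear in each $c_{\beta,k}$ with leading coefficient $k+\kappa_\beta-\langle\bf{a}_w,\beta^\vee\rangle$, rather than more tangled expressions. This requires careful tracking of the degree bounds on the products $(\nabla V)_{\beta_2}V^\imath_{\beta_1}$, in the spirit of Lemma~\ref{lemma: shadow 1}; once this bookkeeping is in place, the induction and the genericity reduction are routine.
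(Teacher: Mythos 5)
Your argument is correct in substance, but it takes a genuinely different route from the paper: the paper's entire proof of this proposition is a citation to the proof of \cite[Proposition 3.4.12]{LLL}, together with the observation that the argument there, written for complete local Noetherian $\cO$-algebras, remains valid for $\cO$-flat Noetherian $\cO$-algebras. You instead rederive the result by specializing the colength-one machinery of \S\ref{sec:finite height conditions}--\S\ref{sec: monodromy} to the extremal case; your three steps are the degenerate (no bad roots, no $u_{-\alpha}$-factor, no extra variable $c$) versions of Proposition~\ref{prop: FH 1}, Lemma~\ref{lemma: routine monodromy 1}(1) and Proposition~\ref{prop: naive main, one}, and since $\kappa_\beta=\delta_{w(\beta)<0}$ is subadditive and $\gamma_\beta$ superadditive with no exceptional cases, the degree bookkeeping you flag as the main obstacle does close up. Your approach buys self-containedness and uniformity with the colength-one treatment, at the cost of redoing work that \cite{LLL} has already carried out.

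Two small repairs are needed. First, the factorization should read $w^{-1}Aw=(v+p)^\eta\cdot V$, with the diagonal factor on the left as in \eqref{eq: elementary operation 1}; with your ordering $V\cdot(v+p)^\eta$ the entries of $V$ would acquire denominators $(v+p)^{|\langle\eta,\beta^\vee\rangle|}$, and the degree bounds you then state (which are the correct ones) would not hold. Second, the relation extracted from the coefficient of $(v+p)^k$ is not of the form $(k+\kappa_\beta-\langle\bf{a}_w,\beta^\vee\rangle)c_{\beta,k}+h_{\beta,k}=0$ with $h_{\beta,k}\in\mathbf{F}_{>\beta}$: computing $v\tfrac{d}{dv}$ of $v^{\kappa_\beta}\sum_k c_{\beta,k}(v+p)^k$ and substituting $v=(v+p)-p$ produces an additional term $-p(k+1)c_{\beta,k+1}$, which involves the same root $\beta$ and hence does not lie in $\mathbf{F}_{>\beta}$. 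This is harmless --- one eliminates the $c_{\beta,k}$ by descending induction on $k$ within each fixed $\beta$ (the case $k=m_\beta-1$ only involves $c_{\beta,m_\beta}$, which is a unit multiple of $X_\beta$ by the $n$-genericity of $\ovl{\bf{a}}$) before running the induction along the root poset --- but the elimination has to be organized in that order, and the displayed relation should be stated accordingly.
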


\begin{proof}
This follows from the arguments in \cite[\S 3.4]{LLL}, specifically the proof of \cite[Proposition 3.4.12]{LLL}.
(Note that the argument in \emph{loc.~cit}.~is written for complete local Noetherian $\cO$-algebras but is valid in our setting of $\cO$-flat Noetherian $\cO$-algebras.)
\end{proof}

We now give an upper bound of $U(\tld{w},\leq\!\! \eta,\nabla_{\bf{a}})$ for $\tld{w}^*$ of colength one. Recall that $Z_{-\alpha}$ is constructed in Lemma~\ref{lemma: equation for def ring 1} (resp. in Lemma~\ref{lemma: equation for def ring 2}) if $\tld{w}^*$ is of colength one of the first form (resp.~of the second form).

\begin{prop}\label{prop: naive main, one}
Let $\tld{w}^*\in \Adm(\eta)$ be a colength one shape, and let $\bf{a}\in\cO^n$.
Assume that $\ovl{\bf{a}}$ is $n$-generic $($as defined in \cite[\S 4.2]{MLM}$)$.
Then there is a closed immersion $$U(\tld{w},\leq\!\! \eta,\nabla_{\bf{a}})\hookrightarrow \mathrm{Spec}\frac{\cO[\{X_\beta\mid \beta\in\Phi^-\}\cup\{c\}]}{(c\cdot Z_{-\alpha}-p)}.$$
\end{prop}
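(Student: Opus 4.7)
The plan is to impose the monodromy conditions of \S\ref{sec: monodromy} on top of the presentation of Proposition~\ref{prop: space of height conditions} and inductively eliminate all variables $c_{\beta,k}$ in favor of the single variable $X_\beta$ for each $\beta \in \Phi^-$, together with the variable $c$, subject to one surviving relation coming from the monodromy equation at $\beta = -\alpha$. Concretely, starting from $\cO[\{c_{\beta,k}\}_{\beta,k} \cup \{c\}]$ modulo the height conditions, I would add the ideal generated by the coefficients of $V^\sharp_\beta/(v^{\tld{\kappa}_\beta}(v+p)^{m_\beta})$ for $\beta\in \Phi^-$ (equivalently, the monodromy ideal of \cite[Definition 7.1.8]{MLM}), and then compute its $p$-saturation. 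Roots are processed in an order refining $\beta' > \beta$, so that when $\beta$ is treated, every monomial in $\mathbf{F}_{>\beta}$ already lies in the subring generated by the previously chosen $X_{\beta'}$'s and $c$.

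For $\beta \in \Phi^-$ neither bad nor equal to $-\alpha$ (and away from the left-bottom block $l>j_0,\,m<i_0$), Lemma~\ref{lemma: routine monodromy 1}(1) or~\ref{lemma: routine monodromy 2}(1) exhibits $V^\sharp_\beta$ as $v^{\tld{\kappa}_\beta}(v+p)^{m_\beta}$ times a polynomial whose leading coefficient is $X_\beta$ modulo $\mathbf{F}_{>\beta}$, the subleading coefficients being entirely in $\mathbf{F}_{>\beta}$. The genericity hypothesis makes $(m_\beta+\kappa_\beta-\langle\bf{a}_w,\beta^\vee\rangle)$ and its analogues units in $\cO$, so after inverting $p$ one solves iteratively for $c_{\beta,0},\ldots,c_{\beta,m'_\beta}$ in terms of $X_\beta$ and higher variables. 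For $\beta=\alpha_{lm}$ with $l>j_0$, $m<i_0$, where subadditivity of $\kappa_\beta$ fails, Lemma~\ref{lemma: left-bottom block 1}/\ref{lemma: left-bottom block 2} (whose proof uses $\cO$-flatness of $R$ together with the unit property of $c$ from Lemma~\ref{lemma: equation from height conditions 1}/\ref{lemma: equation from height conditions 2}) guarantees that $V^\sharp_\beta$ still has the expected form, so the same elimination proceeds.

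For bad $\beta$, Lemma~\ref{lemma: routine monodromy 1}(2) or~\ref{lemma: routine monodromy 2}(2) yields two top coefficients $X_\beta$ and $Y_\beta$. The extra variable $Y_\beta$ is removed using Lemma~\ref{lemma: bad cases 1}/\ref{lemma: bad cases 2}, which combines the height-condition identity $-c\cdot c_{-\alpha}=p^{|\langle\eta,\alpha^\vee\rangle|}$ with the matrix identities of Lemma~\ref{lemma: equation from VV^-1=1 1}/\ref{lemma: equation from VV^-1=1 2} to express $Y_\beta$ in terms of $X_{s(\beta)}$ (already treated, as $s(\beta)>\beta$) plus elements of $\mathbf{F}_{>\beta} + c\cdot\mathbf{F}^\beta_{>s(\beta)} + c\cdot c_{\beta,m'_\beta}\cdot\mathbf{F}_{\geq -\alpha}$. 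After $p$-saturation and invoking genericity once more, this leaves the single new variable $X_\beta$ at each bad root as well.

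It remains to analyze $\beta=-\alpha$. Here Lemma~\ref{lemma: shadow 1}/\ref{lemma: shadow 2} shows that for every decomposition $(\beta_1,\beta_2)\in\mathfrak{D}_{-\alpha}$ the summand $(\nabla V)_{\beta_2} V^\imath_{\beta_1}$ has exact degree $m_{-\alpha}$, so the extraction of the constant coefficient of $V^\sharp_{-\alpha}$ (respectively of $v\cdot V^\sharp_{-\alpha}$ in the second form) is a genuinely new relation not used in the elimination. This is exactly the content of Lemma~\ref{lemma: equation for def ring 1}/\ref{lemma: equation for def ring 2}, which produces $p^{m_{-\alpha}+1}\cdot u = p^{m_{-\alpha}}\cdot c\cdot Z_{-\alpha}$ for a scalar $u$ equal to $\langle\bf{a}_w,-\alpha^\vee\rangle$ or $\langle\bf{a}_w,-\alpha^\vee\rangle+1$. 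Since $u$ is a unit in $\cO$ by genericity and $R$ is $\cO$-flat, we may divide by $p^{m_{-\alpha}}$ and absorb $u$ into $Z_{-\alpha}$ (up to reversible rescaling) to obtain the relation $c\cdot Z_{-\alpha}-p=0$. Putting everything together gives the desired closed immersion.

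The main obstacle will be tracking the inductive bookkeeping: ensuring that every $c_{\beta,k}$ and every $Y_\beta$ can be eliminated using only equations attached to roots $\leq \beta$ in the processing order, that no redundant relations arise, and that exactly one new relation—the constant term at $-\alpha$—survives. The delicate case of the left-bottom block, where $\kappa_\beta$ fails to be subadditive and Lemma~\ref{lemma: left-bottom block 1}/\ref{lemma: left-bottom block 2} must be invoked, and the simultaneous handling of bad roots sharing the row versus column of $\alpha$, are the technical heart of the argument.
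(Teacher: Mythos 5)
Your proposal is correct and follows essentially the same route as the paper: eliminate the $c_{\beta,k}$ via Lemmas~\ref{lemma: routine monodromy 1}, \ref{lemma: left-bottom block 1}, \ref{lemma: routine monodromy 2}, \ref{lemma: left-bottom block 2} to reduce to generators $c$, $X_\beta$, $Y_\beta$, then remove the $Y_\beta$ via Lemmas~\ref{lemma: bad cases 1} and~\ref{lemma: bad cases 2}, and finally extract the single relation $c\cdot Z_{-\alpha}-p$ from Lemmas~\ref{lemma: equation for def ring 1} and~\ref{lemma: equation for def ring 2} together with $p$-flatness and genericity. Your write-up is in fact more explicit than the paper's about the processing order, the role of the genericity hypothesis in inverting the scalar coefficients, and the unit rescaling of $Z_{-\alpha}$.
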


\begin{proof}
By Proposition \ref{prop: space of height conditions}, $\cO(U(\tld{w},\leq\!\! \eta,\nabla_{\bf{a}}))$ is generated by $c_{\beta,k}$ (with $\beta\in\Phi^-$ and $0\leq k\leq m'_\beta$) and $c$.
But Lemmas~\ref{lemma: routine monodromy 1} ,~\ref{lemma: left-bottom block 1},~\ref{lemma: routine monodromy 2} and~\ref{lemma: left-bottom block 2} show that we can also generate using $c$, $X_\beta$ ($\beta\in\Phi^-$) and $Y_\beta$ ($\beta$ bad).
In turn, Lemmas~\ref{lemma: bad cases 1} and~\ref{lemma: bad cases 2} show that we can generate using just $c$ and $X_\beta$.
Finally Lemmas~\ref{lemma: equation for def ring 1} and~\ref{lemma: equation for def ring 2} (and $p$-flatness) give the relation $c\cdot Z_{-\alpha}-p$.
\end{proof}

\section{Colength one deformation rings}\label{sec:CL1:def}

In this section we apply the results of \S\ref{sec:finite height conditions} and \S\ref{sec: monodromy} to compute potentially crystalline deformation rings with Hodge--tate weights $\eta$, for sufficiently generic $\rhobar$ and tame inertial types $\tau$ such that $\tld{w}(\rhobar,\tau)$ has colength at most one in each embedding.

\subsection{Product structures and error terms}\label{subsec:error_term}
We first extend the technical results of \S \ref{subsec: monodromy cond, first} and \S \ref{subsec: monodromy cond, second} in a way which can be used to describe the closed immersion $\tld{U}(\tld{z},\leq\!\!\eta,\nabla_{\tau,\infty})\into \tld{U}(\tld{z},\leq\!\!\eta)^{\wedge_p}$, when $\tld{z}=(\tld{z}^{(j)})_{j\in\cJ}$ has colength at most one. This requires the modification of some of the previous formulas by allowing product structures, non-trivial diagonal entries, and an ``error term'' which takes into account the monodromy condition defined in \S \ref{subsub:true_mon}.

Keep the notation of \S\ref{sec:finite height conditions}, and let $R$ be a $p$-adically complete, topologically finite type, Noetherian $\cO$-algebra.
Let $\tld{w}^*\in \Adm(\eta)$ be a colength one shape of the first form (resp. of the second form), and $\tld{A}\in \tld{U}(\tld{w},\leq\!\!\eta)(R)$ with its image $A\in U(\tld{w},\leq\!\!\eta)(R)$ under the natural morphism $\tld{U}(\tld{w},\leq\!\!\eta)\rightarrow U(\tld{w},\leq\!\!\eta)$. We may write $w^{-1}\tld{A}w=sD_0s\cdot w^{-1}A w$ (resp. $w'^{-1}\tld{A}w'=D_0\cdot w'^{-1}A w'$) for some $D_0=\mathrm{Diag}(a_1,\cdots,a_n)\in T^\vee(R)$. Let $V\in \frac{1}{v}\Mat_n(R[v])$ be the matrix obtained from Proposition \ref{prop: FH 1} (resp. from Proposition \ref{prop: FH 2}) applied to $A\in U(\tld{w},\leq\!\!\eta)(R)$, and set $\tld{V}\defeq D_0\cdot V$. Then by the same argument as in Proposition \ref{prop: FH 1} (resp. in Proposition \ref{prop: FH 2}) we may write
$(w^{-1}\tld{A}w)_{j_0j_0}=a_{j_0j_0}(v+p)^{n-j_0}$ (resp. $(w'^{-1}\tld{A}w^{\prime})_{i_0i_0}=a_{i_0i_0}(v+p)^{n-j_0})$
for some $a_{j_0j_0}\in R$ (resp. for some $a_{i_0i_0}\in R$), and we have the following identity:
\begin{multline}\label{eq: elementary operation with diagonal entries}
s\cdot u_{-\alpha}(-\frac{a_{j_0j_0}}{a_{j_0}})\cdot(w^{-1}\tld{A}w)=(v+p)^\eta\cdot \tld{V}\\
\big(\mbox{resp. }u_{\alpha}(-\frac{a_{i_0i_0}}{a_{j_0}})\cdot v^{\varepsilon_{i_0}}\cdot (w'^{-1}\tld{A}w')\cdot s=(v+p)^\eta\cdot v^{\varepsilon_{j_0}} \cdot\tld{V}\big).
\end{multline}
Note that we may identify $c$, defined in Proposition~\ref{prop: FH 1} (resp. in Proposition~\ref{prop: FH 2}), with $a_{j_0j_0}/a_{i_0}$ (resp. with $a_{i_0i_0}/a_{i_0}$). We also note that the degree description of each entry of $\tld{V}$ is exactly the same as that of $V$, as $\tld{V}=D_0\cdot V$.

We fix a tame inertial type $\tau$ with a $N$-generic lowest alcove presentation $(s,\mu)$, together with an element $\tld{z}\in\Adm(\eta)^\vee$ which we write as $\tld{z}=(\tld{z}^{(j)})_{j\in\cJ}$.

Until the end of this subsection, assume that $j\in \cJ$ is such that $\ell(\tld{z}^{(j)})=\ell(t_{\eta})-1$, and let $A^{(j)}\in U(\tld{z}^{(j)},\leq\!\!\eta)(R)$ be the image of $\tld{A}^{(j)}\in \tld{U}(\tld{z}^{(j)},\leq\!\!\eta)(R)$. Let $\tld{V}^{(j)}\in \frac{1}{v}\Mat_n(R[v])$ be the matrix obtained from \eqref{eq: elementary operation with diagonal entries} (according to the two possible forms of $\tld{z}^{(j)}$) applied to $\tld{A}^{(j)}\in \tld{U}(\tld{z}^{(j)},\leq\!\!\eta)(R)$.
(We adapt the notation of Propositions \ref{prop: FH 1} and \ref{prop: FH 2} as well as condition \eqref{eq: elementary operation with diagonal entries} in our context by adding a superscript $(j)$, so that for instance a  colength one shape of the second form has decomposition $w^{(j)}s_{\alpha^{(j)}}t_{\eta-\alpha^{(j)}}{w^{(j)}}^{-1}$.) It is easy to see that condition~\eqref{equ: monodromy condition} shows that condition \eqref{eq:true:mon:cond:A} has the form
\begin{equation}\label{eq:true:mon:cond:V}
\tld{V}^{(j),\sharp}_{\beta^{(j)}}\defeq\left((\nabla_{\bf{a}_w} \tld{V}^{(j)})\cdot\tld{V}^{(j)\,\imath}\right)_{\beta^{(j)}}
\in (v+p)^{m_{\beta^{(j)}}}R[v]+p^{N-2n+5}R[\![v]\!]
\end{equation}
for all $\beta^{(j)}\in\Phi^-$, where $\bf{a}_w\in\Z^n$ is defined by $w^{(j)}(\bf{a}_w)=\bf{a}^{(j)}$.

As $\tld{V}^{(j)}=D_0\cdot V^{(j)}$ for some $D_0\defeq\mathrm{Diag}(a_1,\cdots,a_n)\in T^\vee(R)$, if we let $\beta^{(j)}=\alpha_{lm}\in\Phi^-$ then it is easy to see that $$a_m\cdot \tld{V}^{(j),\sharp}_{\beta^{(j)}}=a_l\cdot V^{(j),\sharp}_{\beta^{(j)}}.$$ Hence, condition \eqref{eq:true:mon:cond:V} applied to $\tld{V}^{(j)}$ induces all the relevant lemmas from \S \ref{subsec: monodromy cond, first} and \S \ref{subsec: monodromy cond, second} keeping track of the ``error term'' $p^{N-2n+5}$, as the diagonal entries $a_k$ of $D_0$ are units in $R$. More precisely,
\begin{itemize}
\item in Lemma \ref{lemma: routine monodromy 1} (resp. in Lemma~\ref{lemma: routine monodromy 2}), we have
\begin{enumerate}
\item $V^\sharp_{\beta^{(j)}}\in v^{\tld{\kappa}_{\beta^{(j)}}}(v+p)^{m_{\beta^{(j)}}}\big(X_{\beta^{(j)}}+\mathbf{F}_{>{\beta^{(j)}}}\big)+p^{N-2n+5}R[\![v]\!]$;
\item $V^\sharp_{\beta^{(j)}}\in v^{\kappa_{\beta^{(j)}}}(v+p)^{m_{\beta^{(j)}}} \big(Y_{\beta^{(j)}}+vX_{\beta^{(j)}}+\mathbf{F}_{>{\beta^{(j)}}}\big)+p^{N-2n+5}R[\![v]\!]$,
\end{enumerate}
\item in Lemma \ref{lemma: equation for def ring 1} (resp. in Lemma~\ref{lemma: equation for def ring 2}), we have
\begin{equation*}
p^{m_{-\alpha^{(j)}}+1}\cdot(\langle\bf{a}_w,-\alpha^{(j)\,\vee}\rangle+\kappa_{-\alpha^{(j)}}) \in p^{m_{-\alpha^{(j)}}}\cdot c\cdot Z_{-\alpha^{(j)}}+p^{N-2n+5}R,
\end{equation*}
\item in Lemma \ref{lemma: bad cases 1} (resp. in Lemma~\ref{lemma: bad cases 2}), we have
\begin{equation*}
p^{m_{s({\beta^{(j)}})}}\cdot(Y_{\beta^{(j)}}\pm c\cdot X_{s({\beta^{(j)}})})\in
p^{m_{s({\beta^{(j)}})}}\cdot \left(c\cdot c_{{\beta^{(j)}},m'_{{\beta^{(j)}}}}\cdot\mathbf{F}_{\geq-{\alpha^{(j)}}}+ \mathbf{F}_{>{\beta^{(j)}}}+c\cdot \mathbf{F}_{>s({\beta^{(j)}})}^{{\beta^{(j)}}}\right)+p^{N-2n+5}R,
\end{equation*}

\item in Lemma \ref{lemma: left-bottom block 1} (resp. in Lemma~\ref{lemma: left-bottom block 2}), we have
\[
V^\sharp_{\beta^{(j)}} \in v^{\kappa_{\beta^{(j)}}}(v+p)^{m_{\beta^{(j)}}}\big(X_{\beta^{(j)}}+\mathbf{F}_{>{\beta^{(j)}}}\big)+p^{N-2n+5}R[\![v]\!].
\]
\end{itemize}

\subsection{Potentially crystalline deformation rings}

Fix $\rhobar:G_K\rightarrow \GL_n(\F)$, and let $\tau$ be a tame inertial type.
We assume that $\tau$ has an $N$-generic lowest alcove presentation (\cite[Definition 2.4.3]{MLM}) with $N>3n-6$.
Assume that $R_{\rhobar}^{\eta,\tau}\neq 0$ so that in particular $\tld{w}(\rhobar,\tau)$ is defined.
If $\tld{w}(\rhobar,\tau)^{(j)}=\ell(t_\eta)-1$ for some $j\in \cJ$, then it determines a positive root $\alpha^{(j)}$ and we write $Z_{-\alpha^{(j)}}$ for the element $Z_{-\alpha}$ constructed using Lemma~\ref{lemma: equation for def ring 1} (resp.~using Lemma~\ref{lemma: equation for def ring 2}) if $\tld{w}^*$ is of colength one of the first form (resp.~of the second form) taking into account the ``error term'' as explained in \S \ref{subsec:error_term}.
(Note that the element $Z_{-\alpha}$ is defined exactly because $N-2n+5>m_{\beta}+1$ for all negative roots $\beta$.)

\begin{lemma}
\label{lemma:explicit:def:ring}
Let $\tau$ be a tame inertial type with an $N$-generic lowest alcove presentation, where $N>3n-6$. Assume that $\tld{w}(\rhobar,\tau)$ satisfies $\ell(\tld{w}(\rhobar,\tau)^{(j)})\geq \ell(t_\eta)-1$ for each $j\in\cJ$. Then there is a closed immersion
$$\tld{U}((\tld{w}(\rhobar,\tau)^{(j)})^*,\leq\!\!\eta,\nabla_{\bf{a}^{(j)}})\hookrightarrow\mathrm{Spec}R^{(j)}$$
where $\bf{a}^{(j)}\in\Zp$ are the constants defined in \eqref{eq:monodromy_str_constant} and
$$R^{(j)}\defeq\left\{
  \begin{array}{ll}
    \frac{\cO[\{X_\beta\mid \beta\in\Phi^-\}\cup\{c\}]}{(c\cdot Z_{-\alpha^{(j)}}-p)}\otimes_\cO\left(\bigotimes_{\cO,\,i=1}^n\frac{\cO[a_i,Y_i]}{(a_i\cdot Y_i-1)}\right) & \hbox{if $\ell(\tld{w}(\rhobar,\tau)^{(j)})=\ell(t_\eta)-1$;} \\
    \cO[\{X_\beta\mid \beta\in\Phi^-\}]\otimes_\cO\left(\bigotimes_{\cO,\,i=1}^n\frac{\cO[a_i,Y_i]}{(a_i\cdot Y_i-1)}\right) & \hbox{otherwise.}
  \end{array}
\right.
$$
\end{lemma}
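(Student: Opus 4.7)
The plan is to reduce the statement to Propositions \ref{prop: naive main, zero} and \ref{prop: naive main, one} by exhibiting $\tld{U}((\tld{w}(\rhobar,\tau)^{(j)})^*,\leq\!\!\eta,\nabla_{\bf{a}^{(j)}})$ as a trivial $T^\vee$-torsor over $U((\tld{w}(\rhobar,\tau)^{(j)})^*,\leq\!\!\eta,\nabla_{\bf{a}^{(j)}})$ and then identifying the coordinate ring of $T^\vee\cong\Gm^n$ with $\bigotimes_{\cO,i=1}^n \cO[a_i,Y_i]/(a_iY_i-1)$.

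First I would make the $T^\vee$-torsor structure explicit. Given any $\tld{A}\in \tld{U}(\tld{z},\leq\!\!\eta)(R)$, the ``monic at the pivot'' requirement defining $U(\tld{z},\leq\!\!\eta)\subseteq \cU(\tld{z})^{\det,\leq 0}$, namely that $A_{z(k)k}$ is a monic polynomial, uniquely determines the diagonal $D\in T^\vee(R)$ in any decomposition $\tld{A}=D\cdot A$ with $A\in U(\tld{z},\leq\!\!\eta)(R)$: the entry $D_{z(k),z(k)}$ must be read off as the leading coefficient of $\tld{A}_{z(k),k}$. This shows that the multiplication map $T^\vee\times_\cO U(\tld{z},\leq\!\!\eta)\to \tld{U}(\tld{z},\leq\!\!\eta)$ is an isomorphism of $\cO$-schemes. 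Since left multiplication by a constant diagonal matrix commutes with both $v\tfrac{d}{dv}$ and $[\mathrm{Diag}(\bf{a}),-]$, we have the identity $\nabla_{\bf{a}}(D\tld{A})(D\tld{A})^{-1}=D\cdot\nabla_{\bf{a}}(\tld{A})\tld{A}^{-1}\cdot D^{-1}$, so the $T^\vee$-action preserves the monodromy condition and the isomorphism restricts to an isomorphism $T^\vee\times_\cO U(\tld{z},\leq\!\!\eta,\nabla_{\bf{a}})\risom \tld{U}(\tld{z},\leq\!\!\eta,\nabla_{\bf{a}})$, in line with~\eqref{eq: left torus action}.

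Second, I would check that $\ovl{\bf{a}}^{(j)}\in\F^n$ is $n$-generic, as required to invoke Propositions~\ref{prop: naive main, zero} and~\ref{prop: naive main, one}. From \eqref{eq:monodromy_str_constant}, $\bf{a}^{(j)}$ is obtained, up to a Weyl element and the invertible scalar $(1-p^{fr})^{-1}$, from the standard structure constants of the $N$-generic lowest alcove presentation $(s,\mu)$ of $\tau$, whose relevant differences are controlled by $\mu+\eta$. The hypothesis $N>3n-6\geq n$ is amply sufficient to ensure that the required non-vanishing conditions modulo $p$ defining $n$-genericity in \cite[Definition 4.2.2]{MLM} hold for $\ovl{\bf{a}}^{(j)}$.

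Finally, putting the pieces together: applying Proposition~\ref{prop: naive main, zero} when $\ell(\tld{w}(\rhobar,\tau)^{(j)})=\ell(t_\eta)$ and Proposition~\ref{prop: naive main, one} when $\ell(\tld{w}(\rhobar,\tau)^{(j)})=\ell(t_\eta)-1$ yields closed immersions of $U((\tld{w}(\rhobar,\tau)^{(j)})^*,\leq\!\!\eta,\nabla_{\bf{a}^{(j)}})$ into the spectrum of the first tensor factor of $R^{(j)}$. Tensoring over $\cO$ with the coordinate ring of $T^\vee$, i.e.\ $\bigotimes_{\cO,i=1}^n \cO[a_i,Y_i]/(a_iY_i-1)$, and combining with the product decomposition above gives the claimed closed immersion into $\mathrm{Spec}\,R^{(j)}$. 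There is no substantial obstacle beyond this; the genuine technical content is located in Propositions~\ref{prop: naive main, zero} and~\ref{prop: naive main, one}, with the present lemma amounting to unwinding the torus action and repackaging the output per embedding.
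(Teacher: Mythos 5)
Your proposal is correct and follows essentially the same route as the paper: the paper's proof is precisely the combination of Propositions \ref{prop: naive main, zero} and \ref{prop: naive main, one} with the torus decomposition \eqref{eq: left torus action}, and your additional details (uniqueness of the factorization $\tld{A}=DA$ via the monic pivots, equivariance of the monodromy condition, and the genericity of $\ovl{\bf{a}}^{(j)}$) are exactly the points the paper leaves implicit.
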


\begin{proof}
The results follow immediately from Proposition~\ref{prop: naive main, zero} and Proposition~\ref{prop: naive main, one} together with equation~\eqref{eq: left torus action}.
\end{proof}

\begin{prop}
\label{prop:cl:imm:def:ring}
Let $\tau$ be a tame inertial type with an $N$-generic lowest alcove presentation, where $N>3n-6$.
Assume that $\tld{w}(\rhobar,\tau)$ satisfies that $\ell(\tld{w}(\rhobar,\tau)^{(j)})\geq \ell(t_\eta)-1$ for each $j\in\cJ$.
Then there is a closed immersion $$\tld{U}(\tld{w}(\rhobar,\tau)^*,\leq\!\!\eta,\nabla_{\tau,\infty})\hookrightarrow \mathrm{Spf}\,\bigg(\bigotimes_{\cO,\,j\in\cJ}R^{(j)}\bigg)^{\wedge_p}$$
where the rings $R^{(j)}$ have been defined in Lemma~\ref{lemma:explicit:def:ring}.
\end{prop}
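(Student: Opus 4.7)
The strategy is to combine the embedding-by-embedding closed immersion of Lemma~\ref{lemma:explicit:def:ring} with the error-term refinements of the monodromy identities collected at the end of \S\ref{subsec:error_term}, and to absorb the remaining error terms via a successive-approximation argument in the $p$-adically complete ring, using $p$-flatness to convert congruences modulo $p^{N-2n+5}$ into genuine identities after $p$-saturation.

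First, I would exploit the product decomposition
\[
\tld{U}(\tld{w}(\rhobar,\tau)^*,\leq\!\!\eta)^{\wedge_p}\;\cong\;\Bigl(\prod_{j\in\cJ}\tld{U}((\tld{w}(\rhobar,\tau)^{(j)})^*,\leq\!\!\eta)\Bigr)^{\wedge_p},
\]
inside which $\tld{U}(\tld{w}(\rhobar,\tau)^*,\leq\!\!\eta,\nabla_{\tau,\infty})$ is, by \S\ref{subsub:true_mon}, cut out by the $p$-saturation of an ideal $I_{\nabla_{\tau,\infty}}\subset\sum_{j\in\cJ}I^{(j)}_{\nabla_{\bf{a}^{(j)}}}+(p^{N-2n+5})$. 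Thus at each embedding $j$, modulo $p^{N-2n+5}R$ (where $R$ denotes the $p$-completed tensor product of global functions), the defining equations of $\tld{U}(\tld{w}(\rhobar,\tau)^*,\leq\!\!\eta,\nabla_{\tau,\infty})$ coincide with those cutting out $\tld{U}((\tld{w}(\rhobar,\tau)^{(j)})^*,\leq\!\!\eta,\nabla_{\bf{a}^{(j)}})$. I would then apply embedding-by-embedding the error-term versions of Lemmas~\ref{lemma: routine monodromy 1},~\ref{lemma: bad cases 1},~\ref{lemma: equation for def ring 1},~\ref{lemma: left-bottom block 1} (and their second-form analogs) spelled out in \S\ref{subsec:error_term}. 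By the $n$-genericity of $\ovl{\bf{a}^{(j)}}$, the linear coefficient $m_\beta+\kappa_\beta-\langle\bf{a}_w,\beta^\vee\rangle$ attached to each leading variable $c^{(j)}_{\beta,m'_\beta}$ is a $p$-adic unit, exactly as used in the proofs of Propositions~\ref{prop: naive main, zero} and~\ref{prop: naive main, one}. Extracting coefficients of the error-term monodromy identities then eliminates all variables $c^{(j)}_{\beta,k}$ and $Y^{(j)}_\beta$ in favor of the $X^{(j)}_\beta$, the scalar $c^{(j)}$ (when the $j$-th shape has colength one), and the torus coordinates $a^{(j)}_i,Y^{(j)}_i$, at the cost of introducing terms in $p^{N-2n+5}R$; the surviving relation at a colength-one embedding $j$ takes the form $c^{(j)}Z_{-\alpha^{(j)}}-p\equiv 0\pmod{p^{N-2n+5}R}$.

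To upgrade each such congruence to an identity in the $p$-saturated quotient, I would run a successive-approximation argument in the $p$-adically complete ring. The hypothesis $N>3n-6$ gives $N-2n+5\geq n\geq 2$, so every congruence of the shape $c^{(j)}_{\beta,k}=P+p^{N-2n+5}\eps$ (with $P$ a polynomial in the target generators) can be iteratively substituted into itself; the errors at successive stages acquire strictly larger $p$-adic valuations, and the $p$-adic completeness of the ambient ring produces an exact identity $c^{(j)}_{\beta,k}=Q$ for some $Q$ in the subalgebra generated by $X^{(j)}_\beta,c^{(j)},a^{(j)}_i,Y^{(j)}_i$. The same iteration applied to the colength-one equation, combined with the $p$-flatness of $\tld{U}(\tld{w}(\rhobar,\tau)^*,\leq\!\!\eta,\nabla_{\tau,\infty})$ built into its definition, produces a genuine relation $c^{(j)}Z_{-\alpha^{(j)}}=p$ in the $p$-saturated quotient. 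Assembling these embedding-wise identities yields the desired surjection
\[
\Bigl(\bigotimes_{\cO,\,j\in\cJ}R^{(j)}\Bigr)^{\wedge_p}\twoheadrightarrow\cO\!\left(\tld{U}(\tld{w}(\rhobar,\tau)^*,\leq\!\!\eta,\nabla_{\tau,\infty})\right),
\]
which is the sought-after closed immersion. The main obstacle I anticipate is the bookkeeping in the iteration step: one must verify that the errors introduced at each substitution genuinely increase in $p$-adic valuation so that $p$-adic convergence holds, and that the absorption of the error terms into $Z_{-\alpha^{(j)}}$ preserves the clean form of the relation defining $R^{(j)}$ rather than perturbing it in an uncontrolled way.
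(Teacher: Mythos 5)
Your proposal is correct and follows essentially the same route as the paper: the paper's proof likewise combines Lemma~\ref{lemma:explicit:def:ring} with the error-term versions of the elimination lemmas recorded in \S\ref{subsec:error_term}, uses genericity to invert the leading coefficients, and concludes with the surjection $\big(\bigotimes_{\cO,\,j\in\cJ}R^{(j)}\big)^{\wedge_p}\twoheadrightarrow \cO\big(\tld{U}(\tld{w}(\rhobar,\tau)^*,\leq\!\!\eta,\nabla_{\tau,\infty})\big)$. The only difference is that you make explicit the $p$-adic successive-approximation step needed to absorb the $O(p^{N-2n+5})$ error terms, which the paper leaves implicit in ``repeating the same arguments.''
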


\begin{proof}
The proof goes very similar to the ones of Proposition~\ref{prop: naive main, one} and Proposition~\ref{prop: naive main, zero} together with Lemma~\ref{lemma:explicit:def:ring}. The only difference is that we need to take care of the error terms. For instance, if $(\tld{w}(\rhobar,\tau)^{(j)})^*$ is of colength one of the first form, and if $\beta$ is a bad root sharing the row of $\alpha$, then there exist $F_\beta\in \mathbf{F}_{\geq-\alpha}$, $G_\beta\in \mathbf{F}_{>\beta}$, and $H_{s(\beta)}\in \mathbf{F}_{>s(\beta)}^{\beta}$ such that
$$Y_\beta=-c\cdot X_{s(\beta)}+ \left(c\cdot c_{\beta,m'_{\beta}}\cdot F_\beta+ G_\beta+c\cdot H_{s(\beta)}\right)+O\left(p^{N-2n+5-m_{s(\beta)}}\right),$$
by Lemma~\ref{lemma: bad cases 1}. The coefficient $c_{\beta,m'_{\beta}}$ corresponds to $X_\beta$, and so due to our generic assumption, by scaling $Y_\beta$ we can further eliminate the variable $Y_\beta$.

Repeating the same arguments, we conclude that there is a surjection
$$\bigg(\bigotimes_{\cO,\,j\in\cJ}R^{(j)}\bigg)^{\wedge_p}\twoheadrightarrow \cO(\tld{U}(\tld{w}(\rhobar,\tau)^*,\leq\!\!\eta,\nabla_{\tau,\infty})),$$ which completes the proof.
\end{proof}

Set $$\cJ_0\defeq\{j\in\cJ\mid \ell(\tld{w}(\rhobar,\tau)^{(j)})=\ell(t_\eta)-1 \mbox{ and }Z_{-\alpha^{(j)}}\equiv 0\pmod{\varpi}\}.$$
\begin{thm}\label{thm: main deformation ring}
Let $\tau$ be a tame inertial type with an $N$-generic lowest alcove presentation, where $N>3n-6$.
Assume that $R^{\eta,\tau}_{\rhobar}\neq 0$, and the shape $\tld{w}(\rhobar,\tau)$ satisfies $\ell(\tld{w}(\rhobar,\tau)^{(j)})\geq \ell(t_\eta)-1$ for each $j\in\cJ$. Then $R_{\rhobar}^{\eta,\tau}$ is formally smooth over
\begin{equation}
\label{eq:model:dr}
\widehat{\bigotimes}_{\cO,\,j\in \cJ_0}\frac{\cO[[X,Y]]}{(XY-p)}.
\end{equation}
\end{thm}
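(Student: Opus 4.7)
The plan is to combine Proposition~\ref{prop:cl:imm:def:ring} with an analysis of the completion of $\widehat{\bigotimes}_{\cO,\,j\in\cJ}R^{(j)}$ at the closed point corresponding to $\rhobar$, and then transfer the result through the formally smooth morphism~\eqref{diag:main}.

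First, Proposition~\ref{prop:cl:imm:def:ring} provides a closed immersion
\[
\tld{U}(\tld{w}(\rhobar,\tau)^*,\leq\!\!\eta,\nabla_{\tau,\infty})\hookrightarrow \Spf\Big(\widehat{\bigotimes}_{\cO,\,j\in\cJ}R^{(j)}\Big).
\]
A dimension count shows both sides have the same relative dimension $\#\cJ(\binom{n}{2}+n)$ over $\cO$: the source is obtained by combining the formally smooth morphism~\eqref{diag:main} of relative dimension $n\#\cJ$ with the known dimension of the versal ring $R^{\leq\eta,\tau}_{\rhobar}$, while each factor $R^{(j)}$ contributes relative dimension $\binom{n}{2}+n$ (in the colength-one case the relation $c\cdot Z_{-\alpha^{(j)}}-p$ is a regular element, decreasing dimension by exactly one). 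Combined with $\cO$-flatness and reducedness on both sides, this closed immersion must become an isomorphism on completed local rings at the closed point corresponding to $\rhobar$.

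Next, I analyze the completed local ring of each factor $R^{(j)}$ at its relevant closed point. If $\ell(\tld{w}^{(j)})=\ell(t_\eta)$, the ring $R^{(j)}$ is manifestly formally smooth over $\cO$. If $\ell(\tld{w}^{(j)})=\ell(t_\eta)-1$, the local structure is dictated by the relation $c\cdot Z_{-\alpha^{(j)}}-p=0$. For $j\notin\cJ_0$, the relevant linear coefficient of $Z_{-\alpha^{(j)}}$ is a unit modulo $\varpi$, and using the $(3n-6)$-genericity of $\tau$ a formal change of variables allows one to solve the relation cleanly by eliminating $c$ against one of the $X_\beta$, yielding formal smoothness over $\cO$. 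For $j\in\cJ_0$, the reduction of $Z_{-\alpha^{(j)}}$ modulo $\varpi$ is degenerate, and a standard formal change of variables brings the relation $c\cdot Z_{-\alpha^{(j)}}=p$ to the canonical form $XY=p$ up to a formally smooth factor, producing the singular factor $\cO[[X,Y]]/(XY-p)$.

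Combining these analyses, the completion of $\widehat{\bigotimes}_{\cO,\,j\in\cJ}R^{(j)}$ at the closed point corresponding to $\rhobar$ is formally smooth over $\widehat{\bigotimes}_{\cO,\,j\in\cJ_0}\cO[[X,Y]]/(XY-p)$. Pulling back along~\eqref{diag:main} and identifying $R^{\eta,\tau}_{\rhobar}$ with $R^{\leq\eta,\tau}_{\rhobar}$ under the genericity assumption (the locus of strictly smaller Hodge--Tate weights does not contribute) yields the desired formal smoothness. The main obstacle is the explicit completion analysis of $R^{(j)}$ in the colength-one case: one must carefully track which coefficients of $Z_{-\alpha^{(j)}}$ are units modulo $\varpi$, handle separately the two forms of colength-one admissible elements from Proposition~\ref{prop:classification, colength 1 dual shapes}, and verify that the required formal changes of variables are well-defined using the $(3n-6)$-generic hypothesis.
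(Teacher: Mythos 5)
Your proposal follows essentially the same route as the paper's proof: Proposition~\ref{prop:cl:imm:def:ring} together with a dimension count to upgrade the closed immersion to an isomorphism on completed local rings at the point corresponding to $\rhobar$, a change of variables reducing $c\cdot Z_{-\alpha^{(j)}}=p$ to the standard form $XY=p$ (the paper phrases this as replacing $Z_{-\alpha^{(j)}}$ by $X_{-\alpha^{(j)}}$ via Lemmas~\ref{lemma: equation for def ring 1} and~\ref{lemma: equation for def ring 2}), transfer through the formally smooth morphism~\eqref{diag:main}, and the identification $R^{\leq\eta,\tau}_{\rhobar}=R^{\eta,\tau}_{\rhobar}$. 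The one point where your wording is loose is the claim that $\cO$-flatness and \emph{reducedness} force the closed immersion to be an isomorphism in equal dimension --- one actually needs the target's completed local ring to be a \emph{domain} (reducedness would not exclude the image being a proper union of components), but this irreducibility is exactly what your explicit description of the factors $R^{(j)}$ provides, and it is also what the paper uses to conclude $R^{\leq\eta,\tau}_{\rhobar}=R^{\eta,\tau}_{\rhobar}$.
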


\begin{proof}
First, notice that we may replace $Z_{-\alpha}$ with $X_{-\alpha}$, due to the equations of $Z_{-\alpha}$ in Lemma~\ref{lemma: equation for def ring 1} and Lemma~\ref{lemma: equation for def ring 2}.
As $\rhobar\in \cX^{\leq\eta,\tau}(\tld{w}(\rhobar,\tau)^*)(\F)$, we can pick $\tld{A}\in \tld{U}(\tld{w}(\rhobar,\tau)^*,\leq\!\!\eta)(\F)$ corresponding to $\rhobar|_{G_{K_\infty}}$ as explained in \S \ref{subsub:true_mon}.
Then the completion of $\cO\big(\tld{U}(\tld{w}(\rhobar,\tau)^*,\leq\!\!\eta,\nabla_{\tau,\infty})\big)$ at $\tld{A}$ is formally smooth over the ring in \eqref{eq:model:dr}, by dimension counting, and by \eqref{diag:main}, the deformation ring $R_{\rhobar}^{\leq\eta\,\tau}$ is also formally smooth over the ring in \eqref{eq:model:dr}.
As the latter is irreducble, so is $R_{\rhobar}^{\leq\eta\,\tau}$, in particular $R_{\rhobar}^{\leq\eta\,\tau}=R_{\rhobar}^{\eta\,\tau}$
 which completes the proof.
\end{proof}

\begin{rmk}
Under stronger genericity assumptions on $\tau$ we have $\tld{U}(\tld{z},\leq\!\!\eta,\nabla_{\tau,\infty})\neq \emptyset$ whenever $\tld{z}\in \Adm(\eta)^\vee$ (\cite[Lemma 7.3.5]{MLM}).
\end{rmk}

\section{Applications}

In this section we elaborate on how the explicit description of the potentially crystalline deformation rings from Theorem \ref{thm: main deformation ring} can provide information on representation theory and automorphic forms through following the philosophy of the mod $p$ local Langlands correspondence.

\subsection{Subextremal weights}
\label{subsec: sub ext weight}
In this section we refine, in Definition \ref{def:def:SW}, the notion of \emph{defect} for Serre weights of $\rhobar:G_K\ra\GL_n(\F)$ introduced in \cite[\S 8.6.1]{MLM} and \cite[\S 3.7]{OBW}.

We fix once and for all a lowest alcove presentation $\tld{w}(\rhobar^{\semis})$ for $\rhobar$.
All tame inertial types will be endowed with the unique lowest alcove presentation compatible with $\tld{w}(\rhobar^{\semis})$. Throughout this subsection, we assume $S_p=\{v\}$ so that $F^+_p=K$.

Recall from \cite[Theorem 3.7]{CEGGPS} that given a tame inertial type $\tau$ for $K$ there exists an irreducible smooth representation $\sigma(\tau)$ of $\GL_n(\cO_K)$ which satisfies  properties towards the inertial local Langlands correspondence.
By \cite[Theorem 2.5.3]{MLM}, if $\tau$ has a $1$-generic lowest alcove presentation $(s,\mu-\eta)$ then $\sigma(\tau)$ can be taken to be $R_s(\mu)$.

Let $\rhobar^{\speci}:I_K\ra \GL_n(\F)$ be a $2n$-generic tame inertial $\F$-type.
The choice of a lowest alcove presentation $\tld{w}(\rhobar^{\speci})=t_{\mu+\eta}s$ for it gives a map
\begin{align*}
\tau_{\rhobar^{\speci}}:\Adm^{\textnormal{reg}}(\eta)&\longrightarrow\left\{\tau:I_K\ra\GL_n(E)\right\}.\\
t_\nu w&\mapsto \tau(sw,\mu+\eta+s(\nu))
\end{align*}
(Note that given $\tld{w}\in \Adm^{\textnormal{reg}}(\eta)$ we have $\tld{w}(\rhobar^{\speci},\tau_{\rhobar^{\speci}}(\tld{w}))=\tld{w}$ by construction.)

Recall from \cite[\S 2.3.1]{MLM} the background on Deligne--Lustig representations and their lowest alcove presentations.
In particular given a Deligne--Lusztig representation $R$ with a $2(n-1)$-generic lowest alcove presentation $\tld{w}(R)$ we have a set $\JH_{\textnormal{out}}(\ovl{R})$ of \emph{outer weights} for $R$.
We also recall that we have a bijection
\begin{align}
\label{eq:bij:pairs:admreg}
\left\{(\tld{w}_1,\tld{w}_2)\in \left(\tld{\un{W}}^+\times\tld{\un{W}}_1^+\right)/X^0(\un{T})\mid \tld{w}_1\uparrow\tld{w}_h^{-1}\tld{w}_2
\right\}&\longrightarrow \Adm^{\textnormal{reg}}(\eta)\\
(\tld{w}_1,\tld{w}_2)&\longmapsto \tld{w}_2
^{-1}w_0\tld{w}_1\nonumber
\end{align}
from \cite[Remark 2.1.8]{MLM}, and a bijection
\begin{align*}
\left\{(\tld{w}_1,\tld{w}_2)\in \left(\tld{\un{W}}^+\times\tld{\un{W}}_1^+\right)/X^0(\un{T})\mid \tld{w}_1\uparrow\tld{w}_2
\right\}&\longrightarrow W^?(\rhobar^{\speci})\\
(\tld{w}_1,\tld{w}_2)&\longmapsto F_{(\tld{w}_2,\tld{w}(\rhobar^{\speci})(\tld{w}_1)^{-1}(0))}\nonumber
\end{align*}
from \cite[Proposition 2.6.2]{MLM}.
As multiplication by $\tld{w}_h$ gives a self bijection on $\tld{\un{W}}_1^+$, we finally obtain a bijection
\begin{align*}
\label{eq:bij:pair&SW}
\sigma_{\rhobar^{\speci}}:\Adm^{\textnormal{reg}}(\eta)&\longrightarrow W^?(\rhobar^{\speci}).\\
\tld{w}_2^{-1}w_0\tld{w}_1&\longmapsto F_{(\tld{w}_h^{-1}\tld{w}_2,\tld{w}(\rhobar^{\speci})(\tld{w}_1)^{-1}(0))}\nonumber
\end{align*}
We write $\tld{w}_{\rhobar^{\speci}}$ for the inverse of $\sigma_{\rhobar^{\speci}}$.

\begin{lemma}
\label{lem:reg:type}
Assume that $\tld{w}(\rhobar^{\speci})$ is a $2n$-generic lowest alcove presentation for $\rhobar^{\speci}$, and let $\tld{w}\in \Adm^{\textnormal{reg}}(\eta)$.
Then $\sigma_{\rhobar^{\speci}}(\tld{w})\in W^?(\rhobar^{\speci})\cap \JH_{\textnormal{out}}(\ovl{\sigma(\tau_{\rhobar^{\speci}}(\tld{w}))})$ and it satisfies the following property: for any $\sigma'\in W^?(\rhobar^{\speci})\cap \JH(\ovl{\sigma(\tau_{\rhobar^{\speci}}(\tld{w}))})$ we have
\begin{equation}
\label{eq:max:def}
\tld{w}_{\rhobar^{\speci}}(\sigma')^{(j)}\geq \tld{w}^{(j)}
\end{equation}
for all $j\in\cJ$ with equality for all $j\in\cJ$ if and only if $\sigma'=\sigma$.
\end{lemma}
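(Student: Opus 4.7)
The plan is to combine the explicit parameterizations of the two sets involved with the length formula from \cite{LLL}.

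I would begin by unwinding definitions. Writing $\tld{w} = \tld{w}_2^{-1} w_0 \tld{w}_1$ via the bijection \eqref{eq:bij:pairs:admreg} (so $(\tld{w}_1, \tld{w}_2) \in \tld{\un{W}}^+ \times \tld{\un{W}}_1^+$ modulo $X^0(\un{T})$ with $\tld{w}_1 \uparrow \tld{w}_h^{-1} \tld{w}_2$), the weight $\sigma \defeq \sigma_{\rhobar^{\speci}}(\tld{w})$ is by construction $F_{(\tld{w}_h^{-1}\tld{w}_2, \tld{w}(\rhobar^{\speci})(\tld{w}_1)^{-1}(0))}$. The fact that $\sigma \in W^?(\rhobar^{\speci})$ is automatic from the bijection $\sigma_{\rhobar^{\speci}}$. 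For the outer Jordan--H\"older factor property, I would appeal to the explicit description of $\JH_{\textnormal{out}}(\ovl{\sigma(\tau_{\rhobar^{\speci}}(\tld{w}))})$ from \cite[\S 2.3.1]{MLM}, combined with the computation of the lowest alcove presentation of $\tau_{\rhobar^{\speci}}(\tld{w})$ compatible with $\tld{w}(\rhobar^{\speci})$: writing $\tld{w}(\rhobar^{\speci}) = t_{\mu+\eta}s$ and $\tld{w} = t_\nu w$, the type $\tau_{\rhobar^{\speci}}(\tld{w})$ admits the presentation $(sw, \mu+s(\nu))$, and the outer factors of $\ovl{\sigma(\tau_{\rhobar^{\speci}}(\tld{w}))}$ correspond to the extremal elements of the parameterization and in particular contain $\sigma$.

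For the maximality statement, take $\sigma' \in W^?(\rhobar^{\speci}) \cap \JH(\ovl{\sigma(\tau_{\rhobar^{\speci}}(\tld{w}))})$ and write $\sigma' = F_{(\tld{y}_2, \tld{w}(\rhobar^{\speci})(\tld{y}_1)^{-1}(0))}$ with $\tld{y}_1 \in \tld{\un{W}}^+$, $\tld{y}_2 \in \tld{\un{W}}_1^+$ and $\tld{y}_1 \uparrow \tld{y}_2$, so that $\tld{w}_{\rhobar^{\speci}}(\sigma') = (\tld{w}_h \tld{y}_2)^{-1} w_0 \tld{y}_1$. The parameterization of Jordan--H\"older factors of a generic Deligne--Lusztig reduction from \cite[Theorem 2.3.3]{MLM} translates the condition $\sigma' \in \JH(\ovl{\sigma(\tau_{\rhobar^{\speci}}(\tld{w}))})$ into embedding-wise Bruhat inequalities relating $(\tld{y}_1, \tld{y}_2)$ to $(\tld{w}_1, \tld{w}_h^{-1} \tld{w}_2)$; explicitly, $\tld{y}_1^{(j)} \leq \tld{w}_1^{(j)}$ and $\tld{w}_h \tld{y}_2^{(j)} \geq \tld{w}_2^{(j)}$ in the appropriate Bruhat orders. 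The length additivity of \cite[Lemma 4.1.9]{LLL} (already used in the proof of Lemma \ref{lem:fact:reg}) then yields $\tld{w}_{\rhobar^{\speci}}(\sigma')^{(j)} \geq \tld{w}^{(j)}$ for every $j \in \cJ$, with simultaneous equality at every embedding forcing both Bruhat comparisons to be equalities, hence $\sigma' = \sigma$.

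The main obstacle will be pinning down the precise Jordan--H\"older parameterization in this context, i.e. extracting the embedding-wise Bruhat inequalities from the condition $\sigma' \in \JH(\ovl{\sigma(\tau_{\rhobar^{\speci}}(\tld{w}))})$. This requires careful bookkeeping with the lowest alcove presentations and intertwining the $\uparrow$ order (controlling Serre weight membership) with the Bruhat order (controlling shape/length), especially in the product-of-embeddings setting where inequalities must hold componentwise.
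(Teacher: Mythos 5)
The first half of your argument (that $\sigma\defeq\sigma_{\rhobar^{\speci}}(\tld{w})$ lies in $W^?(\rhobar^{\speci})\cap\JH_{\textnormal{out}}(\ovl{\sigma(\tau)})$) is in line with the paper, which identifies $\tld{w}(\rhobar^{\speci})\tld{w}_1^{-1}(0)$ with $\tld{w}(\tau)\tld{w}_2^{-1}(0)$ via \cite[Proposition 2.6.4]{MLM} and then invokes the definition of outer weights. The gap is in the maximality step. You translate $\sigma'\in W^?(\rhobar^{\speci})\cap\JH(\ovl{\sigma(\tau)})$ into the componentwise Bruhat inequalities $\tld{y}_1^{(j)}\leq\tld{w}_1^{(j)}$ and $\tld{w}_h\tld{y}_2^{(j)}\geq\tld{w}_2^{(j)}$, and then claim that length additivity gives $(\tld{w}_h\tld{y}_2^{(j)})^{-1}w_0\tld{y}_1^{(j)}\geq(\tld{w}_2^{(j)})^{-1}w_0\tld{w}_1^{(j)}$. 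This deduction does not go through: your two inequalities push the outer and inner factors of the product in \emph{opposite} directions (the left factor of $\tld{w}_{\rhobar^{\speci}}(\sigma')$ gets larger while the right factor gets smaller), and \cite[Lemma 4.1.9]{LLL} gives no comparison between the two products in that situation. The length of $\tld{x}_2^{-1}w_0\tld{x}_1$ measures how close $\tld{x}_1$ and $\tld{w}_h^{-1}\tld{x}_2$ are in the $\uparrow$ order, not how large each is individually; your inequalities are compatible both with $(\tld{y}_1,\tld{y}_2)$ being closer together than $(\tld{w}_1,\tld{w}_h^{-1}\tld{w}_2)$ (shape larger) and with their being farther apart (shape smaller), so they are too weak to conclude.

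The input you are missing is precisely \cite[Proposition 2.6.4]{MLM} (not the parameterization of $\JH(\ovl{R})$ alone): $\sigma'\in W^?(\rhobar^{\speci})\cap\JH(\ovl{\sigma(\tau)})$ exactly when $\sigma'=F_{(\tld{w}',\tld{w}(\tau)\tld{s}_2^{-1}(0))}$ for some factorization $\tld{w}=\tld{s}_2^{-1}w\tld{s}_1$ with $w\in\un{W}$, $\tld{s}_1,\tld{s}_2\in\tld{\un{W}}^+$ and $\tld{s}_1\uparrow\tld{w}'\uparrow\tld{w}_h^{-1}\tld{s}_2$. The decisive feature is that $\tld{s}_1$ is then simultaneously the right-hand factor of $\tld{w}$ (through this factorization) and of $\tld{w}_{\rhobar^{\speci}}(\sigma')=(\tld{w}_h\tld{w}')^{-1}w_0\tld{s}_1$ (through the pair $(\tld{s}_1,\tld{w}_h\tld{w}')$ in \eqref{eq:bij:pairs:admreg}). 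With the inner factor shared, Wang's theorem (giving $\tld{s}_2^{(j)}\leq\tld{w}_h^{(j)}\tld{w}^{\prime\,(j)}$ from $\tld{w}'\uparrow\tld{w}_h^{-1}\tld{s}_2$) together with $w_0\geq w$ and \cite[Lemma 4.1.9]{LLL} yields $(\tld{w}_h^{(j)}\tld{w}^{\prime\,(j)})^{-1}w_0\tld{s}_1^{(j)}\geq(\tld{s}_2^{(j)})^{-1}w_0\tld{s}_1^{(j)}\geq(\tld{s}_2^{(j)})^{-1}w\tld{s}_1^{(j)}=\tld{w}^{(j)}$, which is \eqref{eq:max:def}; the equality case then follows simply from the injectivity of $\sigma_{\rhobar^{\speci}}$. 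The ``main obstacle'' you flag at the end of your proposal is exactly where the proof lives, and the reformulation you propose in its place does not capture it.
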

\begin{proof}
This is an immediate consequence the proof of \cite[Proposition 8.6.3]{MLM} of which we employ here the notation and convention.
In particular we let $\tau\defeq \tau_{\rhobar^{\speci}}(\tld{w})$ so that  $\tld{w}=\tld{w}(\rhobar,\tau)\in \Adm^{\textnormal{reg}}(\eta)$.
Using the bijection \eqref{eq:bij:pairs:admreg} we decompose $\tld{w}$ as $\tld{w}_2^{-1}w_0\tld{w}_1$ where $(\tld{w}_1,\tld{w}_2)\in \tld{\un{W}}^+\times\tld{\un{W}}_1^+ $ satisfies $\tld{w}_1\uparrow \tld{w}_h^{-1}\tld{w}_2$.

We have $\sigma_{\rhobar^{\speci}}(\tld{w})=F_{(\tld{w}_h^{-1}\tld{w}_2,\tld{w}(\rhobar^{\speci})\tld{w}_1^{-1}(0))}$ by definition, and the latter Serre weight is the element $\kappa\in W^?(\rhobar^{\speci})\cap \JH(\ovl{\sigma(\tau)})$ defined in \cite[Proposition 8.6.3]{MLM}.
As $\tld{w}(\rhobar^{\speci})\tld{w}_1^{-1}(0)=\tld{w}(\tau)\tld{w}_2^{-1}(0)$  by \cite[Proposition 2.6.4]{MLM}, the weight $\kappa$ is in $\JH_{\textnormal{out}}(\ovl{\sigma(\tau)})$, by definition of $\JH_{\textnormal{out}}(\ovl{\sigma(\tau)})$ (see \cite[Proposition 2.3.7]{MLM} and the beginning of \cite[\S 2.3.1]{MLM}).

The fact that for any $\sigma'\in W^?(\rhobar^{\speci})\cap \JH(\ovl{\sigma(\tau)})$ the inequality \eqref{eq:max:def} holds is immediate from the proof of \cite[Proposition 8.6.3]{MLM}.
We provide the details: by \cite[Proposition 2.6.4]{MLM} any $\sigma'\in W^?(\rhobar^{\speci})\cap \JH(\ovl{\sigma(\tau)})$ is of the form $\sigma'=F_{(\tld{w}',\tld{w}(\tau)\tld{s}_2^{-1}(0))}$ for some $\tld{w}'\in \un{\tld{W}}_1^+$ and some pair $(\tld{s}_1,\tld{s}_2)\in \tld{\un{W}}^+\times \tld{\un{W}}^+$ satisfying $\tld{s}_1\uparrow \tld{w}'\uparrow\tld{w}_h^{-1}\tld{s}_2$ and $\tld{s}_2^{-1}w\tld{s}_1=\tld{w}(\rhobar^{\speci},\tau)$ for some $w\in \un{W}$.
By Wang's Theorem (see \cite[Theorem 4.1.1]{LLL}) the condition $\tld{s}_2\uparrow \tld{w}_h\tld{w}'$, which is defined \emph{embeddingwise}, gives $\tld{s}^{(j)}_2\leq \tld{w}^{(j)}_h\tld{w}^{\prime\,(j)}$ for all $j\in\cJ$ and by \cite[Lemma 4.1.9]{LLL} we conclude that
\begin{equation*}
(\tld{w}^{(j)}_h\tld{w}^{\prime\,(j)})^{-1}w_0\tld{s}_1^{(j)}\geq (\tld{s}_2^{(j)})^{-1}w_0\tld{s}_1^{(j)}
\geq (\tld{s}_2^{(j)})^{-1}w\tld{s}_1^{(j)}=\tld{w}^{(j)}
\end{equation*}
since $(\tld{s}_2^{(j)})^{-1}w_0\tld{s}_1^{(j)}$, $(\tld{w}^{(j)}_h\tld{w}^{\prime\,(j)})^{-1}w_0\tld{s}_1^{(j)}$ are reduced expressions for all $j\in\cJ$ and $w_0\geq w$.
As $\tld{s}_1\uparrow \tld{w}'\in\tld{\un{W}}_1^+$ we see that $(\tld{w}_h\tld{w}',\tld{s}_1)$ defines an element in the left hand side of \eqref{eq:bij:pairs:admreg} and hence $\tld{w}_{\rhobar^{\speci}}(\sigma')=(\tld{w}_h\tld{w}')^{-1}w_0\tld{s}_1$, proving \eqref{eq:max:def}.
The fact that the equality holds for all $j\in\cJ$ if and only if $\sigma'=\sigma_{\rhobar^{\speci}}(\tld{w})$ is immediate since $\sigma_{\rhobar^{\speci}}$ is a bijection.
\end{proof}

The usual order on $\N$ induces the product partial order on $\N^\cJ$, and for $\un{h}=(h^{(j)})_{j\in\cJ}\in\N^\cJ$ we define
\begin{equation*}
W^?_{\leq \un{h}}(\rhobar^{\speci})\defeq\left\{\sigma\in W^?(\rhobar^{\speci}) \mid \ell\big(\tld{w}_{\rhobar^{\speci}}(\sigma)^{(j)}\big) \geq \ell(t_{\eta})-h^{(j)} \text{ for all $j\in\cJ$} \right\}.
\end{equation*}

Let $\rhobar:G_{K}\ra\GL_n(\F)$ be a continuous Galois representation such that $\rhobar^{\semis}$ is $0$-generic (so that $\tld{w}(\rhobar^{\semis})\in\tld{\un{W}}$ is defined).
We say that a tame inertial $\F$-parameter $\rhobar^{\speci}:I_K\ra\GL_n(\F)$ is a \emph{specialization} of $\rhobar$, and write $\rhobar\leadsto \rhobar^{\speci}$, if there exists an $n$-generic tame inertial type $\tau$ such that $\rhobar$ is $\tau$-admissible and $\tld{w}(\rhobar,\tau)=\tld{w}(\rhobar^{\speci},\tau)$.
(In this definition, the lowest alcove presentations of $\tau$ and $\rhobar^{\speci}$ are always assumed to be compatible with a fixed lowest alcove presentation of $\rhobar^{\semis}$.)

Let $\cX_{n,K}$ be the Noetherian formal algebraic stack over $\Spf\cO$ defined in \cite[Definition 3.2.1]{EGstack}.
It has the property that $\cX_{n,K}(\F)$ is isomorphic to the groupoid of continuous representations of $G_K$ over rank $n$ vector spaces over $\F$.
Moreover there is a bijection $\sigma\mapsto \cC_\sigma$ between Serre weights of $\rG(\defeq\GL_n(k))$ and irreducible components of the reduced special fiber of $\cX_{n,K}$, described in \cite[\S 7.4]{MLM}.
(We refer the reader to \cite[\S 2.2]{MLM} concerning Serre weights and their lowest alcove presentations.)
This bijection is a renormalization of the bijection $\sigma\mapsto \cX_{n,\textnormal{red}}^\sigma$ of \cite[Theorem 6.5.1]{EGstack}.

In particular, if $\rhobar\in \cX_{n,K}(\F)$ we define the set of \emph{geometric} weights of $\rhobar$ as
\begin{equation*}
W^g(\rhobar)\defeq\left\{\sigma\mid \rhobar\in \cC_\sigma(\F)\right\}.
\end{equation*}

\begin{defn}
\label{def:def:SW}
Let $\rhobar:G_{K}\ra\GL_n(\F)$ be a continuous Galois representation such that $\rhobar^{\semis}$ is $0$-generic.
For $\un{h}=(h^{(j)})_{j\in\cJ}\in \N^{\cJ}$ define $W^g_{\leq \un{h}}(\rhobar)$ to be
\[
W^g(\rhobar)\cap \bigg(\bigcup_{\rhobar\leadsto\rhobar^{\speci}}W^?_{\leq\un{h}}(\rhobar^{\speci}) \bigg).
\]
\end{defn}

In what follows we write $\un{1}$ for the tuple of $\un{h}\in \N^{\cJ}$ satisfying $h^{(j)}=1$ for all $j\in\cJ$, and similarly for $\un{0}$.
Note that $W^g_{\leq \un{0}}(\rhobar)=W_{\textnormal{extr}}(\rhobar)$ is the set of \emph{extremal weights} in \cite[Definition 3.7.1]{OBW}.

\subsection{Application to patching functors}
\label{subsec: App to patching}

We introduce the formalism of patching functors following \cite[\S 5.2]{OBW}, giving applications to the results on the deformation rings in \S \ref{sec:CL1:def}.

\subsubsection{$L$-parameters}
Recall that $F^+_p$ is a finite unramified \'etale $\Q_p$-algebra, which we write as $\prod_{v\in S_p}F_v^+$ for a finite set $S_p$ and finite unramified extensions $F^+_v/\Qp$.
We assume throughout that for any $v\in S_p$ the coefficient field $E$ (resp.~$\F$) contains the image of any homomorphism $F^+_v\into \ovl{\Qp}$ (resp.~$k_v\into\ovl{\F}_p$, where $k_v$ denotes the residue field of $F^+_v$).
We let $\un{G}^{\vee}$ denote the product $\prod_{F^+_p\ra E}{\GL_n}^{\vee}_{/\cO}$ (the dual group of $\Res_{F^+_p/\Qp}{\GL_n}_{/\cO_p}$) and ${}^{L}\un{G}(\F)\defeq \un{G}^\vee \rtimes \Gal(E/\Qp)$, where $\Gal(E/\Qp)$ acts on the set $\{\iota:F^+_p\ra E\}$ by post-composition.
An $L$-homomorphism over $A\in\{E,\F\}$ is a continuous homomorphism $\rbar_p:G_{\Qp}\ra {}^{L}\un{G}(A)$.
An $L$-parameter is a $\un{G}^\vee(E)$-conjugacy class of an $L$-homomorphism.
An tame inertial $L$-parameter is a $\un{G}^\vee(E)$-conjugacy class of an homomorphism $\tau:I_{\Qp}\ra \un{G}^\vee(E)$ which has open kernel and factors through the tame quotient of $I_{\Qp}$, and which admits an extension to an $L$-homomorphism.
By \cite[Lemma 9.4.1, Lemma 9.4.5]{GHS}, the datum of an $L$-parameters $\rbar_p$ (resp.~a tame inertial $L$-parameter $\tau$) is equivalent to the datum of a collection of continuous homomorphisms $\{\rho_v: G_{F_v^+}\ra\GL_n(A)\}_{v\in S_p}$ (resp.~tame inertial types $\{\tau_v: I_{F_v^+}\ra\GL_n(A)\}_{v\in S_p}$).
Via this bijection, we can therefore give the notion of lowest alcove presentations and genericity for $L$-parameters.
Given a tame inertial $L$-parameter $\tau$ with corresponding collection $\{\tau_v: I_{F_v^+}\ra\GL_n(A)\}_{v\in S_p}$ of tame inertial types, we let $\sigma(\tau)$ be the tame smooth irreducible representation of $\GL_n(\cO_p)$ over $E$ given by $\otimes_{v\in S_p,E}\sigma(\tau_v)$, where for each $v\in S_p$ we let $\sigma(\tau_v)$ be the tame smooth irreducible representation of $\GL_n(\cO_{F^+_v})$ over $E$ attached to $\tau_v$ via the inertial local Langlands correspondence of \cite[Proposition 2.5.5]{MLM}.

\subsubsection{Patching functors and Serre weights}
Let now $\rbar_p:G_{\Qp}\ra {}^{L}\un{G}(\F)$ be an $L$-homomorphism, with corresponding collection $\{\rhobar_v: G_{F_v^+}\ra \GL_{n}(\F)\}_{v\in S_p}$.

We let $R^p$ be a nonzero complete local Noetherian equidimensional flat $\cO$-algebra with residue field $\F$ such that each irreducible component of $\Spec R^p$ and of $\Spec \overline{R}^p$ is geometrically irreducible, and define
\[
R_{\rbar_p}\defeq\widehat{\bigotimes}_{v\in S_p,\cO}R^{\Box}_{\rhobar_v} ,\qquad\qquad R_\infty\defeq R^p\widehat{\otimes}_{\cO} R_{\rbar_p}
\]
(we suppress the dependence on $R^p$ in the notation of $R_\infty$).

Given a tame inertial $L$-homomorphism $\tau: I_{\Qp}\ra \un{G}^\vee(E)$, with corresponding collection $\{\tau_v\}_{v\in S_p}$, we define
\[
R_{\rbar_p}^{\eta,\tau}\defeq \widehat{\bigotimes}_{v\in S_p}R_{\rhobar_v}^{\eta_v,\tau_v}
\]
and $R_\infty(\tau)\defeq R_\infty \otimes_{R_{_{\rbar_p}}} R_{_{\rbar_p}}^{\eta,\tau}$, and write $X_\infty$, $X_\infty(\tau)$, $\ovl{X}_\infty(\tau)$
for $\Spec R_\infty$, $\Spec R_\infty(\tau)$, and $\Spec \ovl{R}_\infty(\tau)$ respectively.

We write $\Mod(X_\infty)$ be the category of coherent sheaves over $X_\infty$ and $\Rep_{\cO}(\GL_n(\cO_p))$ be the category of topological $\cO[\GL_n(\cO_p)]$-modules which are finitely generated over $\cO$.

\begin{defn}\label{minimalpatching}
A \emph{weak patching functor} for an $L$-homomorphism $\rbar_p: G_{\Qp} \ra{}^L\un{G}(\F)$ is a nonzero covariant exact functor $M_\infty:\Rep_{\cO}(\GL_n(\cO_p))\ra \Coh(X_{\infty})$ such that for any tame inertial $L$-homomorphism $\tau$ and any $\cO$-lattice $\sigma(\tau)^\circ$ in $\sigma(\tau)$ one has:
\begin{enumerate}
\item
\label{support}
$M_\infty(\sigma(\tau)^\circ)$ is a maximal Cohen--Macaulay sheaf on $X_\infty(\tau)$; and
\item
\label{dimd}
for all $\sigma \in \JH(\ovl{\sigma(\tau)})$, $M_\infty(\sigma)$ is either zero or a maximal Cohen--Macaulay sheaf on $\ovl{X}_\infty(\tau)$.
\end{enumerate}
\end{defn}

Given a weak patching functor for an $L$-homomorphism $\rbar_p$ we thus define
\begin{equation}
\label{eq:Minfty}
W_{M_\infty}(\rbar_p)\defeq \left\{\sigma\mid \sigma \text{ is a $3(n-1)$-deep Serre weight of $\rG$ and } M_\infty(\sigma)\neq 0\right\}.
\end{equation}

By \cite[Proposition 2.4.5]{MLM} and \cite[Theorem 5.1.1, Proposition 5.4.1]{OBW} we see that if $\ovl{r}_p$ is $6(n-1)$-generic then condition that $\sigma$ is $3(n-1)$-deep in the right hand side of \eqref{eq:Minfty} is automatically satisfied.

\subsubsection{Modularity of defect one weights}
We now assume $S_p=\{v\}$ and $F^+_v=K$.
In particular a continuous homomorphism $\rhobar: G_{K} \ra{} \GL_n(\F)$ can be seen as an $L$-parameter, and have weak patching functors associated to it.

\begin{prop}
\label{prop:mod:def:1:abstract}
Let $\rhobar: G_{K} \ra{} \GL_n(\F)$ be $6(n-1)$-generic and let $M_\infty$ be a weak patching functor for $\rhobar$.
Then the following are equivalent:
\begin{enumerate}
\item $W^g_{\leq \un{0}}(\rhobar)\cap W_{M_\infty}(\rhobar)\neq \emptyset$; and
\item $W^g_{\leq \un{1}}(\rhobar)\subseteq W_{M_\infty}(\rhobar)$.
\end{enumerate}
\end{prop}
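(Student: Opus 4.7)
The implication (2)$\Rightarrow$(1) is essentially immediate: by construction $W^g_{\leq \un{0}}(\rhobar)\subseteq W^g_{\leq \un{1}}(\rhobar)$, and under the $6(n-1)$-genericity assumption the set $W^g_{\leq \un{0}}(\rhobar)$ is nonempty via the existence of extremal weights (see \cite[\S 3.7]{OBW}). My plan is therefore to focus on the nontrivial direction (1)$\Rightarrow$(2), which I would prove via a two-step strategy.

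\emph{First step.} I would bootstrap modularity of a single weight in $W^g_{\leq \un{0}}(\rhobar)\cap W_{M_\infty}(\rhobar)$ to modularity of every extremal weight of $\rhobar$, applying \cite[Theorem~5.5.5]{OBW} in the patching-functor formalism of Definition~\ref{minimalpatching}. The argument of \emph{loc.~cit}.~only uses formal smoothness over $\cO$ of the extremal-length deformation rings and thus transfers directly, yielding $W^g_{\leq \un{0}}(\rhobar)\subseteq W_{M_\infty}(\rhobar)$.

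\emph{Second step.} For $\sigma\in W^g_{\leq \un{1}}(\rhobar)\setminus W^g_{\leq \un{0}}(\rhobar)$, I would choose a specialization $\rhobar\leadsto \rhobar^{\speci}$ witnessing $\sigma\in W^?_{\leq \un{1}}(\rhobar^{\speci})$, set $\tld{w}\defeq \tld{w}_{\rhobar^{\speci}}(\sigma)$ and $\tau\defeq \tau_{\rhobar^{\speci}}(\tld{w})$, so that $\tld{w}(\rhobar,\tau)=\tld{w}$ has colength at most one at every embedding. By Theorem~\ref{thm: main deformation ring}, $R^{\eta,\tau}_\rhobar$ is formally smooth over the domain $\widehat{\bigotimes}_{\cO,\, j\in \cJ_0}\cO[\![X,Y]\!]/(XY-p)$, so each irreducible component of $\Spec R_\infty(\tau)$ is irreducible. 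The key input is that $M_\infty(\sigma(\tau)^\circ)$ is maximal Cohen--Macaulay on $X_\infty(\tau)$: once nonzero it has full support on each component of $X_\infty(\tau)$, and its reduction modulo $\varpi$ is supported on every irreducible component of $\ovl{X}_\infty(\tau)$. Nonvanishing of $M_\infty(\sigma(\tau)^\circ)$ follows from the first step: the extremal weight $\sigma_{\rhobar^{\speci}}(\tld{w}')$, where $\tld{w}'$ is obtained from $\tld{w}$ by increasing to maximal length at each $j\in \cJ_0$, lies in $\JH(\ovl{\sigma(\tau)})\cap W^g_{\leq \un{0}}(\rhobar)$ by Lemma~\ref{lem:reg:type} and step one, so exactness of $M_\infty$ gives $M_\infty(\sigma(\tau)^\circ)\neq 0$. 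Matching the $2^{|\cJ_0|}$ components of $\ovl{X}_\infty(\tau)$ with Serre weights in $W^?(\rhobar^{\speci})\cap \JH(\ovl{\sigma(\tau)})$ via the bijection $\sigma_{\rhobar^{\speci}}$ of \S\ref{subsec: sub ext weight}, one identifies $\sigma$ with one such component, and the MCM support argument then forces $M_\infty(\sigma)\neq 0$.

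The main obstacle I anticipate is precisely this last matching step: showing, via a Breuil--M\'ezard cycle formula compatibly with the explicit presentation of Theorem~\ref{thm: main deformation ring} and the combinatorics of Proposition~\ref{prop:classification, colength 1 dual shapes}, that the $2^{|\cJ_0|}$ components of $\ovl{X}_\infty(\tau)$ are labeled bijectively by the appropriate subset of $W^?(\rhobar^{\speci})\cap \JH(\ovl{\sigma(\tau)})$ in such a way that $M_\infty(\sigma')$ is supported precisely on the component labeled by $\sigma'$. A secondary subtlety will be ensuring that the adjacent extremal weight $\sigma_{\rhobar^{\speci}}(\tld{w}')$ used to force $M_\infty(\sigma(\tau)^\circ)\neq 0$ is indeed a geometric weight of $\rhobar$ (and not only predicted for $\rhobar^{\speci}$), which again relies on the compatibility encoded in Lemma~\ref{lem:reg:type}.
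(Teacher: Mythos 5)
Your overall strategy (weight elimination plus the colength-one deformation ring computation plus a cycle/support argument, with the base case supplied by the main result of \cite{OBW}) is the right one and matches the paper's in outline, but the step you flag as ``the main obstacle'' is not a technicality you can defer --- it is the actual content of the proof, and your proposal does not contain an argument for it. Knowing that $M_\infty(\ovl{\sigma}(\tau)^\circ)$ has full support on $\ovl{X}_\infty(\tau)$ and that $Z(M_\infty(\ovl{\sigma}(\tau)^\circ))=\sum_{\sigma'}Z(M_\infty(\sigma'))$ does not by itself force $M_\infty(\sigma)\neq 0$: a priori $M_\infty(\sigma)$ could vanish while some other Jordan--H\"older factor $\sigma''$ contributes a cycle supported on $\cC_{\sigma}(\rhobar)$ as well as on $\cC_{\sigma''}(\rhobar)$, since Definition~\ref{minimalpatching}\eqref{dimd} only says each $M_\infty(\sigma')$ is maximal Cohen--Macaulay on $\ovl{X}_\infty(\tau)$, not on which components it sits. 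The paper resolves this by replacing your ``two-step'' scheme with an induction on the total defect $\delta_{\rhobar}(\sigma)=\min\sum_{j}(\ell(t_\eta)-\ell(\tld{w}_{\rhobar^{\speci}}(\sigma)^{(j)}))$ (note that for $K\neq\Qp$ this ranges over $0,1,\dots,\#\cJ$, so there are genuinely more than two steps): the inductive hypothesis asserts that for every $\sigma'$ with $\delta_{\rhobar}(\sigma')<\delta_{\rhobar}(\sigma)$ the cycle $\ovl{\mathrm{pr}}\circ Z(M_\infty(\sigma'))$ is supported on $\cC_{\sigma'}(\rhobar)$ and on components of strictly smaller defect, and since every $\sigma'\neq\sigma$ in $W^?(\rhobar^{\speci})\cap\JH(\ovl{\sigma(\tau)})$ has strictly smaller defect than $\sigma$ (by Lemma~\ref{lem:reg:type}), the component $\cC_{\sigma}(\rhobar)$ appearing in $\mathrm{Supp}(Z(\ovl{R}^{\eta,\tau}_{\rhobar}))$ can only come from $M_\infty(\sigma)$.

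Two further inputs of the paper's argument are missing from your plan and are needed to make even this induction run. First, you must choose the specialization $\rhobar\leadsto\rhobar^{\speci}$ so that $\tld{w}(\rhobar^{\speci},\tau)$ is maximal (equivalently, realizes the minimum defining $\delta_{\rhobar}(\sigma)$); this is what guarantees $\tld{w}(\rhobar,\tau)=\tld{w}(\rhobar^{\speci},\tau)$, rather than merely $\geq$, via \cite[Theorem 3.5.1]{OBW}. Second, the identification of $\mathrm{Supp}(Z(\ovl{R}^{\eta,\tau}_{\rhobar}))$ with $\bigcup_{\sigma'}\cC_{\sigma'}(\rhobar)$ over $\sigma'\in W^?(\rhobar^{\speci})\cap\JH(\ovl{\sigma(\tau)})$ requires proving $W^g(\rhobar)\cap\JH(\ovl{\sigma(\tau)})=W^?(\rhobar^{\speci})\cap\JH(\ovl{\sigma(\tau)})$, which the paper does by exhibiting the defect-zero weight $\sigma_0$ of Lemma~\ref{lem:reg:type}, noting $\rhobar\in\cC_{\sigma}\cap\cC_{\sigma_0}$, and using the explicit product description of the $\tld{C}^{\zeta,(j)}_{\sigma'}$ from \cite[\S 4]{MLM} to show $\cC_{\sigma}\cap\cC_{\sigma_0}\subseteq\cC_{\sigma'}$ for every $\sigma'$ in the Jordan--H\"older set; your remark that this is a ``secondary subtlety'' underestimates it, since without it the count of components of $\ovl{X}_\infty(\tau)$ need not match the set of weights you want to label them by (the set $\cJ_0$ of Theorem~\ref{thm: main deformation ring} is a priori only a subset of the embeddings where the shape has colength one).
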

Before the proof, we record the following lemma, which will be also used in the proof of \cite[Conjecture 5.3.1]{PQ} in \S \ref{subsec:PQ:conj}.

\begin{lemma}
\label{lemma:nb:SW}
Let $\rhobar^{\speci}: G_{K} \ra{} \GL_n(\F)$ be $6(n-1)$-generic and let $\tld{w}\in\Adm^{\mathrm{reg}}(\eta)$ satisfy $\ell(t_\eta)-\ell(\tld{w}^{(j)})\leq  1$ for all $j\in\cJ$.
Let $\tau\defeq \tau_{\rhobar^{\speci}}(\tld{w})$ and write $\tld{w}=\tld{w}_2^{-1}w_0t_\nu\tld{w}_1$ where $\tld{w}_1,\tld{w}_2\in\tld{\un{W}}^+_1$ and $\nu\in X^+(\un{T})$ are uniquely determined up to $X^0(\un{T})$.
Then:
\begin{equation*}
\label{eq:nb:SW}
W^?(\rhobar^{\speci})\cap \JH(\ovl{\sigma(\tau)})=\left\{\sigma_{\rhobar^{\speci}}(\tld{w}')\mid \tld{w}^{\prime\,(j)}\in S^{(j)} \text{ for all $j\in\cJ$}
\right\}
\end{equation*}
where $S^{(j)}\defeq \left\{(\tld{w}_2^{(j)})^{-1}w_0t_{\nu^{(j)}}\tld{w}^{(j)}_1, (\tld{w}_h^{(j)}\tld{w}_1^{(j)})^{-1}w_0\tld{w}^{(j)}_1\right\}$. Moreover, $\#S^{(j)}=1$ if and only if $\tld{w}^{(j)}_1=\tld{w}_h^{-1}\tld{w}_2^{(j)}$ and $\nu^{(j)}\in X^0(T)$, if and only if $\ell(\tld{w}^{(j)})=\ell(t_\eta)$.
\end{lemma}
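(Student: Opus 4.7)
The plan is to combine the parametrization of $W^?(\rhobar^{\speci})\cap\JH(\ovl{\sigma(\tau)})$ from \cite[Proposition 2.6.4]{MLM} with an embeddingwise application of Lemma \ref{lem:fact:reg}, and then unravel the definition of $\sigma_{\rhobar^{\speci}}$ to compute $\tld{w}_{\rhobar^{\speci}}(\sigma')^{(j)}$ explicitly.

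For the inclusion $\subseteq$, by \cite[Proposition 2.6.4]{MLM} every $\sigma'\in W^?(\rhobar^{\speci})\cap\JH(\ovl{\sigma(\tau)})$ has the form $F_{(\tld{w}',\tld{w}(\tau)\tld{s}_2^{-1}(0))}$ for some $\tld{w}'\in\tld{\un{W}}^+_1$ and $(\tld{s}_1,\tld{s}_2)\in \tld{\un{W}}^+\times \tld{\un{W}}^+$ satisfying $\tld{s}_1\uparrow\tld{w}'\uparrow\tld{w}_h^{-1}\tld{s}_2$ and $\tld{s}_2^{-1}w\tld{s}_1=\tld{w}$ for some $w\in\un{W}$; moreover $\tld{w}(\tau)\tld{s}_2^{-1}(0)=\tld{w}(\rhobar^{\speci})\tld{s}_1^{-1}(0)$ as in the computation of the proof of Lemma \ref{lem:reg:type}. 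Applying Lemma \ref{lem:fact:reg} at each embedding $j$ (justified by $\ell(t_\eta)-\ell(\tld{w}^{(j)})\leq 1$) forces $w^{(j)}=w_0$ and $\tld{w}^{\prime,(j)}$ to belong modulo $X^0(T)$ to either $\tld{w}_1^{(j)}$ (Case A, with $\tld{w}^{\prime,(j)}=\tld{s}_1^{(j)}$) or $(\tld{w}_h^{(j)})^{-1}\tld{w}_2^{(j)}$ (Case B, with $\tld{w}^{\prime,(j)}=(\tld{w}_h^{(j)})^{-1}\tld{s}_2^{(j)}$). By definition of $\sigma_{\rhobar^{\speci}}$, the inverse $\tld{w}_{\rhobar^{\speci}}(\sigma')$ equals $\tld{y}_2^{-1}w_0\tld{y}_1$ with $\tld{y}_2=\tld{w}_h\tld{w}'$ and $\tld{y}_1=\tld{s}_1$ modulo $X^0(T)$, so $\tld{w}_{\rhobar^{\speci}}(\sigma')=(\tld{w}_h\tld{w}')^{-1}w_0\tld{s}_1$. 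Substituting yields $(\tld{w}_h^{(j)}\tld{w}_1^{(j)})^{-1}w_0\tld{w}_1^{(j)}$ in Case A and $(\tld{s}_2^{(j)})^{-1}w_0\tld{s}_1^{(j)}=\tld{w}^{(j)}$ in Case B; in particular $\tld{w}_{\rhobar^{\speci}}(\sigma')^{(j)}\in S^{(j)}$.

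For the reverse inclusion, given $\tld{u}\in\Adm^{\textnormal{reg}}(\eta)$ with $\tld{u}^{(j)}\in S^{(j)}$, we produce a factorization datum realizing $\sigma_{\rhobar^{\speci}}(\tld{u})$ as a JH constituent via \cite[Proposition 2.6.4]{MLM}. At each embedding $j$, we set $\tld{w}^{\prime,(j)}$ to be $\tld{w}_1^{(j)}$ or $(\tld{w}_h^{(j)})^{-1}\tld{w}_2^{(j)}$ (both elements of $\tld{\un{W}}^+_1$, the latter by the analysis in Lemma \ref{lem:fact:reg}) according as $\tld{u}^{(j)}$ equals $(\tld{w}_h^{(j)}\tld{w}_1^{(j)})^{-1}w_0\tld{w}_1^{(j)}$ or $\tld{w}^{(j)}$, and choose $(\tld{s}_1,\tld{s}_2)$ embeddingwise so that $\tld{s}_2^{-1}w_0\tld{s}_1=\tld{w}$ globally. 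The required $\uparrow$-chain follows from the relation $\tld{w}_1\uparrow\tld{w}_h^{-1}\tld{w}_2$ in the canonical factorization of $\tld{w}$, after accommodating the translations $t_{\nu^{(j)}}$ at the appropriate embeddings.

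For the \emph{moreover} clause, uniqueness of the canonical factorization in \cite[Proposition 2.1.5]{MLM} shows that the two elements of $S^{(j)}$ coincide modulo $X^0(T)$ if and only if $\tld{w}_2^{(j)}=\tld{w}_h^{(j)}\tld{w}_1^{(j)}$ modulo $X^0(T)$ and $\nu^{(j)}\in X^0(T)$; the equivalence with $\ell(\tld{w}^{(j)})=\ell(t_\eta)$ is precisely the last sentence of Lemma \ref{lem:fact:reg}. The main obstacle is the delicate verification of the $\uparrow$-relations in the reverse inclusion direction, where mixed choices across embeddings together with the translation $t_{\nu^{(j)}}$ must be handled simultaneously to ensure both $\tld{s}_1\in\tld{\un{W}}^+$ and $\tld{w}_h^{-1}\tld{s}_2\in\tld{\un{W}}^+$ hold and the appropriate $\uparrow$-chain is realized.
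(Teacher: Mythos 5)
Your proof follows essentially the same route as the paper's: \cite[Proposition 2.6.4]{MLM} combined with an embeddingwise application of Lemma \ref{lem:fact:reg} and the definition of $\sigma_{\rhobar^{\speci}}$, and your case computation of $\tld{w}_{\rhobar^{\speci}}(\sigma')^{(j)}$ matches the paper's. The ``main obstacle'' you flag in the reverse inclusion is not a real one: since \cite[Proposition 2.6.4]{MLM} is an equivalence and all conditions there are imposed embeddingwise, the factorization datum for an arbitrary mixed choice is exactly the one identified in (the proof of) Lemma \ref{lem:fact:reg} in each case --- namely $\tld{x}^{(j)}=\tld{x}_1^{(j)}=\tld{w}_1^{(j)}$ with $\tld{x}_2^{(j)}=t_{-w_0(\nu^{(j)})}\tld{w}_2^{(j)}$, resp.\ $\tld{x}^{(j)}=\tld{w}_h^{-1}\tld{x}_2^{(j)}=\tld{w}_h^{-1}\tld{w}_2^{(j)}$ with $\tld{x}_1^{(j)}=t_{\nu^{(j)}}\tld{w}_1^{(j)}$ --- and the required $\uparrow$-chains reduce in both cases to $t_{\nu^{(j)}}\tld{w}_1^{(j)}\uparrow\tld{w}_h^{-1}\tld{w}_2^{(j)}$ (which is part of the canonical factorization of $\tld{w}^{(j)}$) together with the translation-invariance of $\uparrow$.
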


Note that the existence and uniqueness of the decomposition of $\tld{w}$ in the lemma is guaranteed by \cite[Proposition 2.1.5]{MLM}.

\begin{proof}
The relation $F_{(\tld{x},\omega)}\in W^?(\rhobar^{\speci})\cap \JH(\ovl{\sigma(\tau)})$ is equivalent, by \cite[Proposition 2.6.4]{MLM}, to the existence of a factorization $\tld{w}=(\tld{x}_2)^{-1}s\tld{x}_1$ with $\tld{x}_1,\tld{x}_2\in \tld{\un{W}}^+$, $\tld{x}_1\uparrow\tld{x}\uparrow\tld{w}_h^{-1}\tld{x}_2$ and $\omega=\tld{w}(\rhobar^{\speci})(\tld{x}_1)^{-1}(0)$.
By Lemma \ref{lem:fact:reg} applied to each $\tld{w}^{(j)}$ we have $s=w_0$ and for all $j\in\cJ$ either $\tld{x}^{(j)}=\tld{x}^{(j)}_1\in \tld{w}_1^{(j)}X^0(T)$ (in which case, by the uniqueness of the factorization in \cite[Proposition 2.1.5]{MLM} we further have $\tld{x}_2^{(j)}=t_{-w_0(\nu^{(j)})}\tld{w}_2^{(j)}$) or $\tld{x}^{(j)}=(\tld{w}^{(j)}_h)^{-1}\tld{x}^{(j)}_2\in (\tld{w}^{(j)}_h)^{-1}\tld{w}_2^{(j)}X^0(T)$ (in which case, by the uniqueness of the factorization in \cite[Proposition 2.1.5]{MLM} we further have $\tld{x}_1^{(j)}=t_{\nu^{(j)}}\tld{w}_1^{(j)}$).
The conclusion follows now from the definition of the map $\sigma_{\rhobar^{\speci}}$.
\end{proof}

\begin{proof}[Proof of Proposition \ref{prop:mod:def:1:abstract}]
The proof is by induction on the following quantity $\delta_{\rhobar}(\sigma)$ attached to a Serre weight $\sigma\in W^g_{\leq \un{1}}(\rhobar)$:
\begin{equation}
\label{eq:defect}
\delta_{\rhobar}(\sigma)\defeq\min \Big\{\sum_{j\in\cJ}\ell(t_\eta)-\ell(\tld{w}_{\rhobar^{\speci}}(\sigma)^{(j)})\mid\rhobar\leadsto \rhobar^{\speci}\Big\}.
\end{equation}
We fix throughout the proof a choice of an algebraic central character $\zeta$; all lowest alcove presentations below will be chosen to be compatible with $\zeta$.

By definition of $W^g_{\leq \un{1}}(\rhobar)$ and Lemma \ref{lem:reg:type}, for each $\sigma\in W^g_{\leq \un{1}}(\rhobar)$ there exist $\rhobar\leadsto\rhobar^{\speci}$ and a tame inertial type $\tau$ (depending on $\rhobar^{\speci}$) such that
\begin{enumerate}
\item\label{en:type:nice:1}
$\sigma\in W^g(\rhobar)\cap W^?(\rhobar^{\speci})\cap \JH_{\textnormal{out}}(\ovl{\sigma(\tau)})$;
\item\label{en:type:nice:2}
$\tld{w}_{\rhobar^{\speci}}(\sigma)=\tld{w}(\rhobar^{\speci},\tau)\in \Adm^{\textnormal{reg}}(\eta)$; and
\item\label{en:type:nice:3} $\tld{w}_{\rhobar^{\speci}}(\sigma')^{(j)}\geq \tld{w}_{\rhobar^{\speci}}(\sigma)^{(j)}$ for all $j\in\cJ$ and all $\sigma'\in  W^?(\rhobar^{\speci})\cap \JH(\ovl{\sigma(\tau)})$.
\end{enumerate}
(Note that condition \eqref{en:type:nice:2} determines $\tau$ uniquely, and we thus let $\tau=\tau_{\rhobar^{\speci}}(\tld{w}_{\rhobar^{\speci}}(\sigma))$ in the notation of Lemma \ref{lem:reg:type}.)
We fix $\rhobar\leadsto\rhobar^{\speci}$ such that $\tld{w}(\rhobar^{\speci},\tau)$ is maximal possible (which is equivalent to $\tld{w}_{\rhobar^{\speci}}(\sigma)$ realizing the minimum \eqref{eq:defect}), where $\tau$ is the type associated to $\rhobar^{\speci}$ satisfying \eqref{en:type:nice:1}, \eqref{en:type:nice:2}, and \eqref{en:type:nice:3} above.

We claim that with this choice of $\rhobar\leadsto \rhobar^{\speci}$ and $\tau$ we have
\begin{equation*}
\label{eq:shape:speci}
\tld{w}(\rhobar,\tau)=\tld{w}(\rhobar^{\speci},\tau).
\end{equation*}
Indeed, we always have $\tld{w}(\rhobar,\tau)\geq\tld{w}(\rhobar^{\speci},\tau)$ by \cite[Theorem 3.5.1]{OBW}.
If $\tld{w}(\rhobar,\tau)>\tld{w}(\rhobar^{\speci},\tau)$, then there exists a specialization $\rhobar\leadsto \rhobar^{\prime,\speci}$ such that $\tld{w}(\rhobar^{\prime,\speci},\tau)=\tld{w}(\rhobar,\tau)$ (in particular, $\tld{w}(\rhobar^{\prime,\speci},\tau)\in \Adm^{\textnormal{reg}}(\eta)$) and therefore $\tld{w}(\rhobar^{\prime,\speci},\tau)>\tld{w}(\rhobar^{\speci},\tau)$.
This contradicts the maximality of $\tld{w}(\rhobar^{\speci},\tau)$.

For each $\sigma\in W^g_{\leq \un{1}}(\rhobar)$ with corresponding $\rhobar\leadsto\rhobar^{\speci}$ and  $\tau$ as above, we now claim that
\begin{equation}
\label{eq:geom:wgt}
W^g(\rhobar)\cap \JH(\ovl{\sigma(\tau)})=W^?(\rhobar^{\speci})\cap \JH(\ovl{\sigma(\tau)})
\end{equation}
or, equivalently by \cite[Theorem 4.2.4]{LLL}, that any $\sigma'\in W^?(\rhobar^{\speci})\cap \JH(\ovl{\sigma(\tau)})$ is in $W^g(\rhobar)$.
Indeed by Lemma \ref{lem:reg:type} there exists $\sigma_0\in W^?(\rhobar^{\speci})\cap \JH(\ovl{\sigma(\tau)})$ such that
\begin{enumerate}
\item
\label{it:geom:1}
$\delta_{\rhobar}(\sigma_0)=0$; and
\item
\label{it:geom:2}
for all $\sigma'\in W^?(\rhobar^{\speci})\cap \JH(\ovl{\sigma(\tau)})$ we have $\tld{w}_{\rhobar^{\speci}}(\sigma')^{(j)}\in\{\tld{w}_{\rhobar^{\speci}}(\sigma_0)^{(j)},\, \tld{w}_{\rhobar^{\speci}}(\sigma)^{(j)}\}$.
\end{enumerate}
By item \eqref{it:geom:1} and \cite[Theorem 7.4.2]{MLM} (see also  \cite[Remark 3.9.1]{OBW}) we have $\sigma_0\subset W^g(\rhobar)$ and thus $\rhobar\in \cC_{\sigma}\cap \cC_{\sigma_0}$.
Recall from \cite[equation (4.11) and Definition 4.6.1]{MLM} the variety $\tld{C}^{\zeta}_{\sigma'}$ with its decomposition $\prod_{j\in\cJ}\tld{C}^{\zeta,(j)}_{\sigma'}$.
Each $\tld{C}^{\zeta,(j)}_{\sigma'}$ is determined explicitly by $\tld{w}(\rhobar^{\speci})^{(j)}$ and $\tld{w}_{\rhobar^{\speci}}(\sigma')^{(j)}$ (see \emph{loc.~cit}.~Definition 4.3.2, Theorem 4.3.9) so that $\tld{C}^{\zeta,(j)}_{\sigma'}=\tld{C}^{\zeta,(j)}_{\sigma}$ if $\tld{w}_{\rhobar^{\speci}}(\sigma')^{(j)}=\tld{w}_{\rhobar^{\speci}}(\sigma)^{(j)}$, and $\tld{C}^{\zeta,(j)}_{\sigma'}=\tld{C}^{\zeta,(j)}_{\sigma_0}$ if $\tld{w}_{\rhobar^{\speci}}(\sigma')^{(j)}=\tld{w}_{\rhobar^{\speci}}(\sigma_0)^{(j)}$.
Thus, by item \eqref{it:geom:2}, for all $\sigma'\in W^?(\rhobar^{\speci})\cap \JH(\ovl{\sigma(\tau)})$ we have
\[
\tld{C}^{\zeta}_{\sigma}\cap \tld{C}^{\zeta}_{\sigma_0}=\prod_{j\in\cJ} \tld{C}^{\zeta,(j)}_{\sigma}\cap \tld{C}^{\zeta,(j)}_{\sigma_0}
\subseteq\prod_{j\in\cJ}\tld{C}^{\zeta,(j)}_{\sigma'} =\tld{C}^{\zeta}_{\sigma'}
\]
which together with \cite[Theorem 7.4.2]{MLM} implies $\cC_{\sigma}\cap \cC_{\sigma_0}\subseteq \cC_{\sigma'}$.
This proves \eqref{eq:geom:wgt}.

We now proceed to the inductive argument.
We freely use the notation for cycles from patching functors introduced in \cite[\S 5.3]{OBW}, in particular we write $\overline{\textnormal{pr}}$ to indicate the projection map from cycles over the reduced union $\cup_{\tau\in \cT} \ovl{X}_\infty(\tau)$ (for a set of generic tame inertial $L$-parameters $\cT$) to cycles over the special fiber of the multi-type deformation ring associated to the set $\cT$.
The set $\cT$ can be fixed to satisfy condition (ii) in \cite[\S 5.3]{OBW} since all the tame inertial types $\tau$ involved in this proof satisfy $\ell(\tld{w}(\rhobar,\tau)^{(j)})\geq \ell(t_\eta)-1$ for all $j\in\cJ$, hence Theorem \ref{thm: main deformation ring} applies.
Furthermore, given $\sigma\in W^g(\rhobar)$ we will write $\cC_{\sigma}(\rhobar)$ to denote the pullback of the component $\cC_{\sigma}$ to the versal ring of $\cX_{n,K}$ at $\rhobar\in \cX_{n,K}(\F)$.

Let $\sigma\in W^g_{\leq \un{1}}(\rhobar)$.
We prove by induction on  $\delta_{\rhobar}(\sigma)$ that the support of $\ovl{\textnormal{pr}}\circ Z(M_\infty(\ovl{\sigma}(\tau)^\circ))$ contains $\cC_{\sigma}$, and that any \emph{other} component in the support of $\ovl{\textnormal{pr}}\circ Z(M_\infty(\ovl{\sigma}(\tau)^\circ))$ is of the form $\cC_{\kappa}$ with $\kappa \in W^g(\rhobar)$, $\delta_{\rhobar}(\kappa)< \delta_{\rhobar}(\sigma)$.

If $\delta_{\rhobar}(\sigma)=0$ then $\sigma \in W^g_{\leq \un{0}}(\rhobar)$, and  $W^g_{\leq \un{0}}(\rhobar)\cap W_{M_\infty}(\rhobar)\neq \emptyset$ implies $W_{\leq \un{0}}^g(\rhobar)\subseteq  W_{M_\infty}(\rhobar)$ by the main result of \cite{OBW}.
As $W^g(\rhobar)\cap \JH(\ovl{\sigma(\tau)})= W^?(\rhobar^{\speci})\cap \JH(\ovl{\sigma(\tau)})=\{\sigma\}$    by Lemma~\ref{lemma:nb:SW}) and $M_\infty(\ovl{\sigma}(\tau)^\circ)$ has full support over (a formally smooth modification of) $\ovl{R}_{\rhobar}^{\eta,\tau}$, we conclude that the cycle $\ovl{\textnormal{pr}}\circ Z(M_\infty(\ovl{\sigma}(\tau)^\circ))$ is supported on $\cC_\sigma(\rhobar)$.
(Here and below we write $\ovl{\sigma}(\tau)^\circ$ to denote the mod $\varpi$-reduction of any $\cO$-lattice $\sigma(\tau)^\circ$ in $\sigma(\tau)$.)

Assume now that $\delta_{\rhobar}(\sigma)>0$ and that for any $\sigma'\in W^g_{\leq \un{1}}(\rhobar)$ with $\delta_{\rhobar}(\sigma')<\delta_{\rhobar}(\sigma)$ the cycle $\ovl{\textnormal{pr}}\circ Z(M_\infty(\sigma'))$ is supported on $\cC_{\sigma'}(\rhobar)$ and possibly other components of the form $\cC_\kappa(\rhobar)$ where $\kappa \in W^g(\rhobar)$ and $\delta_{\rhobar}(\kappa)< \delta_{\rhobar}(\sigma')$.
We have
\begin{eqnarray}
\label{eq:eq:support}
&\textnormal{Supp}\big(Z(\ovl{R}_{\rhobar}^{\eta,\tau})\big)&=
\textnormal{Supp}\big(\textnormal{red}(Z(R_{\rhobar}^{\eta,\tau})[1/p])\big)
\\
\nonumber
&&=\textnormal{Supp}\big(
\textnormal{red}\circ \textnormal{pr} \big(Z(M_\infty(\sigma(\tau)^\circ)[1/p])\big)\big)
\\
\nonumber
&&=\textnormal{Supp}\big(
\ovl{\textnormal{pr}}\circ \textnormal{red}\big( Z(M_\infty(\sigma(\tau)^\circ)[1/p])\big)\big)
\\
\nonumber
&&=
\textnormal{Supp}\big(\ovl{\textnormal{pr}}\circ Z(M_\infty(\ovl{\sigma}(\tau)^\circ))\big)
\end{eqnarray}
where the first and last equality follow from \cite[Proposition 5.3.1]{OBW}, the second from the fact that $R_{\rhobar}^{\eta,\tau}$ is geometrically integral (Theorem \ref{thm: main deformation ring}), the third from $\textnormal{red}\circ \textnormal{pr}=\ovl{\textnormal{pr}}\circ \textnormal{red}$ (see \cite[\S 5.3]{OBW}).
Since $R_{\rhobar}^{\eta,\tau}$ is geometrically integral by Theorem \ref{thm: main deformation ring}, by exactness of $M_\infty$ and \cite[Theorem 5.1.1 and Proposition 5.4.1]{OBW} we have
\begin{align}
\label{eq:final:modularity}
\ovl{\textnormal{pr}}\circ Z(M_\infty(\ovl{\sigma}(\tau)^\circ))
&= \sum_{\sigma'\in W^?(\rhobar^{\speci})\cap \JH(\ovl{\sigma(\tau)})} \ovl{\textnormal{pr}}\circ Z(M_\infty(\sigma'))
\nonumber
\\
\nonumber
&=\ovl{\textnormal{pr}}\circ Z(M_\infty(\sigma))+\sum_{\sigma'\in W^?(\rhobar^{\speci})\cap \JH(\ovl{\sigma(\tau)})\setminus\{\sigma\}} \ovl{\textnormal{pr}}\circ Z(M_\infty(\sigma'))\nonumber
\end{align}
From the definition of $\delta_{\rhobar}$ together with Lemma~\ref{lem:reg:type}, we easily check that $\delta_{\rhobar}(\sigma')<\delta_{\rhobar}(\sigma)$ for all $\sigma'\in W^?(\rhobar^{\speci})\cap \JH(\ovl{\sigma(\tau)})\setminus \{\sigma\}$.
We thus deduce, using the inductive hypothesis and \eqref{eq:geom:wgt}, that
\[
\bigcup_{\sigma'\in W^?(\rhobar^{\speci})\cap \JH(\ovl{\sigma(\tau)})\setminus\{\sigma\}} \textnormal{Supp}\big(\ovl{\textnormal{pr}}\circ Z(M_\infty(\sigma'))\big)= \bigcup_{\sigma'\in W^?(\rhobar^{\speci})\cap \JH(\ovl{\sigma(\tau)})\setminus\{\sigma\}} \cC_{\sigma'}(\rhobar).
\]

On the other hand, we have
\[
\textnormal{Supp}\big(Z(\ovl{R}_{\rhobar}^{\eta,\tau})\big)=
\bigcup_{\sigma'\in W^g(\rhobar)\cap \JH(\ovl{\sigma(\tau)})} \cC_{\sigma'}(\rhobar)
\]
by \cite[Theorem 7.4.2]{MLM} (and  \cite[\href{https://stacks.math.columbia.edu/tag/0DRB}{Lemma 0DRB}, \href{https://stacks.math.columbia.edu/tag/0DRD}{Lemma 0DRD} and \href{https://stacks.math.columbia.edu/tag/0DRA}{Definition 0DRA}]{stacks-project}) and hence $\ovl{\textnormal{pr}}\circ Z(M_\infty(\sigma))$ is necessarily supported on $\cC_{\sigma}(\rhobar)$, and possibly on other components $\cC_{\sigma'}(\rhobar)$ with $\sigma' \in W^g(\rhobar)$ and $\delta_{\rhobar}(\sigma')< \delta_{\rhobar}(\sigma)$.
\end{proof}

\begin{rmk}
If, in the statement of Proposition \ref{prop:mod:def:1:abstract}, we furthermore assume that weak patching functor $M_\infty$ is \emph{minimal} (\cite[Definition 5.2.1]{OBW}), then equation \eqref{eq:eq:support} can be replaced with the stronger statement
\[
e(R_{\rhobar}^{\eta,\tau})=e(M_\infty(\ovl{\sigma}(\tau)^\circ))
\]
(where $e(\cdot)$ denotes the Hilbert--Samuel multiplicity), which forces $e(M_\infty(\sigma))=1$ for all $\sigma\in W^?(\rhobar^{\speci})\cap \JH(\ovl{\sigma(\tau)})$.
\end{rmk}

\begin{rmk}
\label{rmk:Lpar}
Let $\rbar_p: G_{\Qp}\ra{}^{L}\un{G}(\F)$ be an $L$-parameter, with corresponding collection $\{\rhobar_v: G_{F^+_v}\ra\GL_n(\F)\}_{v\in S_p}$.
For each $v\in S_p$ let $\cJ_v$ denote the set of ring homomorphisms $\{k_v\into \F\}$, so that $\cJ_p=\prod_{v\in S_p}\cJ_v$.
Given $\un{h}_v\in \N^{\cJ_v}$ for each $v\in S_p$ we then have a collection $\{W^g_{\leq \un{h}_v}(\rhobar_v)\}_{v\in S_p}$ whose elements are Serre weights for $\rG$ by taking tensor products over $v\in S_p$.
In particular, given $\un{h}\in \N^{\cJ_p}$ we can define the set $W^g_{\leq \un{h}}(\rbar_p)$ for an $L$-parameter $\rbar_p: G_{\Qp}\ra{}^{L}\un{G}(\F)$.
The proofs of Proposition \ref{prop:mod:def:1:abstract} and Lemma \ref{lemma:nb:SW} go through, \emph{mutatis mutandis}, replacing $\rhobar:G_K\ra\GL_n(\F)$ with an $L$-homomorphism $\rbar_p: G_{\Qp}\ra{}^{L}\un{G}(\F)$ (for these kind of passages from $\# S_p=1$ to $\# S_p>1$ see also  \cite[Remark 7.3.4]{MLM}).
\end{rmk}

\subsection{Global applications and the conjecture of \cite{PQ}} \label{subsec:PQ:conj}
We apply the results above to obtain our main global applications.
We follow the setup and notation of \cite[\S 5.5.1, 5.5.2, 5.5.3]{OBW}.
In particular, we have a totally real field $F^+/\Q$ not equal to $\Q$, and $F/F^+$ a CM extension.
We assume from now on that all places of $F^+$ above $p$ are unramified over $\Qp$, and that they are further totally split in $F$.
Given a reductive group $G_{/F^+}$ which is an outer form for $\GL_n$, split over~$F$, and such that $G(F^+\otimes_{\Q}\R)=U_n(F^+\otimes_{\Q}\R)$, we define for a continuous Galois representation $\rbar: G_{F^+}\ra \cG_n(\F)$ the notion of being \emph{automorphic} (relatively to $G_{/F^+}$) as in \cite[Definition 5.5.1]{OBW}, as well as the set $W(\rbar)$ of modular Serre weights of $\rbar$ (\emph{loc.~cit.}~Definition 5.5.2).
Here, $\cG_n$ is the group scheme defined in \cite[\S 2]{CHT}.
Finally, let $\rbar_p$ be the $L$-homomorhism induced from the collection of continuous representations $\rbar|_{G_{F^+_{v}}}:{G_{F^+_{v}}}\ra\GL_n(\F)$ for $v|p$. In particular, we denote by $S_p$ the set of the finite places of $F^+$ above $p$.

Let $\rbar: G_{F^+}\ra \cG_n(\F)$ be automorphic and such that $\rbar(G_{F(\zeta_p)})\subseteq \GL_n(\F)$ is adequate.
Then by \cite[Lemma 5.5.4]{OBW} we can and do fix a weak patching functor $M_\infty$ for the $L$-homomorphism $\rbar_p$ such that for any Serre weight $\sigma$ of $\rG=\prod_{v\in S_p}\GL_n(k_v)$ we have
\begin{equation}
\label{eq:Minfty:sigma}
M_\infty(\sigma)\neq 0\Longleftrightarrow \sigma\in W(\rbar).
\end{equation}

\begin{thm}[Modularity of defect one weights] \label{thm:globalobv}
Let $\rbar: G_{F^+} \ra \cG(\F)$ be an automorphic representation such that
\begin{itemize}
\item $\rbar(G_{F(\zeta_p)})\subseteq \GL_n(\F)$ is adequate; and
\item $\rbar_p$ is $6(n-1)$-generic.
\end{itemize}
Then the following are equivalent:
\begin{enumerate}
\item $W_{\leq \un{0}}^g(\rbar_p) \cap W(\rbar) \neq \emptyset$; and
\item $W_{\leq \un{1}}^g(\rbar_p) \subset W(\rbar)$.
\end{enumerate}
\end{thm}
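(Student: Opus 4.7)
The plan is to deduce Theorem \ref{thm:globalobv} as an essentially immediate consequence of the abstract patching statement Proposition \ref{prop:mod:def:1:abstract}, applied in the $L$-parameter setting described in Remark \ref{rmk:Lpar}. The only work is to identify the set $W_{M_\infty}(\rbar_p)$ (attached to a patching functor) with the set $W(\rbar)$ of modular Serre weights, for a sufficiently nice $M_\infty$.

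First, I would invoke \cite[Lemma 5.5.4]{OBW} to produce a weak patching functor $M_\infty$ for the $L$-homomorphism $\rbar_p$ which moreover detects automorphy in the sense of \eqref{eq:Minfty:sigma}, i.e.\ $M_\infty(\sigma)\neq 0 \Longleftrightarrow \sigma\in W(\rbar)$ for every Serre weight $\sigma$ of $\un{\rG}=\prod_{v\in S_p}\GL_n(k_v)$. The hypotheses that $F^+\neq\Q$, that every place of $F^+$ over $p$ is unramified and totally split in $F$, together with the adequacy assumption on $\rbar(G_{F(\zeta_p)})$, are exactly what is needed for this construction to apply. Since $\rbar_p$ is $6(n-1)$-generic, the defining condition of $W_{M_\infty}(\rbar_p)$ (see \eqref{eq:Minfty}) imposes no extra constraint on genericity of $\sigma$, and so $W_{M_\infty}(\rbar_p) = W(\rbar)$.

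Next, I would apply the $L$-parameter version of Proposition \ref{prop:mod:def:1:abstract}, as laid out in Remark \ref{rmk:Lpar}, to $\rbar_p$ and to the above $M_\infty$. This yields the equivalence
\[
W_{\leq\un{0}}^g(\rbar_p)\cap W_{M_\infty}(\rbar_p)\neq\emptyset \quad\Longleftrightarrow\quad W_{\leq\un{1}}^g(\rbar_p)\subseteq W_{M_\infty}(\rbar_p),
\]
which, after substituting $W_{M_\infty}(\rbar_p)=W(\rbar)$, is precisely the equivalence claimed in the theorem.

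The main obstacle is therefore not in the global argument itself (which is a clean reduction to the local statement via the patching formalism), but rather to check that Proposition \ref{prop:mod:def:1:abstract} genuinely carries over to the $L$-parameter setting with $\#S_p>1$. Concretely, one has to verify that the inductive argument on the defect $\delta_{\rhobar}$ in the proof of Proposition \ref{prop:mod:def:1:abstract} runs identically when $\rhobar$ is replaced by $\rbar_p$ and the cycle-theoretic statements are applied to the tensor product deformation ring $R_{\rbar_p}^{\eta,\tau}=\widehat{\bigotimes}_{v\in S_p,\cO}R_{\rhobar_v}^{\eta_v,\tau_v}$; this is possible because Theorem \ref{thm: main deformation ring} provides geometric integrality of each local factor, which propagates through the completed tensor product, and because Lemma \ref{lemma:nb:SW} is purely combinatorial and decomposes across $v\in S_p$. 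Once these factorizations are in place, the proof of Proposition \ref{prop:mod:def:1:abstract} applies verbatim and the theorem follows.
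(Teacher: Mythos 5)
Your proposal is correct and follows the paper's own route exactly: fix the weak patching functor $M_\infty$ from \cite[Lemma 5.5.4]{OBW} satisfying \eqref{eq:Minfty:sigma}, note that $6(n-1)$-genericity makes the depth condition in \eqref{eq:Minfty} automatic so that $W_{M_\infty}(\rbar_p)=W(\rbar)$, and then apply Proposition \ref{prop:mod:def:1:abstract} in the $L$-parameter form of Remark \ref{rmk:Lpar}. Your closing discussion of why the inductive argument factors over $v\in S_p$ is exactly the content the paper delegates to Remark \ref{rmk:Lpar}, so nothing is missing.
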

\begin{proof}
This follows from Proposition  \ref{prop:mod:def:1:abstract} and Remark \ref{rmk:Lpar} using \eqref{eq:Minfty:sigma}.
\end{proof}

\begin{rmk}
Keep the setup and notation of Theorem \ref{thm:globalobv} and Remark \ref{rmk:Lpar}.
If $\sigma\in W^?_{\leq 1}(\rbar_p^{\speci})$ for some $\rbar_p \leadsto \rbar_p^{\speci}$, then $\sigma \in W_{M_{\infty}}(\rbar)$ if and only if $\sigma \in W^g(\rbar_p)$ if and only if $\sigma \in W^g_{\leq 1}(\rbar_p)$.
For each such $\sigma$, there exists $\tau$ such that
\begin{equation}
\label{eq:cond:WE}
\tld{w}(\rbar_p, \tau)=\tld{w}(\rbar_p^{\speci},\tau)\quad\text{and}\quad \ell(\tld{w}(\rbar_p^{\speci},\tau)^{(j)})\geq \ell(t_\eta)-1
\end{equation}
for each $j\in \cJ_p$ and $\sigma\in W^?(\rbar_p^{\speci})\cap \JH(\overline{\sigma}(\tau))$, with $\sigma\in W^g(\rbar_p)$ if and only if  $\rbar_p$ satisfies $Z_{-\alpha^{(j)}}=0$ for each $j\in\cJ_p$ for which the inequality in \eqref{eq:cond:WE} is an equality.
\end{rmk}

We now recall the setup of the local--global compatibility result of \cite{PQ}.
Assume that $p$ is totally split in $F$ and fix a place $w|p$ of $F$.
Assume that $\rbar_w\defeq \rbar|_{G_{F_w}}$ is Fontaine--Laffaille of niveau one, and that satisfies a geometric genericity condition dictated by its position in the moduli of Fontaine--Laffaille modules (see \cite[Definition 3.2.5]{PQ}).
In particular we have a lowest alcove presentation $(1,\mu)$ for $\rbar_w^{\semis}$ (with $\mu=(c_{n-1},c_{n-2},\dots,c_1,c_0)$ in the notation of \S 1 in \emph{loc.~cit}.) and a niveau one tame inertial type $\tau$ with lowest alcove presentation $(1,s_\alpha\big((\mu^{\Box,i_1,j_1})^\vee\big))$ (with $\mu^{\Box,i_1,j_1}\defeq \mu^{\vee}+(\langle\eta,\alpha^\vee\rangle+1)\alpha$) where $0\leq i_1,j_1\leq n-1$ corresponds to a positive root $\alpha=\alpha_{i_1+1,j_1+1}$.
Then by the proof of \cite[Lemma 3.4.1]{PQ} (namely, from the expression of $\Mat_{\mathfrak{e}''}(\phi)$  in \emph{loc.~cit}.) we see that $\tld{w}(\rbar_w,\tau)=t_{\eta-\alpha}s_\alpha$, which is a regular colength one shape.
In particular $\rbar_w\leadsto \rhobar^{\speci}$  with $\tld{w}(\rhobar^{\speci},\tau)=t_{\eta-\alpha}s_\alpha$.
By Lemma \ref{lemma:nb:SW} we obtain:
\begin{thm}[Conjecture 5.3.1 \cite{PQ}]
Let $w|p$. Assume that $\rbar_w$ is Fontaine--Laffaille of niveau one, that $(\rbar_w)_{n-i_0,n-j_0}$ is Fontaine--Laffaille generic in the sense of \cite[Definition 3.2.5]{PQ}, and moreover that $\rbar_w^{\semis}$ is $3(n-1)$-generic.
Then
\[
W_w(\rbar)\cap \JH(\ovl{\sigma}(\tau))\subseteq \{F(\mu)^\vee,\ F(\mu^{\Box,i_1,j_1})^\vee\}
\]
where $W_w(\rbar)$ denotes the set of modular weights for $\rbar$ at $w$ $($as defined in the paragraph just below \cite[Definition 5.2.2]{PQ}$)$.
\end{thm}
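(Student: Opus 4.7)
The plan is to deduce the statement as a direct consequence of Lemma~\ref{lemma:nb:SW} applied to the specific shape that appears for the Fontaine--Laffaille data at $w$. First I would record that, by \cite[Lemma~3.4.1]{PQ} (i.e.~the explicit form of $\mathrm{Mat}_{\mathfrak{e}''}(\phi)$ in \emph{loc.~cit.}), the shape of $\rbar_w$ with respect to $\tau$ is $\tld{w}(\rbar_w,\tau)=t_{\eta-\alpha}s_\alpha$, with $\alpha=\alpha_{i_1+1,j_1+1}\in\Phi^+\setminus\Delta$. This is a regular element of $\Adm(\eta)$ of colength one of the second form (in the sense of Proposition~\ref{prop:classification, colength 1 dual shapes}, taken with $w=1$), so in particular $\ell(\tld{w}(\rbar_w,\tau))\geq \ell(t_\eta)-1$ and the specialization $\rbar_w\leadsto\rhobar^{\speci}$ with $\tld{w}(\rhobar^{\speci},\tau)=t_{\eta-\alpha}s_\alpha$ is defined.

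Next I would compute the canonical factorization $\tld{w}=\tld{w}_2^{-1}w_0t_\nu\tld{w}_1$ given by Lemma~\ref{lem:can:dec:easy}: since $w=1$ and we are in case~\eqref{it:colength:1:2} of Proposition~\ref{prop:classification, colength 1 dual shapes}, it reads $\tld{w}_1=\tld{s_\alpha}$, $\tld{w}_2=\tld{w}_h$, $\nu=\eta_{1}-\eta_{s_\alpha}-\alpha$. Since $\ell(\tld{w})<\ell(t_\eta)$, the lemma guarantees that $\tld{w}_1\neq\tld{w}_h^{-1}\tld{w}_2$ (or $\nu\not\in X^0(T)$), so Lemma~\ref{lemma:nb:SW} gives that the set $W^?(\rhobar^{\speci})\cap\JH(\ovl{\sigma}(\tau))$ consists of exactly two Serre weights, namely
\[
\sigma_0\defeq\sigma_{\rhobar^{\speci}}\bigl(\tld{w}_2^{-1}w_0t_\nu\tld{w}_1\bigr)\quad\text{and}\quad\sigma_1\defeq\sigma_{\rhobar^{\speci}}\bigl((\tld{w}_h\tld{w}_1)^{-1}w_0\tld{w}_1\bigr).
\]
Unwinding the definition of $\sigma_{\rhobar^{\speci}}$ via \cite[Proposition~2.6.2]{MLM} and translating between our lowest alcove presentation of $\rhobar^{\speci}$ and the Fontaine--Laffaille presentation $(1,\mu)$ of \cite[\S 1]{PQ}, I would identify these two weights with $F(\mu)^\vee$ and $F(\mu^{\Box,i_1,j_1})^\vee$ respectively; the extremal one corresponds to the ``obvious'' weight $F(\mu)^\vee$ and the defect one weight corresponds to the ``shadow'' weight $F(\mu^{\Box,i_1,j_1})^\vee$, with the twist by $\alpha$ in the first coordinate of $\nu$ matching the shift $\mu\mapsto\mu^{\Box,i_1,j_1}$.

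Finally, to conclude, I would use that $W_w(\rbar)\subseteq W^g(\rbar_w)$ by \cite[Theorem~7.4.2]{MLM} (together with the fact that $\rbar_w^{\semis}$ is $3(n-1)$-generic so the cited compatibility between modular weights and irreducible components applies), and that by definition of specialization $W^g(\rbar_w)\cap \JH(\ovl{\sigma}(\tau))\subseteq W^?(\rhobar^{\speci})\cap\JH(\ovl{\sigma}(\tau))$. Combining with the previous paragraph yields
\[
W_w(\rbar)\cap\JH(\ovl{\sigma}(\tau))\subseteq\{\sigma_0,\sigma_1\}=\{F(\mu)^\vee,F(\mu^{\Box,i_1,j_1})^\vee\},
\]
as desired. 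The only nontrivial bookkeeping, which I expect to be the main obstacle, is the explicit identification of $\sigma_0,\sigma_1$ with the Fontaine--Laffaille weights $F(\mu)^\vee$ and $F(\mu^{\Box,i_1,j_1})^\vee$: this requires carefully matching the normalization conventions of \cite{MLM} (lowest alcove presentations of Serre weights) with those of \cite{PQ} (Fontaine--Laffaille parameters), and verifying that the duality twist and the shift $\eta_1-\eta_{s_\alpha}-\alpha$ produce precisely the element $\mu^{\Box,i_1,j_1}=\mu^\vee+(\langle\eta,\alpha^\vee\rangle+1)\alpha$ after dualizing.
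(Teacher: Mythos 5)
Your proposal is correct and follows essentially the same route as the paper: the whole argument is Lemma~\ref{lemma:nb:SW} applied to the regular colength-one shape $\tld{w}(\rhobar^{\speci},\tau)=t_{\eta-\alpha}s_\alpha$ (which the paper, like you, extracts from the proof of \cite[Lemma~3.4.1]{PQ}), yielding exactly the two weights that are then identified with $F(\mu)^\vee$ and $F(\mu^{\Box,i_1,j_1})^\vee$. The only divergence is in the first containment: the paper gets $W_w(\rbar)\cap \JH(\ovl{\sigma}(\tau))\subseteq W^?(\rhobar^{\speci})\cap\JH(\ovl{\sigma}(\tau))$ in one step from the weight elimination result \cite[Theorem~5.1.1]{OBW}, whereas you route through geometric weights; your inclusion $W^g(\rbar_w)\cap\JH(\ovl{\sigma}(\tau))\subseteq W^?(\rhobar^{\speci})\cap\JH(\ovl{\sigma}(\tau))$ is true but is not ``by definition of specialization''---it is the nontrivial comparison the paper attributes to \cite[Theorem~4.2.4]{LLL}---so the direct citation of \cite{OBW} is the cleaner justification.
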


\begin{proof}
By \cite[Theorem~5.1.1]{OBW}, $W_w(\rbar)\cap \JH(\ovl{\sigma}(\tau))\subseteq W^?(\rhobar^\speci)\cap \JH(\ovl{\sigma}(\tau))$. Now, it follows immediately from Lemma~\ref{lemma:nb:SW}, as $\tld{w}(\rhobar^{\speci},\tau)=s_\alpha t_{\eta-\alpha}$.
\end{proof}

The above result removes the weight elimination condition of \cite[Theorem 5.6.2]{PQ}.
However the  fact that the deformation ring $R_{\rhobar}^{\eta,\tau}$ is formally smooth over $\cO$ when $Z_{-\alpha}\not\equiv 0$ modulo $\varpi$ makes the argument of \cite[Theorem 5.6.2]{PQ} more direct, since the modules of algebraic automorphic forms are in this case free over the Hecke algebra by patching arguments (cf.~\cite[Remark 5.4.6]{PQ}).

\bibliography{Biblio}
\bibliographystyle{amsalpha}

\end{document}